\newtheorem{thm}{Theorem}
\newtheorem{cor}[thm]{Corollary}
\newtheorem{theorem}{Theorem}[section]
\newtheorem{lemma}[theorem]{Lemma}
\newtheorem{proposition}[theorem]{Proposition}
\newtheorem{corollary}[theorem]{Corollary}
\newtheorem{fact}[theorem]{Fact}
\newtheorem*{claim*}{Claim}
\theoremstyle{definition}
\newtheorem{definition}[theorem]{Definition}
\newtheorem{question}[theorem]{Question}
\newtheorem{example}[theorem]{Example}
\theoremstyle{remark}
\newtheorem{remark}[theorem]{Remark}
\newenvironment{proofclaim}{
\begin{proof}[Proof of Claim.]
\renewcommand{\qed}{\hfill$\square_{\mbox{ Claim}}$}
}{\end{proof}}
\def\C{\mathfrak{C}}
\def\M{\mathcal{M}}
\def\qf{\mathrm{qf}}
\DeclareMathOperator{\tp}{tp}
\DeclareMathOperator{\EL}{EL}
\DeclareMathOperator{\NF}{NF}
\renewcommand{\Im}{\mathrm{Im}}
\DeclareMathOperator{\Inv}{Inv}
\DeclareMathOperator{\Aut}{Aut}
\DeclareMathOperator{\Emb}{Emb}
\DeclareMathOperator{\Age}{Age}
\DeclareMathOperator{\Sym}{Sym}
\DeclareMathOperator{\Gal}{Gal}
\DeclareMathOperator{\AutfSh}{Autf_{Sh}}
\DeclareMathOperator{\AutfKP}{Autf_{KP}}
\DeclareMathOperator{\AutfL}{Autf_L}
\DeclareMathOperator{\GalSh}{Gal_{Sh}}
\DeclareMathOperator{\GalKP}{Gal_{KP}}
\DeclareMathOperator{\GalL}{Gal_L}
\DeclareMathOperator{\cl}{cl}
\DeclareMathOperator{\J}{\mathcal{J}}
\DeclareMathOperator{\ct}{ct}
\DeclareMathOperator{\equivSh}{\equiv_{Sh}}
\DeclareMathOperator{\equivKP}{\equiv_{KP}}
\DeclareMathOperator{\equivL}{\equiv_{L}}
\DeclareMathOperator{\id}{id}
\DeclareMathOperator{\acl}{acl}
\DeclareMathOperator{\dcl}{dcl}
\DeclareMathOperator{\Th}{Th}
\DeclareMathOperator{\Stab}{Stab}
\def\invlim{\underleftarrow{\lim}}
\def\eerp{$EERP$}
\def\eecrp{$EECRP$}
\def\deerp{$DEERP$}
\def\deecrp{$DEECRP$}
\def\edeerp{$EDEERP$}
\def\eerdeg{$EERdeg$}
\def\edeerdeg{$EDEERdeg$}
\title{Ramsey theory and topological dynamics for first order theories}
\author[K. Krupi\'nski]{Krzysztof Krupi\'nski}
\address[K. Krupi\'nski]{Uniwersytet Wroc\l awski, Instytut Matematyczny, Plac Grunwaldzki 2/4, 50-384 Wroc\l aw, Poland}
\email{kkrup@math.uni.wroc.pl}
\thanks{All authors are supported by National Science Center, Poland, grant 2016/22/E/ST1/00450. The first author is also supported by National Science Center, Poland, grant 2018/31/B/ST1/00357. The third author is also supported by the Ministry of Education, Science and Technological Development of Serbia.}
\author[J. Lee]{Junguk Lee}
\address[J. Lee]{KAIST, Department of Mathematical Sciences, 291 Daehak-ro Yuseong-gu, 34141, Daejeon, South Korea}
\email{ljwhayo@kaist.ac.kr}
\author[S. Moconja]{Slavko Moconja}
\address[S. Moconja]{University of Belgrade, Faculty of Mathematics, Studentski trg 16, 11000 Belgrade, Serbia}
\email{slavko@matf.bg.ac.rs}
\keywords{Ramsey property, Ramsey degree, Ellis group, [extremely] amenable theory, profinite group}
\subjclass[2010]{03C45, 05D10, 54H20, 54H11, 20E18}
\begin{document}

\begin{abstract}
We investigate interactions between Ramsey theory, topological dynamics, and model theory. We introduce various Ramsey-like properties for first order theories and characterize them in terms of the appropriate dynamical properties of the theories in question (such as  [extreme] amenability of a theory or some properties of the associated Ellis semigroups). Then we relate them to profiniteness and triviality of the Ellis groups of first order theories. In particular, we find various criteria for [pro]finiteness and for triviality of the Ellis group of a given theory from which we obtain wide classes of examples of theories with [pro]finite or trivial Ellis groups. 
We also find several concrete examples illustrating the lack of implications between some fundamental properties. In the appendix, we give a full computation of the Ellis group of the theory of the random hypergraph with one binary and one 4-ary relation. This example shows that the assumption of NIP in the version of Newelski's conjecture for amenable theories (proved in \cite{KNS}) cannot be dropped.
\end{abstract}

\maketitle

\section{Introduction}

In their seminal paper \cite{KPT}, Kechris, Pestov and Todor\v cevi\'c discovered surprising interactions between dynamical properties of the group of automorphisms of a Fra\"iss\'e structure and Ramsey-theoretic properties of its age. For example, they proved that this group is extremely amenable iff the age has the structural Ramsey property and consists of rigid structures (equivalently, the age has the embedding Ramsey property in the terminology used by Zucker in \cite{Z}). This started a wide area of research of similar phenomena.  Recently, Pillay and the first author \cite{KP} gave a model-theoretic account for the fundamental results of Kechris-Pestov-Todor\v cevi\'c (shortly KPT)  theory, generalizing the context to arbitrary, possibly uncountable, structures. 
However, KPT theory (including such generalizations) is not really about model-theoretic properties of the underlying theory, because: on the dynamical side, it talks about the topological dynamics of the topological group of automorphisms of a given structure, which can be expressed in terms of the action of this group on the universal ambit rather than on type spaces of the underlying theory, and, on the Ramsey-theoretic side, it considers arbitrary colorings (without any definability properties) of the finite subtuples of a given model. 
Definitions of Ramsey properties for a  given structure stated in \cite{KP} suggest the corresponding definitions for first order theories just by applying them to a monster model. In this paper, we go much further and define various ``definable'' versions of Ramsey properties for first order theories by restricting the class of colorings to ``definable'' ones. And then we find the appropriate dynamical characterizations of our ``definable'' Ramsey properties in terms of the dynamics of the underlying theory (in place of the dynamics of the group of automorphisms of a given model) some of which are surprising and different comparing to classical KPT theory. 

The classes of amenable and extremely amenable theories introduced and studied in \cite{HKP} are defined in a different way than typical Shelah-style, combinatorially defined classes of theories (such as NIP, simple, $\textrm{NTP}_2$). In this paper, we give Ramsey-theoretic characterizations of [extremely] amenable theories; these characterizations are clearly combinatorial, but still of different flavor than Shelah's definitions. Also, the new classes of theories introduced in this paper via some Ramsey-theoretic properties or via their dynamical characterizations do not follow the usual  Shelah-style way of defining new classes of theories. This makes the whole topic rather novel in model theory. 

We find the interaction between ``definable'' Ramsey properties and the dynamics of first order theories natural and interesting in its own right. However, our original motivation to introduce the ``definable'' Ramsey properties has some specific origins in model theory and topological dynamics in model theory, which we explain in the next paragraph.

Some methods of topological dynamics were introduced to model theory by Newelski in \cite{Ne1}. Since then a wide research on this topic has been done by Chernikov, Hrushovski, Newelski, Pillay, Rzepecki, Simon, the first author, and others. For any given theory $T$, a particularly important place in this research is reserved for the investigation of the flow $(\Aut(\C),S_{\bar c}(\C))$, where $\bar c$ is an enumeration of a monster model $\C \models T$ and $S_{\bar c}(\C)$ is the space of global types extending $\tp(\bar c/\emptyset)$, as it turns out that topological properties of this flow carry important information about the underlying theory. In particular, in \cite{KPRz} it was proved that there exists a topological quotient epimorphism from the Ellis group of the flow $(\Aut(\C),S_{\bar c}(\C))$ (also called the Ellis group of $T$, as it does not depend on the choice of the monster model  by \cite{KNS}) to $\GalKP(T)$ (the Kim-Pillay Galois group of $T$), and even to the larger group $\GalL(T)$ (the Lascar Galois group of $T$); in particular, the Ellis group of $T$ captures more information about $T$ than the Galois groups of $T$.  This was the starting point for the research in this paper. Namely, from the aforementioned result from \cite{KPRz} one easily deduces that profiniteness of the Ellis group implies profiniteness of $\GalKP(T)$, which in turn is known to be equivalent to the equality of the Shelah and Kim-Pillay strong types. The question for which theories the Shelah and Kim-Pillay strong types coincide is fundamental in model theory 
(note that it can be viewed as ``elimination of bounded hyperimaginaries''; in the context of simple theories, it is equivalent to the independence theorem being true over the (imaginary rather than hyperimaginary) algebraically closed sets). This is known to be true in e.g.\ stable or supersimple theories, but remains a well-known open question in simple theories in general. This led us to the question for which theories the Ellis group is profinite, which is also interesting in its own rights (keeping in mind that the Ellis group of $T$ captures more information than any of the Galois groups of $T$). And among the main outcomes of this paper are results saying that various Ramsey-like properties of $T$ imply profiniteness of the Ellis group.



Let us briefly discuss the Ramsey properties which we investigate in this paper. They are given with respect to a monster model $\C$ of a first-order theory $T$, but we will show that they do not depend on the choice of $\C$, so they are really properties of $T$. We say that $T$ has {\em separately finite elementary embedding Ramsey degree (sep.\ fin.\ \eerdeg)} if for every finite $\bar a\subseteq\C$ there exists $l<\omega$ such that for every finite $\bar b\supseteq\bar a$, $r<\omega$, and coloring $c:{\C\choose\bar a}\to r$ there exists $\bar b'\in{\C\choose\bar b}$ such that $\#c[{\bar b'\choose\bar a}]\leqslant l$. Here, for a tuple $\bar a$ and a set $B$ (or a tuple which is treated as the set of coordinates), ${B\choose\bar a}$ denotes the set of all $\bar a'\subseteq B$ such that $\bar a'\equiv\bar a$. If $l$ above can be taken to be $1$ for every finite $\bar a$, we say that $T$ has the {\em elementary embedding Ramsey property (\eerp)}. If $l$  can be taken to be $1$ and we restrict ourselves to considering only [externally] definable colorings (see Section \ref{section ramsey} for definitions), we say that $T$ has the {\em [externally] definable elementary embedding Ramsey property ([E]DEERP)}. 
If for every finite set of formulae $\Delta$ and every finite $\bar a$ the above holds (for some $l$)  for the externally definable $\Delta$-colorings, then we say that $T$ has {\em separately finite externally definable elementary embedding Ramsey degree (sep.\ fin.\ \edeerdeg)}. 

Theories with \eerp\ and sep.\ fin.\ \eerdeg\ are generalizations of the classical notions of embedding Ramsey property and finite embedding Ramsey degree in the following sense: If $K$ is an $\aleph_0$-saturated Fra\"iss\'e structure, then its age has the embedding Ramsey property [sep.\ fin.\ embedding Ramsey degree] iff $\mathrm{Th}(K)$ has \eerp\ [sep.\ fin.\ \eerdeg].

We also consider the following convex Ramsey-like properties. We say that $T$ has the {\em elementary embedding convex Ramsey property (\eecrp)} if for every $\epsilon\geqslant 0$ and finite $\bar a\subseteq\bar b\subseteq\C$, $n<\omega$, and coloring $c:{\C\choose\bar a}\to 2^n$ there exist $k<\omega$, $\lambda_0,\dots,\lambda_{k-1}\in[0,1]$ with $\lambda_0+\dots+\lambda_{k-1}=1$, and $\sigma_0,\dots,\sigma_{k-1}\in\Aut(\C)$ such that for any two tuples $\bar a',\bar a''\in{\bar b\choose\bar a}$ the convex combinations $\sum_{j<k}\lambda_jc(\sigma_j(\bar a'))(i)$ and $\sum_{j<k}\lambda_jc(\sigma_j(\bar a''))(i)$ differ by at most $\epsilon$ for every $i<n$. If we restrict ourselves to definable colorings, we say that $T$ has the {\em definable elementary embedding convex Ramsey property (\deecrp)}.


To state our main results, we need to use a natural refinement of the usual space of $\Delta$-types, denoted by $S_{\bar c, \Delta}(\bar p)$ for a finite set of formulae $\Delta=\{\varphi_i(\bar x, \bar y)\}_{i<k}$ and a finite set (or sequence) of types $\bar p =\{p_j(\bar y)\}_{j<m} \subseteq S_{\bar y}(\emptyset)$. (It is defined after Lemma \ref{lemma ellis semi epi bigger lang invariant}.) For a flow $(G,X)$, by $EL(X)$ we denote the Ellis semigroup of this flow. By $\Inv_{\bar c}(\C)$, we denote the space of global invariant types extending $\tp(\bar c/\emptyset)$. (All these notations and definitions can be found in Section \ref{section prelim}.) Our main result yields dynamical characterizations of the introduced Ramsey properties.

\begin{thm}\label{THM} Let $T$ be a complete first-order theory and $\C$ its monster model. Then:
\begin{enumerate}[label=(\roman*),align=right,leftmargin=*]
\item $T$ has \deerp\ iff $T$ is extremely amenable (in the sense of \cite{HKP}).
\item $T$ has \edeerp\ iff there exists $\eta\in\EL(S_{\bar c}(\C))$ such that $\Im(\eta)\subseteq \Inv_{\bar c}(\C)$.
\item $T$ has sep.\ fin.\ \edeerdeg\ iff for every finite set of formulae $\Delta$ and finite sequence of types $\bar p$ there exists $\eta\in\EL(S_{\bar c,\Delta}(\bar p))$ such that $\Im(\eta)$ is finite.
\item $T$ has \deecrp\ iff $T$ is amenable (in the sense of \cite{HKP}).
\end{enumerate}
\end{thm}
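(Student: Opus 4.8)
The plan is to prove all four equivalences through a single dictionary that translates [externally] definable colorings of $\binom{\C}{\bar a}$ into objects attached to type spaces, and then matches the Ramsey-type hypothesis with a dynamical conclusion by a compactness/limit argument. Two basic observations drive everything. First, $\Aut(\C)$ acts transitively on $\binom{\C}{\bar a}$ by strong homogeneity of the monster, so an $\Aut(\C)$-invariant coloring of $\binom{\C}{\bar a}$ is automatically constant; dually, a global type over $\C$ is invariant exactly when it is constant on copies (does not split over $\emptyset$). Second, a definable coloring $c$ with parameters $\bar e\subseteq\C$ transforms under $\sigma\in\Aut(\C)$ only according to $\tp(\sigma\bar e/\C)$, so that the orbit closure of $c$ in $r^{\binom{\C}{\bar a}}$ is a factor of $S_{\bar c}(\C)$; by contrast an externally definable coloring has its parameters in a type $s\in S(\C)$, so its $\Aut(\C)$-translates and their limits are governed by the action of $\EL(S_{\bar c}(\C))$ on $s$. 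This is precisely why the definable case only sees fixed points (invariant types) while the externally definable case sees the whole Ellis semigroup.

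For (i) I would run the model-theoretic form of the Kechris--Pestov--Todor\v cevi\'c correspondence \cite{KPT}. From extreme amenability to \deerp: given a definable $c\colon\binom{\C}{\bar a}\to r$ and finite $\bar b\supseteq\bar a$, the orbit closure of $c$ is a factor of $S_{\bar c}(\C)$ by the remarks above, so an invariant type yields a fixed point there, i.e.\ an invariant coloring in the closure, which by transitivity is constant; unwinding membership in the orbit closure on the finite set $\binom{\bar b}{\bar a}$ produces the homogenizing $\sigma$. For the converse I would argue contrapositively: if no invariant type exists then $S_{\bar c}(\C)$ has no fixed point, and since its fixed-point set is the inverse limit of the fixed-point sets of the definable-coloring factors, by compactness the failure is already witnessed by one definable coloring and one finite $\bar b$, contradicting \deerp.

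Parts (ii) and (iii) are the semigroup refinements, carried out inside $\EL(S_{\bar c}(\C))$ (resp.\ $\EL(S_{\bar c,\Delta}(\bar p))$) along the lines relating Ramsey degrees to the Ellis semigroup in \cite{Z}. From \edeerp\ I would produce $\eta$ by directing all pairs (externally definable coloring, finite $\bar b$), choosing for each a homogenizing $\sigma$ via \edeerp, and passing to a limit $\eta\in\EL(S_{\bar c}(\C))$; in the limit, homogenization of the coloring attached to a type $s$ becomes constancy of $\varphi$-instances across copies, which by transitivity says exactly that $\eta(p)$ does not split, i.e.\ $\Im(\eta)\subseteq\Inv_{\bar c}(\C)$. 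Conversely, given such $\eta$, I approximate it by an automorphism agreeing with $\eta$ on the finitely many instances indexed by $\binom{\bar b}{\bar a}$ (using $\Aut(\C)$ dense in $\EL$); since $\eta$ sends the parameter type of the given coloring into $\Inv_{\bar c}(\C)$, and invariant types are constant on copies, this automorphism homogenizes the coloring. Part (iii) is identical except that ``image contained in $\Inv_{\bar c}(\C)$'' is replaced by ``finite image'': the degree bound $l$ attached to $\bar a$ matches the size of $\Im(\eta)$ on the refined space $S_{\bar c,\Delta}(\bar p)$, and passing to a fixed $\Delta$ and $\bar p$ is exactly what makes the number of colors finite rather than one.

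Part (iv) is the measure-theoretic analogue: discrete colorings are replaced by colorings into $2^n$ and monochromaticity by $\epsilon$-agreement of convex combinations, so the natural output is a finitely supported average $\sum_{j<k}\lambda_j(\sigma_j)_*$ of push-forwards, and the limit object is an $\Aut(\C)$-invariant Keisler measure on $S_{\bar c}(\C)$, i.e.\ amenability in the sense of \cite{HKP}. I would deduce invariance of the limit measure from \deecrp\ by letting $\epsilon\to 0$ along the directed set of finite data, and conversely extract the required $\lambda_j,\sigma_j$ by approximating an invariant measure in the weak$^*$ topology by finitely supported averages. I expect the main obstacle to lie in the $\Longrightarrow$ directions of (ii)--(iv): converting the purely local, finitary homogenization supplied by the Ramsey property into a global object lying \emph{exactly} in the prescribed set --- invariant types, a finite image, or invariant measures --- rather than merely approximately. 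This requires care in the choice of directed set and limit so that the relevant finite patterns stabilize, in tracking the externally definable parameters as genuine points of $S(\C)$ acted on by $\EL$, and, in (iv), in ensuring that the weak$^*$ limit of approximately invariant averages is genuinely invariant; in each case the transitivity of $\Aut(\C)$ on $\binom{\C}{\bar a}$ is what ultimately upgrades ``constant on copies'' to ``invariant.''
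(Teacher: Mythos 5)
Your translation dictionary and your arguments for (i), (ii) and (iv) are essentially the paper's own proofs (Theorems \ref{theorem characterization of deerp}, \ref{theorem characterization of dpeerp} and \ref{theorem deecrp}) in different packaging: where you push an invariant type into the orbit closure of a definable coloring and use transitivity, the paper restricts the invariant type to $\bar b$, realizes the restriction, and moves it by an automorphism; where you approximate an invariant measure in the weak$^*$ topology by finitely supported averages, the paper decomposes along the atoms of the finite Boolean algebra generated by the relevant clopens (which even yields $\epsilon=0$). One caveat on the forward direction of (ii): if you direct over \emph{all} externally definable colorings at once, the base tuple $\bar a$ varies with the index, and then the conditions you need at the limit are only cofinal rather than eventual --- a copy of a small tuple inside $\bar b_f$ need not extend to a copy of the index's base tuple inside $\bar b_f$, so the homogenization at index $f$ says nothing about it. The paper avoids this by a two-step argument: first a net with \emph{fixed} base tuple, producing $\eta_q$ with $\Im(\eta_q)\subseteq\Inv_{q,\bar c}(\C)$, then a composition trick ($\eta_S:=\eta_{q_1}\circ\dots\circ\eta_{q_n}$, legitimate because each $\Inv_{q,\bar c}(\C)$ is a subflow, hence preserved by every element of $\EL(S_{\bar c}(\C))$), followed by a second limit. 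Your flag about ``care in the choice of directed set'' covers this, and it is fixable.

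The genuine gap is (iii), which is \emph{not} ``identical'' to (ii). The condition $\Im(\eta)\subseteq\Inv_{\bar c}(\C)$ is an intersection of clopen conditions on $\eta$, each of which becomes eventual along the net, so it survives the limit softly; the condition ``$\Im(\eta)$ is finite'' is not of this form, and nothing in the limit construction bounds the number of values of $\eta$ on the whole infinite space $S_{\bar c,\Delta}(\bar p)$: each homogenizing automorphism $\sigma_f$ only bounds the number of colors attained on the copies of $\bar a$ inside the single finite tuple $\bar b_f$. In the paper's proof of Theorem \ref{theorem characterization of sep fin dpeerdeg}, finiteness of $\Im(\eta)$ is established by contradiction with genuine combinatorial content: assuming $\eta(\hat q_0),\eta(\hat q_1),\dots$ pairwise distinct, one first applies Ramsey's theorem to uniformize the witnessing formula $\varphi_{i_0}$ and type $p_{j_0}$, then packs $n>2^l$ pairwise witnesses into a single application of the degree-$l$ hypothesis and derives $n\leqslant 2^l$ via a counting argument (the sets $S(\bar a')$ and an injection of $n$ into $\mathcal P(\{S_0,\dots,S_{l'-1}\})$). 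This argument is absent from your sketch, and ``the degree bound $l$ matches the size of $\Im(\eta)$'' is not what happens: the actual relation is $\#\Im(\eta)\leqslant 2^{mkl}$ in one direction, and in the other the \edeerdeg\ of $\bar a$ with respect to $\Delta$ is at most $2^{kt}$ where $t=\#\Im(\eta)$ (Corollary \ref{corollary: bounds}). Moreover, even setting up the correspondence requires the transposition $\Delta\mapsto\Delta^{\textrm{opp}}$: on the Ramsey side one colors the finite tuple $\bar a$ (a realization of $\bar p$) using types on the $\bar c$-side, while $\eta$ acts on $\Delta$-types of $\bar c$ over the realizations of $\bar p$, so the roles of variables and parameters must be swapped. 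The converse direction (finite image implies finite degree) likewise needs its own pattern-counting argument rather than a ``matching''. So parts (i), (ii), (iv) of your plan are sound modulo routine care, but part (iii) is missing its combinatorial core.
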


A relation of this to the Ellis group of the theory is given by the next corollary.

\begin{cor}\phantomsection\label{the main corollary of the main theorem}
\begin{enumerate}[label=(\roman*), align=left, leftmargin=*, labelsep=-4pt]
\item A theory with \edeerp\ has trivial Ellis group (see Cor. \ref{corollary dpeerp implies trivial ellis}).
\item A theory with sep.\ fin.\ \edeerdeg\ has profinite Ellis group (see Cor. \ref{corollary sep fin dpeerdeg implies 0-dim ellis}).
\end{enumerate}
\end{cor}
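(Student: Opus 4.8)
The plan is to derive Corollary~\ref{the main corollary of the main theorem} from Theorem~\ref{THM} together with the known quotient map from the Ellis group of $T$ onto $\GalL(T)$ established in \cite{KPRz}. The two parts are of the same spirit: part (ii) of the theorem provides an idempotent $\eta\in\EL(S_{\bar c}(\C))$ whose image lands in the invariant types, while part (iii) provides, for each $\Delta$ and $\bar p$, an element of $\EL(S_{\bar c,\Delta}(\bar p))$ with finite image. In both cases the strategy is to translate these semigroup-level facts into structural information about the minimal left ideals and their associated groups, i.e.\ the Ellis group.

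For part (i), I would argue as follows. Recall that the Ellis group is computed as $u\M$, where $\M$ is a minimal left ideal of $\EL(S_{\bar c}(\C))$ and $u\in\M$ is an idempotent. By \edeerp\ and Theorem~\ref{THM}(ii), there is $\eta\in\EL(S_{\bar c}(\C))$ with $\Im(\eta)\subseteq\Inv_{\bar c}(\C)$. The key point is that an element with image consisting entirely of \emph{invariant} types forces the dynamics to collapse: composing $\eta$ into a minimal left ideal, and using that invariant types are fixed (up to the appropriate equivalence) by the $\Aut(\C)$-action, one shows that the minimal left ideal is a single point, or equivalently that $u\M$ is trivial. Concretely I would show that any $\eta$ as above, when multiplied on the left into $\M$, yields an idempotent whose action on $\M$ is constant, so that the group $u\M$ has only the identity. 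This is exactly the content referenced in Cor.~\ref{corollary dpeerp implies trivial ellis}.

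For part (ii), the plan is to pass through the inverse-limit (profinite) structure. The refined $\Delta$-type spaces $S_{\bar c,\Delta}(\bar p)$ form an inverse system whose limit recovers $S_{\bar c}(\C)$, and correspondingly the Ellis semigroups $\EL(S_{\bar c,\Delta}(\bar p))$ sit in an inverse system with $\EL(S_{\bar c}(\C))$ mapping onto each factor. By Theorem~\ref{THM}(iii) and sep.\ fin.\ \edeerdeg, for each $\Delta,\bar p$ there is $\eta_{\Delta,\bar p}\in\EL(S_{\bar c,\Delta}(\bar p))$ with $\Im(\eta_{\Delta,\bar p})$ finite. Finiteness of the image bounds the size of the corresponding minimal left ideal and hence of the Ellis group at each finite level: the Ellis group of $\EL(S_{\bar c,\Delta}(\bar p))$ is finite. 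Taking the inverse limit over all $\Delta$ and $\bar p$, and checking that the Ellis group of $T$ maps compatibly onto this system of finite groups with the limit being the full Ellis group, yields profiniteness. This is the statement of Cor.~\ref{corollary sep fin dpeerdeg implies 0-dim ellis}.

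The main obstacle I anticipate is the compatibility of the Ellis-group construction with the inverse-limit presentation in part (ii): the Ellis group functor does not commute with inverse limits in general, so one must verify directly that the projections $\EL(S_{\bar c}(\C))\to\EL(S_{\bar c,\Delta}(\bar p))$ send a fixed minimal left ideal and idempotent of the top semigroup to a minimal left ideal and idempotent downstairs, inducing surjections of Ellis groups, and that these surjections realize the Ellis group of $T$ as the inverse limit of the finite quotient groups. Establishing that the image being finite at each level genuinely forces the level-wise Ellis group to be finite (rather than merely bounding some auxiliary quantity) is the delicate step; once that is in place, profiniteness follows formally from the inverse-limit description.
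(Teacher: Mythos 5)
Your proposal is correct and follows essentially the same route as the paper: part (i) is exactly the argument of Corollary \ref{corollary dpeerp implies trivial ellis} (compose the element $\eta$ with $\Im(\eta)\subseteq\Inv_{\bar c}(\C)$ into a minimal left ideal $\M$ and use invariance of its image under $\Aut(\C)$ to collapse $\M$ to a single point), and part (ii) is precisely the chain (D) $\Longrightarrow$ (C) $\Longrightarrow$ (B) $\Longrightarrow$ (A) of Theorem \ref{THMsecond}. The two ``delicate steps'' you flag are exactly the paper's cited Fact \ref{fact finite image} (an element of the Ellis semigroup with finite image forces the Ellis group to be finite) and Fact \ref{fact inverse limit and ellis} (minimal left ideals, idempotents, Ellis groups, and their $\tau$-topologies pass to the inverse limit of the flows $S_{\bar c,\Delta}(\bar p)$), so once those known facts are invoked nothing further is missing.
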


Item (i) is an easy consequence of Theorem \ref{THM}(ii). Item (ii) follows from Theorem \ref{THM}(iii) and the implication (D) $\Longrightarrow$ (A) in Theorem \ref{THMsecond} below. 

In the next theorem, $\M$ denotes a minimal left ideal in $\EL(S_{\bar c}(\C))$ and $u$ an idempotent in this ideal, so $u\M$ is the Ellis group of $T$; $u\M/H(u\M)$ is the canonical Hausdorff quotient of $u\M$ (see Section \ref{section prelim}). Analogously, $u_{\Delta,\bar p}\M_{\Delta,\bar p}$ is the Ellis group of the flow $(\Aut(\C), S_{\bar c,\Delta}(\bar p))$. The main idea behind the next result is that a natural way to obtain that the Ellis group of $T$  is profinite is to present the flow $S_{\bar c}(\C)$ as the inverse limit of some flows each of which has finite Ellis group, and if it works, it should also work for the natural presentation of $S_{\bar c}(\C)$ as the inverse limit of the flows $S_{\bar c, \Delta}(\bar p)$ (where $\Delta$ and $\bar p$ vary).

\begin{thm}\label{THMsecond}
Consider the following conditions:
\begin{enumerate}[label=(\Alph*), align=right, leftmargin=*]
\item[(A")] $\GalKP(T)$ is profinite;

\item[(A')] $u\M/H(u\M)$ is profinite;

\item $u\M$ is profinite;

\item 
the $\Aut(\C)$-flow $S_{\bar c}(\C)$ is isomorphic to the inverse limit $\underleftarrow{\lim}_{i\in I}X_i$ of some $\Aut(\C)$-flows $X_i$ each of which has finite Ellis group;

\item for every finite sets of formulae $\Delta$ and types $\bar p\subseteq S(\emptyset)$, $u_{\Delta,\bar p}\M_{\Delta,\bar p}$ is finite;

\item for every finite sets of formulae $\Delta$ and types $\bar p\subseteq S(\emptyset)$, there exists $\eta\in\EL(S_{\bar c,\Delta}(\bar p))$ with $\Im(\eta)$ finite.
\end{enumerate}
Then (D) $\Longrightarrow$ (C)  $\Longleftrightarrow$ (B)  $\Longrightarrow$ (A)  $\Longrightarrow$ (A') $\Longrightarrow$ (A''). 
\end{thm}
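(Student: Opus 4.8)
The plan is to traverse the chain one arrow at a time, isolating $(C)\Leftrightarrow(B)$ and $(B)\Rightarrow(A)$ as the technical core and deriving the remaining arrows from the facts recalled in the introduction.

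I would begin with $(D)\Rightarrow(C)$, which I would prove for each pair $(\Delta,\bar p)$ as the semigroup-theoretic statement: if $E:=\EL(S_{\bar c,\Delta}(\bar p))$ contains an element $\eta$ with finite image, then its Ellis group is finite. Fixing any minimal left ideal $\M_0\subseteq E$, the set $\M_0\eta$ is a left ideal, it is compact (right multiplication is continuous and $\M_0$ is closed), and each $\nu\eta\in\M_0\eta$ has image inside $\nu(\Im\eta)$, hence finite. Choosing a minimal left ideal $\M\subseteq\M_0\eta$ and an idempotent $u\in\M$, the set $F:=\Im(u)=\Fix(u)$ is finite, and since every $g\in u\M$ satisfies $g=gu$ we get $g(x)=g(u(x))$ with $u(x)\in F$; thus $g$ is determined by $g\restriction F$, which is a permutation of $F$ because $u\M$ is a group whose identity $u$ acts as $\id$ on $F$. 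Hence $g\mapsto g\restriction F$ embeds $u\M$ into $\Sym(F)$, so $u\M$ is finite; as the Ellis group is independent of the choices, $u_{\Delta,\bar p}\M_{\Delta,\bar p}$ is finite.

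The arrow $(C)\Rightarrow(B)$ is immediate from the canonical presentation of $S_{\bar c}(\C)$ as the inverse limit of the flows $S_{\bar c,\Delta}(\bar p)$ (over finite $\Delta$, $\bar p$): by $(C)$ every factor in this presentation has finite Ellis group, which is exactly $(B)$. The top two arrows are soft: for $(A)\Rightarrow(A')$ the canonical Hausdorff quotient $u\M/H(u\M)$ of a profinite $u\M$ is again profinite; for $(A')\Rightarrow(A'')$ I would invoke the topological quotient epimorphism $u\M/H(u\M)\twoheadrightarrow\GalKP(T)$ from \cite{KPRz}, noting that a continuous surjective homomorphic image of a profinite group, being a quotient by a closed normal subgroup, is profinite.

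The remaining steps $(B)\Rightarrow(A)$ and $(B)\Rightarrow(C)$ carry the real weight. For both I would first establish (or cite) that the enveloping semigroup commutes with inverse limits of flows, $\EL(\invlim_i X_i)\cong\invlim_i\EL(X_i)$, and then choose compatibly (by a standard compactness argument, using that the projections send minimal left ideals onto minimal left ideals) a thread of minimal left ideals $\M_i$ and idempotents $u_i\in\M_i$. This produces a minimal left ideal $\M=\invlim_i\M_i$ and an idempotent $u=(u_i)_i$ in $\EL(S_{\bar c}(\C))$ with $u\M\cong\invlim_i u_i\M_i$; after checking that the $\tau$-topology on $u\M$ matches the inverse-limit topology, $u\M$ is an inverse limit of the finite groups $u_i\M_i$ and hence profinite, giving $(A)$. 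For $(B)\Rightarrow(C)$ the same identification yields, for each canonical factor $Y=S_{\bar c,\Delta}(\bar p)$, a continuous epimorphism $u\M\twoheadrightarrow u_Y\M_Y$ of a profinite group, so $u_Y\M_Y$ is at least profinite. The main obstacle is to upgrade \emph{profinite} to \emph{finite} here: a factor of a flow with profinite Ellis group may in general have infinite Ellis group, so finiteness of $u_Y\M_Y$ cannot follow from topological dynamics alone and must use that $\Delta$ is finite. Via the embedding $u_Y\M_Y\hookrightarrow\Sym(\Fix(u_Y))$ of the second paragraph, the task becomes showing that the continuous action of the profinite group $u_Y\M_Y$ on the set $\Fix(u_Y)$ of $\Delta$-types has finite image, equivalently that $\ker(u\M\to u_Y\M_Y)$ is open; I would attempt this by arguing that, $\Delta$ being finite, the data separating the points of the minimal subflow relevant to $Y$ is already resolved at a finite stage of the presentation $\invlim_i X_i$. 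Securing this openness of the kernel is, in my estimation, the crux of the theorem.
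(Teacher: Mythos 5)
Your treatment of every arrow except (B)$\implies$(C) is correct and is essentially the paper's route: your $\M_0\eta$ argument for (D)$\implies$(C) is a sound reproof of Fact \ref{fact finite image}; (C)$\implies$(B) is immediate from Lemma \ref{lemma inverse limit of St's}; (B)$\implies$(A) is Fact \ref{fact inverse limit and ellis}, which you propose to cite (note that the standard formulation obtains the $\M_i$ and $u_i$ by \emph{projecting} a minimal left ideal and idempotent of $\EL(S_{\bar c}(\C))\cong\invlim_i\EL(X_i)$ down to the factors, rather than assembling a compatible thread from below -- the set of idempotents need not be closed, so the compactness argument you sketch is delicate as stated); and (A)$\implies$(A')$\implies$(A'') follow as you say, or directly from Proposition \ref{lemma galkp profinite}. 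The genuine gap is (B)$\implies$(C). You correctly observe that finiteness of $\Delta$ and $\bar p$ must be used, you reformulate the goal as $\tau$-openness of $\ker(u\M\to u_Y\M_Y)$, and then you explicitly leave that unproved (``I would attempt this by arguing that \dots the data \dots is already resolved at a finite stage''). That openness \emph{is} the implication: nothing in your proposal connects the abstract inverse system $(X_i)_{i\in I}$ furnished by (B) with the concrete factor $Y=S_{\bar c,\Delta}(\bar p)$, so the proposal establishes (B)$\implies$(C) only modulo the statement being proved. (A smaller slip: since (D) is not assumed here, $\Fix(u_Y)=\Im(u_Y)$ may well be infinite, so ``the action of $u_Y\M_Y$ on $\Fix(u_Y)$ has finite image'' is not a statement about a finite set; only your kernel reformulation is usable.)

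The paper fills exactly this hole, and it does so at the level of flows rather than Ellis groups. For $t=(\Delta,\bar p)$ let $F_t$, resp.\ $G_i$, be the closed $\Aut(\C)$-invariant equivalence relations on $S_{\bar c}(\C)$ obtained by pulling back equality along the projections onto $S_t:=S_{\bar c,\Delta}(\bar p)$, resp.\ onto $X_i$. Since $\bigcap_i G_i$ is the diagonal and the $G_i$'s are directed, a compactness argument shows that every clopen subset of $S_{\bar c}(\C)$ is a union of $G_{i_0}$-classes for some $i_0\in I$. Applying this to the preimages in $S_{\bar c}(\C)$ of the finitely many clopens $[\varphi_l(\bar x,\bar a_j)]\subseteq S_t$ (where $\varphi_l\in\Delta$ and $\bar a_j\models p_j\in\bar p$ are fixed realizations), and using $\Aut(\C)$-invariance of $G_i$ to pass from the chosen $\bar a_j$'s to \emph{all} realizations of the $p_j$'s, one finds a single $i\in I$ with $G_i\subseteq F_t$; this is precisely where finiteness of $(\Delta,\bar p)$ enters, and it is the rigorous form of your ``finite stage'' heuristic. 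This inclusion yields a flow epimorphism $X_i\cong S_{\bar c}(\C)/G_i\to S_{\bar c}(\C)/F_t\cong S_t$, and then Fact \ref{fact induced epimorphism}(i) together with Fact \ref{fact epi ellis} exhibits the Ellis group of $S_t$ as a quotient of the finite Ellis group of $X_i$, giving (C). If you prefer to keep your formulation, this same claim is what completes it: the epimorphism $u\M\to u_Y\M_Y$ then factors through the projection $u\M\to u_i\M_i$, whose kernel is $\tau$-open because $u_i\M_i$ is finite and $T_1$, hence discrete.
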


We also find several other criteria for [pro]finiteness of the Ellis group. Applying Corollary \ref{the main corollary of the main theorem} or our other criteria together with some well-known theorems from structural Ramsey theory (saying that various Fra\"{i}ss\'{e} classes have the appropriate Ramsey properties), we get wide classes of examples of theories with [pro]finite or sometimes even trivial Ellis groups. But we also find some specific examples illustrating interesting phenomena, e.g.\ we give examples showing that in Theorem \ref{THMsecond}: (A'') does not imply (A'),  and (A') does not imply (B). The example showing that (A'') does not imply (A') is supersimple of SU-rank 1, so it shows that even for supersimple theories the Ellis group of the theory need not be profinite. We have not found examples showing that (C) does not imply (D), and (A) does not imply (B), which we leave as open problems. 

One of our most important examples is Example \ref{example R2 R4} which is analyzed in details in the appendix: we give there a precise computation of the Ellis group of the theory of the random hypergraph with one binary and one 4-ary relation. This group turns out to be $\mathbb Z/2\mathbb Z$.
This example is interesting for various reasons. Firstly, by classical KPT theory, we know that it has sep.\ finite \eerdeg, so the Ellis group is profinite by the above results (in fact, it satisfies the assumptions of some other criteria that we found, which implies that the Ellis group is finite), and the example shows that it may be non-trivial. A variation of this example (see Example \ref{example R2 R4 Ps}) yields an infinite Ellis group, which shows that in some of our criteria for profiniteness, we cannot expect to get finiteness of the Ellis group.  
Example \ref{example R2 R4} also shows that in general sep.\ fin.\ $EERdeg$ does not imply $EDEERP$ and that $DEERP$ does not imply $EDEERP$. Finally, this example is easily seen to be extremely amenable in the sense of \cite{HKP}, so its KP-Galois group is trivial. But the Ellis group is non-trivial. Hence, the epimorphism (found in \cite{KPRz}) from the Ellis group to the KP-Galois group is not an isomorphism. On the other hand, by \cite[Theorem 0.7]{KNS}, we know that under NIP, even amenability of the theory is sufficient for this epimorphism to be an isomorphism. So our example shows that one cannot drop the NIP assumption in \cite[Theorem 0.7]{KNS}, which was not known so far.

Using our observations that both properties \eerp\ and \eecrp\ do not depend on the choice of the monster model, or even an $\aleph_0$-saturated and strongly $\aleph_0$-homogeneous model $M\models T$, and the results from \cite{KP} saying that \eerp\ (defined in terms of $M$) is equivalent to extreme amenability of the topological group $\Aut(M)$, and \eecrp\ (defined in terms of $M$) is equivalent to amenability of $\Aut(M)$, we get the following corollary.


\begin{cor}\label{THM2}
Let $T$ be a complete first-order theory. The group $\Aut(M)$ is [extremely] amenable as a topological group for some $\aleph_0$-saturated and strongly $\aleph_0$-homogeneous model $M\models T$ iff it is [extremely] amenable as a topological group for all $\aleph_0$-saturated and strongly  $\aleph_0$-homogeneous models $M\models T$.
\end{cor}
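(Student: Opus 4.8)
The plan is to factor the dynamical statement about the automorphism groups through the corresponding Ramsey-theoretic properties, for which model-independence has already been established in this paper, and then transport the conclusion back. The two cases are handled in parallel: \eerp\ governs extreme amenability and \eecrp\ governs amenability.

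First I would invoke the results of \cite{KP}: for an $\aleph_0$-saturated and strongly $\aleph_0$-homogeneous model $M\models T$, the group $\Aut(M)$ is extremely amenable iff $M$ has \eerp\ (as defined in terms of $M$), and $\Aut(M)$ is amenable iff $M$ has \eecrp. This replaces each occurrence of ``[extreme] amenability of $\Aut(M)$'' by a Ramsey-theoretic property of $M$. Next, I would appeal to the observation established earlier in the paper that \eerp\ and \eecrp\ do not depend on the choice of $M$ among the $\aleph_0$-saturated and strongly $\aleph_0$-homogeneous models of $T$. Chaining these, for any two such models $M_1,M_2$ we obtain that $\Aut(M_1)$ is extremely amenable iff $M_1$ has \eerp\ iff $M_2$ has \eerp\ iff $\Aut(M_2)$ is extremely amenable, and analogously with \eecrp\ for amenability; in particular the property holds for some such model iff it holds for all of them.

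The main obstacle is not in this final assembly, which is purely formal, but in the two inputs on which it rests. The genuine content --- invariance of \eerp\ and \eecrp\ under a change of the $\aleph_0$-saturated, strongly $\aleph_0$-homogeneous model --- is the part that must be secured beforehand, and there the subtlety is to transfer the Ramsey combinatorics between models using only the shared theory $T$. A smaller point to verify is definitional: one must confirm that the version of \eerp\ / \eecrp\ ``in terms of $M$'' used in \cite{KP} coincides with the one used here when the monster-model definition is specialized to a general such $M$, so that the two chains of equivalences compose with no gap.
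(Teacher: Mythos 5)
Your proposal is correct and follows essentially the same route as the paper: the paper also chains the equivalences from \cite{KP} (ERP $\Leftrightarrow$ extreme amenability of $\Aut(M)$, ECRP $\Leftrightarrow$ amenability of $\Aut(M)$) with its own observations that these Ramsey properties are invariant across $\aleph_0$-saturated, strongly $\aleph_0$-homogeneous models (Remark \ref{remark rp ind in sat and str hom} and Lemma \ref{lemma eecrp independent}). The definitional point you flag is exactly the one the paper settles by noting that strong $\aleph_0$-homogeneity gives ${C\choose\bar a}^{\Aut}={C\choose\bar a}$, so the \cite{KP} notions coincide with the type-based ones used here.
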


This means that [extreme] amenability of the group of automorphisms of an $\aleph_0$-saturated and strongly $\aleph_0$-homogeneous structure is actually a property of its theory, which seems to be a new observation.

The paper is organized as follows. 
In Section \ref{section prelim}, we recall or introduce all the necessary notions from model theory, topological dynamics, and classical structural Ramsey theory. 
Furthermore, we prove several new fundamental and useful observations, which for example are helpful to compute Ellis groups  or show some dynamical properties of concrete theories. 
Section \ref{section ramsey} is the central part of the paper. We introduce and characterize all the aforementioned Ramsey properties for first order theories. We prove Theorem \ref{THM} and Corollary \ref{THM2}. In Section \ref{section around}, we prove Theorem \ref{THMsecond} and find some other conditions which imply [pro]finiteness of the Ellis group of the theory.
In Section \ref{section applications}, we give a long list of examples to which our results apply, and find several examples with some specific properties, e.g.\ the aforementioned examples showing the lack of two of the implications between the items of Theorem \ref{THMsecond}.
In the appendix, we give a complete computation of the Ellis group of the theory of the random hypergraph with one binary and one 4-ary symbol.\\

Some ``definable'' versions of Ramsey properties were also introduced and considered in a recent paper by Nguyen Van Th\'{e} \cite{The}. Also, Hrushovski \cite{H} has recently introduced some version of Ramsey properties in a first-order setting.
But all these notions seem to be different and they are introduced for different reasons. It would be interesting to see in the future if there are any relationships.

\section{Preliminaries and fundamental observations}\label{section prelim}

Most of this section consists of definitions, notations  and facts needed in this paper. But there are also new ingredients, especially in Subsection \ref{Subsection: Flows in model theory}, where we obtain some new reductions and introduce the type spaces $S_{\Delta, \bar c}(\bar p)$ playing a key role in this paper.

\subsection{Model theory}\label{subsection model theory}

We use standard model-theoretic concepts and terminology. By a theory we always mean a complete first-order theory $T$ in a language $L$. For simplicity, we will be assuming that $L$ is one-sorted, but the whole theory developed in this paper works almost the same for many-sorted languages. We usually work in a monster model $\C$ of $T$, i.e.\ a $\kappa$-saturated and strongly $\kappa$-homogeneous model of $T$ for a large enough cardinal $\kappa$ (called the degree of saturation of $\C$). Elements of $\C$ are denoted by $a,b,\dots$ and tuples (finite or infinite) of $\C$ are denoted by $\bar a,\bar b,\dots$. By a small set [model] we mean a subset [elementary submodel] of $\C$ of cardinality less than $\kappa$; small subsets of $\C$ are denoted by $A,B,\dots$, and small submodels by $M,N,\dots$. 

Global types are complete types over $\C$. 
By $S_{\bar x}(A)$ we denote the space of all complete types over $A$ in variables $\bar x$; if $A=\emptyset$, we also write $S_{\bar x}(T)$ for $S_{\bar x}(\emptyset)$. For a type $\pi(\bar x)$ over some $B \subseteq A$, $S_{\pi}(A)$ denotes  the subspace of $S_{\bar x}(A)$ consisting of all types extending $\pi(\bar x)$. For a tuple $\bar a$, $S_{\bar a}(A)$ denotes the space of all complete types over $A$ extending $\tp(\bar a)$; in other words, $S_{\bar a}(A) = S_{\tp(\bar a)}(A)$. These spaces are naturally compact, Hausdorff, $0$-dimensional topological spaces. For tuples $\bar a,\bar b$, $\bar a\equiv\bar b$ means that $\bar a$ and $\bar b$ have the same type over $\emptyset$. 

$\Aut(\C)$ is the group of all automorphisms of $\C$, and $\Aut(\C/A)$ is the pointwise stabilizer of $A$.
A subset of a power of $\C$ is invariant [$A$-invariant] if it is invariant under $\Aut(\C)$ [$\Aut(\C/A)$]. Having the same type over $\emptyset$ [small $A$] is the equivalence relation of lying in the same orbit of $\Aut(\C)$ [$\Aut(\C/A)$] on the appropriate power of $\C$. 
By $\equivSh$ we denote the intersection of all $\emptyset$-definable, finite equivalence relations (i.e.\ with finitely many classes) on a given power of $\C$;
the classes of $\equivSh$ are called Shelah strong types. By $\AutfSh(\C)$ we denote the group of all Shelah strong automorphisms of $\C$, i.e.\ all automorphisms of $\C$ fixing all Shelah strong types. An equivalence relation is bounded if it has less than $\kappa$ classes. $\equivKP$  and $\equivL$ are respectively the finest bounded $\emptyset$-type-definable equivalence relation and the finest bounded $\emptyset$-invariant equivalence relation (on a fixed power of $\C$); the classes of $\equivKP$ and $\equivL$ are called Kim-Pillay strong types and Lascar strong types, respectively. By $\AutfKP(\C)$ and $\AutfL(\C)$ we denote respectively the groups of all Kim-Pillay strong automorphism and all Lascar strong automorphisms, i.e.\ automorphisms of $\C$ fixing all $\equivKP$-classes and all $\equivL$-classes, respectively. It turns out that $\equivSh$, $\equivKP$, and $\equivL$ are the orbit equivalence relations of $\AutfSh(\C)$,  $\AutfKP(\C)$, and $\AutfL(\C)$, respectively.

$\AutfSh(\C)$, $\AutfKP(\C)$, $\AutfL(\C)$ are normal subgroups of $\Aut(\C)$, and the corresponding quotients do not depend on the choice of the monster $\C$ and are called respectively the Shelah Galois group, the Kim-Pillay Galois group, and the Lascar Galois group of $T$; we denote them by $\GalSh(T)$, $\GalKP(T)$, and $\GalL(T)$, respectively.
Since  $\AutfL(\C)\leqslant\AutfKP(\C)\leqslant\AutfSh(\C)$, we have natural epimorphisms $\GalL(T)\to\GalKP(T)\to\GalSh(T)$.

All the above Galois groups of $T$ are topological groups. 
The topology on $\GalL(T)$ is defined as follows. Let $M$ be a small model and let $\bar m$ be an enumeration of $M$. The natural projection $\Aut(\C)\to\GalL(T)$ factors through $S_{\bar m}(M)$: $\Aut(\C)\to S_{\bar m}(M)\to \GalL(T)$, and we equip $\GalL(T)$ with the quotient topology induced by $S_{\bar m}(M)\to \GalL(T)$. This does not depend on the choice of the model $M$. It turns out that $\GalL(T)$ is a compact (but not necessarily Hausdorff) topological group. 
The topologies on $\GalSh(T)$ and $\GalKP(T)$ are defined in  similar fashion. $\GalSh(T)$ is a 
profinite group,
and $\GalKP(T)$ is a compact, Hausdorff group. 
The epimorphisms  $\GalL(T)\to\GalKP(T)\to\GalSh(T)$ are  topological quotient maps.

Furthermore, $\GalSh(T)$ is the largest profinite quotient of $\GalKP(T)$. Thus, $\GalKP(T)$ is profinite iff it equals $\GalSh(T)$, and the last condition is clearly equivalent to saying that $\equivKP$ and $\equivSh$ are equal on all powers of $\C$, i.e.\ the Kim-Pillay and Shelah strong types coincide. 

For more details concerning strong types and Galois groups the reader is referred to \cite{CLPZ}, \cite{Zi}, or \cite[Chapter 2.5]{RzPhD}.

\subsection{Topological dynamics}\label{Subsection: Topological dynamics}

We quickly introduce and state some facts from topological dynamics.
As a general reference we can recommend \cite{Aus} and \cite{Gl}.

By a {\em $G$-flow} we mean a pair $(G,X)$ where $G$ is a topological group acting continuously on a (non-empty) compact, Hausdorff space $X$. 
The {\em Ellis semigroup} of a $G$-flow $(G,X)$, denoted by $\EL(X)$, is the closure of the set $\{\pi_g\mid g\in G\}$ in $X^X$ (equipped with the topology of pointwise convergence), where $\pi_g$ is the function given by $x\mapsto gx$, with composition as semigroup operation; this semigroup operation is continuous in the left coordinate. 
$\EL(X)$ itself is a $G$-flow, where the action is defined by $g\eta=\pi_g\circ\eta$ for $g\in G$ and $\eta\in\EL(X)$. 
By abusing notation, we denote $\pi_g$ simply by $g$, treat $G$ as a subset of $\EL(X)$ (although this ``inclusion'' is not necessarily 1-1), and then $\EL(X)=\cl(G)$.
The minimal $G$-subflows of $\EL(X)$ coincide with the minimal left ideals of $\EL(X)$. If $\mathcal M$ is any minimal left ideal of $\EL(X)$, then the set $\mathcal J(\mathcal M)$ of all idempotents in $\mathcal M$ is non-empty. Furthermore, $\mathcal M$ is a disjoint union of subsets $u\mathcal M$ for $u\in\mathcal J(\mathcal M)$. For each $u\in\mathcal J(\mathcal M)$, $u\mathcal M$ is a group with respect to the composition of functions (a subgroup of $\EL(X)$) with neutral $u$. Moreover, the isomorphism type of this group does not depend on the choice of $\mathcal M$ and $u\in\mathcal J(\mathcal M)$, and it is called the {\em Ellis group} of the flow $(G,X)$; abusing terminology, any $u\M$ is also called an (or the) Ellis group of $(G,X)$. 
Note that for every $\eta \in \mathcal M$, $\eta \mathcal M = u \mathcal M$ where $u$ is the unique idempotent with $\eta \in u \mathcal M$.

The existence of an element in the Ellis semigroup with finite image will be one of the key properties in this paper. 
The fundamental observation in this situation is given by the next fact, which follows from Lemmas 4.2 and 4.3 in \cite{KNS}.
The idea of the proof of this fact is that if $\eta \in EL(X)$ has finite image, then for any $u \in \mathcal J(\mathcal M)$ (where $\mathcal M$ is a minimal left ideal in  $\EL(X)$), the element $u \eta u \in u\mathcal M$ also has finite image, and so has $u$. Then one checks that the map $u\M\to \Sym(\Im(u))$ given by the restriction $\tau\mapsto \tau_{\upharpoonright\Im(u)}$ is a monomorphism, so $u \mathcal M$ is finite.

\begin{fact}\label{fact finite image} If there exists $\eta\in\EL(X)$ with $\Im(\eta)$ finite, then the Ellis group is also finite.\qed
\end{fact}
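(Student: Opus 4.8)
The plan is to follow the structure theory of the Ellis semigroup and reduce finiteness of the abstract Ellis group $u\mathcal M$ to finiteness of a symmetric group on a finite set. Fix a minimal left ideal $\mathcal M \subseteq \EL(X)$ and an idempotent $u \in \mathcal J(\mathcal M)$, so that $u\mathcal M$ is a group under composition of functions with identity $u$; by the stated independence of its isomorphism type on the choices, it suffices to prove that this particular $u\mathcal M$ is finite. First I would transport the hypothesis into the ideal: given $\eta\in\EL(X)$ with $\Im(\eta)$ finite, set $\tau_0 := u\eta u = u \circ \eta \circ u$. Since $\mathcal M$ is a left ideal and $u\in\mathcal M$, we have $\eta u\in\mathcal M$ and hence $u\eta u\in\mathcal M$; moreover $u(u\eta u)=u\eta u$ shows $\tau_0\in u\mathcal M$. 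As a composition whose outermost applied map is $u$ evaluated on values of $\eta$, its image satisfies $\Im(\tau_0)\subseteq u[\Im(\eta)]$, which is finite.

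Next I would use that $u\mathcal M$ is a \emph{group} to push finiteness of the image onto the identity $u$. Let $\tau_0^{-1}\in u\mathcal M$ be the group inverse of $\tau_0$; since the group operation is composition of functions, $\tau_0^{-1}\circ\tau_0 = u$. Therefore $\Im(u)=\tau_0^{-1}[\Im(\tau_0)]$ is the image of a finite set under a function, hence finite. This is the crucial gain: the identity of the Ellis group itself has finite image.

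Finally I would analyze the restriction-to-$\Im(u)$ map $\rho\colon u\mathcal M\to\Sym(\Im(u))$, $\tau\mapsto\tau_{\upharpoonright\Im(u)}$, and show it is a group monomorphism; since $\Im(u)$ is finite, $\Sym(\Im(u))$ is finite, and this yields finiteness of $u\mathcal M$. The key identities are that $u$ is the two-sided identity of $u\mathcal M$, i.e.\ $\tau\circ u = u\circ\tau = \tau$ for all $\tau\in u\mathcal M$, together with idempotency $u\circ u = u$ (so $u$ fixes $\Im(u)$ pointwise). From $\tau = u\circ\tau$ one gets $\tau[\Im(u)]\subseteq\Im(u)$, so $\rho$ is well defined into self-maps of $\Im(u)$; applying the same reasoning to $\tau^{-1}$ and noting that $\tau^{-1}\circ\tau = u$ restricts to the identity on $\Im(u)$ shows that each $\tau_{\upharpoonright\Im(u)}$ is a bijection of $\Im(u)$, so $\rho$ genuinely lands in $\Sym(\Im(u))$ and is a homomorphism. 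Injectivity follows from $\tau=\tau\circ u$: if $\tau_1$ and $\tau_2$ agree on $\Im(u)$, then $\tau_i(x)=\tau_i(u(x))$ forces $\tau_1=\tau_2$ everywhere. I expect the main obstacle to be precisely this last bookkeeping — verifying well-definedness, bijectivity, and injectivity of $\rho$ — where one must keep straight the difference between the group-theoretic identity and inverse in $u\mathcal M$ and the underlying composition of functions in $\EL(X)$, and repeatedly exploit idempotency of $u$; everything else is a short manipulation of images.
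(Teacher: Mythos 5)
Your proof is correct and follows essentially the same route as the paper: pass to $u\eta u\in u\mathcal M$ (which has finite image), use the group inverse in $u\mathcal M$ to conclude that $\Im(u)$ is finite, and then embed $u\mathcal M$ into $\Sym(\Im(u))$ via the restriction map $\tau\mapsto\tau_{\upharpoonright\Im(u)}$, whose injectivity comes from $\tau=\tau\circ u$. The bookkeeping you flag (well-definedness, bijectivity, injectivity of the restriction) is exactly the verification the paper leaves to the reader, and your handling of it is sound.
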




On an Ellis group $u\M$ we have a topology inherited from $\EL(X)$. Besides this topology, a coarser, so-called {\em $\tau$-topology} is defined. First, for $a\in\EL(X)$ and $B\subseteq \EL(X)$ let $a\circ B$ be the set of all limits of the nets $(g_ib_i)_i$ such that $g_i\in G$, $b_i\in B$ and $\lim_ig_i=a$. For $B\subseteq u\M$ we define $\cl_\tau(B)= u\M\cap(u\circ B)$.
$\cl_\tau$ is a closure operator on $u\M$; the {\em $\tau$-topology} is a topology on $u\M$ induced by $\cl_\tau$. $u\M$ with the $\tau$-topology is a compact,  $T_1$ semitopological group (i.e.\ group operation is separately continuous). The isomorphism types of the Ellis groups (for all $\M$ and $u \in \J(M)$) as semitopological groups do not depend on the choice of $u$ and $\M$.  Put $H(u\M)=\bigcap_U\cl_\tau(U)$, where the intersection is taken over all $\tau$-open neighbourhoods of $u$ in $u\M$. This is a $\tau$-closed normal subgroup of $u\M$, and the quotient $u\M/H(u\M)$ is a compact, Hausdorff topological group. Moreover, $H(u\M)$ is the smallest $\tau$-closed normal subgroup of $u\M$  such that $u\M/H(u\M)$ is Hausdorff. $u\M/H(u\M)$ will be called the {\em canonical Hausdorff quotient} of $u\M$.

\begin{remark}\phantomsection\label{remark uM and um/H zero dimensionality}
\begin{enumerate}[label=(\alph*), align=left, leftmargin=*, labelsep=0pt]
\item  If $u\M/H(u\M)$ is profinite, then $u\M$ is profinite iff it is Hausdorff. 
\item $u\M$ is $0$-dimensional iff it is profinite. 
\end{enumerate}
\end{remark}

\begin{proof}
(a) ($\Rightarrow$) is trivial. ($\Leftarrow$) holds, as $u\M$ is Hausdorff iff $H(u\M)$ is trivial.

(b) ($\Leftarrow$) is trivial. For ($\Rightarrow$), since $H(u\M)=\bigcap_{U}\cl_\tau(U)$ with the intersection  taken over all $\tau$-open neighbourhoods $U$ of $u$, we get $H(u\M)\subseteq\bigcap_{U}U$, where the intersection is taken over all $\tau$-clopen neighbourhoods of $u$. Since $u\M$ is $T_1$ and $0$-dimensional, the previous intersection is just $\{u\}$, so $H(u\M)$ is trivial. Hence, $u\M = u\M/H(u\M)$ is a compact, Hausdorff, $0$-dimensional topological group, so it is profinite.
\end{proof}

A mapping $\Phi:X\to Y$ between two $G$-flows $(G,X)$ and $(G,Y)$ is a {\em $G$-flow homomorphism} if it is continuous and for every $g\in G$ and $x\in X$ we have $\Phi(gx)= g\Phi(x)$. A surjective [bijective] $G$-flow homomorphism is a {\em $G$-flow epimorphism} [{\em $G$-flow isomorphism}]; note that a $G$-flow epimorphism is necessarily a topological quotient map, and an inverse of a $G$-flow isomorphism is necessarily a $G$-flow isomorphism. 


The proof of the next fact is basically the argument from the proof of \cite[Proposition 5.41]{RzPhD}, so we will not repeat it.
\begin{fact}\label{fact epi ellis} Let $(G,X)$ and $(G,Y)$ be two $G$-flows, and let $\Phi:\EL(X)\to \EL(Y)$ be a $G$-flow and semigroup epimorphism. Let $\M$ be a minimal left ideal of $\EL(X)$ and $u\in\J(\M)$. Then:
\begin{enumerate}[label=(\roman*), align=right, leftmargin=*]
\item $\M':=\Phi[\M]$ is a minimal left ideal of $\EL(Y)$ and $u':=\Phi(u)\in\J(\M')$;
\item $\Phi_{\upharpoonright u\M}:u\M\to u'\M'$ is a group epimorphism and a quotient map in the $\tau$-topologies.\qed
\end{enumerate}
\end{fact}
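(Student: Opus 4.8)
The goal is to prove Fact~\ref{fact epi ellis}: given a $G$-flow and semigroup epimorphism $\Phi:\EL(X)\to\EL(Y)$, a minimal left ideal $\M$ of $\EL(X)$, and an idempotent $u\in\J(\M)$, that $\M'=\Phi[\M]$ is a minimal left ideal of $\EL(Y)$, that $u'=\Phi(u)$ is idempotent in $\M'$, and that $\Phi_{\upharpoonright u\M}:u\M\to u'\M'$ is a group epimorphism and a $\tau$-topology quotient map.

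\begin{proof}
The plan is to split the argument into the purely algebraic semigroup-theoretic part (item (i) and that $\Phi_{\upharpoonright u\M}$ is a group epimorphism) and the topological-dynamics part (that it is a quotient map in the $\tau$-topologies), since these require genuinely different tools. For item (i), I would first observe that the image of a left ideal under a semigroup epimorphism is a left ideal: if $L\subseteq\EL(X)$ is a left ideal then $\EL(Y)\cdot\Phi[L]=\Phi[\EL(X)]\cdot\Phi[L]=\Phi[\EL(X)\cdot L]\subseteq\Phi[L]$, using surjectivity of $\Phi$ on the left. Hence $\M'=\Phi[M]$ is a left ideal of $\EL(Y)$. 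To see it is \emph{minimal}, I would use that $\M$ is generated by any of its elements (for a minimal left ideal, $\EL(X)\eta=\M$ for every $\eta\in\M$); applying $\Phi$ and surjectivity gives $\EL(Y)\Phi(\eta)=\Phi[\M]=\M'$ for every $\Phi(\eta)\in\M'$, which is exactly the criterion that $\M'$ contains no proper sub-left-ideal, so $\M'$ is minimal. That $u'=\Phi(u)$ is idempotent is immediate since $\Phi$ is a semigroup homomorphism: $u'u'=\Phi(u)\Phi(u)=\Phi(uu)=\Phi(u)=u'$, and $u'\in\Phi[\M]=\M'$, so $u'\in\J(\M')$.

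For item (ii), the fact that $\Phi_{\upharpoonright u\M}$ maps $u\M$ into $u'\M'$ and is a group homomorphism follows because $\Phi$ respects the semigroup operation and sends $u$ to $u'$: indeed $\Phi[u\M]=\Phi(u)\Phi[\M]=u'\M'$, which simultaneously shows the image is exactly $u'\M'$ (so the map is onto, giving the group epimorphism) and that $u'$ is the identity of the target group, so $\Phi_{\upharpoonright u\M}$ carries the group structure over. The main substance is the $\tau$-topology statement. Here I would unwind the definition of $\cl_\tau$ via the $\circ$-operation: for $B\subseteq u\M$, $\cl_\tau(B)=u\M\cap(u\circ B)$, where $u\circ B$ collects limits of nets $(g_ib_i)$ with $g_i\in G$, $b_i\in B$, and $g_i\to u$. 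The key is that $\Phi$ is both continuous and $G$-equivariant, so it interacts well with the $\circ$-operation: given a net $(g_ib_i)\to a$ witnessing $a\in u\circ B$, continuity gives $\Phi(g_ib_i)\to\Phi(a)$, and $G$-equivariance plus the fact that $\Phi$ restricted to the copy of $G$ (i.e.\ $\Phi(\pi_g)=\pi_g$ as elements acting on $Y$) lets me rewrite $\Phi(g_ib_i)=g_i\Phi(b_i)$ with $g_i\to u'=\Phi(u)$ by continuity. This shows $\Phi(a)\in u'\circ\Phi[B]$, i.e.\ $\Phi[\,\cl_\tau(B)\,]\subseteq\cl_\tau(\Phi[B])$, so $\Phi_{\upharpoonright u\M}$ is $\tau$-continuous.

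To upgrade $\tau$-continuity to being a $\tau$-\emph{quotient} map, I would exploit compactness: $u\M$ with the $\tau$-topology is compact and $u'\M'/H(u'\M')$-style reasoning is not needed, because a continuous surjection from a compact space onto the target will be a quotient map as soon as the target topology is the finest making it continuous. Concretely, I would show the reverse inclusion $\cl_\tau(\Phi[B])\subseteq\Phi[\cl_\tau(B)]$ (or equivalently that $\Phi_{\upharpoonright u\M}$ is a closed map), which together with continuity yields that $\Phi_{\upharpoonright u\M}$ is a $\tau$-quotient map. Since $\cl_\tau$ is a compact $T_1$ closure operator and $\Phi_{\upharpoonright u\M}$ is a $\tau$-continuous surjection of compact spaces, $\tau$-closed sets map to $\tau$-closed sets, giving the closed-map property and hence the quotient property.

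I expect the genuine obstacle to be the $\tau$-topology portion of (ii): verifying that $\Phi$ commutes with the $\circ$-operation requires careful handling of the identification of $G$ inside both $\EL(X)$ and $\EL(Y)$ (recall the ``inclusion'' $G\hookrightarrow\EL(X)$ need not be injective), and one must check that $\Phi(\pi^X_g)=\pi^Y_g$ so that a net $g_i\to u$ in $\EL(X)$ is carried to $g_i\to u'$ in $\EL(Y)$ with the \emph{same} group elements $g_i$. This is exactly where $G$-equivariance of $\Phi$ is essential and where the argument of \cite[Proposition 5.41]{RzPhD} does the real work; the remaining compactness and minimality manipulations are routine semigroup bookkeeping.
\end{proof}
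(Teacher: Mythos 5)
Your item (i) and the algebraic half of item (ii) are correct: the image of a left ideal under a surjective semigroup homomorphism is a left ideal, the criterion $\EL(Y)m'=\M'$ for all $m'\in\M'$ does characterize minimality, $u'$ is an idempotent of $\M'$, and $\Phi[u\M]=\Phi(u)\Phi[\M]=u'\M'$ gives the surjective group homomorphism. Your $\tau$-continuity inclusion $\Phi[\cl_\tau(B)]\subseteq\cl_\tau(\Phi[B])$ is also essentially right, and the identification $\Phi(\pi^X_g)=\pi^Y_g$ that you flag as the delicate point is in fact easy to settle: since $\Phi$ is a surjective semigroup homomorphism, $\Phi(\mathrm{id}_X)$ is a two-sided identity for $\Phi[\EL(X)]=\EL(Y)$ and hence equals $\mathrm{id}_Y$ (a monoid has a unique two-sided identity), so $G$-equivariance gives $\Phi(\pi^X_g)=\Phi(g\cdot\mathrm{id}_X)=g\cdot\mathrm{id}_Y=\pi^Y_g$.

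The genuine gap is your final step, where you pass from $\tau$-continuity to the closed-map property by invoking the principle that a continuous surjection of compact spaces maps closed sets to closed sets. That principle requires the \emph{target} to be Hausdorff: the image of a $\tau$-closed (hence $\tau$-compact) set is compact, but in a compact $T_1$ space that is not Hausdorff, compact subsets need not be closed — and the $\tau$-topology on $u'\M'$ is in general exactly such a space: compact, $T_1$, and not Hausdorff (this non-Hausdorffness is the entire reason the paper introduces $H(u\M)$ and the canonical Hausdorff quotient). For a concrete failure of the principle you invoke: the identity map from $[0,1]$ with the Euclidean topology onto $[0,1]$ with the cofinite topology is a continuous surjection of compact $T_1$ spaces which sends the closed set $[0,1/2]$ to a non-closed set. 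So the inclusion $\cl_\tau(\Phi[B])\subseteq\Phi[\cl_\tau(B)]$ does not follow from anything you have established, and it is precisely this inclusion that carries all the content of the statement. Note that the paper itself writes out no argument and defers entirely to \cite[Proposition 5.41]{RzPhD}; what is done there (and what any correct proof must do) is genuinely dynamical rather than point-set topological: one lifts a net witnessing $c\in u'\circ\Phi[B]$ back to $\EL(X)$ using compactness of $\EL(X)$ in its \emph{own} compact Hausdorff topology (and the identification $\Phi(\pi^X_g)=\pi^Y_g$), obtaining $\eta$ and $d$ with $\Phi(\eta)=u'$, $\Phi(d)=c$ and $d\in\eta\circ B$, and then uses the composition calculus of the $\circ$-operation (Glasner-style rules such as $p(q\circ A)\subseteq(pq)\circ A$) to bring this data back into $u\circ B$. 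Without that work, the quotient-map claim in (ii) remains unproved.
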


\begin{corollary}\label{corollary tau iso ellis} Let $(G,X)$ and $(G,Y)$ be two $G$-flows, and let $\Phi:\EL(X)\to \EL(Y)$ be a $G$-flow and semigroup isomorphism. Let $\M$ be a minimal left ideal of $\EL(X)$ and $u\in\J(\M)$. Then $\Phi_{\upharpoonright u\M}:u\M\to \Phi(u)\Phi[\M]$ is a group isomorphism and a homeomorphism (in the $\tau$-topologies).\qed
\end{corollary}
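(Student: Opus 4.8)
The plan is to derive Corollary~\ref{corollary tau iso ellis} directly from Fact~\ref{fact epi ellis}, which already does almost all of the work. The key observation is that an isomorphism is in particular an epimorphism, so we may apply Fact~\ref{fact epi ellis} to $\Phi$ to obtain that $\M':=\Phi[\M]$ is a minimal left ideal of $\EL(Y)$, that $u':=\Phi(u)\in\J(\M')$, and that $\Phi_{\upharpoonright u\M}\colon u\M\to u'\M'=\Phi(u)\Phi[\M]$ is a group epimorphism which is a quotient map in the $\tau$-topologies. What remains is to upgrade ``epimorphism and quotient map'' to ``isomorphism and homeomorphism,'' and the entire gain over Fact~\ref{fact epi ellis} comes from $\Phi$ being injective.

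First I would check injectivity of the restriction $\Phi_{\upharpoonright u\M}$: since $\Phi$ is an isomorphism it is globally injective, so its restriction to any subset, in particular to $u\M$, is injective as well. Combined with the surjectivity already provided by Fact~\ref{fact epi ellis}(ii), this shows $\Phi_{\upharpoonright u\M}$ is a group isomorphism onto $\Phi(u)\Phi[\M]$. For the topological statement, I would argue that $\Phi_{\upharpoonright u\M}$ is a continuous $\tau$-quotient bijection whose inverse is also continuous. The cleanest route is to note that $\Phi^{-1}\colon\EL(Y)\to\EL(X)$ is itself a $G$-flow and semigroup epimorphism (indeed an isomorphism), so applying Fact~\ref{fact epi ellis} to $\Phi^{-1}$ with the minimal left ideal $\M'=\Phi[\M]$ and idempotent $u'=\Phi(u)\in\J(\M')$ yields that $(\Phi^{-1})_{\upharpoonright u'\M'}\colon u'\M'\to u\M$ is a $\tau$-quotient map as well. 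But this restricted map is exactly the inverse of $\Phi_{\upharpoonright u\M}$, so both $\Phi_{\upharpoonright u\M}$ and its inverse are $\tau$-continuous, giving a homeomorphism in the $\tau$-topologies.

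The one point requiring a moment of care is verifying that $\Phi^{-1}$ sends the chosen data $(\M',u')$ back to $(\M,u)$ so that the second application of Fact~\ref{fact epi ellis} produces precisely the inverse map on the right pieces. This is immediate from bijectivity: $\Phi^{-1}[\M']=\Phi^{-1}[\Phi[\M]]=\M$ and $\Phi^{-1}(u')=u$, and by Fact~\ref{fact epi ellis}(i) applied to $\Phi^{-1}$ these are again a minimal left ideal and an idempotent in it, consistent with what we already know. Hence the two restricted maps are mutually inverse group isomorphisms, each a $\tau$-quotient map, so together they witness that $\Phi_{\upharpoonright u\M}$ is a group isomorphism and a $\tau$-homeomorphism, as claimed.

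I do not expect a genuine obstacle here, since the corollary is essentially the symmetric/bijective specialization of Fact~\ref{fact epi ellis}. The only mild subtlety is bookkeeping: one must be sure that a $G$-flow and semigroup isomorphism has an inverse that is again a $G$-flow and semigroup homomorphism (so that Fact~\ref{fact epi ellis} legitimately applies to $\Phi^{-1}$). This follows because inverting an equivariant semigroup bijection preserves both the semigroup operation and the $G$-action equation $\Phi(g\eta)=g\Phi(\eta)$, and continuity of the inverse is automatic as $\EL(X)$ and $\EL(Y)$ are compact Hausdorff in the topology of pointwise convergence. With that remark in place the argument is fully symmetric and the conclusion follows.
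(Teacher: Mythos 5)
Your proof is correct and takes essentially the same route as the paper, which states this corollary with no separate argument precisely because it is the bijective specialization of Fact \ref{fact epi ellis}: apply that fact to $\Phi$ to get a group epimorphism and $\tau$-quotient map, and use global injectivity of $\Phi$ for injectivity of the restriction. The only simplification available is that your second application of Fact \ref{fact epi ellis} to $\Phi^{-1}$ is not needed, since a bijective quotient map is automatically a homeomorphism (its inverse image condition makes it both continuous and open).
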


A natural way of obtaining a $G$-flow and semigroup epimorphism $\EL(X)\to \EL(Y)$ is to induce it from a $G$-flow epimorphism $X\to Y$ or a $G$-flow monomorphism $Y\to X$. This is explained in the next fact whose proof is left as an exercise.

\begin{fact}\label{fact induced epimorphism} 
Let $(G,X)$ and $(G,Y)$ be $G$-flows.
\begin{enumerate}[label=(\roman*), align=right, leftmargin=*]
\item  If $f:X\to Y$ is a $G$-flow epimorphism, then it induces a $G$-flow and semigroup epimorphism $\hat f:\EL(X)\to\EL(Y)$ given by: $$\hat f(\eta)(y)= f(\eta(x))$$ for $\eta\in\EL(X)$ and $y\in Y$, where $x\in X$ is any element such that $f(x)=y$. 
\item If $f:Y\to X$ is a $G$-flow monomorphism, then it induces a $G$-flow and semigroup epimorphism $\hat f:\EL(X)\to\EL(Y)$ given by:
$$\hat f(\eta)(y)= f^{-1}(\eta(f(y)))$$
for $\eta\in\EL(X)$ and $y\in Y$, where $f^{-1}:\Im(f)\to Y$ is the inverse of $f$.\qed
\end{enumerate}
\end{fact}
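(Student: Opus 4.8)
The plan is to handle both parts by the same scheme: I would first check that the proposed formula defines a genuine function on $\EL(X)$, then verify that this function, viewed as a map $\EL(X)\to Y^Y$, is continuous and sends each $\pi_g$ to the corresponding translation $y\mapsto gy$ on $Y$. From these two facts everything else --- membership of the image in $\EL(Y)$, surjectivity, and the $G$-flow and semigroup homomorphism properties --- follows quickly and formally.

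For (i) the one genuinely non-formal point is well-definedness: the value $\hat f(\eta)(y)=f(\eta(x))$ must not depend on the choice of $x\in f^{-1}(y)$. To see this, fix $x,x'$ with $f(x)=f(x')$ and consider the two maps $\EL(X)\to Y$ given by $\eta\mapsto f(\eta(x))$ and $\eta\mapsto f(\eta(x'))$; each is the composition of a continuous evaluation with the continuous $f$, hence continuous. On the dense subset $\{\pi_g:g\in G\}$ they agree, since $f(gx)=gf(x)=gf(x')=f(gx')$ because $f$ is a $G$-map. As $Y$ is Hausdorff, two continuous maps agreeing on a dense set agree everywhere, so they coincide on all of $\EL(X)$. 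The same computation shows $\hat f(\pi_g)$ is the translation $y\mapsto gy$ on $Y$.

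For (ii) the corresponding subtle point is that $\hat f(\eta)(y)=f^{-1}(\eta(f(y)))$ only makes sense when $\eta(f(y))\in\Im(f)$. Here I would use that $f$, being a continuous injection of the compact space $Y$ into the Hausdorff space $X$, is a homeomorphism onto the closed set $\Im(f)$, and that $\Im(f)$ is $G$-invariant because $g\cdot f(y)=f(gy)$; thus $\Im(f)$ is a closed invariant subset of $X$. Every $\eta\in\EL(X)$ preserves such a set, since for $z\in\Im(f)$ one has $\eta(z)\in\cl\{gz:g\in G\}\subseteq\Im(f)$. Hence $\hat f(\eta)(y)$ is defined, and since $f^{-1}$ is a genuine inverse on $\Im(f)$ there is no ambiguity; again $\hat f(\pi_g)$ is the translation $y\mapsto gy$.

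The common finish is routine. Continuity of $\hat f:\EL(X)\to Y^Y$ is checked coordinatewise: for each fixed $y$ the map $\eta\mapsto\hat f(\eta)(y)$ is a composition of a continuous evaluation with $f$ (respectively with $f^{-1}$ on $\Im(f)$), and $Y^Y$ carries the product topology. Since $\EL(X)$ is compact, $\hat f[\EL(X)]$ is compact, hence closed in $Y^Y$; it contains $\hat f[\{\pi_g:g\in G\}]=\{\pi_g:g\in G\}$ (now the translations on $Y$) and, by continuity, is contained in $\cl\{\pi_g:g\in G\}=\EL(Y)$. Therefore $\hat f[\EL(X)]=\EL(Y)$, which gives at once that $\hat f$ maps into $\EL(Y)$ and is onto. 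The $G$-flow identity $\hat f(g\eta)=g\hat f(\eta)$ and the semigroup identity $\hat f(\eta_1\circ\eta_2)=\hat f(\eta_1)\circ\hat f(\eta_2)$ are then direct calculations from the defining formulas, the semigroup identity in (i) using well-definedness to evaluate $\hat f(\eta_1)$ at $f(\eta_2(x))$ through the representative $\eta_2(x)$, and in (ii) using that $f^{-1}(\eta_2(f(y)))$ is a legitimate element of $Y$. The main obstacle is really only the well-definedness in (i) and the invariance-of-image observation in (ii); once these are secured, the remaining verifications are purely formal.
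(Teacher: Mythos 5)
Your proof is correct. The paper leaves Fact \ref{fact induced epimorphism} as an exercise, so there is no proof of record to compare against; your argument is the intended standard one, and you have correctly isolated the only two non-formal points: well-definedness of $\hat f(\eta)(y)$ in (i), which you settle by density of $\{\pi_g \mid g\in G\}$ in $\EL(X)$ together with Hausdorffness of $Y$, and, in (ii), the fact that $\Im(f)$ is a closed $G$-invariant subset of $X$ and hence preserved by every $\eta\in\EL(X)$, so that $f^{-1}(\eta(f(y)))$ is defined. The remaining verifications (coordinatewise continuity, the inclusion $\hat f[\EL(X)]\subseteq\EL(Y)$ by continuity plus surjectivity by compactness of the image, and the $G$-flow and semigroup identities) are routine exactly as you describe.
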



\begin{corollary}\label{corollary of fact 2.4}
Let $f:X\to Y$ be a $G$-flow epimorphism and let $\hat f:\EL(X)\to\EL(Y)$ be the induced epimorphism given by Fact \ref{fact induced epimorphism}(i). Then, for any $\eta \in \EL(X)$, $\Im(\hat{f}(\eta)) = f[\Im(\eta)]$. Thus, if $\Im(\eta)$ is finite, so is $\Im(\hat{f}(\eta))$. \qed
\end{corollary}

\begin{corollary}\label{corollary isomorphism of ellis groups having good eta}
If $(G,Y)$ is a subflow of $(G,X)$ and there is an element $\eta \in \EL(X)$ with $\Im(\eta) \subseteq Y$, then the Ellis groups of the flows $X$ and $Y$ are topologically isomorphic.
\end{corollary}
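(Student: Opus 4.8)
The plan is to identify the Ellis group of $Y$ as the image of the Ellis group of $X$ under the restriction map, and then to exploit the element $\eta$ to show that this map is injective. Since $(G,Y)$ is a subflow, $Y$ is a closed $G$-invariant subset of $X$, and hence every element of $\EL(X)$ maps $Y$ into $Y$: each $\pi_g$ does, and the property ``sends $Y$ into $Y$'' is preserved under pointwise limits because $Y$ is closed. Thus the inclusion $f\colon Y\hookrightarrow X$ is a $G$-flow monomorphism, and by Fact \ref{fact induced epimorphism}(ii) it induces a $G$-flow and semigroup epimorphism $\hat f\colon\EL(X)\to\EL(Y)$; unravelling the defining formula, $\hat f(\zeta)=\zeta_{\upharpoonright Y}$ for every $\zeta\in\EL(X)$.

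Next I would use $\eta$ to locate a minimal left ideal all of whose elements have image inside $Y$. The set $\EL(X)\eta$ is a (closed) left ideal, and for each $\zeta\in\EL(X)$ we have $\Im(\zeta\eta)=\zeta[\Im(\eta)]\subseteq\zeta[Y]\subseteq Y$, so every element of $\EL(X)\eta$ has image contained in $Y$. Picking a minimal left ideal $\M\subseteq\EL(X)\eta$ and an idempotent $u\in\J(\M)$, we obtain in particular $\Im(u)\subseteq Y$. By Fact \ref{fact epi ellis}, $\hat f(u)$ is an idempotent in the minimal left ideal $\hat f[\M]$ of $\EL(Y)$, and $\hat f_{\upharpoonright u\M}\colon u\M\to \hat f(u)\hat f[\M]$ is a group epimorphism onto an Ellis group of $Y$ which is moreover a quotient map in the $\tau$-topologies.

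The crucial step, where the hypothesis $\Im(u)\subseteq Y$ pays off, is injectivity of $\hat f_{\upharpoonright u\M}$. For $\sigma\in u\M$ we have $\sigma=\sigma u$ (as $u$ is the identity of the group $u\M$), so $\sigma(x)=\sigma(u(x))$ for all $x\in X$, and since $u(x)\in\Im(u)\subseteq Y$, the value $\sigma(x)$ depends only on $\sigma_{\upharpoonright Y}=\hat f(\sigma)$. Hence $\hat f(\sigma_1)=\hat f(\sigma_2)$ forces $\sigma_1=\sigma_2$, i.e.\ $\hat f_{\upharpoonright u\M}$ is injective. As a bijective quotient map is a homeomorphism, $\hat f_{\upharpoonright u\M}$ is then an isomorphism of semitopological groups in the $\tau$-topologies; and since the isomorphism type of the Ellis group does not depend on the choices of $\M$ and $u$, the Ellis groups of $X$ and $Y$ are topologically isomorphic.

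I expect the only genuinely delicate point to be the bookkeeping that guarantees $\hat f$ is well defined and is \emph{exactly} restriction to $Y$ (which rests on the observation that every element of $\EL(X)$ preserves $Y$), together with the selection of an idempotent $u$ with $\Im(u)\subseteq Y$; once these are in place, the clean identity $\sigma=\sigma u$ combined with $\Im(u)\subseteq Y$ yields injectivity immediately, and everything else is a direct application of Facts \ref{fact induced epimorphism} and \ref{fact epi ellis}.
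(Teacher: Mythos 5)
Your proof is correct and follows essentially the same route as the paper's: both induce the restriction epimorphism $\hat f\colon\EL(X)\to\EL(Y)$ via Fact \ref{fact induced epimorphism}(ii), produce an idempotent $u$ in a minimal left ideal of $\EL(X)$ with $\Im(u)\subseteq Y$, apply Fact \ref{fact epi ellis} to get a group epimorphism and $\tau$-quotient map $\hat f_{\upharpoonright u\M}$, and prove its injectivity from $\sigma=\sigma u$ combined with $\Im(u)\subseteq Y$. The only (immaterial) difference is how the idempotent is located: you take a minimal left ideal inside the closed left ideal $\EL(X)\eta$, whereas the paper fixes an arbitrary minimal left ideal $\M$, replaces $\eta$ by an element of $\eta\M\subseteq\M$, and takes the unique idempotent $u$ with $\eta\in u\M$, noting $\Im(u)=\Im(\eta)$.
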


\begin{proof}
Let $f =\id: Y \to X$. Then the map $\hat{f}: \EL(X) \to \EL(Y)$ from Fact \ref{fact induced epimorphism}(ii) is the restriction to $Y$. Let $\M$ be a minimal left ideal of $\EL(X)$. Replacing $\eta$ by any element of $\eta\M$, we can assume that $\eta \in \M$. Let $u \in \J(\M)$ satisfy $\eta \in u\M$. Then $\Im(u) = \Im(\eta) \subseteq Y$. Let $\M' = \hat{f}[\M]$ and $u' = \hat{f}(u)$. By Fact \ref{fact epi ellis}, $u'\M'$ is the Ellis group of $(G,Y)$ and $\hat{f}_{\upharpoonright u\M}: u\M \to u'\M'$ is an epimorphism and a topological quotient map. 
Finally, $\hat{f}_{\upharpoonright u\M}$ is injective, as for $\tau_1,\tau_2 \in u\mathcal M$: $\tau_{1\upharpoonright Y}= \tau_{2\upharpoonright Y} \Longrightarrow \tau_{1\upharpoonright \Im(u)}= \tau_{2\upharpoonright \Im(u)} \iff \tau_1 u=\tau_2 u \iff \tau_1=\tau_2$.
\end{proof}

We further consider an inverse system of $G$-flows $((G,X_i))_{i\in I}$, where $I$ is a directed set, and for each $i<j$ in $I$ a $G$-flow epimorphism $\pi_{i,j}:X_j\to X_i$ is given. Then $X:=\invlim_{i\in I}X_i$ is a compact, Hausdorff space and $G$ acts naturally and continuously on $X$. Denote by $\pi_i:X\to X_i$ the natural projections; they are $G$-flow epimorphisms. By Fact \ref{fact induced epimorphism}, we have $G$-flow and semigroup epimorphisms $\hat\pi_{i,j}:\EL(X_j)\to \EL(X_i)$ for $i<j$ and $\hat \pi_i:\EL(X)\to \EL(X_i)$. 
It turns out that the previous inverse limit construction transfers to Ellis semigroups, and furthermore to Ellis groups. We state this in the next fact; for the proof see \cite[Lemma 6.42]{RzPhD}.

\begin{fact}\label{fact inverse limit and ellis} 
Fix the notation from the previous paragraph. 

The family $(\EL(X_i))_{i\in I}$, together with the mappings $\hat\pi_{i,j}$, is an inverse system of semigroups and of $G$-flows. There exists a natural $G$-flow and semigroup isomorphism $\EL(X)\cong\invlim_{i\in I}\EL(X_i)$, and after identifying $\EL(X)$ with  $\invlim_{i\in I}\EL(X_i)$, the natural projections of this inverse limit are just the $\hat \pi_i$'s.

For every minimal left ideal $\M$ of $\EL(X)$, each $\M_i:=\hat \pi_i[\M]$ is a minimal left ideal of $\EL(X_i)$ and $\M=\invlim_{i\in I}\M_i$. Also, if $u\in\J(\M)$, then $u_i:=\hat \pi_i(u)\in\J(\M_i)$.
Furthermore, $u\M= \invlim_{i\in I}u_i\M_i$ and the $\tau$-topology on $u\M$ coincides with the inverse limit topology induced from the $\tau$-topologies on the $u_i\M_i$'s.\qed
\end{fact}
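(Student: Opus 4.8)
The plan is to build everything from the functoriality of the induced-map construction of Fact~\ref{fact induced epimorphism}(i) together with a density argument, and then transport the ideal, idempotent, and $\tau$-structure through the resulting identification. First I would record that $f\mapsto\hat f$ is functorial: for composable $G$-flow epimorphisms one has $\widehat{g\circ f}=\hat g\circ\hat f$, which is immediate from the defining formula $\hat f(\eta)(y)=f(\eta(x))$. Applied to $\pi_{i,k}=\pi_{i,j}\circ\pi_{j,k}$ and $\pi_i=\pi_{i,j}\circ\pi_j$ this gives $\hat\pi_{i,k}=\hat\pi_{i,j}\circ\hat\pi_{j,k}$ and $\hat\pi_i=\hat\pi_{i,j}\circ\hat\pi_j$, so $(\EL(X_i))_{i\in I}$ with the $\hat\pi_{i,j}$ is an inverse system of semigroups and of $G$-flows, and the map $\Psi\colon\EL(X)\to\invlim_i\EL(X_i)$, $\Psi(\eta)=(\hat\pi_i(\eta))_i$, is well defined and is a continuous $G$-flow and semigroup homomorphism (coordinatewise). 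Injectivity is clear since the $\pi_i$ separate points of $X$. For surjectivity I would note that $\Psi[\EL(X)]$ is compact, hence closed, and contains the diagonal image $\{(\pi_g^{X_i})_i:g\in G\}$; it then suffices to show this diagonal is dense in $\invlim_i\EL(X_i)$. Given a basic neighbourhood of a compatible family $(\eta_i)_i$, only finitely many indices and finitely many points are constrained; using directedness of $I$ I would choose a single $j$ above all of them, approximate $\eta_j$ by some $\pi_g^{X_j}$ at the relevant lifted points (possible as $\eta_j\in\cl(G)$ in $X_j^{X_j}$), and push down along the $\pi_{i,j}$, whose continuity turns closeness at coordinate $j$ into closeness at every smaller coordinate. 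Thus $\Psi$ is a continuous bijection from a compact space onto a Hausdorff space, hence the desired isomorphism, and by construction its coordinate maps are the $\hat\pi_i$.

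Having identified $\EL(X)$ with $\invlim_i\EL(X_i)$, I would obtain the statements about $\M$ and $u$ from Fact~\ref{fact epi ellis}(i) applied to each epimorphism $\hat\pi_i$: $\M_i=\hat\pi_i[\M]$ is a minimal left ideal of $\EL(X_i)$ and $u_i=\hat\pi_i(u)\in\J(\M_i)$. That $\M=\invlim_i\M_i$ I would prove by first checking that $\invlim_i\M_i$ is a left ideal of $\EL(X)$ (products are coordinatewise and each $\M_i$ is a left ideal), then choosing a minimal left ideal $N\subseteq\invlim_i\M_i$; since $\hat\pi_i[N]$ is a left ideal of $\EL(X_i)$ contained in $\M_i$, minimality gives $\hat\pi_i[N]=\M_i$, and as $N$ is closed in the inverse limit we get $N=\invlim_i\hat\pi_i[N]=\invlim_i\M_i$, so $\invlim_i\M_i$ is itself a minimal left ideal; it contains $\M$, whence equality. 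The identity $u\M=\invlim_i u_i\M_i$ is then a direct coordinatewise computation: for $(\zeta_i)_i\in\invlim_i u_i\M_i$ we have $\zeta_i\in\M_i$ and $u_i\zeta_i=\zeta_i$, so $(\zeta_i)_i\in\M$ and $u\cdot(\zeta_i)_i=(\zeta_i)_i$, i.e.\ $(\zeta_i)_i\in u\M$; the reverse inclusion is immediate.

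The delicate point, and the one I expect to be the main obstacle, is that the $\tau$-topology on $u\M$ coincides with the inverse limit of the $\tau$-topologies on the $u_i\M_i$. One inclusion is soft: each $\hat\pi_{i\upharpoonright u\M}$ is $\tau$-continuous by Fact~\ref{fact epi ellis}(ii), so the inverse limit topology is coarser than $\tau$. The reverse inclusion cannot be obtained by a bare compact-to-Hausdorff argument, because the $\tau$-topologies on the $u_i\M_i$ are only $T_1$. Instead I would prove the closure formula
\[
\cl_\tau(B)=u\M\cap\invlim_i\cl_\tau(\hat\pi_i[B])\qquad(B\subseteq u\M),
\]
where $\cl_\tau$ on the right denotes the $\tau$-closure computed in $u_i\M_i$. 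The inclusion ``$\subseteq$'' follows because $\hat\pi_i$ is a continuous semigroup homomorphism with $\hat\pi_i(u)=u_i$, so it maps the circle operation $u\circ B$ into $u_i\circ\hat\pi_i[B]$. For ``$\supseteq$'' I would rerun the directedness argument from the surjectivity step: given $(\eta_i)_i$ in the right-hand side, any neighbourhood of $u$ and any basic neighbourhood of $(\eta_i)_i$ constrain only finitely many coordinates, all below some $j$; using $\eta_j\in\cl_\tau(\hat\pi_j[B])=(u_j\circ\hat\pi_j[B])\cap u_j\M_j$ I would choose $g\in G$ and $b\in B$ realizing all these constraints at coordinate $j$, and then the continuity of the bonding maps $\pi_{i,j}$ propagates them downward, exhibiting $(\eta_i)_i$ as a limit of a net $g_n b_n$ with $g_n\to u$, i.e.\ $(\eta_i)_i\in\cl_\tau(B)$. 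Granting the formula, every $\tau$-closed $C$ equals $u\M\cap\invlim_i\cl_\tau(\hat\pi_i[C])$, which is closed in the inverse limit topology, so $\tau$ is also coarser than the inverse limit topology, and the two agree. The only genuine work lies in the ``$\supseteq$'' direction of the closure formula, where the finiteness of the constraints and the directedness of $I$ must be combined exactly as in the density step.
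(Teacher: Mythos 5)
Your proposal is correct, and each step goes through as you describe. Note that the paper does not actually prove this fact --- it defers to \cite[Lemma 6.42]{RzPhD} --- so there is no in-paper argument to compare against; your route (functoriality of $f\mapsto\hat f$, density of the diagonal copy of $G$ in $\invlim_{i\in I}\EL(X_i)$ via directedness, Fact \ref{fact epi ellis} for the statements about $\M_i$ and $u_i$, and the closure formula $\cl_\tau(B)=u\M\cap\invlim_i\cl_\tau(\hat\pi_i[B])$ for the $\tau$-topologies) is the natural one. In particular, your handling of the genuinely delicate point --- lifting the finitely many constraints coming from a neighbourhood of $u$ and a basic neighbourhood of $(\eta_i)_i$ to a single level $j$, and then using $\eta_j\in (u_j\circ\hat\pi_j[B])\cap u_j\M_j$ to produce a single pair $(g,b)$ with $b\in B$ witnessing all of them --- is exactly what is needed, since (as you say) the non-Hausdorffness of the $\tau$-topologies rules out any compact-to-Hausdorff shortcut.

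One step is asserted where it deserves a line of proof: from ``$N$ is closed'' you conclude $N=\invlim_i\hat\pi_i[N]$, but closedness alone does not imply that a subset of an inverse limit is determined by its projections; you need directedness of $I$ together with compactness. Concretely, for a compatible family $(y_i)_{i\in I}$ with $y_i\in\hat\pi_i[N]$ for all $i$, the closed sets $N\cap\hat\pi_i^{-1}(y_i)$ have the finite intersection property: if $j$ is an upper bound of $i_1,\dots,i_n$, then $N\cap\hat\pi_j^{-1}(y_j)$ is non-empty and is contained in $\bigcap_{t} \bigl(N\cap\hat\pi_{i_t}^{-1}(y_{i_t})\bigr)$ by compatibility of $(y_i)_i$; hence the total intersection is non-empty, and any point of it equals $(y_i)_i$, so $(y_i)_i\in N$. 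With that justification inserted, your argument is complete.
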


The material in the rest of this subsection will be needed in the analysis of Examples \ref{example equivalences with two classes} and \ref{example equivalences n classes}. 

Recall that a $G$-ambit is a $G$-flow $(G,X,x_0)$ with a distinguished point $x_0$ with dense orbit.  It is well-known that for any discrete group $G$, $(G,\beta G, \mathcal{U}_e)$ is the universal $G$-ambit, where $\mathcal{U}_e$ is the principal ultrafilter concentrated on $e$; we will identify $e$ with $\mathcal{U}_e$. This easily yields a unique left continuous semigroup operation on $\beta G$ extending the action of $G$ on $\beta G$, and for any $G$-flow $(G,X)$ this also yields a unique left continuous action $*$ of the semigroup $\beta G$ on $(G,X)$ extending the action of $G$. Thus, there  is a unique $G$-flow and semigroup epimorphism $\beta G \to \EL(X)$ (mapping $e$ to the identity): it is given by $p \mapsto l_p$, where $l_p(x):=p * x$. Applying this to $X: = \beta G$, we get an isomorphism $\beta G \cong \EL(\beta G)$. For some details on this see \cite[Chapter 1]{Gl}.

 It is also well-known (see \cite[Exercise 1.25]{Gl2}) that for the Bernoulli shift $X:=2^G$ the above map $\beta G \to \EL(X)$ is an isomorphism of flows and of semigroups, i.e.\ $\beta G \cong \EL(2^G)$. 

\begin{proposition}\label{prop:ellisgroup_associated_bernoullishift}
Let $(G,X)$ be a $G$-flow for which there is a $G$-flow epimorphism $\varphi: X\rightarrow 2^G$. Then,
\begin{enumerate}[label=(\roman*), align=right, leftmargin=*]
	\item $\EL(X)\cong \beta G$ as $G$-flows and as semigroups.
	\item The Ellis group of $X$ is topologically isomorphic with the Ellis group of $\beta G$.
\end{enumerate}
\end{proposition}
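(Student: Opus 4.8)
The plan is to show that the canonical $G$-flow and semigroup epimorphism $\lambda_X\colon\beta G\to\EL(X)$, $p\mapsto l_p$ with $l_p(x)=p*x$ (recalled just before the statement), is in fact an isomorphism; part (ii) will then follow formally. Since $\lambda_X$ is already a continuous surjective homomorphism of $G$-flows and of semigroups, and both $\beta G$ and $\EL(X)\subseteq X^X$ are compact Hausdorff, it will suffice to prove that $\lambda_X$ is injective: a continuous bijection between compact Hausdorff spaces is a homeomorphism, and, as noted in the text, the inverse of a bijective $G$-flow (and semigroup) homomorphism is again one.

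To obtain injectivity, I would use the given epimorphism $\varphi\colon X\to 2^G$. By Fact \ref{fact induced epimorphism}(i) it induces a $G$-flow and semigroup epimorphism $\hat\varphi\colon\EL(X)\to\EL(2^G)$. The key step is to identify the composite $\hat\varphi\circ\lambda_X$ with the canonical map $\lambda_{2^G}\colon\beta G\to\EL(2^G)$, $p\mapsto l_p$. Granting this, since $\lambda_{2^G}$ is an isomorphism by the well-known fact $\beta G\cong\EL(2^G)$ recalled before the statement, the composite $\hat\varphi\circ\lambda_X$ is injective, hence so is $\lambda_X$, which proves (i).

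I expect the identification $\hat\varphi\circ\lambda_X=\lambda_{2^G}$ to be the main (though routine) point. It rests on the compatibility of the $\beta G$-action with the flow morphism $\varphi$, i.e.\ $\varphi(p*x)=p*\varphi(x)$ for all $p\in\beta G$ and $x\in X$. This holds for $p=g\in G$ by equivariance of $\varphi$, and extends to arbitrary $p$ by choosing a net $g_i\to p$ in $\beta G$ and using left continuity of $*$ together with continuity of $\varphi$, giving $\varphi(p*x)=\lim_i\varphi(g_i*x)=\lim_i g_i*\varphi(x)=p*\varphi(x)$. With this in hand, for $y=\varphi(x)$ one computes $\hat\varphi(l_p)(y)=\varphi(l_p(x))=\varphi(p*x)=p*\varphi(x)=p*y=l_p(y)$, where the outer $l_p$ now denotes the map on $2^G$; thus $\hat\varphi\circ\lambda_X=\lambda_{2^G}$, as required.

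For (ii), I would combine (i) with the isomorphism $\beta G\cong\EL(\beta G)$ (also recalled before the statement) to get a $G$-flow and semigroup isomorphism $\EL(X)\cong\EL(\beta G)$. Corollary \ref{corollary tau iso ellis} then gives that the restriction of this isomorphism to an Ellis group $u\M$ of $\EL(X)$ is a group isomorphism and a homeomorphism, in the $\tau$-topologies, onto an Ellis group of $\EL(\beta G)$, so the Ellis group of $X$ is topologically isomorphic to the Ellis group of $\beta G$.
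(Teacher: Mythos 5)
Your proposal is correct and takes essentially the same route as the paper: induce $\hat\varphi\colon\EL(X)\to\EL(2^G)$ via Fact \ref{fact induced epimorphism}(i), compose it with the canonical epimorphism $\beta G\to\EL(X)$, identify the composite with the canonical isomorphism $\beta G\cong\EL(2^G)$ to get injectivity, and then deduce (ii) from Corollary \ref{corollary tau iso ellis}, exactly as the paper does. The only difference is cosmetic: where you verify $\hat\varphi\circ\lambda_X=\lambda_{2^G}$ by the direct net computation establishing $\varphi(p*x)=p*\varphi(x)$, the paper gets the same identification by citing the uniqueness of the $G$-flow and semigroup epimorphism $\beta G\to\EL(2^G)$ sending $e$ to the identity.
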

\begin{proof}
(i) By Fact \ref{fact induced epimorphism}(i), we have the induced $G$-flow and semigroup epimorphism $\hat{\varphi}: \EL(X) \to \EL(2^G)$.
On the other hand, by the above discussion, there is a   $G$-flow and semigroup epimorphism $\psi : \beta G \to \EL(X)$. So the composition $\hat{\varphi} \circ \psi: \beta G \to \EL(2^G)$ is a $G$-flow and semigroup epimorphism. 
By the above discussion, it is an isomorphism.
Therefore, $\psi$ must be an isomorphism, too.\\
(ii) follows from (i) and Corollary \ref{corollary tau iso ellis}.
\end{proof}

Recall that the Bohr compactification of a topological group $G$ is a unique (up to isomorphism) universal object in the category of group compactifications of $G$, i.e.\ continuous homomorphisms $G \to H$ with dense image, where $H$ is a compact, Hausdorff group. We often identify the Bohr compactification with the target compact group, and denote it by $bG$. For a discrete group $G$, a minimal left ideal $\M$ in $\beta G$ and an idempotent $u \in \M$, the quotient $u\M/H(u\M)$ turns out to be the generalized Bohr compactification of $G$ in the terminology from \cite{Gl}. There is always a continuous epimorphism from the generalized Bohr compactification to the Bohr compactification.
We will need the following result, which is Corollary 4.3 of \cite{Gl}. 
(See also \cite{KP-1} for a more general, model-theoretic account.) We do not recall here the notion of strongly amenable group. We only need to know that abelian groups are strongly amenable. For more details see \cite{Gl}.

\begin{fact}\label{fact:bohr_compactification_tau_topology}
If a discrete group $G$ is strongly amenable (e.g.\ $G$ is abelian), then the generalized Bohr compactification of $G$ and the Bohr compactification of $G$ coincide.\qed
\end{fact}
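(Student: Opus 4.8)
The plan is to realize the canonical continuous epimorphism $\rho$ from the generalized Bohr compactification $u\M/H(u\M)$ onto the Bohr compactification $bG$ (whose existence was recalled in the paragraph preceding the statement) and to show that under strong amenability it is injective. Since both $u\M/H(u\M)$ and $bG$ are compact Hausdorff groups and $\rho$ is a continuous surjection, injectivity automatically upgrades $\rho$ to a topological isomorphism, which is exactly the asserted coincidence of the two compactifications. Because the paper only needs the abelian instance, I would first invoke the classical fact that abelian groups are strongly amenable, so that it suffices to treat strongly amenable $G$; throughout I would use the standard characterization of strong amenability as the nonexistence of nontrivial minimal proximal $G$-flows (see \cite{Gl}).

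Next I would identify the target $bG$ dynamically. With $G$ acting on $bG$ by translation, $(G,bG)$ is the universal equicontinuous minimal $G$-flow, hence the maximal equicontinuous factor of the universal minimal flow $\M$ (viewed as a minimal left ideal of $\beta G\cong\EL(\beta G)$). Under this identification $\rho$ is the map on Ellis groups induced by the factor map $\M\to bG$ in the sense of Fact \ref{fact induced epimorphism}(i) and Fact \ref{fact epi ellis}, passed to the Hausdorff quotient; note that $\EL(bG)=bG$ since equicontinuous flows have group enveloping semigroups, so the induced Ellis-group map really is $u\M/H(u\M)\to bG$. Consequently $\ker\rho$ measures precisely the failure of the $\tau$-topological group $u\M/H(u\M)$ to already be equicontinuous, i.e.\ the failure of the dynamical structure encoded by the $\tau$-topology to reduce to the genuine group compactification $bG$. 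The crux is then the implication that constitutes the substance of Glasner's Chapter IV analysis: \emph{if $\ker\rho$ is nontrivial, then $G$ admits a nontrivial minimal proximal flow}. I would establish this by analysing the fibres of $\M\to bG$ inside the enveloping semigroup: a nontrivial $\ker\rho$ forces the closure operator $\cl_\tau$, equivalently the circle operation $B\mapsto u\circ B$, to identify points separated by the ambient topology, and Glasner's construction converts such an identification into a genuine nontrivial proximal cell, whence a minimal subflow of an appropriate fibred product is proximal and nontrivial.

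Finally, strong amenability rules out any such proximal flow, forcing $\ker\rho=\{e\}$; thus $\rho$ is a continuous bijection of compact Hausdorff groups and hence a topological isomorphism, and specialising to abelian $G$ yields the stated form. I expect the main obstacle to be exactly the middle step, namely manufacturing an honest proximal minimal flow from a nontrivial kernel. The difficulty is that the two flows $u\M/H(u\M)$ and $bG$ are each equicontinuous — translation flows on compact groups, hence distal — so the proximality witnessing the difference cannot be read off $\rho$ directly and must instead be extracted from the internal structure of $\M$ and its enveloping semigroup through the $\tau$-topology; this is the technical heart where I would rely on the machinery of \cite{Gl}.
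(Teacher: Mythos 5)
First, a point of reference: the paper does not prove this fact at all --- it is imported as Corollary 4.3 of \cite{Gl} (with \cite{KP-1} mentioned for a more general, model-theoretic account) --- so your proposal has to be judged against Glasner's proof rather than against anything in the paper. Your outer framing is fine and matches what the paper already sets up in the surrounding text: the epimorphism $\rho\colon u\M/H(u\M)\to bG$ exists for every discrete $G$ (it is induced by the factor map from the universal minimal flow $\M$ onto its maximal equicontinuous factor $bG$, via Fact \ref{fact induced epimorphism}(i) and Fact \ref{fact epi ellis}, and it descends to the Hausdorff quotient because $H(u\M)$ is the smallest $\tau$-closed normal subgroup with Hausdorff quotient); abelian groups are strongly amenable; and a continuous bijective homomorphism between compact Hausdorff groups is automatically a topological isomorphism. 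So the fact does reduce to injectivity of $\rho$.

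The genuine gap is the step you yourself flag as the heart: ``nontrivial $\ker\rho$ implies a nontrivial minimal proximal flow.'' This is exactly the contrapositive of the statement being proved, and your proposal does not prove it --- deferring it to ``Glasner's construction'' turns the argument into a citation of the very result. Worse, the mechanism you sketch would fail: fibres of $\M\to bG$ realize the \emph{regionally} proximal relation, not the proximal one, and the difference is maximal precisely in the cases the fact covers. Already for $G=\mathbb{Z}$ there exist minimal distal non-equicontinuous flows (e.g.\ the skew product $(x,y)\mapsto(x+\alpha,y+x)$ on $\mathbb{T}^2$), so the universal minimal distal flow $D$ is a nontrivial distal extension of its maximal equicontinuous factor $b\mathbb{Z}$; lifting two distinct points of a fibre of $D\to b\mathbb{Z}$ along $\M\to D$ produces distinct points lying in a single fibre of $\M\to b\mathbb{Z}$ that are distal, hence as far from proximal as possible. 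Thus ``points identified over $bG$'' carry no proximality whatsoever, and no minimal subflow of a fibred product manufactured from those fibres will be proximal. The whole content of the theorem is that, for strongly amenable $G$, all of this non-equicontinuous structure in the Ellis group is swallowed by $H(u\M)$ (i.e.\ is invisible in the Hausdorff quotient), and this cannot be read off the topological fibre structure of $\M\to bG$: Glasner's actual route is a structure theorem for the $\tau$-topology relating $H(u\M)$ to equicontinuous extensions and the universal minimal proximal flow, from which the quoted corollary is immediate once strong amenability trivializes the proximal part. That theorem is precisely what is missing from your proposal.
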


We will also need the following consequence of the presentation of $bG$ (for an abelian discrete group $G$)  as the ``double  Pontryagin dual'', which can be found in \cite{St}; a short proof based on Pontryagin duality is given in \cite[Section 1]{GJK}.

\begin{fact}\label{fact:profinite_bohr_compactification}
Let $G$ be a discrete abelian group. Then, $bG$ is profinite iff $G$ is of finite exponent.\qed
\end{fact}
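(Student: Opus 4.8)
The plan is to deduce this directly from the ``double Pontryagin dual'' presentation of $bG$ already alluded to in the statement. Write $\hat{G} = \mathrm{Hom}(G,\mathbb{T})$ for the Pontryagin dual of the discrete group $G$, a compact abelian group, and let $(\hat G)_d$ denote the same group equipped with the discrete topology. Then $bG \cong \widehat{(\hat G)_d}$ is a compact abelian group, and by Pontryagin duality its own dual is $\widehat{bG} \cong (\hat G)_d$. The first reduction is the standard duality dictionary for compact abelian groups: $bG$ is profinite iff it is totally disconnected iff its dual $(\hat G)_d$ is a torsion group, i.e.\ iff $\hat G$ is torsion as an abstract group. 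So everything reduces to proving
$$\hat G \text{ is a torsion group} \iff G \text{ has finite exponent}.$$

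For the easy direction, if $G$ has exponent $N$ then every $\chi \in \hat G$ satisfies $(N\chi)(g) = \chi(Ng) = 0$, so $\hat G$ has exponent dividing $N$ and is in particular torsion. For the converse, the key point is that if $\hat G$ happens to have \emph{bounded} exponent $M$, then so does $G$: characters separate the points of a discrete abelian group (one uses divisibility, hence injectivity, of $\mathbb{T}$ to extend a character that is nonzero on a prescribed element), so were $Mg_0 \neq 0$ for some $g_0$, there would be a $\chi$ with $\chi(Mg_0)\neq 0$, i.e.\ $M\chi \neq 0$, a contradiction. Thus it suffices to upgrade ``$\hat G$ torsion'' to ``$\hat G$ of bounded exponent''.

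This upgrade is the one genuinely non-formal step, and I expect it to be the main obstacle; it is where compactness of $\hat G$ is used. The argument is a Baire category one: write $\hat G = \bigcup_{n} \hat G[n!]$, where $\hat G[m] = \{\chi : m\chi = 0\}$ is a closed subgroup (being the preimage of $0$ under the continuous map $\chi \mapsto m\chi$), and torsion-ness of $\hat G$ makes this an increasing cover. Since the compact Hausdorff group $\hat G$ is a Baire space, some $\hat G[n!]$ has nonempty interior, hence (being a subgroup) is open and therefore of finite index. Writing $d$ for the exponent of the finite quotient $\hat G / \hat G[n!]$, we get $(n!\,d)\chi = 0$ for every $\chi$, so $\hat G$ has bounded exponent. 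Combined with the implication of the previous paragraph, ``$\hat G$ torsion'' forces $G$ to have finite exponent, which completes the equivalence and hence the proof.
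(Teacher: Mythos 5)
Your proposal is correct. The paper itself gives no proof of this fact — it is stated with references to Turek [St] and to [GJK], where the latter is described as giving "a short proof based on Pontryagin duality" — and your argument (the double-dual presentation $bG \cong \widehat{(\hat G)_d}$, the duality dictionary "compact abelian and totally disconnected iff the discrete dual is torsion", the Baire-category upgrade from torsion to bounded exponent, and separation of points to pull the bound back to $G$) is a correct, self-contained realization of exactly that Pontryagin-duality approach, so it fills in the citation rather than diverging from it.
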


\subsection{Structural Ramsey theory}

Recall that a first order structure $K$ is called a {\em Fra\"iss\'e structure} if it is countable, locally finite (finitely generated substructures of $K$ are finite; although this property is not always taken as a part of the definition) and ultrahomogeneous (every isomorphism between finite substructures of $K$ lifts to an automorphism of $K$). Every Fra\"iss\'e structure $K$ is uniquely determined by its {\em age}, $\mathrm{Age}(K)$, i.e.\ the class of all finite structures which are embeddable in $K$. The age of a Fra\"iss\'e structure is a {\em Fra\"iss\'e class}, i.e.\ a class of finite structures which is closed under isomorphisms, countable up to isomorphism, and satisfies {\em hereditary}, {\em joint embedding} and {\em amalgamation property}. On the other hand, for every Fra\"iss\'e class $\mathcal C$ of first order structures there exists a unique Fra\"iss\'e structure, called the {\em Fra\"iss\'e limit} of $\mathcal C$, whose age is exactly $\mathcal C$.

Structural Ramsey theory, invented by Ne\v{s}et\v{r}il and  R\"odl in the 1970s, investigates combinatorial properties of Fra\"iss\'e classes (and, more generally, categories).
Originally, these structural combinatorial properties are given by colorings of isomorphic copies of $A$ in $B$, where $A$ and $B$ are members of the given class such that $A$ is embeddable in $B$. We follow the approach from \cite{Z}. Let $K$ be a Fra\"iss\'e structure. For $A,B\in\Age(K)$, $\Emb(A,B)$ stands for the set of all embeddings $A\to B$.  $K$ (or $\Age(K)$) has {\em separately finite embedding Ramsey degree} if for every $A \in \Age(K)$ there exists $l<\omega$ such that for every $B\in\Age(K)$ with $\Emb(A,B)\neq\emptyset$ and every $r<\omega$ there exists $C\in\Age(K)$ such that for every coloring $c:\Emb(A,C)\to r$ there exists $f\in\Emb(B,C)$ with $\#c[f\circ\Emb(A,B)]\leqslant l$.  
We added the word ``separately'' to emphasize that $l$ depends on $A$. If in the previous definition $l$ can be chosen to be $1$ for every $A$, we obtain the notion of the {\em embedding Ramsey property}.
By \cite[Proposition 4.4]{Z}, $K$ has separately finite embedding Ramsey degree iff it has separately finite structural Ramsey degree (defined by using colorings of isomorphic copies of $A$ in $B$ in place of embeddings). Moreover, by \cite[Corollary 4.5]{Z}, $K$ has the embedding Ramsey property iff it has the structural Ramsey property and all structures in $\Age(K)$ are rigid (have trivial automorphism groups).

Both the property of having separately finite embedding Ramsey degree and the embedding Ramsey property for a Fra\"iss\'e structure $K$ can be alternatively defined as follows. For finite $\bar a\subseteq K$ and $C\subseteq K$ denote by ${C\choose\bar a}^\qf$ the set of all $\bar a'\subseteq C$ such that $\bar a'\equiv^\qf\bar a$ ($\bar a'$ and $\bar a$ have the same quantifier-free type).

\begin{fact}\label{fact fraisse ramsey}
A Fra\"iss\'e structure $K$ has separately finite embedding Ramsey degree iff for every finite $\bar a \subseteq K$ there exists $l<\omega$ such that for any finite $\bar b\subseteq K$ containing $\bar a$ and $r<\omega$ there exists a finite $C\subseteq K$ such that for every coloring $c:{C\choose\bar a}^\qf\to r$ there exists $\bar b'\in{C\choose\bar b}^\qf$ with $\#c[{\bar b'\choose\bar a}^\qf]\leqslant l$.

The same holds for the embedding Ramsey property and $l=1$.\qed
\end{fact}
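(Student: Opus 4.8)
The plan is to prove both equivalences by translating between the ``embedding'' language of $\Emb(A,B)$ and the ``quantifier-free type'' language of ${B\choose\bar a}^\qf$ through one basic dictionary. For a finite tuple $\bar a$ with generated substructure $A:=\langle\bar a\rangle$ and a finite substructure $D\leqslant K$, the assignment $\bar a'\mapsto(\bar a\mapsto\bar a')$ is a bijection ${D\choose\bar a}^\qf\to\Emb(A,D)$: a tuple $\bar a'\equiv^\qf\bar a$ determines the unique embedding sending $\bar a$ to $\bar a'$ (unique because $\bar a$ generates $A$), and every embedding arises this way. More generally, for a finite set $S\subseteq K$, the copies in ${S\choose\bar a}^\qf$ correspond exactly to those embeddings $g$ of $A$ with $g(\bar a)\subseteq S$. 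The one point to keep straight is that when $\bar b$ enumerates all elements of a finite substructure $B$ and $\bar b'\equiv^\qf\bar b$, the entries of $\bar b'$ are precisely the universe of $\langle\bar b'\rangle\cong B$; consequently, writing $f$ for the embedding $\bar b\mapsto\bar b'$, the set ${\bar b'\choose\bar a}^\qf$ corresponds bijectively to $f\circ\Emb(A,B)$. First I would record these correspondences; everything else is a term-by-term translation.

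For the implication from the embedding formulation to the tuple formulation, fix a finite $\bar a$ and let $l$ be the embedding Ramsey degree of $A:=\langle\bar a\rangle$. Given finite $\bar b\supseteq\bar a$ and $r<\omega$, put $B:=\langle\bar b\rangle$ and apply the embedding formulation to $A\leqslant B$ (note $\Emb(A,B)\neq\emptyset$) and $r$ to obtain $C_0\in\Age(K)$; since $C_0$ embeds in $K$, realize it as a finite substructure $C\leqslant K$ with $\langle C\rangle=C$. A coloring $c\colon{C\choose\bar a}^\qf\to r$ becomes, via the dictionary for the closed set $C$, a coloring $c_0\colon\Emb(A,C_0)\to r$, so the embedding formulation yields $f\in\Emb(B,C_0)$ with $\#c_0[f\circ\Emb(A,B)]\leqslant l$. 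Setting $\bar b':=f(\bar b)\in{C\choose\bar b}^\qf$, the copies in ${\bar b'\choose\bar a}^\qf$ are exactly those $g(\bar a)$ with $g\in\Emb(A,C)$ and $g(\bar a)$ inside the universe of $f(B)$, hence form a subset of $f\circ\Emb(A,B)$; therefore $\#c[{\bar b'\choose\bar a}^\qf]\leqslant\#c_0[f\circ\Emb(A,B)]\leqslant l$, as required.

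For the converse, fix $A\in\Age(K)$, realize it as a substructure of $K$, enumerate all of its elements by $\bar a$ (so $A=\langle\bar a\rangle$), and take the $l$ provided by the tuple formulation. Given $B\in\Age(K)$ with $\Emb(A,B)\neq\emptyset$ and $r<\omega$, use ultrahomogeneity of $K$ to realize a copy of $B$ inside $K$ with $A\leqslant B\leqslant K$, and enumerate all elements of $B$ by a tuple $\bar b\supseteq\bar a$. The tuple formulation applied to $\bar a\subseteq\bar b$ and $r$ gives a finite $C\subseteq K$; set $C_0:=\langle C\rangle\in\Age(K)$. A coloring $c_0\colon\Emb(A,C_0)\to r$ restricts, via the dictionary, to $c\colon{C\choose\bar a}^\qf\to r$, and we obtain $\bar b'\in{C\choose\bar b}^\qf$ with $\#c[{\bar b'\choose\bar a}^\qf]\leqslant l$. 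Because $\bar b$ enumerates all of $B$, the entries of $\bar b'$ are the full universe of $\langle\bar b'\rangle\subseteq C$, so the embedding $f\colon\bar b\mapsto\bar b'$ lies in $\Emb(B,C_0)$ and ${\bar b'\choose\bar a}^\qf$ corresponds bijectively (not merely as a subset) to $f\circ\Emb(A,B)$; hence $\#c_0[f\circ\Emb(A,B)]=\#c[{\bar b'\choose\bar a}^\qf]\leqslant l$. The case $l=1$ is the identical argument and gives the stated equivalence for the embedding Ramsey property.

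The main obstacle I anticipate is not conceptual but a bookkeeping one, and it is exactly the mismatch the dictionary has to absorb: ${\,\cdot\,\choose\cdot}^\qf$ is defined via the set of coordinates of a tuple, whereas the Ramsey side speaks of generated substructures and their embeddings, and in a language with function symbols the set of entries of a tuple need not be closed. The two asymmetric choices above are what resolve this — enumerating $A$ and $B$ by all of their elements (so that the entries of a copy of $\bar b$ really are the universe of the copied substructure, which is what makes ${\bar b'\choose\bar a}^\qf$ match $f\circ\Emb(A,B)$), and taking the witnessing $C$ closed in the first direction while only restricting the given coloring to ${C\choose\bar a}^\qf$ in the second. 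Once these conventions are fixed, the favorable inclusion ${\bar b'\choose\bar a}^\qf\subseteq f\circ\Emb(A,B)$ in one direction and the bijection in the other make both color-count inequalities immediate.
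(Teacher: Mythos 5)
The paper gives no proof of Fact \ref{fact fraisse ramsey} at all --- it is stated as a routine reformulation with the proof omitted --- so there is no argument of the paper to compare yours against; the only question is whether your translation is correct, and it is. Your dictionary (tuples in a finite \emph{substructure} $D$ realizing the qf-type of $\bar a$ correspond bijectively to $\Emb(\langle\bar a\rangle,D)$) together with the two asymmetric conventions --- enumerating $A$ and $B$ by their full universes when passing from the tuple formulation to the embedding formulation, and taking the witness $C$ closed under the function symbols in the other direction --- is exactly the bookkeeping needed in a language with function symbols, and both directions go through as written. One wording slip is worth fixing: in the first direction you say the copies in ${\bar b'\choose\bar a}^\qf$ are \emph{exactly} those $g(\bar a)$ with $g\in\Emb(A,C)$ and $g(\bar a)$ inside the universe of $f(B)$. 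There $\bar b$ is an arbitrary finite tuple, so its entries need not exhaust $B=\langle\bar b\rangle$, hence the entries of $\bar b'=f(\bar b)$ can be a proper subset of the universe of $f(B)$, and ``exactly'' should be ``among.'' This is harmless: that direction only uses the inclusion of ${\bar b'\choose\bar a}^\qf$ (under the dictionary) into $f\circ\Emb(A,B)$, which does hold since $g(\bar a)\subseteq f[B]$ forces $g(A)=\langle g(\bar a)\rangle\subseteq f[B]$; and in the converse direction, where genuine bijectivity is needed for the equality $\#c_0[f\circ\Emb(A,B)]=\#c[{\bar b'\choose\bar a}^\qf]$, you have correctly arranged for $\bar b$ to enumerate all of $B$, which is precisely what justifies it.
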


\subsection{Flows in model theory}\label{Subsection: Flows in model theory}

As mentioned in the introduction, the methods of topological dynamics were introduced to model theory by Newelski over ten years ago, and since then this approach has gained a lot of attention which resulted in various applications (e.g.\ to strong types in \cite{KPRz, KRz18}). Flows in model theory occur naturally in two ways. One way is to consider the action of a definable group on various spaces of types concentrated on this group. The other is to consider the action of the automorphism group of a model on various spaces of types. In this paper, we consider the second situation. We are mostly interested in the flow $(\Aut(\C), S_{\bar c}(\C))$, where $\C$ is a monster model of a theory $T$, $\bar c$ is an enumeration of $\C$, $S_{\bar c}(\C)$ is the space of all types over $\C$ extending $\tp(\bar c)$, and the action of $\Aut(\C)$ on $S_{\bar c}(\C)$ is the obvious one.


Recall that the question when the group $\GalKP(T)$ is profinite was an initial motivation behind this paper.
In \cite{KPRz} it is proved that there is a sequence of quotient topological maps and group epimorphisms:
$$u\M\to u\M/H(u\M)\to \GalL(T)\to\GalKP(T),$$
for  any minimal left ideal $\M$ of $\EL(S_{\bar c}(\C))$ and $u\in\J(\M)$,  
where $u\M$ is equipped with the $\tau$-topology, $u\M/H(u\M)$ with the corresponding quotient topology, and $\GalL(T)$ and $\GalKP(T)$ with the topologies described in Subsection \ref{subsection model theory}. 
The following easy consequence of this result gives a sufficient condition for profiniteness of $\GalKP(T)$.

\begin{proposition}\label{lemma galkp profinite}  $\GalKP(T)$ is profinite if $u\M/H(u\M)$ is profinite. Furthermore, $u\M/H(u\M)$ is profinite if $u\M$ is $0$-dimensional.
\end{proposition}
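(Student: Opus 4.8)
The plan is to derive both assertions from the chain of topological quotient maps and group epimorphisms
$$u\M\to u\M/H(u\M)\to \GalL(T)\to\GalKP(T)$$
recalled above from \cite{KPRz}, together with Remark \ref{remark uM and um/H zero dimensionality} and the classical fact that a quotient of a profinite group by a closed normal subgroup is again profinite.

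For the first assertion I would compose the last two arrows of this chain into a single topological quotient map and group epimorphism $q\colon u\M/H(u\M)\to\GalKP(T)$ (a composition of topological quotient maps is a topological quotient map). Put $N=\ker q$. Since $\GalKP(T)$ is Hausdorff and $q$ is continuous, $N$ is a closed normal subgroup of $u\M/H(u\M)$, and because $q$ is a topological quotient map it induces a topological group isomorphism $\bigl(u\M/H(u\M)\bigr)/N\cong\GalKP(T)$. Now assume $G:=u\M/H(u\M)$ is profinite and write $G=\invlim_U G/U$ over its open normal subgroups $U$; then $G/N\cong\invlim_U G/NU$ with each $G/NU$ finite (as $NU\supseteq U$ is open of finite index), so $G/N$, and hence $\GalKP(T)$, is profinite.

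For the second assertion I would invoke Remark \ref{remark uM and um/H zero dimensionality}(b): if $u\M$ is $0$-dimensional in the $\tau$-topology, then it is profinite, hence in particular Hausdorff. As $u\M$ is $T_1$, the singleton $\{u\}$ is $\tau$-closed and normal, and since $u\M/\{u\}=u\M$ is already Hausdorff, minimality of $H(u\M)$ (the smallest $\tau$-closed normal subgroup with Hausdorff quotient) forces $H(u\M)=\{u\}$. Therefore $u\M/H(u\M)=u\M$ is profinite.

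The whole argument is essentially bookkeeping of topologies, so I do not expect a genuine obstacle. The only nonroutine input is the standard fact that Hausdorff quotients of profinite groups remain profinite. The two points needing care are to keep track of which topology on $u\M$ is meant (the $\tau$-topology throughout) and to confirm that the composite $q$ is still a topological quotient map, both of which are immediate.
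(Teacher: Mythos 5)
Your proof is correct, but it follows a different route than the paper. The paper dispatches both assertions at once by one general fact: if $G$ is a compact \emph{semitopological} group, $H$ a Hausdorff topological group, and $f\colon G\to H$ a topological quotient map and group epimorphism, then $H$ is profinite whenever $G$ is $0$-dimensional; this is applied first with $G=u\M$ (with the $\tau$-topology) mapping onto $u\M/H(u\M)$, and then with $G=u\M/H(u\M)$ mapping onto $\GalKP(T)$ via the chain from \cite{KPRz}. You instead split the two claims and use different tools for each: for the first, the first isomorphism theorem for topological quotient maps together with the classical fact that a quotient of a profinite group by a closed normal subgroup is profinite (your inverse-limit argument $G/N\cong\invlim_U G/NU$ is the standard proof of that fact and is sound); for the second, Remark \ref{remark uM and um/H zero dimensionality}(b) plus the minimality characterization of $H(u\M)$, which forces $H(u\M)=\{u\}$ and hence $u\M/H(u\M)=u\M$. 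Note that this second step is legitimate and not circular, since the Remark precedes the Proposition, though its proof already contains the triviality of $H(u\M)$, so you are essentially re-deriving what the Remark's proof established. What your route buys is that you never have to argue directly with the semitopological $\tau$-topology on $u\M$ — all quotient-map reasoning happens among genuine (compact Hausdorff) topological groups; what the paper's route buys is a single reusable lemma that handles the semitopological case uniformly (a variant of the same argument reappears later, e.g.\ in the proof of Proposition \ref{lemma A equiv C'}).
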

\begin{proof}
Both assertions are consequences of the following more general, easy fact: If $G$ is a compact semitopological group, $H$ is a Hausdorff topological group, and $f:G\to H$ is a quotient topological map and group epimorphism, then $H$ is profinite if $G$ is $0$-dimensional.
\end{proof}

Thus, by Remark \ref{remark uM and um/H zero dimensionality}, for profiniteness of $\GalKP(T)$ it is enough to prove profiniteness (equiv. $0$-dimensionality) of either $u\M$ or $u\M/H(u\M)$.


The investigation of the flow $(\Aut(\C), S_{\bar c}(\C))$ in concrete examples is not easy, thus we need reductions of it to flows which are easier to deal with. Some reductions of this kind appeared in Section 2 of \cite{KNS}, but here we need more specific ones.

Let $\bar d$ be a tuple of all elements of $\C$ in which each element of $\C$ is repeated infinitely many times. It is clear that $S_{\bar d}(\C)$ and $S_{\bar c}(\C)$ are isomorphic $\Aut(\C)$-flows. Hence,  their Ellis semigroups are isomorphic as semigroups and as $\Aut(\C)$-flows. So, in fact, one can work with $S_{\bar d}(\C)$ in place of $S_{\bar c}(\C)$ whenever it is convenient.

Let us fix some notation. 
Let $\bar x$ be a tuple of variables corresponding to $\bar d$. For $\bar x'\subseteq\bar x$,  $\bar z$ with $|\bar z|=|\bar x'|$, and for $p(\bar x')\in S_{\bar x'}(\C)$ we denote by $p[\bar x'/\bar z]$ the type from $S_{\bar z}(\C)$ obtained by replacing the variables $\bar x'$ by $\bar z$. Denote by $\bar z$ a sequence $(z_i)_{i<n}$ of variables, where $n\leqslant\omega$. We discuss connections between the $\Aut(\C)$-flows $S_{\bar d}(\C)$ and $S_{\bar z}(\C)$. 
The proof of the next lemma is left as an exercise.

\begin{lemma}\label{lemma: first reduction}
Let $\Phi:\EL(S_{\bar d}(\C))\to \EL(S_{\bar z}(\C))$ be defined by $\Phi(\eta):=\hat\eta$, where $\hat\eta$ is given by:
$$\hat\eta(p(\bar z))= \eta(q(\bar x))_{\upharpoonright\bar x'}[\bar x'/\bar z],$$
where $p(\bar z)\in S_{\bar z}(\C)$, and $\bar x'\subseteq\bar x$ and $q(\bar x)\in S_{\bar d}(\C)$ are such that $q(\bar x)_{\upharpoonright\bar x'}[\bar x'/\bar z]=p(\bar z)$. Then $\Phi$ is a well-defined epimorphism of $\Aut(\C)$-flows and of semigroups.\qed
\end{lemma}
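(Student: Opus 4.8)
The plan is to reduce everything to a single well-definedness statement and then read off the flow, semigroup, and surjectivity claims essentially for free. I first record the one naturality fact that drives the whole argument: for $\sigma\in\Aut(\C)$, a subtuple $\bar x'\subseteq\bar x$, and $q\in S_{\bar d}(\C)$, the pushforward $\sigma(q)$ commutes with restricting variables to $\bar x'$ and with the renaming $[\bar x'/\bar z]$, i.e.\ $\sigma(q)_{\upharpoonright\bar x'}[\bar x'/\bar z]=\sigma\bigl(q_{\upharpoonright\bar x'}[\bar x'/\bar z]\bigr)$, where on the right $\sigma$ acts on $S_{\bar z}(\C)$. This is immediate from the definition of the action on parameters, since restriction and renaming touch only variables. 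I would also note that $q\mapsto q_{\upharpoonright\bar x'}$ and $p\mapsto p[\bar x'/\bar z]$ are continuous maps into $S_{\bar z}(\C)$. In particular, for $\sigma\in\Aut(\C)$ the defining formula gives $\Phi(\pi_\sigma)(p)=\sigma(p)=\pi_\sigma(p)$, so $\Phi$ sends each generator $\pi_\sigma$ to the corresponding $\pi_\sigma$, and the image of the dense set is dense.

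Before proving well-definedness I would check that for each $p(\bar z)\in S_{\bar z}(\C)$ at least one admissible pair $(\bar x',q)$ exists, and this is exactly where the hypothesis that every element of $\C$ occurs infinitely often in $\bar d$ is used. Realizing $p$ by a tuple $\bar e$ in a larger monster $\C^*\succ\C$, the $\emptyset$-type of $\bar e$ is the $\emptyset$-type of some $n$-tuple from $\C$; the infinite repetition lets me realize its equality pattern using \emph{distinct} coordinates of $\bar d$, producing a subtuple $\bar x'$ with $(d_{i_j})_j\equiv\bar e$. Extending the partial elementary map $d_{i_j}\mapsto e_j$ to an elementary $f\colon\C\to\C^*$ and setting $q:=\tp(f(\bar d)/\C)$ gives $q\in S_{\bar d}(\C)$ with $q_{\upharpoonright\bar x'}[\bar x'/\bar z]=p$, so the formula for $\Phi$ is applicable to every $p$.

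The heart of the matter is well-definedness, and the key idea is that the value produced by $\Phi$ is secretly choice-free. Writing $\eta=\lim_i\sigma_i$ as a pointwise limit of a net from $\Aut(\C)$ and combining continuity of restriction-and-renaming with the naturality fact, any admissible $(\bar x',q)$ yields
\[
\eta(q)_{\upharpoonright\bar x'}[\bar x'/\bar z]=\lim_i\,\sigma_i(q)_{\upharpoonright\bar x'}[\bar x'/\bar z]=\lim_i\sigma_i(p),
\]
and the right-hand side does not mention $(\bar x',q)$. Since $S_{\bar z}(\C)$ is Hausdorff the limit is unique, so $\hat\eta(p)$ is independent of the choices; moreover $\hat\eta=\lim_i\pi_{\sigma_i}$ pointwise, whence $\hat\eta\in\EL(S_{\bar z}(\C))$ (note $\hat\eta$ itself need not be continuous, but it lies in the closure). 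I expect this step — recognizing the hidden choice-free formula $\hat\eta(p)=\lim_i\sigma_i(p)$ — to be the main obstacle; everything afterwards is formal.

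Finally I would harvest the remaining claims. Continuity of $\Phi$ follows by fixing one admissible $(\bar x',q)$ per $p$ and pushing pointwise convergence $\eta_\alpha(q)\to\eta(q)$ through the continuous restriction-and-renaming map. The $\Aut(\C)$-equivariance $\Phi(g\eta)=g\Phi(\eta)$ is a further application of naturality (now for $g$). The semigroup identity $\Phi(\eta_1\circ\eta_2)=\Phi(\eta_1)\circ\Phi(\eta_2)$ is the one place where well-definedness pays off decisively: for a fixed admissible $(\bar x',q)$ for $p$, the pair $(\bar x',\eta_2(q))$ is admissible for $p':=\hat\eta_2(p)$, since $\eta_2(q)\in S_{\bar d}(\C)$ and $\eta_2(q)_{\upharpoonright\bar x'}[\bar x'/\bar z]=p'$; computing $\hat\eta_1(p')$ with this pair gives exactly $\eta_1(\eta_2(q))_{\upharpoonright\bar x'}[\bar x'/\bar z]=\Phi(\eta_1\circ\eta_2)(p)$. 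Surjectivity is then immediate: $\Phi$ is continuous with compact domain, so its image is closed and contains the dense set $\{\pi_\sigma\}$, hence equals $\EL(S_{\bar z}(\C))$.
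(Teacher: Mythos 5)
The paper gives no proof of this lemma (it is explicitly ``left as an exercise''), so there is nothing to compare against; your argument is correct and is surely the intended one. The decisive step is exactly the one you flag: the choice-free identity $\hat\eta(p)=\lim_i\sigma_i(p)$ for any net $\sigma_i\in\Aut(\C)$ converging pointwise to $\eta$, which simultaneously yields well-definedness, membership of $\hat\eta$ in $\EL(S_{\bar z}(\C))$, and (together with continuity of $\Phi$ and compactness of the domain) surjectivity from density of the generators. You also correctly isolate where the hypothesis that every element occurs infinitely often in $\bar d$ is used — namely to produce an admissible pair $(\bar x',q)$ for every $p\in S_{\bar z}(\C)$, including types with repeated coordinates, which would fail for a repetition-free enumeration — and your use of well-definedness to verify $\Phi(\eta_1\circ\eta_2)=\Phi(\eta_1)\circ\Phi(\eta_2)$ via the admissible pair $(\bar x',\eta_2(q))$ is exactly right.
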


Having in mind that the flows $S_{\bar d}(\C)$ and $S_{\bar c}(\C)$ are isomorphic, we get:

\begin{corollary}\label{lemma c to z} There exists an $\Aut(\C)$-flow and semigroup epimorphism from $\EL(S_{\bar c}(\C))$ to $\EL(S_{\bar z}(\C))$. In particular, such an epimorphism exists from $\EL(S_{\bar c}(\C))$ to $\EL(S_n(\C))$ for all $n<\omega$.\qed 
\end{corollary}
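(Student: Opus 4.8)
The plan is to produce the epimorphism as a composition of two maps that are essentially already in hand. Concretely, I would compose the isomorphism $\EL(S_{\bar c}(\C)) \cong \EL(S_{\bar d}(\C))$, which comes from the isomorphism of $\Aut(\C)$-flows $S_{\bar c}(\C) \cong S_{\bar d}(\C)$ recorded just before the statement, with the epimorphism $\Phi \colon \EL(S_{\bar d}(\C)) \to \EL(S_{\bar z}(\C))$ supplied by Lemma \ref{lemma: first reduction}.

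First I would make the flow isomorphism explicit. Since $\bar d$ arises from $\bar c$ by repeating each coordinate infinitely many times, the map that sends a type of $\bar c$ over $\C$ to the corresponding type of $\bar d$ (duplicating each coordinate the appropriate number of times) is a well-defined bijection $S_{\bar c}(\C) \to S_{\bar d}(\C)$; it is a homeomorphism and clearly commutes with the action of $\Aut(\C)$, so it is an isomorphism of $\Aut(\C)$-flows. By the general principle that isomorphic flows have isomorphic Ellis semigroups, this yields an isomorphism $\iota \colon \EL(S_{\bar c}(\C)) \to \EL(S_{\bar d}(\C))$ both as semigroups and as $\Aut(\C)$-flows. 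Composing, $\Phi \circ \iota \colon \EL(S_{\bar c}(\C)) \to \EL(S_{\bar z}(\C))$ is then an $\Aut(\C)$-flow and semigroup homomorphism which is surjective (being the composition of a bijection with a surjection), hence the desired epimorphism.

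For the final clause I would simply specialize $\bar z$: taking $\bar z = (z_i)_{i<n}$ with $n < \omega$, the space $S_{\bar z}(\C)$ is by definition $S_n(\C)$, so the statement just proved gives an $\Aut(\C)$-flow and semigroup epimorphism $\EL(S_{\bar c}(\C)) \to \EL(S_n(\C))$ for every $n < \omega$. There is no real obstacle here: all the substance is contained in Lemma \ref{lemma: first reduction}, whose verification (left as an exercise there) is the only nontrivial ingredient. The remaining points — that the duplication map is genuinely $\Aut(\C)$-equivariant and a homeomorphism, and that composing a flow/semigroup isomorphism with a flow/semigroup epimorphism again gives such an epimorphism — are routine.
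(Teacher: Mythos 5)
Your proposal is correct and follows exactly the paper's route: the paper also obtains the corollary by combining the flow isomorphism $S_{\bar c}(\C)\cong S_{\bar d}(\C)$ (and the resulting isomorphism of Ellis semigroups) with the epimorphism $\Phi$ of Lemma \ref{lemma: first reduction}, and then specializing $\bar z$ to a finite tuple for the ``in particular'' clause. The only difference is that you spell out the duplication map and its equivariance, which the paper treats as clear.
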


\begin{corollary}\label{corollary c to infinite z} 
If $\bar z=(z_i)_{i<\omega}$, then $\EL(S_{\bar c}(\C))$ and $\EL(S_{\bar z}(\C))$ are $\Aut(\C)$-flow and semigroup isomorphic. Thus, the Ellis groups of the flows $(\Aut(\C),S_{\bar c}(\C))$ and $(\Aut(\C),S_{\bar z}(\C))$ are topologically isomorphic.
\end{corollary}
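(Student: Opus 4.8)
The plan is to prove Corollary \ref{corollary c to infinite z} by specializing Lemma \ref{lemma: first reduction} (equivalently Corollary \ref{lemma c to z}) to the case $\bar z = (z_i)_{i<\omega}$ and then exhibiting an inverse to the epimorphism $\Phi$. The essential point is that when $\bar z$ has length $\omega$, the reduction map loses no information: $\bar d$ is a tuple in which every element of $\C$ is repeated infinitely often, so $\tp(\bar d)$ and $\tp(\bar z)$ (for $\bar z$ of length $\omega$ mapping onto $\C$ via a realization) carry the same data up to a reindexing of coordinates. My first step is to make this precise by observing that there is an $\Aut(\C)$-flow isomorphism $S_{\bar d}(\C) \cong S_{\bar z}(\C)$ directly, rather than merely an epimorphism. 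Concretely, since $\bar d$ enumerates $\C$ (with repetitions) and $\bar z$ is a countable tuple, one sets up a coordinate-wise correspondence between the variables $\bar x$ (for $\bar d$) and $\bar z$ that respects $\Aut(\C)$; both spaces parametrize the same global types over $\C$ concentrated on the relevant sort, just indexed differently.

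Second, I would invoke Lemma \ref{lemma: first reduction} to get the semigroup and $\Aut(\C)$-flow epimorphism $\Phi : \EL(S_{\bar d}(\C)) \to \EL(S_{\bar z}(\C))$, and then argue that for $|\bar z| = \omega$ this $\Phi$ is in fact injective, hence an isomorphism. Injectivity is the crux: I must show that if $\hat\eta_1 = \hat\eta_2$ as maps on $S_{\bar z}(\C)$, then $\eta_1 = \eta_2$ on $S_{\bar d}(\C)$. The defining formula $\hat\eta(p(\bar z)) = \eta(q(\bar x))_{\upharpoonright \bar x'}[\bar x'/\bar z]$ shows that $\hat\eta$ records the action of $\eta$ on all finite restrictions; because $\bar z$ is infinite and $\bar d$ has each element repeated infinitely often, ranging over all $p(\bar z) \in S_{\bar z}(\C)$ and all finite $\bar x' \subseteq \bar x$ recovers $\eta(q)_{\upharpoonright \bar x'}$ for every finite $\bar x'$ and every $q \in S_{\bar d}(\C)$. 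Since a type over $\C$ in the infinite variable tuple $\bar x$ is determined by its restrictions to finite subtuples, this pins down $\eta(q)$ completely, giving injectivity.

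Third, having established that $\Phi$ is a semigroup and $\Aut(\C)$-flow isomorphism, I would pass to the Ellis groups. Here I simply apply Corollary \ref{corollary tau iso ellis}: any $\Aut(\C)$-flow and semigroup isomorphism $\Phi : \EL(S_{\bar c}(\C)) \to \EL(S_{\bar z}(\C))$ restricts, for a minimal left ideal $\M$ and idempotent $u \in \J(\M)$, to a group isomorphism and $\tau$-homeomorphism $u\M \to \Phi(u)\Phi[\M]$ between the respective Ellis groups. Combined with the already-noted isomorphism $\EL(S_{\bar c}(\C)) \cong \EL(S_{\bar d}(\C))$ (from $\bar c$ and $\bar d$ enumerating $\C$ and hence giving isomorphic flows), this yields the topological isomorphism of the Ellis groups of $(\Aut(\C), S_{\bar c}(\C))$ and $(\Aut(\C), S_{\bar z}(\C))$.

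The main obstacle I anticipate is the injectivity argument for $\Phi$, i.e.\ checking that no information is lost when $|\bar z| = \omega$. The subtlety is bookkeeping about which finite subtuples $\bar x' \subseteq \bar x$ arise as one varies $p(\bar z)$: one needs that every finite restriction pattern occurring in $\bar d$ is realized by some $\bar z$-type, which is exactly where the ``each element repeated infinitely many times'' hypothesis on $\bar d$ and the infinitude of $\bar z$ are used. Once this combinatorial matching is in place, everything else is a formal application of Corollary \ref{corollary tau iso ellis}, so the real work is confined to verifying that $\Phi$ is a bijection at the level of Ellis semigroups.
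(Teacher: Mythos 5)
Your operative argument is correct and is essentially the paper's own proof. The real content is your second step: the epimorphism $\Phi$ of Lemma \ref{lemma: first reduction} is injective when $|\bar z|=\omega$, because $\hat\eta$ records the restriction of $\eta(q)$ to every subtuple $\bar x'\subseteq\bar x$ of size $\omega$, hence to every finite subtuple (each finite subtuple sits inside a countable one), and a type in infinitely many variables is determined by its finite restrictions. The paper phrases this contrapositively: if $\eta_1\neq\eta_2$, they differ on some formula $\varphi(\bar x,\bar a)$, one places its finitely many variables inside a countable $\bar x'\subseteq\bar x$, and then $\hat\eta_1,\hat\eta_2$ differ at $p=q_{\upharpoonright\bar x'}[\bar x'/\bar z]$. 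Your third step (Corollary \ref{corollary tau iso ellis} plus the flow isomorphism $S_{\bar c}(\C)\cong S_{\bar d}(\C)$) is exactly how the paper concludes. One small imprecision to fix: in Lemma \ref{lemma: first reduction} the subtuples $\bar x'$ have $|\bar x'|=|\bar z|=\omega$, so they are countable, not finite as you wrote; the recovery of finite restrictions goes through a countable $\bar x'$ containing the finitely many relevant variables.

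However, your first step must be deleted: the claim that $S_{\bar d}(\C)$ and $S_{\bar z}(\C)$ are isomorphic as $\Aut(\C)$-flows is false (fortunately, nothing in your second and third steps uses it). The monster model is $\kappa$-saturated for a large $\kappa$, so $|\C|>\omega$; thus $\bar d$ has length $|\C|\cdot\aleph_0>\omega=|\bar z|$, there is no coordinate-wise correspondence between $\bar x$ and $\bar z$, and a realization of a type in $S_{\bar z}(\C)$ is a countable tuple, which cannot ``map onto $\C$'' as you assert. The flows genuinely differ: for instance, when $T$ is the theory of the random graph and $|\C|\geqslant 2^{\aleph_0}$, the set of $\Aut(\C)$-fixed points of $S_{\bar d}(\C)$ has cardinality $2^{|\C|}$ (each coordinate is joined to all of $\C$ or to none, independently), while that of $S_{\bar z}(\C)$ has cardinality only $2^{\aleph_0}$; since a flow isomorphism carries fixed points bijectively to fixed points, no such isomorphism exists. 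This is precisely why the corollary has content: the isomorphism holds only at the level of Ellis semigroups, where elements are determined by finitary information, and establishing it is exactly what the injectivity argument is for. If your first step were true, the whole statement would be a triviality.
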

\begin{proof}
It is enough to prove that $\Phi$ from Lemma \ref{lemma: first reduction} is injective. Let $\eta_1\neq\eta_2\in\EL(S_{\bar d}(\C))$, and take $q(\bar x)$ and $\varphi(\bar x,\bar a)$ such that $\varphi(\bar x,\bar a)\in\eta_1(q(\bar x))$ and $\lnot\varphi(\bar x,\bar a)\in\eta_2(q(\bar x))$. Take $\bar x'\subseteq\bar x$ such that $|\bar x'|=\omega$ and all variables occurring in $\varphi(\bar x,\bar a)$ are in $\bar x'$, so we may write $\varphi(\bar x,\bar a)$ as $\varphi'(\bar x',\bar a)$. Let $p(\bar z)= q(\bar x)_{\upharpoonright\bar x'}[\bar x'/\bar z]$. By the definition of $\hat\eta_1$ and $\hat\eta_2$, $\varphi'(\bar z,\bar a)\in\hat\eta_1(p(\bar z))$ and $\lnot\varphi'(\bar z,\bar a)\in\hat\eta_2(p(\bar z))$. Thus, $\hat\eta_1\neq\hat\eta_2$.
\end{proof}

If $|\bar z|=n<\omega$, then in general we do not have injectivity of $\Phi$, but we may distinguish a sufficient condition on $T$ for which $\Phi$ is injective for some $n$. We say that a theory $T$ is {\em $m$-ary} (for some $m<\omega$) if every $L$-formula is equivalent modulo $T$  to a Boolean combination of $L$-formulae with at most $m$ free variables.

\begin{corollary}\label{corollary c to finite z m-ary} 
If $T$ is $(m+1)$-ary, then $\EL(S_{\bar c}(\C))$ and $\EL(S_{m}(\C))$ are $\Aut(\C)$-flow and semigroup isomorphic. Thus, the Ellis groups of the flows $(\Aut(\C),S_{\bar c}(\C))$ and $(\Aut(\C),S_{m}(\C))$ are topologically isomorphic.
\end{corollary}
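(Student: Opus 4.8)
The plan is to follow the strategy of the proof of Corollary \ref{corollary c to infinite z}, namely to show that the map $\Phi$ of Lemma \ref{lemma: first reduction} is injective, now applied with $\bar z=(z_i)_{i<m}$, so that $S_{\bar z}(\C)=S_m(\C)$. Since $\Phi$ is always a surjective epimorphism of $\Aut(\C)$-flows and of semigroups, injectivity will upgrade it to an isomorphism $\EL(S_{\bar d}(\C))\cong\EL(S_m(\C))$; composing with the flow and semigroup isomorphism $\EL(S_{\bar c}(\C))\cong\EL(S_{\bar d}(\C))$ recorded above then yields the desired isomorphism $\EL(S_{\bar c}(\C))\cong\EL(S_m(\C))$, and the topological isomorphism of the Ellis groups follows from Corollary \ref{corollary tau iso ellis}.

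To prove injectivity, suppose $\eta_1\neq\eta_2\in\EL(S_{\bar d}(\C))$ and pick $q(\bar x)\in S_{\bar d}(\C)$ together with a formula $\psi(\bar x,\bar a)$ such that $\psi(\bar x,\bar a)\in\eta_1(q(\bar x))$ and $\lnot\psi(\bar x,\bar a)\in\eta_2(q(\bar x))$, exactly as in Corollary \ref{corollary c to infinite z}. The one new observation, and the place where the hypothesis enters, is that both values $\eta_1(q),\eta_2(q)$ lie in $S_{\bar d}(\C)$ and hence both extend $\tp(\bar d/\emptyset)$; consequently they agree on every parameter-free formula, so any formula on which they differ must genuinely involve a parameter from $\bar a$. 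I would then write the underlying $L$-formula as $\psi(\bar u,\bar y)$, where $\bar u$ collects the finitely many $\bar x$-variables occurring in $\psi$, and invoke $(m+1)$-arity to express $\psi(\bar u,\bar y)$ modulo $T$ as a Boolean combination of $L$-formulae $\chi_j$, each in at most $m+1$ of the variables $\bar u,\bar y$. Because $\eta_1(q)$ and $\eta_2(q)$ are complete types differing on $\psi(\bar x,\bar a)$, they differ on $\chi_j(\bar x,\bar a)$ for some $j$; by the parameter-free remark this $\chi_j$ must contain at least one $\bar y$-variable, so it contains at most $m$ of the $\bar x$-variables.

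Finally I would choose $\bar x'\subseteq\bar x$ with $|\bar x'|=m$ containing all $\bar x$-variables occurring in $\chi_j$, rewrite $\chi_j(\bar x,\bar a)$ as $\chi_j'(\bar x',\bar a)$, and set $p(\bar z)=q(\bar x)_{\upharpoonright\bar x'}[\bar x'/\bar z]$. Unwinding the definition of $\hat\eta_1$ and $\hat\eta_2$ precisely as in Corollary \ref{corollary c to infinite z} gives $\chi_j'(\bar z,\bar a)\in\hat\eta_1(p(\bar z))$ and $\lnot\chi_j'(\bar z,\bar a)\in\hat\eta_2(p(\bar z))$, whence $\Phi(\eta_1)=\hat\eta_1\neq\hat\eta_2=\Phi(\eta_2)$. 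The only real point beyond the infinite-variable case is the reduction of the number of object variables to $m$, and the main (if modest) obstacle is exactly this: the extra variable needed to accommodate the ``$+1$'' in the $(m+1)$-arity assumption is supplied precisely by the fact that a distinguishing formula cannot be parameter-free. Everything else is the same bookkeeping as in Corollary \ref{corollary c to infinite z}.
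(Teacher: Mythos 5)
Your proposal is correct and follows essentially the same route as the paper: show that the epimorphism $\Phi$ of Lemma \ref{lemma: first reduction} is injective for $|\bar z|=m$, using $(m+1)$-arity to replace the distinguishing formula by one with at most $m+1$ free variables, and then the key observation that $\eta_1(q)$ and $\eta_2(q)$ both extend $\tp(\bar d/\emptyset)$, so the distinguishing constituent must involve a parameter variable and hence at most $m$ object variables, after which the final step of Corollary \ref{corollary c to infinite z} applies verbatim. The only difference is cosmetic: you spell out the Boolean-combination bookkeeping that the paper compresses into one sentence.
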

\begin{proof}
If $|\bar z|=m$, we prove that $\Phi$ is injective.
Let $\eta_1\neq\eta_2\in\EL(S_{\bar d}(\C))$ and let $q(\bar x)\in S_{\bar d}(\C)$ be such that $\eta_1(q(\bar x))\neq \eta_2(q(\bar x))$. By $(m+1)$-arity of the theory,  there is a formula $\varphi(\bar x'',\bar y)$ with $\bar x''\subseteq\bar x$ and $|\bar x''|+|\bar y|\leqslant m+1$, and $\bar a$ such that $\varphi(\bar x'',\bar a)\in\eta_1(q(\bar x))$ and $\lnot\varphi(\bar x'',\bar a)\in\eta_2(q(\bar x))$. Note that $|\bar y|\geqslant 1$, as otherwise $\varphi(\bar x'',\bar a)$ is a formula without parameters, so it belongs to $\eta_1(q(\bar x))$ iff it belongs to $\eta_2(q(\bar x))$. So $|\bar x''|\leqslant m$, and we can write $\varphi(\bar x'',\bar a)$ as $\varphi'(\bar x',\bar a)$ for some $\bar x'\subseteq\bar x$ such that $|\bar x'|=m$. Now, the final step of the proof of Corollary \ref{corollary c to infinite z} goes through.
\end{proof}

We will need the following general, easy observation.

\begin{remark}\label{remark invariance} 
Let $(G,X)$ and $(G,Y)$ be $G$-flows. Denote by $\Inv(X)$ and $\Inv(Y)$ the sets of all points fixed by $G$ in $X$ and $Y$, respectively. Assume that $F: \EL(X) \to \EL(Y)$ is a $G$-flow epimorphism. Then for any $\eta \in \EL(X)$ we have: $\Im(\eta)  \subseteq  \Inv(X)$ iff $\Im(\eta) = \Inv(X)$, and these equivalent conditions imply $\Im(F(\eta)) = \Inv(Y)$. So, if $F$ is an isomorphism, then  $\Im(\eta) = \Inv(X)$ iff $\Im(F(\eta)) = \Inv(Y)$.\qed
\end{remark}



By Corollary \ref{corollary c to finite z m-ary} and  Remark \ref{remark invariance}, we get:

\begin{corollary}\label{corollary: c to finite z for image in inv types}
If $T$ is $(m+1)$-ary, then the existence of $\eta\in\EL(S_{\bar c}(\C))$ with $\Im(\eta)= \Inv_{\bar c}(\C)$ is equivalent to the existence of $\eta'\in\EL(S_{m}(\C))$ with $\Im(\eta')= \Inv_{m}(\C)$ (where $\Inv_{m}(\C)$ is the set of all invariant types in $S_m(\C)$).\qed
\end{corollary}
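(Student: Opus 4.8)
The plan is to combine the flow-and-semigroup isomorphism of Corollary \ref{corollary c to finite z m-ary} with the abstract transfer principle recorded in Remark \ref{remark invariance}. Concretely, under the $(m+1)$-arity assumption, Corollary \ref{corollary c to finite z m-ary} supplies an $\Aut(\C)$-flow and semigroup isomorphism $\Phi: \EL(S_{\bar c}(\C)) \to \EL(S_m(\C))$. Since $\Phi$ is in particular an $\Aut(\C)$-flow epimorphism (indeed an isomorphism), Remark \ref{remark invariance} applies with $(G,X) = (\Aut(\C), S_{\bar c}(\C))$ and $(G,Y) = (\Aut(\C), S_m(\C))$, and $F = \Phi$.

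First I would identify the fixed-point sets in the two flows with the invariant type spaces: the points of $S_{\bar c}(\C)$ fixed by every element of $\Aut(\C)$ are exactly the $\Aut(\C)$-invariant global types extending $\tp(\bar c)$, i.e.\ $\Inv(S_{\bar c}(\C)) = \Inv_{\bar c}(\C)$, and likewise $\Inv(S_m(\C)) = \Inv_m(\C)$. This is immediate from the definition of an invariant type as one whose truth value of each formula is preserved by automorphisms, which is precisely the statement that the type is a fixed point of the $\Aut(\C)$-action on the relevant type space.

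Next I would invoke the last sentence of Remark \ref{remark invariance}: when $F$ is an isomorphism, for any $\eta \in \EL(S_{\bar c}(\C))$ one has $\Im(\eta) = \Inv(S_{\bar c}(\C))$ iff $\Im(\Phi(\eta)) = \Inv(S_m(\C))$. Translating through the identifications of the previous paragraph, this says $\Im(\eta) = \Inv_{\bar c}(\C)$ iff $\Im(\Phi(\eta)) = \Inv_m(\C)$. Since $\Phi$ is a bijection, the existence of some $\eta \in \EL(S_{\bar c}(\C))$ with $\Im(\eta) = \Inv_{\bar c}(\C)$ is therefore equivalent to the existence of some $\eta' \in \EL(S_m(\C))$ with $\Im(\eta') = \Inv_m(\C)$: given such an $\eta$, take $\eta' = \Phi(\eta)$; conversely, given such an $\eta'$, take $\eta = \Phi^{-1}(\eta')$ and apply the equivalence in the other direction.

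There is essentially no serious obstacle here, as both ingredients are already established in the excerpt; the statement is a direct corollary. The only point requiring mild care is the bookkeeping identification $\Inv(X) = \Inv_{\bar c}(\C)$ and $\Inv(Y) = \Inv_m(\C)$, ensuring that the abstract ``set of $G$-fixed points'' in Remark \ref{remark invariance} coincides with the concrete space of invariant types named in the statement. Once this is noted, the biconditional follows formally from the bijectivity of $\Phi$ together with the $\Im(\eta) = \Inv(X) \iff \Im(F(\eta)) = \Inv(Y)$ clause of Remark \ref{remark invariance}.
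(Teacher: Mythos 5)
Your proposal is correct and follows essentially the same route as the paper, which derives this corollary in a single line by combining Corollary \ref{corollary c to finite z m-ary} with Remark \ref{remark invariance}, exactly as you do. The details you spell out --- identifying the abstract fixed-point sets $\Inv(X)$, $\Inv(Y)$ with $\Inv_{\bar c}(\C)$, $\Inv_m(\C)$, and using bijectivity of $\Phi$ together with the isomorphism clause of the remark to get both directions --- are precisely what the paper leaves implicit.
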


We will need one more consequence of the above investigations. 
Let $L\subseteq L^*$ be two languages, let $T^*$ be a complete $L^*$-theory and $T:=T^*_{\upharpoonright L}$. Assume that $\C^*$ is a monster of $T^*$ such that $\C:=\C^*_{\upharpoonright L}$ is a monster of $T$. Let $\bar c$ be an enumeration of $\C$ (and $\C^*$). We can treat $S_{\bar c}(\C)$ as an $\Aut(\C^*)$-flow, and when we do that (in this section) we write  $S_{\bar c}^*(\C)$ in place of  $S_{\bar c}(\C)$; similarly for $S_{\bar z}^*(\C)$. Note that $\Aut(\C^*) \leqslant \Aut(\C)$, and so $\EL(S_{\bar c}^*(\C)) \subseteq \EL(S_{\bar c}(\C))$ and $\EL(S_{\bar z}^*(\C)) \subseteq \EL(S_{\bar z}(\C))$.

\begin{lemma}\label{lemma ellis semi epi bigger lang invariant} Take the previous notation.
\begin{enumerate}[label=(\roman*), align=right, leftmargin=*]
\item There is an $\Aut(\C^*)$-flow and semigroup epimorphism $\Psi:\EL(S_{\bar c}(\C^*))\to \EL(S_{\bar c}^*(\C))$.
\item If there is $\eta^*\in\EL(S_{\bar c}(\C^*))$ such that $\Im(\eta^*)\subseteq\Inv_{\bar c}(\C^*)$ (equiv. $=\Inv_{\bar c}(\C^*)$), then there is $\eta\in\EL(S_{\bar c}(\C))$ such that $\Im(\eta)= \Inv_{\bar c}^*(\C)$, where $\Inv_{\bar c}(\C^*)\subseteq S_{\bar c}(\C^*)$ and $\Inv_{\bar c}^*(\C)\subseteq S_{\bar c}^*(\C)$ are the subsets of all $\Aut(\C^*)$-invariant types in $S_{\bar c}(\C^*)$ and $S_{\bar c}^*(\C)$, respectively.
\item There is an $\Aut(\C^*)$-flow and semigroup epimorphism $\Psi_{\bar z}:\EL(S_{\bar c}(\C^*))\to \EL(S_{\bar z}^*(\C))$.
\item If there is $\eta^*\in\EL(S_{\bar c}(\C^*))$ such that $\Im(\eta^*)\subseteq\Inv_{\bar c}(\C^*)$ (equiv. $=\Inv_{\bar c}(\C^*)$), then there is $\eta\in\EL(S_{\bar z}(\C))$ such that $\Im(\eta)= \Inv_{\bar z}^*(\C)$, where $\Inv_{\bar z}^*(\C)\subseteq S_{\bar z}^*(\C)$ is the subset of all $\Aut(\C^*)$-invariant types in $S_{\bar z}^*(\C)$.
\end{enumerate} 
\end{lemma}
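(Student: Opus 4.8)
The plan is to establish the two epimorphisms (i) and (iii) first, and then to read off (ii) and (iv) formally from them via Remark~\ref{remark invariance}. Write $p_{\mathrm{id}}=\tp(\bar c/\C)\in S_{\bar c}(\C)$ for the type realized by $\bar c$ and $p_{\mathrm{id}}^{*}=\tp(\bar c/\C^{*})\in S_{\bar c}(\C^{*})$ for its $L^{*}$-analogue; the $\Aut(\C)$-orbit of $p_{\mathrm{id}}$ is dense in $S_{\bar c}(\C)$ and the $\Aut(\C^{*})$-orbit of $p_{\mathrm{id}}^{*}$ is dense in $S_{\bar c}(\C^{*})$. The obvious candidate for (i) is the restriction map $\rho\colon S_{\bar c}(\C^{*})\to S_{\bar c}(\C)$, $q\mapsto q_{\upharpoonright L}$, which is continuous and $\Aut(\C^{*})$-equivariant. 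The key subtlety, and the reason this lemma has content, is that $\rho$ \emph{need not be onto}: a type in $S_{\bar c}(\C)$ identifying a coordinate $x_i$ with an element $a\in\C$ whose $L^{*}$-type is incompatible with the $L^{*}$-type that $\tp_{L^{*}}(\bar c)$ forces on $x_i$ has no preimage. Hence one cannot simply apply Fact~\ref{fact induced epimorphism}(i) to $\rho$, nor define $\Psi$ by a naive lift-and-restrict formula (contrast Lemma~\ref{lemma: first reduction}, where lifting to \emph{more variables} in the \emph{same} language is always possible).

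To get around this I would set $Z:=\rho[S_{\bar c}(\C^{*})]$, a closed $\Aut(\C^{*})$-invariant subflow of $S_{\bar c}(\C)$ containing $p_{\mathrm{id}}=\rho(p_{\mathrm{id}}^{*})$, so that $\rho\colon S_{\bar c}(\C^{*})\to Z$ is an $\Aut(\C^{*})$-flow epimorphism. By Fact~\ref{fact induced epimorphism}(i) it induces an $\Aut(\C^{*})$-flow and semigroup epimorphism $\hat\rho\colon\EL(S_{\bar c}(\C^{*}))\to\EL(Z)$. It then remains to identify $\EL(Z)$ with $\EL(S_{\bar c}^{*}(\C))$, and here I would invoke the standard fact that $(\Aut(\C),S_{\bar c}(\C),p_{\mathrm{id}})$ is the universal ambit of $\Aut(\C)$, so that evaluation at $p_{\mathrm{id}}$ is an isomorphism $\EL(S_{\bar c}(\C))\xrightarrow{\sim}S_{\bar c}(\C)$; in particular $\eta\mapsto\eta(p_{\mathrm{id}})$ is injective on the subsemigroup $\EL(S_{\bar c}^{*}(\C))=\overline{\Aut(\C^{*})}\subseteq\EL(S_{\bar c}(\C))$. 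Since $Z$ is a subflow, restriction of functions $r_{Z}\colon\EL(S_{\bar c}^{*}(\C))\to\EL(Z)$, $\eta\mapsto\eta_{\upharpoonright Z}$, is a well-defined $\Aut(\C^{*})$-flow and semigroup homomorphism; it is onto because any element of $\EL(Z)$ is $\lim_{i}(\sigma_{i\upharpoonright Z})$ for a net in $\Aut(\C^{*})$ which, after passing to a subnet, converges to some $\eta\in\EL(S_{\bar c}^{*}(\C))$ with $r_{Z}(\eta)=\eta_{\upharpoonright Z}$, and it is injective because $\eta_{\upharpoonright Z}=\eta'_{\upharpoonright Z}$ forces $\eta(p_{\mathrm{id}})=\eta'(p_{\mathrm{id}})$ and hence $\eta=\eta'$. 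Thus $r_{Z}$ is an isomorphism and $\Psi:=r_{Z}^{-1}\circ\hat\rho$ is the epimorphism required for (i).

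For (iii) I would compose. By Lemma~\ref{lemma: first reduction} (applied in the language $L$, using $S_{\bar c}(\C)\cong S_{\bar d}(\C)$) there is an $\Aut(\C)$-flow and semigroup epimorphism $\Phi^{L}\colon\EL(S_{\bar c}(\C))\to\EL(S_{\bar z}(\C))$; since it sends each $\pi_{\sigma}$ ($\sigma\in\Aut(\C)$) to $\pi_{\sigma}$ and is continuous, it maps $\overline{\Aut(\C^{*})}=\EL(S_{\bar c}^{*}(\C))$ onto $\overline{\Aut(\C^{*})}=\EL(S_{\bar z}^{*}(\C))$, i.e.\ it restricts to an $\Aut(\C^{*})$-flow and semigroup epimorphism. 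Composing this restriction with $\Psi$ from (i) yields $\Psi_{\bar z}\colon\EL(S_{\bar c}(\C^{*}))\to\EL(S_{\bar z}^{*}(\C))$.

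Finally, (ii) and (iv) are immediate. Given $\eta^{*}\in\EL(S_{\bar c}(\C^{*}))$ with $\Im(\eta^{*})\subseteq\Inv_{\bar c}(\C^{*})$, Remark~\ref{remark invariance} applied to the $\Aut(\C^{*})$-flow epimorphism $\Psi$ (whose relevant fixed-point sets are exactly $\Inv_{\bar c}(\C^{*})$ in the domain and $\Inv_{\bar c}^{*}(\C)$ in the target) gives $\Im(\Psi(\eta^{*}))=\Inv_{\bar c}^{*}(\C)$, so $\eta:=\Psi(\eta^{*})\in\EL(S_{\bar c}^{*}(\C))\subseteq\EL(S_{\bar c}(\C))$ witnesses (ii); the same argument with $\Psi_{\bar z}$ and target fixed-point set $\Inv_{\bar z}^{*}(\C)$ witnesses (iv). I expect the whole difficulty to sit in (i): the failure of surjectivity of $\rho$ is the genuine obstacle, and the universal-ambit identification of $\EL(S_{\bar c}^{*}(\C))$ with $\EL(Z)$ (via injectivity of evaluation at $p_{\mathrm{id}}$) is the single nontrivial ingredient that makes the construction go through.
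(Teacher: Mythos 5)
Your diagnosis of the obstacle is exactly right: the restriction map $\rho\colon S_{\bar c}(\C^*)\to S_{\bar c}(\C)$ need not be onto, so Fact \ref{fact induced epimorphism}(i) cannot be applied to it directly; and your parts (ii)--(iv) (Remark \ref{remark invariance} applied to $\Psi$, $\Psi_{\bar z}$, and restricting the epimorphism of Lemma \ref{lemma: first reduction} to the subsemigroup $\EL(S_{\bar c}^*(\C))$) are fine and agree with the paper. The gap is in (i), precisely at what you call the single nontrivial ingredient: it is \emph{false} in general that $(\Aut(\C),S_{\bar c}(\C),p_{\mathrm{id}})$ is the universal ambit of $\Aut(\C)$, and false that evaluation at $p_{\mathrm{id}}$ is injective on $\EL(S_{\bar c}(\C))$ or on $\EL(S_{\bar c}^*(\C))$. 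The value $\eta(p_{\mathrm{id}})$ only records the limit behaviour of an approximating net $(\pi_{\sigma_j})$ with respect to \emph{definable} sets, whereas $\eta$ itself is determined by the limit behaviour with respect to all \emph{externally definable} sets (the traces $\{\bar d \mid \varphi(\bar x,\bar d)\in p\}$, $p\in S_{\bar c}(\C)$); outside the stable setting these genuinely differ, and the universal ambit of $\Aut(\C)$ is the larger space of types in the expansion of $\C$ by all externally definable predicates (see \cite{KP}), of which $S_{\bar c}(\C)$ is in general a proper quotient. Concretely, let $T$ be the random graph (one of the main theories to which this lemma is applied, cf.\ Example \ref{example R2 R4}), fix $d_0\in\C$, for each $e\in\C$ choose $\sigma_e\in\Aut(\C)$ with $\sigma_e(e)=d_0$, and for an ultrafilter $\mathcal{W}$ on $\C$ put $\eta_{\mathcal{W}}:=\lim_{\mathcal{W}}\pi_{\sigma_e}\in\EL(S_1(\C))$. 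If $E\in\mathcal{W}\setminus\mathcal{W}'$ and $p_E\in S_1(\C)$ is the type determined by $\{R(z,d)\mid d\in E\}\cup\{\lnot R(z,d)\mid d\notin E\}\cup\{z\neq d\mid d\in\C\}$, then $R(z,d_0)\in\eta_{\mathcal{W}}(p_E)$ iff $E\in\mathcal{W}$, so the $\eta_{\mathcal{W}}$'s are pairwise distinct. Hence $\EL(S_1(\C))$ has cardinality at least $2^{2^{|\C|}}$, and so does $\EL(S_{\bar c}(\C))$, which surjects onto it by Corollary \ref{lemma c to z} (the same computation with $\sigma_e\in\Aut(\C^*)$, $\C^*$ the ordered random graph, applies to $\EL(S_{\bar c}^*(\C))$). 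Since $|S_{\bar c}(\C)|\leqslant 2^{|\C|}$, no injection of these Ellis semigroups into $S_{\bar c}(\C)$ exists at all, let alone the evaluation map. Consequently your injectivity argument for $r_Z$ has no basis, $r_Z^{-1}$ need not exist, and the definition $\Psi:=r_Z^{-1}\circ\hat\rho$ collapses; whether $\eta\!\upharpoonright\! Z$ determines $\eta$ would need an entirely different argument.

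For contrast, the paper circumvents the non-surjectivity of $\rho$ not by shrinking the target to the image $Z$, but by changing the presentation of both Ellis semigroups: it passes to the tuple $\bar z=(z_i)_{i<\omega}$ of variables, where the restriction-of-language map $S_{\bar z}(\C^*)\to S_{\bar z}^*(\C)$ \emph{is} onto (an arbitrary $L$-type over $\C$ in the variables $\bar z$ is realized in a saturated $L^*$-elementary extension of $\C^*$, as there is no constraint of extending a fixed type over $\emptyset$). Fact \ref{fact induced epimorphism}(i) then yields $\Theta\colon\EL(S_{\bar z}(\C^*))\to\EL(S_{\bar z}^*(\C))$, and one transports $\Theta$ back via the isomorphisms $\EL(S_{\bar c}(\C^*))\cong\EL(S_{\bar z}(\C^*))$ and $\EL(S_{\bar c}^*(\C))\cong\EL(S_{\bar z}^*(\C))$ coming from Corollary \ref{corollary c to infinite z} (the second obtained by restricting the $\Aut(\C)$-isomorphism to the closures of $\{\pi_\sigma\mid\sigma\in\Aut(\C^*)\}$). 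In other words, injectivity is recovered by keeping infinitely many free variables, not by evaluating at a single point; this is the correct substitute for your universality claim.
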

\begin{proof}
(i) Let $\bar z=(z_i)_{i<\omega}$.
By Corollary \ref{corollary c to infinite z}, we have an $\Aut(\C^*)$-flow and semigroup isomorphism $\Phi^*:\EL(S_{\bar c}(\C^*))\to \EL(S_{\bar z}(\C^*))$, and an $\Aut(\C)$-flow and semigroup isomorphism $\Phi:\EL(S_{\bar c}(\C))\to \EL(S_{\bar z}(\C))$. Since $\Aut(\C^*)\leqslant\Aut(\C)$, we get that $\Phi_{\upharpoonright \EL(S_{\bar c}^*(\C))}$ is an $\Aut(\C^*)$-flow and semigroup isomorphism $\EL(S_{\bar c}^*(\C))\to \EL(S_{\bar z}^*(\C))$. We also have an $\Aut(\C^*)$-flow epimorphism $S_{\bar z}(\C^*)\to S_{\bar z}^*(\C)$ given by the restriction of the language; hence,  by Fact \ref{fact induced epimorphism}, there exists an $\Aut(\C^*)$-flow and semigroup epimorphism $\Theta:\EL(S_{\bar z}(\C^*))\to \EL(S_{\bar z}^*(\C))$.  
Then $\Psi:=\Phi_{\upharpoonright \EL(S_{\bar c}^*(\C))}^{-1}\circ\Theta\circ\Phi^*$ is as required.

(ii) Since $\EL(S_{\bar c}^*(\C)) \subseteq \EL(S_{\bar c}(\C))$, (ii) follows from (i) and Remark \ref{remark invariance} applied to $X:= S_{\bar c}(\C^*)$, $Y:= S_{\bar c}^*(\C)$, and $G := \Aut(\C^*)$.

(iii) $\Psi_{\bar z}:=\Theta \circ \Phi^*$ from the proof of (i) does the job (but here $\Phi^*:\EL(S_{\bar c}(\C^*))\to \EL(S_{\bar z}(\C^*))$ is an epimorphism provided by Corollary \ref{lemma c to z}).

(iv) follows from (iii) and Remark \ref{remark invariance}.
\end{proof}

We now describe a natural way of presenting $S_{\bar c}(\C)$ as an inverse limit of $\Aut(\C)$-flows, 
which is one of the key tools in this paper.

For any $\bar a\subseteq\C^*\succ\C$, $\Delta=\{\varphi_i(\bar x,\bar y)\}_{i<k}$ where $|\bar x|=|\bar a|$, and $\bar p=\{p_j\}_{j<m}\subseteq S_{\bar y}(T)$, by $\tp_\Delta(\bar a/\bar p)$ we mean the $\Delta$-type of $\bar a$ over $\bigcup_{j<m}p_j(\C)$, i.e.\ the set of all formulae of the form $\varphi_i(\bar x,\bar b)^\epsilon$ such that $\epsilon\in 2$, $\bar b$ realizes one of $p_j$'s, and $\models\varphi_i(\bar a,\bar b)^\epsilon$. (Here, as usual, $\varphi^0$ denotes $\lnot\varphi$, and $\varphi^1$ denotes $\varphi$.)

For $\Delta=\{\varphi_i(\bar x,\bar y)\}_{i<k}$, where $\bar x$ is reserved for $\bar c$, and $\bar p=\{p_j\}_{j<m}\subseteq S_{\bar y}(T)$, by $S_{\bar c,\Delta}(\bar p)$ we denote the space of all complete $\Delta$-types over $\bigcup_{j<m}p_j(\C)$ consistent with $\tp(\bar c)$; equivalently:
$$S_{\bar c,\Delta}(\bar p)=\{\tp_\Delta(\bar c^*/\bar p)\mid \bar c^*\subseteq\C^*\mbox{ and }\bar c^*\equiv\bar c\}.$$
In the usual way, we endow $S_{\bar c,\Delta}(\bar p)$ with a topology, turning it into a $0$-dimensional, compact, Hausdorff space. Moreover, it is naturally an $\Aut(\C)$-flow.

Let $\mathcal F$ be the family of all pairs $(\Delta=\{\varphi_{i}\}_{i<k},\bar p=\{p_j\}_{j<m})$ as above. We order $\mathcal F$ naturally by:
$$(\Delta=\{\varphi_{i}(\bar x,\bar y)\}_{i<k},\bar p=\{p_j\}_{j<m})\ \leqslant\ (\Delta'=\{\varphi_{i}'(\bar x,\bar y')\}_{i<k'},\bar p'=\{p_j'\}_{j<m'})$$
if $\bar y\subseteq\bar y'$, $\{\varphi_i(\bar x,\bar y)\}_{i<k}\subseteq\{\varphi_i'(\bar x,\bar y')\}_{i<k'}$ by using dummy variables, and $\{p_j\}_{j<m}\subseteq\{p_{j'\upharpoonright\bar y}\}_{j<m'}$.
Then $\mathcal F$ is directed by $\leqslant$, and for pairs $t= (\Delta,\bar p)$ and $t'= (\Delta',\bar p')$ in $\mathcal F$, if $t\leqslant t'$, we have an $\Aut(\C)$-flow epimorphism given by the restriction: 
$$\pi_{t,t'}:S_{\bar c,\Delta'}(\bar p')\to S_{\bar c,\Delta}(\bar p).$$ 
Therefore, we have an inverse system of $\Aut(\C)$-flows $((\Aut(\C),S_{\bar c,\Delta}(\bar p)))_{(\Delta,\bar p)\in\mathcal F}$, and we clearly have:

\begin{lemma}\label{lemma inverse limit of St's} $S_{\bar c}(\C)\cong \invlim_{(\Delta,\bar p)\in\mathcal F}S_{\bar c,\Delta}(\bar p)$ as $\Aut(\C)$-flows.\qed
\end{lemma}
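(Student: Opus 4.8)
The statement to prove is Lemma \ref{lemma inverse limit of St's}: $S_{\bar c}(\C)\cong \invlim_{(\Delta,\bar p)\in\mathcal F}S_{\bar c,\Delta}(\bar p)$ as $\Aut(\C)$-flows.

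The plan is to exhibit an explicit $\Aut(\C)$-flow isomorphism and verify it is a bijection by compactness. First I would define the natural map $\Psi: S_{\bar c}(\C)\to \invlim_{(\Delta,\bar p)\in\mathcal F}S_{\bar c,\Delta}(\bar p)$ using the restriction maps. Each pair $t=(\Delta,\bar p)\in\mathcal F$ comes with a canonical $\Aut(\C)$-flow map $\pi_t: S_{\bar c}(\C)\to S_{\bar c,\Delta}(\bar p)$ sending a complete global type $q(\bar x)$ (extending $\tp(\bar c)$) to the $\Delta$-type $\tp_\Delta(\bar c^*/\bar p)$ of a realization $\bar c^*\models q$ over $\bigcup_{j<m}p_j(\C)$; equivalently, one restricts $q$ to the relevant collection of formulas $\varphi_i(\bar x,\bar b)$ with $\bar b$ realizing some $p_j$. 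These maps are compatible with the bonding maps $\pi_{t,t'}$ (if $t\leqslant t'$ then $\pi_{t,t'}\circ\pi_{t'}=\pi_t$), so they induce a unique continuous map $\Psi$ into the inverse limit, which is manifestly $\Aut(\C)$-equivariant since each $\pi_t$ is.

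Next I would check that $\Psi$ is a continuous bijection between compact Hausdorff spaces, which automatically makes it a homeomorphism and hence a flow isomorphism. Continuity is built into the universal property of the inverse limit together with continuity of each $\pi_t$. For injectivity, suppose $q\neq q'$ in $S_{\bar c}(\C)$; then some formula $\varphi(\bar x,\bar b)$ lies in $q$ but not $q'$, where $\bar b$ is a finite tuple from $\C$. Choosing $\Delta\ni\varphi$ and $\bar p$ containing $\tp(\bar b)$ yields a pair $t\in\mathcal F$ with $\pi_t(q)\neq\pi_t(q')$, so $\Psi(q)\neq\Psi(q')$. The main content is surjectivity: given a coherent family $(r_t)_{t\in\mathcal F}$ in the inverse limit, I need to produce a global type $q$ with $\pi_t(q)=r_t$ for all $t$. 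The point is that every formula over $\C$ in the variables $\bar x$ (with finitely many parameters $\bar b\subseteq\C$) appears in some $S_{\bar c,\Delta}(\bar p)$, namely by taking $\Delta=\{\varphi\}$ and $\bar p=\{\tp(\bar b)\}$; the coherence of the family then guarantees that the union $q:=\bigcup_t r_t$ is a consistent, complete, and finitely-satisfiable-with-$\tp(\bar c)$ set of formulas, and compactness of $S_{\bar c}(\C)$ gives a genuine global type realizing it.

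The step I expect to be the main obstacle is surjectivity, specifically verifying that the union $q=\bigcup_t r_t$ is genuinely a complete global type consistent with $\tp(\bar c)$, rather than merely a finitely approximable collection. The delicate point is that a single parameter tuple $\bar b$ of realizations of $\bar p$ drawn from $\C$ must be matched correctly against the abstract $\Delta$-types $r_t$, which are defined over $\bigcup_j p_j(\C^*)$ in a possibly larger model $\C^*$; one must observe that by saturation and homogeneity of $\C$ every finite subtuple realizing some $p_j$ can already be found inside $\C$ and that the value $r_t$ assigns is $\Aut(\C)$-coherent across the choice of realization, so that $q$ restricted to formulas with parameters from $\C$ is well-defined and consistent. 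Once well-definedness and finite consistency are in place, completeness follows because every formula with finitely many $\C$-parameters is decided by some $r_t$, and the result is the desired preimage. All remaining verifications — equivariance, continuity, and that $\pi_{t,t'}$ commute — are routine and follow directly from the definitions already set up before the lemma.
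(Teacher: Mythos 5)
Your proof is correct and is exactly the argument the paper intends: the paper states this lemma without proof (introducing it with ``we clearly have''), and your verification via the compatible restriction maps, injectivity by separating formulas, and surjectivity of a coherent family by compactness is the standard argument being taken for granted. One small correction: by the paper's definition the types in $S_{\bar c,\Delta}(\bar p)$ already have their parameters in $\bigcup_{j<m}p_j(\C)$ (only the realizations $\bar c^*$ live in the larger model $\C^*$), so the ``delicate point'' you flag about matching parameter tuples from $\C$ against types over realizations in $\C^*$ does not actually arise, and the union $\bigcup_t r_t$ is well-defined without any appeal to homogeneity.
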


The usual $\Aut(\C)$-flow $S_{\bar c, \Delta}(\C)$ of complete $\Delta$-types over the whole $\C$ consistent with $\tp(\bar c)$  projects onto $S_{\bar c,\Delta}(\bar p)$. We also have $S_{\bar c}(\C)\cong \invlim_{\Delta\in\mathcal F}S_{\bar c,\Delta}(\C)$, but this presentation is not sufficient to be used in our main results 
(see Example \ref{example R_2 Ps}).

\subsection{Contents and strong heirs}

The following definition was introduced in \cite{KNS}.

\begin{definition}[{\cite[Definition 3.1]{KNS}}]\label{definition content} Fix $A\subseteq B$.
\begin{enumerate}[label=(\alph*), align=right, leftmargin=*]
\item For $p(\bar x)\in S(B)$, the {\em content} of $p$ over $A$ is defined as:
$$\ct_A(p)=\{(\varphi(\bar x,\bar y),q(\bar y))\in L(A)\times S(A)\mid \varphi(\bar x,\bar b)\in p(\bar x)\mbox{ for some }\bar b\models q\}.$$
\item The content of a sequence $p_0(\bar x),\dots,p_{n-1}(\bar x)\in S(B)$ over $A$, $\ct_A(p_0,\dots,p_{n-1})$, is defined as the set of all $(\varphi_0(\bar x,\bar y),\dots,\varphi_{n-1}(\bar x,\bar y),q(\bar y))\in L(A)^n\times S(A)$ such that 
for some $\bar b\models q$ and for every $i < n$ we have $\varphi_i(\bar x,\bar b)\in p_i$.
\end{enumerate}
If $A=\emptyset$, we write just $\ct(p)$ and $\ct(p_0,\dots,p_{n-1})$.
\end{definition}

A fundamental connection between contents and the Ellis semigroup is given by:

\begin{fact}[{\cite[Proposition~3.5]{KNS}}]\label{fact contents ellis} Let $\pi(\bar x)$ be a type over $\emptyset$, and $(p_0,\dots,p_{n-1})$ and $(q_0,\dots,q_{n-1})$ sequences from $S_{\pi}(\C)$. Then $\ct(q_0,\dots,q_{n-1})\subseteq \ct(p_0,\dots,p_{n-1})$ iff there exists $\eta\in\EL(S_{\pi}(\C))$ such that $\eta(p_i)=q_i$ for every $i<n$.\qed
\end{fact}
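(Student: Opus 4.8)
The plan is to prove the two implications separately: the backward direction (existence of $\eta$ yields content containment) is a direct approximation argument, while the forward direction is the substantive one and proceeds by a compactness/net construction inside the Ellis semigroup.

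For the easier implication, suppose $\eta\in\EL(S_\pi(\C))$ satisfies $\eta(p_i)=q_i$ for all $i<n$, and take $(\varphi_0,\dots,\varphi_{n-1},r)\in\ct(q_0,\dots,q_{n-1})$, witnessed by some $\bar b\models r$ with $\varphi_i(\bar x,\bar b)\in q_i$ for every $i$. Since $\eta$ lies in the closure of $\{\pi_\sigma\mid\sigma\in\Aut(\C)\}$ and the conditions ``$\varphi_i(\bar x,\bar b)\in(\cdot)(p_i)$'' cut out a basic open neighbourhood of $\eta$ in the product topology (using coordinates $p_0,\dots,p_{n-1}$), I can pick $\sigma\in\Aut(\C)$ with $\varphi_i(\bar x,\bar b)\in\sigma(p_i)$ for all $i$. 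Unwinding the action on types, this means $\varphi_i(\bar x,\sigma^{-1}(\bar b))\in p_i$; setting $\bar b':=\sigma^{-1}(\bar b)$ we have $\bar b'\equiv\bar b$, so $\bar b'\models r$, and hence $(\varphi_0,\dots,\varphi_{n-1},r)\in\ct(p_0,\dots,p_{n-1})$.

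For the substantive implication, assume $\ct(q_0,\dots,q_{n-1})\subseteq\ct(p_0,\dots,p_{n-1})$. The key step is the following realization claim: for any formulas $\psi_i(\bar x,\bar c)\in q_i$ sharing a common finite parameter tuple $\bar c$ (obtained by conjoining finitely many formulas of each $q_i$ and padding with dummy variables), there exists $\sigma\in\Aut(\C)$ with $\psi_i(\bar x,\bar c)\in\sigma(p_i)$ for all $i$. To see this, set $r:=\tp(\bar c/\emptyset)$; then $(\psi_0,\dots,\psi_{n-1},r)\in\ct(q_0,\dots,q_{n-1})$, so by hypothesis it lies in $\ct(p_0,\dots,p_{n-1})$, giving $\bar c'\equiv\bar c$ with $\psi_i(\bar x,\bar c')\in p_i$ for all $i$. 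By strong homogeneity of the monster model there is $\sigma\in\Aut(\C)$ with $\sigma(\bar c')=\bar c$, and then $\psi_i(\bar x,\bar c)\in\sigma(p_i)$, as required.

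Finally I assemble $\eta$ by compactness. Index a net by finite sets of formulas-with-parameters drawn from the $q_i$'s, ordered by inclusion; for each index the realization claim supplies $\sigma_j\in\Aut(\C)$ with $\sigma_j(p_i)$ containing all chosen formulas of $q_i$. Since $\EL(S_\pi(\C))$ is closed in the compact space $S_\pi(\C)^{S_\pi(\C)}$, the net $(\sigma_j)$ has a subnet converging to some $\eta\in\EL(S_\pi(\C))$. For any $\psi(\bar x,\bar c)\in q_i$, eventually $\psi(\bar x,\bar c)\in\sigma_j(p_i)$, and since the set $\{r\mid\psi(\bar x,\bar c)\in r\}$ is clopen, the limit satisfies $\psi(\bar x,\bar c)\in\eta(p_i)$; thus $q_i\subseteq\eta(p_i)$, and completeness of the types forces $\eta(p_i)=q_i$. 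I expect the main obstacle to be the bookkeeping in this last step — setting up the directed index set correctly and verifying that convergence in the product topology at the coordinates $p_0,\dots,p_{n-1}$ yields exactly $\eta(p_i)=q_i$ — rather than the realization claim, which follows cleanly from the content hypothesis together with homogeneity.
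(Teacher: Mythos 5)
Your proof is correct. Note that the paper itself gives no proof of this statement at all: it is quoted, with a \qed, from \cite[Proposition 3.5]{KNS}, so there is no internal argument to compare against; what you have written is precisely the standard argument behind that citation. Specifically, your easy direction (approximate $\eta$ by an automorphism on the finitely many coordinates $p_0,\dots,p_{n-1}$, then pull the parameters back by $\sigma^{-1}$) and your substantive direction (conjoin finitely many formulas of the $q_i$'s into a common parameter tuple, use the content hypothesis plus strong homogeneity of $\C$ to realize them inside some $\sigma(p_i)$, and take an accumulation point of the resulting net in the compact semigroup $\EL(S_{\pi}(\C))$, using completeness of types to upgrade $q_i\subseteq\eta(p_i)$ to equality) are exactly the steps of the proof in \cite{KNS}, with no gaps.
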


As was explained in \cite{KNS}, contents expand the concept of fundamental order of Lascar and Poizat, hence an analogous notion of heir can be defined.

\begin{definition}[{\cite[Definition 3.2]{KNS}}]\label{definition strong heir} Let $M\subseteq A$ and $p(\bar x)\in S(A)$. $p(\bar x)$ is a {\em strong heir} over $M$ if for every finite $\bar m\subseteq M$ and $\varphi(\bar x,\bar a)\in p(\bar x)$, where $\bar a\subseteq A$ is finite and $\varphi(\bar x,\bar y)\in L(M)$, there is $\bar a'\subseteq M$ such that $\varphi(\bar x,\bar a')\in  p(\bar x)$ and $\tp(\bar a'/\bar m)=\tp(\bar a/\bar m)$.
\end{definition}


\begin{fact}[{\cite[Lemma 3.3]{KNS}}]\label{fact str heir exists} Let $M\subseteq A$, where $M$ is $\aleph_0$-saturated. Then every $p(\bar x)\in S(M)$ has an extension $p'(\bar x)\in S(A)$ which is a strong heir over $M$.\qed
\end{fact}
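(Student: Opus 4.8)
The plan is to obtain $p'$ by a single compactness argument: I would define a partial type $\Gamma(\bar x)$ over $A$ that already forces the strong heir condition, prove it is consistent, and take any completion. Concretely, let $\Gamma(\bar x)$ consist of $p(\bar x)$ together with all formulas $\neg\varphi(\bar x,\bar a)$ such that $\varphi(\bar x,\bar y)\in L(M)$, $\bar a\subseteq A$ is finite, and there is a finite $\bar m\subseteq M$ with $\neg\varphi(\bar x,\bar a')\in p$ for every $\bar a'\subseteq M$ satisfying $\tp(\bar a'/\bar m)=\tp(\bar a/\bar m)$. The point of this definition is that it is exactly the negation of what the strong heir property forbids: if $p'\in S(A)$ is any completion of $\Gamma$ and $\varphi(\bar x,\bar a)\in p'$, then $\neg\varphi(\bar x,\bar a)\notin\Gamma$, which unravels precisely to the assertion that for every finite $\bar m\subseteq M$ there is $\bar a'\subseteq M$ with $\tp(\bar a'/\bar m)=\tp(\bar a/\bar m)$ and $\varphi(\bar x,\bar a')\in p$. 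Hence every completion of $\Gamma$ is automatically a strong heir of $p$ over $M$, and the entire lemma reduces to the consistency of $\Gamma$.

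For consistency I would use compactness on a finite fragment of $\Gamma$, say $\psi_1(\bar x,\bar e_1),\dots,\psi_j(\bar x,\bar e_j)\in p$ (so each $\bar e_i\subseteq M$) together with $\neg\varphi_1(\bar x,\bar a_1),\dots,\neg\varphi_k(\bar x,\bar a_k)$, where for each $l$ a finite $\bar m_l\subseteq M$ witnesses membership in $\Gamma$. Here the $\aleph_0$-saturation of $M$ enters: I would set $\bar m:=\bar m_1\cup\dots\cup\bar m_k\cup\bar e_1\cup\dots\cup\bar e_j$, a finite subset of $M$, and realize the complete type $\tp(\bar a/\bar m)$ inside $M$ by some $\bar a^*\subseteq M$, splitting it as $\bar a^*=\bar a_1^*\dots\bar a_k^*$ to match $\bar a=\bar a_1\dots\bar a_k$. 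Since $\bar a_l^*\subseteq M$ and $\tp(\bar a_l^*/\bar m_l)=\tp(\bar a_l/\bar m_l)$, the defining property of $\Gamma$ yields $\neg\varphi_l(\bar x,\bar a_l^*)\in p$ for every $l$; together with the $\psi_i(\bar x,\bar e_i)\in p$, all of these formulas are satisfied by any $\bar d\models p$.

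The last move transfers this back to the genuine parameters $\bar a$. Because $\tp(\bar a^*/\bar m)=\tp(\bar a/\bar m)$ and $\bar m$ is small, strong homogeneity of $\C$ supplies $\sigma\in\Aut(\C/\bar m)$ with $\sigma(\bar a^*)=\bar a$. Applying $\sigma$ to a realization $\bar d\models p$ and using that $\sigma$ fixes $\bar m\supseteq\bar e_1\cup\dots\cup\bar e_j$ pointwise, I get $\models\psi_i(\sigma(\bar d),\bar e_i)$ and $\models\neg\varphi_l(\sigma(\bar d),\bar a_l)$ for all $i$ and $l$. Thus the chosen finite fragment is consistent, and compactness gives consistency of $\Gamma$, completing the proof.

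The step I expect to be the main obstacle is precisely this final transfer: the automorphism $\sigma$ carrying $\bar a^*$ to $\bar a$ need not fix all of $M$, so $\sigma(\bar d)$ realizes $\sigma(p)$ rather than $p$. The resolution — and the reason the argument must be run on finite fragments rather than on $p$ as a whole — is to absorb the parameters $\bar e_1,\dots,\bar e_j$ of the finitely many relevant $p$-formulas into $\bar m$ \emph{before} realizing the type in $M$; once $\sigma$ fixes those parameters, it preserves exactly the finitely many formulas of $p$ that matter. This is also the unique place where $\aleph_0$-saturation of $M$ is genuinely used, namely to find $\bar a^*\subseteq M$ with the prescribed finite type over $\bar m$.
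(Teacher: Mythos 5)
The paper never proves this statement: it is imported verbatim from \cite[Lemma 3.3]{KNS} and stated as a Fact with no argument, so there is no in-paper proof to compare yours against; I can only judge the proposal on its own merits. Structurally it is correct, and it is the natural argument. The unraveling step is exactly right: for a completion $p'\supseteq\Gamma$, if $\varphi(\bar x,\bar a)\in p'$ then $\neg\varphi(\bar x,\bar a)\notin\Gamma$, which says that for \emph{every} finite $\bar m\subseteq M$ there is $\bar a'\subseteq M$ with $\tp(\bar a'/\bar m)=\tp(\bar a/\bar m)$ and $\varphi(\bar x,\bar a')\in p$; and since $\bar a'\subseteq M$ and $\varphi\in L(M)$, membership of $\varphi(\bar x,\bar a')$ in $p$ is the same as membership in $p'$. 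The consistency argument correctly isolates where $\aleph_0$-saturation (realizing $\tp(\bar a/\bar m)$ inside $M$) and strong homogeneity of $\C$ enter, and your closing remark about why one must absorb the parameters $\bar e_i$ into $\bar m$ before realizing the type is exactly the right diagnosis of the delicate point.

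There is, however, one detail you did not patch, and as written the final transfer can fail. The formulas $\varphi_l(\bar x,\bar y)$ are $L(M)$-formulas, so they may carry parameters from $M$ that are not displayed in $\bar a_l$ and are not in your $\bar m=\bar m_1\cup\dots\cup\bar m_k\cup\bar e_1\cup\dots\cup\bar e_j$. Your $\sigma\in\Aut(\C/\bar m)$ then only yields $\models\neg\varphi_l^{\sigma}(\sigma(\bar d),\bar a_l)$, where $\varphi_l^{\sigma}$ has its hidden parameters moved by $\sigma$, and you cannot conclude $\models\neg\varphi_l(\sigma(\bar d),\bar a_l)$. The fix is immediate and uses no new idea: enlarging a witness preserves the witnessing property (if $\bar m_l\subseteq\bar m_l'$, the set of tuples $\bar a'\subseteq M$ with $\tp(\bar a'/\bar m_l')=\tp(\bar a_l/\bar m_l')$ only shrinks), so you may assume each $\bar m_l$ contains the $M$-parameters of $\varphi_l$; then $\sigma$ fixes those parameters, $\varphi_l^{\sigma}=\varphi_l$, and the transfer goes through. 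With that one amendment the proof is complete.
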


\subsection{Amenability of a theory}\label{subsection: amenability}

Amenable and extremely amenable theories were introduced and studied in \cite{HKP}. We will not give the original definitions but rather the characterizations which we will use in this paper. For the details see \cite{HKP}.

\begin{definition}\label{definition of amenability} 
\begin{enumerate}[label=(\alph*), align=right, leftmargin=*]
\item A theory $T$ is {\em amenable} if every finitary type $p\in S(T)$ is {\em amenable}, i.e.\ there exists an invariant, (regular) Borel probability measure on $S_p(\C)$.

\item A theory $T$ is {\em extremely amenably} if every finitary type $p\in S(T)$ is {\em extremely amenable}, i.e.\ there exists an invariant type in $S_p(\C)$. 
\end{enumerate}
\end{definition}

In fact, in the above definitions we can remove the adjective ``finitary''. We also get the same notions if we use only $p:=\tp(\bar c)$ (where $\bar c$ is an enumeration of $\C$). 

These definitions do not depend on the choice of the monster model $\C$, i.e.\ they are indeed properties of the theory $T$. In fact, it is enough to assume only that $\C$ is $\aleph_0$-saturated and strongly  $\aleph_0$-homogeneous. 

One should also recall that a regular, Borel probability measure on the space $S_p(\C)$ (or on any $0$-dimensional compact space) is the same thing as a Keisler measure, i.e.\ finitely additive probability measure on the Boolean algebra of all clopen sets. All such measures form a compact subspace $\frak M_p$ of $[0,1]^{\mathrm{clopens}}$ equipped with the product topology.

\section{Definable structural Ramsey theory and topological dynamics}\label{section ramsey}

As mentioned in the introduction, in order to find interactions between Ramsey-like properties of a given theory $T$ and dynamical properties of $T$, one has to impose the appropriate definability conditions on colorings.

For a finite tuple $\bar a$ and a subset $C\subseteq\C$, by $C\choose\bar a$ we denote the set of all realizations of $\tp(\bar a)$ in $C$:
${C\choose\bar a}:= \{\bar a'\in C^{|\bar a|}\mid \bar a'\equiv\bar a\}.$
If instead of $C$ we have a tuple $\bar d$, the meaning of ${\bar d\choose\bar a}$ is the same, i.e.\ it is $D\choose\bar a$, where $D$ is the set of all coordinates of $\bar d$.

For $r<\omega$, a {\em coloring} of the realizations of $\tp(\bar a)$ in $C$ into $r$ colors is any mapping $c:{C\choose\bar a}\to r$. A subset $S\subseteq{C\choose\bar a}$ is {\em monochromatic} with respect to $c$ if $c[S]$ is a singleton.

\begin{definition}\phantomsection\label{definition of definable colorings}
\begin{enumerate}[label=(\alph*), align=left, leftmargin=*,labelsep=0pt]
\item A coloring $c:{C\choose\bar a}\to 2^n$ is {\em definable} if there are formulae with parameters $\varphi_i(\bar x)$, $i<n$, such that:
$$c(\bar a')(i)= \left\{
\begin{array}{cl}
1, & \models \varphi_i(\bar a')\\
0, & \models \lnot\varphi_i(\bar a')
\end{array}
\right.$$
for any $\bar a'\in{C\choose \bar a}$ and $i<n$. 
\item A coloring $c:{C\choose\bar a}\to 2^n$ is {\em externally definable} if there are formulae without parameters $\varphi_i(\bar x,\bar y)$ and types $p_i(\bar y)\in S_{\bar y}(\C)$, $i<n$, such that:
$$c(\bar a')(i)= \left\{
\begin{array}{cl}
1, &  \varphi_i(\bar a',\bar y)\in p_i(\bar y)\\
0, & \lnot\varphi_i(\bar a',\bar y)\in p_i(\bar y)
\end{array}
\right.$$
for any $\bar a'\in{C\choose\bar a}$ and $i<n$. 
\item If $\Delta$ is a set of formulae in variables $\bar x$ and $\bar y$ and $q \in S_{\bar y}(\emptyset)$, then an externally definable coloring $c$ is called {\em externally definable $(\Delta,q)$-coloring} if all the formulae $\varphi_i(\bar x,\bar y)$'s defining $c$ are taken from $\Delta$ and $p_i(\bar y) \in S_{q}(\C)$ for $i<n$.
\end{enumerate}
\end{definition}

\begin{remark}\label{remark: definable is given by realized types}
A coloring $c:{C\choose\bar a}\to 2^n$ is definable iff it is externally definable via realized (in $\C$)  types $p_0(\bar y),\dots,p_{n-1}(\bar y)\in S_{\bar y}(\C)$.\qed
\end{remark}

\begin{remark}\phantomsection\label{remark ext def coloring} 
\begin{enumerate}[label=(\roman*), align=left, leftmargin=*,labelsep=-3pt]
\item An externally definable coloring $c:{C\choose\bar a}\to 2^n$ given by $\varphi_i(\bar x,\bar y)$ and $p_i(\bar y)\in S_{\bar y}(\C)$, $i<n$, can be defined by using $n$ formulae $\psi_i(\bar x,\bar z)$, $i<n$, and only one type $p(\bar z)\in S_{\bar z}(\C)$. Moreover, we can require that $|\bar z|=|\bar c|$ and $p(\bar z) \in S_{\bar c}(\C)$. 
\item In the definition of definable coloring, we can assume that all formulae $\varphi_i(\bar x)$, $i<n$, have the same parameters $\bar e$, and then the coloring is externally definable witnessed by the single realized type $p(\bar y) := \tp(\bar e/\C)$.
\item In the definition of externally definable $(\Delta,q)$-coloring, without loss of generality we can assume that $|\bar y|=|\bar c|$ and $q(\bar y)=\tp(\bar c/\emptyset)$. Whenever $|\bar y|=|\bar c|$ and $q(\bar y) = \tp(\bar c/\emptyset)$, instead of ``externally definable $(\Delta,q)$-coloring'' we will just say {\em externally definable $\Delta$-coloring}.
\end{enumerate}
\end{remark}


\begin{proof}
(i)
Let $\bar z=(\bar y_0,\dots,\bar y_{n-1})$, where each $\bar y_i$ is of length $|\bar y|$ (where wlog $\bar y$ is finite) and the $\bar y_i$'s are pairwise disjoint. Let $p(\bar z)\in S_{\bar z}(\emptyset)$ be any completion of $\bigcup_{i<n}p_i(\bar y_i)$ and let $\psi_i(\bar x,\bar z):= \varphi_i(\bar x,\bar y_i)$. Then $\psi_i(\bar a',\bar z)\in p(\bar z)$ iff $\varphi_i(\bar a',\bar y)\in p_i(\bar y)$. For the ``moreover part'', by adding dummy variables, we can assume that $\bar z$ corresponds to $\bar d$ and $p(\bar z) \in S_{\bar d}(\C)$, where $\bar d$ is a tuple of all elements of $\C$ in which each element is repeated infinitely many times. Finally, we can identify the variables in the tuple $\bar z$ which correspond to the same elements in $\bar d$, and we obtain a shorter tuple, still denoted by $\bar z$, corresponding to $\bar c$ and a type, still denoted by $p(\bar z)$, in $S_{\bar c}(\C)$ which satisfy the requirements.\\
(ii) follows by adding dummy parameters, and
(iii) by adding dummy variables.
\end{proof}

The next remark explains that Definition \ref{definition of definable colorings} coincides with the usual definition of [externally] definable map from a type-definable set to a compact, Hausdorff space (in our case this space is finite). Recall that a function $f: X \to C$, where $X$ is a type-definable subset of a sufficiently saturated model and $C$ is a compact, Hausdorff space, is said to be {\em [externally] definable} if the preimages of any two disjoint closed subsets of $C$ can be separated by a relatively  [externally] definable subset of $X$. In particular, if $C$ is finite, then this is equivalent to saying that all fibers of $f$ are relatively  [externally] definable subsets of $X$.

\begin{remark} 
\begin{enumerate}[label=(\roman*), align=left, leftmargin=*,labelsep=-4pt]
\item For a coloring $c \colon {\C \choose \bar a} \to 2^n$,  $c$ is [externally] definable in the sense of Definition \ref{definition of definable colorings} iff it is [externally] definable in the above sense  (i.e.\ the fibers are relatively [externally] definable subsets of the type-definable set  ${\C \choose \bar a}$).
\item If $c: {\C \choose \bar a} \to r$ is [externally] definable in the above sense, then it remains so as a coloring with the target space $2^n$, where $2^n \geqslant r$. So we can always assume that $r=2^n$.\qed
\end{enumerate}
\end{remark}


\subsection{Ramsey properties}

Motivated by the embedding Ramsey property for\linebreak Fra\"{i}ss\'{e} structures, we introduce the following natural notion.

\begin{definition}\label{definition ramsey property}
A theory $T$ has \eerp\ ({\em the elementary embedding Ramsey property}) if for any two finite tuples $\bar a\subseteq\bar b\subseteq\C$ and any $r<\omega$ there exists a finite subset $C\subseteq\C$ such that for any coloring $c:{C\choose\bar a}\to r$ there exists $\bar b'\in{C\choose\bar b}$ such that $\bar b'\choose\bar a$ is monochromatic with respect to $c$.
\end{definition}

\begin{remark}\label{remark rp ind in sat and str hom}
The definition of \eerp\ does not depend on the choice of the monster (or just an $\aleph_0$-saturated) model $\C$ of $T$, i.e.\ \eerp\ is indeed a property of $T$.\qed
\end{remark}

\begin{remark}\label{remark fraisse eerp}
Definition \ref{definition ramsey property} generalizes the definition of the embedding Ramsey property for Fra\"iss\'e structures in the following sense: If $K$ is an $\aleph_0$-saturated Fra\"iss\'e structure, then $K$ has the embedding Ramsey property iff $\mathrm{Th}(K)$ has EERP. 
\end{remark}
\begin{proof}
Recall that if $K$ is an $\aleph_0$-saturated Fra\"iss\'e structure, then $\mathrm{Th}(K)$ has q.e., so ${C\choose\bar a}^{\qf}= {C\choose\bar a}$ for any finite $\bar a, C \subseteq K$. Therefore, we conclude using  Fact \ref{fact fraisse ramsey} and Remark \ref{remark rp ind in sat and str hom}.
\end{proof}

The next remark is standard and follows from \cite[Remark 3.1]{KP}.

\begin{remark}\label{lemma equivalents of eerps} A theory $T$ has \eerp\ iff for any two finite tuples $\bar a\subseteq\bar b\subseteq\C$, any $r<\omega$, and any coloring $c:{\C\choose\bar a}\to r$ there exists $\bar b'\in{\C\choose\bar b}$ such that $\bar b'\choose\bar a$ is monochromatic with respect to $c$. \qed
\end{remark}

In {\cite{KP}}, the class of first order structures with {\em ERP} ({\em the embedding Ramsey property}) is introduced. A first order structure $M$ has {\em ERP} if
for any finite $\bar a\subseteq\bar b\subseteq M$ and any $r<\omega$ there exists a finite subset $C\subseteq M$ such that for any coloring $c:{C\choose\bar a}^{\Aut}\to r$ there exists $\bar b'\in{C\choose\bar b}^{\Aut}$ such that ${\bar b'\choose\bar a}^{\Aut}$ is monochromatic with respect to $c$.
Here, for finite $\bar a\subseteq M$ and $C\subseteq M$:
$${C\choose\bar a}^{\Aut}: =\{\bar a'\subseteq C\mid \bar a'=f(\bar a)\mbox{ for some }f\in\Aut(M)\}.$$

For Fra\"{i}ss\'{e} structures the next fact is one of the main results from \cite{KPT}, which was later generalized to arbitrary locally finite ultrahomogeneous structures in \cite{TSS}. The formulation below comes from \cite{KP}, but it can be checked (by passing to canonical ultrahomogeneous expansions and using an argument as in Fact \ref{fact fraisse ramsey}) that it is equivalent to the one from \cite{TSS}.

\begin{fact}[{\cite[Theorem 3.2]{KP}}]\label{fact characterization of eerp} A first order structure $M$ has ERP iff $\Aut(M)$ is extremely amenable as a topological group.\qed
\end{fact}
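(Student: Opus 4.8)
The statement is the generalized Kechris--Pestov--Todor\v cevi\'c correspondence, and the plan is to route both implications through a purely group-theoretic \emph{finite Ramsey criterion} for extreme amenability of the (non-archimedean) topological group $G:=\Aut(M)$. Recall that $G$ carries the topology of pointwise convergence, for which the pointwise stabilizers $\Aut(M/\bar a)$ of finite tuples $\bar a\subseteq M$ form a neighbourhood basis of the identity consisting of open subgroups. For such a basic open subgroup $V:=\Aut(M/\bar a)$ the map $gV\mapsto g(\bar a)$ is a $G$-equivariant bijection between the left coset space $G/V$ and the orbit ${M\choose\bar a}^{\Aut}$, so finite colorings of $G/V$ are exactly finite colorings of ${M\choose\bar a}^{\Aut}$. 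The criterion I would isolate is: \emph{$G$ is extremely amenable iff for every open subgroup $V\leqslant G$, every finite $F\subseteq G$, every $r<\omega$ and every $c\colon G/V\to r$ there is $g\in G$ with $\{gfV:f\in F\}$ monochromatic.} By pulling a coloring back along a projection $G/V'\to G/V$ one checks the criterion is monotone in $V$, so it suffices to test it on the basic subgroups $V=\Aut(M/\bar a)$.

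Both directions of the criterion I would prove directly. For the forward direction, given $V$, $F$, $c$, consider the compact $G$-flow $r^{G/V}$ (pointwise-convergence topology) with the shift action $(g\cdot\chi)(xV)=\chi(g^{-1}xV)$, which is continuous precisely because $G$ is non-archimedean. Extreme amenability furnishes a $G$-fixed point $\chi_0$ in the subflow $\overline{G\cdot c}$; since $G$ acts transitively on $G/V$, $\chi_0$ is constant, and membership in the orbit closure means that on the finite set $\{fV:f\in F\}$ some translate $g\cdot c$ agrees with $\chi_0$, which after replacing $g$ by $g^{-1}$ gives the desired monochromatic configuration. The reverse direction is the substantial one: from the finite Ramsey data I must manufacture a $G$-fixed point in an \emph{arbitrary} flow. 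I would do this by producing a fixed point in the greatest $G$-ambit, whose points for non-archimedean $G$ are described by compatible families of ``finitely consistent'' coloring data over the $G/V$'s; the criterion supplies exactly the finite-intersection property needed to run a compactness argument and extract a $G$-invariant point. \textbf{This is the step I expect to be the main obstacle}, since it requires the explicit description of the greatest ambit (equivalently, the Samuel compactification of the right uniformity) and care that the compactness argument is carried out uniformly over all $V$ and $F$.

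It then remains to identify the criterion with ERP. For ERP $\Rightarrow$ criterion, fix $V=\Aut(M/\bar a)$, a finite $F\subseteq G$ and $c\colon{M\choose\bar a}^{\Aut}\to r$; choose a finite tuple $\bar b\supseteq\bar a$ containing all coordinates of every $f(\bar a)$, $f\in F$, so that $\{f(\bar a):f\in F\}\subseteq{\bar b\choose\bar a}^{\Aut}$. ERP yields a finite $C\subseteq M$ and then an $h\in\Aut(M)$ with $\bar b':=h(\bar b)\in{C\choose\bar b}^{\Aut}$ and ${\bar b'\choose\bar a}^{\Aut}$ monochromatic; as $h\bigl(\{f(\bar a):f\in F\}\bigr)\subseteq{\bar b'\choose\bar a}^{\Aut}$, the element $g:=h$ witnesses the criterion. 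Conversely, for criterion $\Rightarrow$ ERP, given finite $\bar a\subseteq\bar b$ and a coloring $c$ of the whole orbit ${M\choose\bar a}^{\Aut}$, take $F\subseteq\Aut(M)$ with $\{f(\bar a):f\in F\}={\bar b\choose\bar a}^{\Aut}$ (a finite set since $\bar b$ is finite); the criterion gives $g$ with $g\cdot{\bar b\choose\bar a}^{\Aut}={\bar b'\choose\bar a}^{\Aut}$ monochromatic for $\bar b':=g(\bar b)\in{M\choose\bar b}^{\Aut}$. This is the ``whole-model'' form of ERP, and I would finish by invoking the equivalence between that form and the finite-$C$ form of the definition (exactly as in Remark \ref{lemma equivalents of eerps}), which is a routine compactness/K\H{o}nig's lemma argument transferring a statement about all of $M$ to the existence of a single finite $C$ working for every coloring of ${C\choose\bar a}^{\Aut}$.
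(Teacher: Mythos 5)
The pieces you actually prove are correct: the dictionary $gV\mapsto g(\bar a)$ identifying colorings of $G/V$ with colorings of ${M\choose\bar a}^{\Aut}$, the monotonicity of your criterion in $V$, both implications between the criterion and ERP (with the routine compactness step correctly delegated to the whole-model/finite-$C$ equivalence), and the implication from extreme amenability to the criterion via the shift flow $r^{G/V}$ --- which really is a $G$-flow, since a basic open set constrains only finitely many coordinates and the pointwise stabilizer of finitely many cosets is a finite intersection of conjugates of $V$, hence open. The genuine gap is exactly the step you flag: you never prove that the criterion implies extreme amenability. This is not a verification that can be postponed; it is the entire content of the theorem in the direction ``ERP implies extreme amenability'' (manufacturing a fixed point in an \emph{arbitrary} $G$-flow from finite combinatorial data). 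As written, your proposal establishes only that extreme amenability implies ERP.

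Moreover, the route you sketch for the missing step is not merely incomplete but misdescribed. For non-discrete $G$ the left-translation action of $G$ on $\beta(G/V)$ is not jointly continuous: for $G=S_\infty$ and $V=\Stab(0)$ this is the action of $S_\infty$ on $\beta\mathbb{N}$, and if $p$ is a nonprincipal ultrafilter containing a co-infinite set $A$, then for every finite $F$ and every $B\in p$ some transposition in $\Stab(F)$ moves a point of $B$ out of $A$, so no neighbourhood $\Stab(F)\times\overline{B}$ of $(e,p)$ lands in $\overline{A}$; the same computation defeats the inverse limit of the $\beta(G/V)$'s. So the greatest ambit is \emph{not} the space of ``compatible families of coloring data over the $G/V$'s''; one needs the Samuel compactification of $G/V$ as a \emph{uniform} space, i.e.\ ultrafilters on the algebra of subsets invariant under left multiplication by some open subgroup (Zucker's near-ultrafilters, cf.\ \cite{Z}), and setting this up correctly is precisely where the work lies, so saying the criterion ``supplies exactly the finite-intersection property needed'' assumes what must be proved. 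The good news is that the gap closes elementarily, with no greatest ambit at all: if a $G$-flow $X$ has no fixed point, pick for each $x$ an open $O_x\ni x$ and $g_x$ with $g_xO_x\cap O_x=\emptyset$, extract a finite subcover $O_{x_1},\dots,O_{x_n}$, and use compactness of $X$ and continuity of the action to find a single open subgroup $W$ and a finite refinement $P_1,\dots,P_m$ with each $WP_j$ contained in some $O_{x_{i(j)}}$; fixing $x_0\in X$, the rule $Wg\mapsto\min\{i : Wgx_0\subseteq O_{x_i}\}$ is a well-defined finite coloring of $W\backslash G$, and your criterion, transported to right cosets via $Wg\mapsto g^{-1}W$ and applied to the pattern $\{W,Wg_{x_1},\dots,Wg_{x_n}\}$, yields $h$ and $i_0$ with both $hx_0$ and $g_{x_{i_0}}hx_0$ in $O_{x_{i_0}}$, a contradiction. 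With that argument supplied, your two-step architecture becomes a complete proof, essentially the Kechris--Pestov--Todor\v{c}evi\'{c}/Pestov one; note that the paper itself does not prove this Fact but imports it from \cite[Theorem 3.2]{KP}, so a self-contained criterion-based proof would be a legitimate alternative --- but only once the core implication is actually proved rather than deferred.
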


Note that if $M$ is strongly $\aleph_0$-homogeneous, then ${C\choose\bar a}^{\Aut}={C\choose\bar a}$, so if we in addition assume that $M$ is $\aleph_0$-saturated, then by Remark \ref{remark rp ind in sat and str hom}, $M$ has {\em ERP} iff $\mathrm{Th}(M)$ has \eerp. Hence, by Fact \ref{fact characterization of eerp}, we obtain the following.

\begin{corollary}\label{corollary extr amen prop of theory} For a theory $T$, $\Aut(M)$ is extremely amenable (as a topological group) for some $\aleph_0$-saturated and strongly $\aleph_0$-homogeneous model $M\models T$ iff it is so for all $\aleph_0$-saturated and strongly $\aleph_0$-homogeneous models of $T$.\qed
\end{corollary}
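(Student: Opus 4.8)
The plan is to observe that this corollary falls out immediately from the equivalence, set up in the paragraph just before it, between extreme amenability of $\Aut(M)$ and the property \eerp\ of $T$, combined with the fact (Remark \ref{remark rp ind in sat and str hom}) that \eerp\ is intrinsic to $T$ and does not depend on the model used to test it.

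First I would fix an arbitrary $\aleph_0$-saturated and strongly $\aleph_0$-homogeneous model $M \models T$. Because $M$ is strongly $\aleph_0$-homogeneous, for every finite $\bar a\subseteq M$ and every $C\subseteq M$ the orbit version and the type version of the ``choose'' set coincide, that is ${C\choose\bar a}^{\Aut}={C\choose\bar a}$; hence the defining condition of ERP for $M$ is literally the defining condition of \eerp\ computed inside $M$. Since $M$ is moreover $\aleph_0$-saturated, Remark \ref{remark rp ind in sat and str hom} applies and gives that ``$M$ has ERP'' is equivalent to ``$T$ has \eerp'' for this particular $M$.

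Next I would invoke Fact \ref{fact characterization of eerp}, which states that $M$ has ERP iff $\Aut(M)$ is extremely amenable as a topological group. Chaining this with the previous step yields, for the arbitrary model $M$ fixed above, the single equivalence: $\Aut(M)$ is extremely amenable iff $T$ has \eerp.

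The decisive point is that the right-hand side of this equivalence is a property of $T$ alone and involves no reference whatsoever to $M$. Consequently the truth value of ``$\Aut(M)$ is extremely amenable'' is the same for every $\aleph_0$-saturated and strongly $\aleph_0$-homogeneous model $M\models T$, namely it coincides with the truth value of ``$T$ has \eerp''. In particular this property holds for some such $M$ iff $T$ has \eerp\ iff it holds for all such $M$, which is exactly the assertion of the corollary. There is no genuine obstacle here: the whole argument is packaged into the model-independence of \eerp\ from Remark \ref{remark rp ind in sat and str hom} and the dynamical characterization of ERP from Fact \ref{fact characterization of eerp}, and the only elementary verification needed by hand is the identification ${C\choose\bar a}^{\Aut}={C\choose\bar a}$ under strong $\aleph_0$-homogeneity.
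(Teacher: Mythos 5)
Your proposal is correct and follows exactly the paper's own argument: the paper derives this corollary from the identification ${C\choose\bar a}^{\Aut}={C\choose\bar a}$ under strong $\aleph_0$-homogeneity, the model-independence of \eerp\ from Remark \ref{remark rp ind in sat and str hom}, and Fact \ref{fact characterization of eerp}, in precisely the chain you describe. No gaps; nothing further needed.
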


We now introduce and study two new classes of theories by restricting our considerations to definable and externally definable colorings, which makes the whole subject more general and more model-theoretic.

\begin{definition}\label{definition def ramsey property}
A theory $T$ has $[E]DEERP$ ({\em the [externally] definable elementary embedding Ramsey property}) if for any finite $\bar a\subseteq\bar b\subseteq\C$, any $n<\omega$ and any [externally] definable coloring $c:{\C\choose \bar a}\to 2^n$ there exists $\bar b'\in{\C\choose\bar b}$ such that $\bar b'\choose\bar a$ is monochromatic with respect to $c$.
\end{definition}

\begin{proposition}\label{lemma def rp independent on monster} The previous definitions do not depend on the choice of the monster (or just an $\aleph_0$-saturated) model, i.e.\ \deerp\ and \edeerp\ are indeed properties of $T$. 
\end{proposition}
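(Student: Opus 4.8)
The plan is to show that both \deerp\ and \edeerp\ are invariant under changing the monster model, and in fact under passing to any $\aleph_0$-saturated and strongly $\aleph_0$-homogeneous model. The guiding principle is the one already used for \eerp\ in Remark \ref{remark rp ind in sat and str hom}: the Ramsey statement is really about finite configurations of tuples with prescribed types, and the role of (external) definability is to restrict colorings to ones coming from genuine syntactic/type-theoretic data that persists between models.

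First I would fix two $\aleph_0$-saturated (say monster) models $\C_1$ and $\C_2$ of $T$ and, by embedding both into a common larger monster, reduce to the case $\C_1 \preceq \C_2$; this is the standard move and lets me compare colorings directly. Recall from Remark \ref{lemma equivalents of eerps} that \eerp\ can be tested either with a finite witnessing set $C$ or directly on all of $\C$, and the same equivalence holds verbatim for \deerp\ and \edeerp: by compactness, if for every finite $\bar a\subseteq\bar b$ and every (externally) definable coloring $c\colon{\C\choose\bar a}\to 2^n$ there is a monochromatic $\bar b'\in{\C\choose\bar b}$, then one can extract a finite $C$ doing the job for all colorings at once, and conversely. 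So it suffices to compare the ``on all of $\C$'' formulations across $\C_1$ and $\C_2$.

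The heart of the argument is that an (externally) definable coloring is specified by data that transfers between elementarily equivalent models. By Remark \ref{remark ext def coloring}(iii), an externally definable coloring is witnessed by finitely many formulae $\varphi_i(\bar x,\bar y)$ over $\emptyset$ together with a single type $p(\bar y)\in S_{\bar y}(\C)$; for a definable coloring, Remark \ref{remark ext def coloring}(ii) and Remark \ref{remark: definable is given by realized types} let me take $p$ realized, i.e.\ the coloring is given by formulae $\varphi_i(\bar x,\bar e)$ with a finite parameter tuple $\bar e$. Given such a coloring on $\C_2$, I restrict it to $\C_1$: in the definable case the parameters $\bar e$ can be moved by a $\C_2$-automorphism into $\C_1$ (using strong homogeneity, or realizing $\tp(\bar e)$ inside $\C_1$ by $\aleph_0$-saturation), yielding a definable coloring of ${\C_1\choose\bar a}$; in the externally definable case the formulae $\varphi_i$ are parameter-free and the witnessing type $p(\bar y)$ restricts to a type over $\C_1$, so the coloring restricts directly. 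A monochromatic $\bar b'\in{\C_1\choose\bar b}$ for the restricted coloring is then monochromatic for the original coloring on $\C_2$ as well, because whether $\varphi_i(\bar a',\bar y)\in p_i(\bar y)$ depends only on the type of $\bar a'$ and the restriction of $p_i$ to the parameters involved. This gives one direction; the other direction transfers a monochromatic tuple found in $\C_2$ down into $\C_1$ by homogeneity, since any $\bar b'\in{\C_2\choose\bar b}$ can be mapped into $\C_1$ by an automorphism, and the coloring data originating over $\C_1$ is automorphism-compatible.

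The main obstacle, and the step to handle with the most care, is the \emph{external} definability case when moving a monochromatic tuple between models, because the witnessing type $p\in S_{\bar y}(\C_2)$ need not restrict to a type ``living over'' the smaller model in a way that is preserved by the relevant automorphisms. The clean way around this is to note, via Remark \ref{remark ext def coloring}(i), that the color of $\bar a'$ is determined by the finitely many conditions $\psi_i(\bar a',\bar z)\in p(\bar z)$, and these depend only on $\tp(\bar a'\bar z)$ for $\bar z$ ranging over realizations; so two tuples $\bar a',\bar a''$ with the same type over the parameters get the same color, and the whole Ramsey statement reduces to one about types over $\emptyset$ of finite tuples together with the fixed combinatorial data $(\{\varphi_i\},p)$ — data that makes sense uniformly in any sufficiently saturated model of $T$. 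Once framed this way, the independence from the choice of $\C$ follows, exactly as in Remark \ref{remark rp ind in sat and str hom}, from the fact that the finitary type spaces $S_{\bar x}(\emptyset)$ and the incidence relations $\varphi_i(\bar a',\bar b)$ are fixed by $T$; I would then remark that only $\aleph_0$-saturation and strong $\aleph_0$-homogeneity of the ambient model are used, so the statement holds at that level of generality, paralleling the assertion already recorded for \eerp.
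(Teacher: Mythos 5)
Your handling of the definable case is essentially correct: a definable coloring carries only a finite parameter tuple $\bar e$, so you can realize $\tp(\bar e)$, respectively $\tp(\bar b'/\bar e)$ for the return trip, inside the smaller model by $\aleph_0$-saturation, and monochromaticity transfers because the coloring is invariant under automorphisms fixing $\bar e$. This is a legitimate alternative to the paper's route, which instead rewrites \deerp\ as a scheme of first-order conditions on $T$ (a condition manifestly independent of the model). One side claim, however, is false: the ``finite witnessing set $C$'' equivalence cannot hold verbatim for \deerp, since every coloring of ${C\choose\bar a}$ for finite $C$ is definable (each fiber is a finite set of tuples), so such an equivalence would collapse \deerp\ to \eerp, contradicting e.g.\ the random graph, which has \edeerp\ (hence \deerp) but not \eerp. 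Fortunately you never use that claim.

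The genuine gap is in the externally definable case, in the downward direction: $\C_1\prec\C_2$, \edeerp\ holds in $\C_2$, and you must deduce it in $\C_1$. The coloring data over $\C_1$ includes a global type $p^*(\bar y)\in S_{\bar y}(\C_1)$; to invoke \edeerp\ in $\C_2$ you must first \emph{choose} an extension $p\in S_{\bar y}(\C_2)$ of $p^*$, and afterwards pull a monochromatic $\bar b'\subseteq\C_2$ back into $\C_1$. Your two justifications for this step --- that ``the coloring data originating over $\C_1$ is automorphism-compatible'' and that the data $(\{\varphi_i\},p)$ ``makes sense uniformly in any sufficiently saturated model of $T$'' --- are both untrue: a global type is not $\Aut(\C_2)$-invariant (the color of a tuple is \emph{not} determined by its type over $\emptyset$ or over any finite parameter set; that is exactly what external definability adds beyond definability), and a type over $\C_1$ is simply not a type over $\C_2$, so the choice of extension matters. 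If $p$ is an arbitrary extension, then knowing $\theta_\varepsilon(\bar b',\bar y)\in p$ for the witness $\bar b'\subseteq\C_2$, where $\theta_\varepsilon(\bar x',\bar y)$ is the conjunction of $\varphi_i(\bar x,\bar y)^{\varepsilon(i)}$ over all $i<n$ and all elementary copies $\bar x$ of $\bar a$ inside $\bar x'$, gives no information whatsoever about $p^*$, hence no monochromatic tuple in $\C_1$. The missing idea, which the paper supplies, is to take $p$ to be a \emph{strong heir} extension of $p^*$ over $\C_1$ (Definition \ref{definition strong heir}, Fact \ref{fact str heir exists}, using $\aleph_0$-saturation of $\C_1$): then from $\theta_\varepsilon(\bar b',\bar y)\in p$ the strong heir property yields $\bar b''\subseteq\C_1$ with $\tp(\bar b'')=\tp(\bar b')=\tp(\bar b)$ and $\theta_\varepsilon(\bar b'',\bar y)\in p$, hence $\theta_\varepsilon(\bar b'',\bar y)\in p^*$ since $\bar b''\subseteq\C_1$; this is exactly the required monochromatic copy of $\bar b$ in $\C_1$. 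Without this device (or an equivalent one) the \edeerp\ half of your proof does not go through.
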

\begin{proof}
We fix the following notation. For any finite $\bar a\subseteq\bar b$ in a model of $T$, let $\bar x'$ be some variables corresponding to $\bar b$ and denote by $V_{\bar a,\bar b}$ the set of all $\bar x\subseteq\bar x'$ corresponding to the elementary copies of $\bar a$ within $\bar b$. Note that $V_{\bar a,\bar b}$ depends only on $\tp(\bar a)$, $\tp(\bar b)$ and the choice of $\bar x'$. 

In order to see the \deerp \ case, it is enough to note that (even assuming only that $\C$ is $\aleph_0$-saturated) the statement defining \deerp \ inside $\C$ is equivalent to the following condition: for any finite $\bar a \subseteq \bar b$ (in any model of $T$), any formula $\varphi(\bar x')\in \tp(\bar b)$, and any formulae $\varphi_0(\bar x,\bar y),\dots,\varphi_{n-1}(\bar x,\bar y)$ without parameters with $\bar x$ corresponding to $\bar a$:  
$$T \vdash (\forall \bar y)(\exists \bar x')\left(\varphi(\bar x') \wedge \bigwedge_{\bar x_1,\bar x_2 \in V_{\bar a,\bar b}}\bigwedge_{i<n}(\varphi_i(\bar x_1,\bar y) \leftrightarrow \varphi_i(\bar x_2,\bar y)) \right).$$

We now turn to \edeerp. Suppose that $\C$ is a monster (or $\aleph_0$-saturated) model which satisfies the property given in the externally definable case of Definition \ref{definition def ramsey property}. It suffices to prove that any monster (or $\aleph_0$-saturated) model $\C^*$  such that $\C^*\prec\C$ or $\C^*\succ\C$ satisfies it as well.

Let $\C^*\prec\C$. Consider any finite $\bar a\subseteq \bar b\subseteq\C^*$, $n<\omega$, and an externally definable coloring $c^*:{\C^*\choose\bar a}\to 2^n$ given by formulae $\varphi_0(\bar x,\bar y),\dots,\varphi_{n-1}(\bar x,\bar y)$ and a type $p^*(\bar y)\in S_{\bar y}(\C^*)$ (see Remark \ref{remark ext def coloring}). 
By Fact \ref{fact str heir exists}, let $p(\bar y)\in S_{\bar y}(\C)$ be a strong heir extension of $p^*(\bar y)$, and let $c:{\C\choose\bar a}\to 2^n$ be the externally definable extension of $c^*$ given by $\varphi_0(\bar x,\bar y),\dots,\varphi_{n-1}(\bar x,\bar y)$ and  $p(\bar y)$. For a color $\varepsilon\in 2^n$ consider the formula $\theta_\varepsilon(\bar x',\bar y):= \bigwedge_{\bar x\in V_{\bar a,\bar b}}\bigwedge_{i<n}\varphi_i(\bar x,\bar y)^{\varepsilon(i)}$. Note that for $\bar b'\in{\C\choose\bar b}$, $c[{\bar b'\choose\bar a}]=\{\varepsilon\}$ iff $\theta_\varepsilon(\bar b',\bar y)\in p(\bar y)$. By assumption, there is a color $\varepsilon\in 2^n$ and $\bar b'\in{\C\choose\bar b}$ such that $c[{\bar b'\choose\bar a}]=\{\varepsilon\}$, hence $\theta_\varepsilon(\bar b',\bar y)\in p(\bar y)$.
Since $p^*(\bar y)\subseteq p(\bar y)$ is a strong heir extension, there is $\bar b''\subseteq\C^*$ such that $\bar b''\equiv\bar b'$ and $\theta_\varepsilon(\bar b'',\bar y)\in p^*(\bar y)$. But this means that $\bar b''\in{\C^*\choose\bar b}$ and $c[{\bar b''\choose\bar a}]=\{\varepsilon\}$, so $\bar b''\choose\bar a$ is monochromatic with respect to $c$, and hence with respect to $c^*$. 
The case $\C^* \succ \C$ is very easy.
\end{proof}

The following remark describes the connections between the introduced notions. 
Both implications below are strict: the lack of the converse of the first implication is witnessed by the theory of the random graph (see Example \ref{example random graph}) or by $T:=ACF_0$ with named constants from the algebraic closure of $\mathbb{Q}$ (see example \ref{example ACF0}), and the second one by the theory of a certain random hypergraph (see Example \ref{example R2 R4}).

\begin{remark}\label{remark relations between eerps} For every theory $T$, \eerp$\implies$ \edeerp$\implies$ \deerp.
\end{remark}

We now turn to dynamical characterizations of theories with \deerp\ and \edeerp. 
Let $T$ be a theory.
In order to unify some proofs below, it is convenient to work with local versions of $[E]DEERP$. We say that a finitary type $q \in S(\emptyset)$ has $[E]DEERP$ if for some (equivalently, any) $\bar a\models q$, any finite $\bar b\subseteq\C$ containing $\bar a$, any $n<\omega$ and any [externally] definable coloring $c:{\C\choose \bar a}\to 2^n$ there exists $\bar b'\in{\C\choose\bar b}$ such that $\bar b'\choose\bar a$ is monochromatic with respect to $c$. Clearly, a theory $T$ has $[E]DEERP$ iff every finitary type has it.
For $q\in S_{\bar x}(T)$, a type $p\in S_{\bar c}(\C)$ is {\em $q$-invariant} if for every formula $\varphi(\bar x,\bar y)$ and $\bar a,\bar a'\models q$: $\varphi(\bar a,\bar y)\in p$ iff $\varphi(\bar a',\bar y)\in p$ holds. The set of all $q$-invariant types from $S_{\bar c}(\C)$ is denoted by $\Inv_{q,\bar c}(\C)$. Clearly, $\Inv_{q,\bar c}(\C)$ is a subflow of $S_{\bar c}(\C)$, so for every $\eta\in\EL(S_{\bar c}(\C))$:
\begin{equation}\tag{$\dagger$}\label{equation inv q}
\eta[\Inv_{q,\bar c}(\C)]\subseteq\Inv_{q,\bar c}(\C).
\end{equation}

\begin{theorem}\phantomsection\label{theorem characterization of dpeerp} 
\begin{enumerate}[label=(\roman*), align=left, leftmargin=*, labelsep=-4pt]
\item A finitary type $q$ has \edeerp\ iff there is $\eta\in\EL(S_{\bar c}(\C))$ such that $\Im(\eta)\subseteq\Inv_{q,\bar c}(\C)$.
\item $T$ has \edeerp\ iff there exists $\eta\in \EL(S_{\bar c}(\C))$ such that  $\Im(\eta)\subseteq\Inv_{\bar c}(\C)$.
\end{enumerate}
\end{theorem}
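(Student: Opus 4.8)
The plan is to prove part (i) directly and then derive part (ii) from it. For part (i), the key is to recognize the equivalence established in Fact~\ref{fact contents ellis}: the existence of an element $\eta \in \EL(S_{\bar c}(\C))$ with $\Im(\eta) \subseteq \Inv_{q,\bar c}(\C)$ should be reformulated in terms of containment of contents. First I would unwind what $\Im(\eta) \subseteq \Inv_{q,\bar c}(\C)$ means combinatorially. Since $\Inv_{q,\bar c}(\C)$ is a subflow and is closed under the action, the natural reformulation is: there is a single $q$-invariant type $p^* = \eta(\tp(\bar c))$ (or, using $\bar d$ in place of $\bar c$, one works with the type of the full enumeration) which lies in the image, and $q$-invariance of $p^*$ encodes precisely that the color assigned to $\bar a'$ by the externally definable coloring does not depend on which realization $\bar a' \models q$ one picks.

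\medskip

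For the forward direction ($\edeerp \Rightarrow$ existence of $\eta$), I would argue as follows. Suppose $q$ has \edeerp. Fix an externally definable coloring; by Remark~\ref{remark ext def coloring}(iii) it is given by a finite $\Delta$ and a single type $p \in S_{\bar c}(\C)$. The \edeerp\ condition says that for every finite $\bar b \supseteq \bar a$ there is a copy $\bar b' \in \binom{\C}{\bar b}$ on which the coloring is constant, i.e.\ all realizations of $q$ inside $\bar b'$ receive the same color. Using Remark~\ref{remark: definable is given by realized types} and an enumeration argument, I would assemble these local monochromatic copies into a global object: the aim is to produce a type $p^* \in S_{\bar c}(\C)$ whose content is contained in the content of $p$ and which is $q$-invariant. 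Concretely, I would consider the family of formulas $\theta_\varepsilon(\bar x', \bar y)$ as in the proof of Proposition~\ref{lemma def rp independent on monster}, use a compactness/finite-intersection argument over all finite $\bar b$ to show consistency, and invoke Fact~\ref{fact contents ellis} to convert the content containment into the existence of the required $\eta$ with $\eta(p) = p^*$ and, more generally, $\Im(\eta) \subseteq \Inv_{q,\bar c}(\C)$.

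\medskip

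For the converse (existence of $\eta \Rightarrow \edeerp$), I would take $\eta$ with $\Im(\eta) \subseteq \Inv_{q,\bar c}(\C)$ and an arbitrary externally definable coloring $c$ given by $\Delta$ and a type $p$. Applying $\eta$ to $p$ yields a $q$-invariant type $p^* = \eta(p)$; by Fact~\ref{fact contents ellis}, $\ct(p^*) \subseteq \ct(p)$, which translates into: whenever $\theta_\varepsilon(\bar b', \bar y) \in p^*$ for some $\bar b'$, there is a genuine copy $\bar b'' \equiv \bar b'$ with $\theta_\varepsilon(\bar b'', \bar y) \in p$. Since $p^*$ is $q$-invariant, all realizations of $q$ inside any fixed $\bar b'$ get the same color with respect to the coloring determined by $p^*$; pulling this back through the content containment produces a monochromatic copy for the original coloring. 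Part (ii) then follows by specializing to $q = \tp(\bar c)$: a type is $\tp(\bar c)$-invariant iff it is $\Aut(\C)$-invariant, so $\Inv_{\tp(\bar c), \bar c}(\C) = \Inv_{\bar c}(\C)$, and $T$ has \edeerp\ iff every finitary $q$ does, which by (i) corresponds to $\Im(\eta) \subseteq \Inv_{\bar c}(\C)$.

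\medskip

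The main obstacle I anticipate is the forward direction, specifically the passage from the local monochromaticity guaranteed by \edeerp\ (one copy $\bar b'$ for each finite $\bar b$) to a single global invariant type whose content sits below that of $p$. This requires a careful compactness argument showing that the partial types expressing ``$\bar c^*$ realizes $q$ coherently and $\theta_\varepsilon(\bar c^*, \bar y) \in p$ for the appropriate colors'' are simultaneously satisfiable, together with the bookkeeping needed to ensure the resulting type is genuinely $q$-invariant rather than merely monochromatic on each finite piece. The clean way to manage this is to route everything through contents and Fact~\ref{fact contents ellis}, so that the topological-dynamical content of the statement is isolated into that one black box and the remaining work is purely about content containment.
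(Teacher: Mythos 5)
The genuine gap is your derivation of part (ii). You cannot ``specialize (i) to $q=\tp(\bar c)$'': $\tp(\bar c)$ is not a finitary type, and both the local \edeerp\ property and statement (i) are defined (and provable) only for finitary $q$ --- there is no meaningful coloring statement for $\bar a=\bar c$, since \edeerp\ quantifies over finite tuples $\bar a\subseteq\bar b\subseteq\C$. What (i) actually gives you, assuming $T$ has \edeerp, is a separate $\eta_q$ for each finitary $q$ with $\Im(\eta_q)\subseteq\Inv_{q,\bar c}(\C)$; since $\Inv_{\bar c}(\C)=\bigcap_q\Inv_{q,\bar c}(\C)$, you still have to merge these into a \emph{single} element of the Ellis semigroup, and no individual $\eta_q$ will do. The paper does this by composition: for a finite set $S=\{q_1,\dots,q_n\}$ of finitary types it forms $\eta_S:=\eta_{q_1}\circ\dots\circ\eta_{q_n}$ and uses the observation $(\dagger)$ that each $\Inv_{q,\bar c}(\C)$ is a subflow of $S_{\bar c}(\C)$, hence mapped into itself by \emph{every} element of $\EL(S_{\bar c}(\C))$; this yields $\Im(\eta_S)\subseteq\bigcap_{i}\Inv_{q_i,\bar c}(\C)$, and an accumulation point of the net $(\eta_S)_S$, directed by inclusion, has image inside $\Inv_{\bar c}(\C)$. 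Without this composition-and-limit step (or a substitute), (ii) simply does not follow from (i).

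There is also a secondary weakness in your forward direction of (i). Fact \ref{fact contents ellis} applied to a single type $p$ produces some $\eta$ with $\eta(p)=p^*$, but gives no control whatsoever over $\eta$ at other types, so your clause ``and, more generally, $\Im(\eta)\subseteq\Inv_{q,\bar c}(\C)$'' is unjustified as stated. To repair it, run the construction jointly: for every finite sequence $p_0,\dots,p_{n-1}\in S_{\bar c}(\C)$ \emph{and} every finite set of formulas (your compactness must range over formulas too, not only over $\bar b$, since $q$-invariance concerns all formulas, not just those in the fixed $\Delta$), apply \edeerp\ to the combined coloring, deduce a joint content containment $\ct(p_0^*,\dots,p_{n-1}^*)\subseteq\ct(p_0,\dots,p_{n-1})$ with the $p_i^*$ jointly $q$-invariant, invoke the sequence version of Fact \ref{fact contents ellis}, and finally take an accumulation point over the directed family of finite sets of types, using that $\Inv_{q,\bar c}(\C)$ is closed. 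Once patched this way, your route is in substance the paper's proof, which bypasses contents and directly takes an accumulation point in $\EL(S_{\bar c}(\C))$ of the net of automorphisms supplied by \edeerp\ over all finite data $(\bar b,\{(\varphi_i,p_i)\}_{i<n})$. Your backward direction of (i) is fine: content containment of $\eta(p)$ below $p$ is exactly what the paper extracts from density of $\Aut(\C)$ in $\EL(S_{\bar c}(\C))$, and your pull-back of the monochromaticity of $\eta(p)$ to a genuine copy is the same argument in different clothing.
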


\begin{proof}
(i) ($\Rightarrow$) Suppose that $q$ has \edeerp\ and take $\bar a\models q$. 
Fix a finite $\bar b\subseteq\C$ containing $\bar a$, $L(\bar a)$-formulae
$\varphi_i(\bar a,\bar y)$ and types
$p_i\in S_{\bar c}(\C)$, $i<n$.
Consider the externally definable coloring $c:{\C\choose\bar a}\to 2^n$ given
via $\varphi_i(\bar x,\bar y)$'s and $p_i$'s.
By \edeerp\ there is $\bar b'\in{\C\choose\bar b}$ with $\bar b'\choose\bar a$
being monochromatic; take $\sigma\in\Aut(\C)$ such that $\sigma(\bar b')=\bar
b$. Then for $i<n$ and $\bar a'\in{\bar b\choose \bar a}$ we have
$\varphi_i(\bar a,\bar y)\in \sigma(p_i)$ iff $\varphi_i(\bar a',\bar y)\in
\sigma(p_i)$.

Consider the family $\mathcal F$ of pairs $(\bar b,\{(\varphi_i(\bar
y),p_i)\}_{i<n})$, for finite $\bar b\supseteq\bar a$, $n<\omega$,
$p_i\in S_{\bar c}(\C)$ and $\varphi_i(\bar y)\in L(\bar a)$ for $i<n$.
We order $\mathcal F$
naturally by:
$$(\bar b,\{(\varphi_i(\bar y),p_i)\}_{i<n})\ \leqslant\ (\bar
b',\{(\varphi_i'(\bar y),p_i')\}_{i<n'})$$
iff $\bar b\subseteq\bar b'$, $n\leqslant n'$ and $\{(\varphi_i(\bar
y),p_i)\}_{i<n}\subseteq \{(\varphi_i'(\bar y),p_i')\}_{i<n'}$; clearly,
$\mathcal F$ is directed by $\leqslant$. 
Consider a net $(\sigma_f)_{f\in\mathcal F}$ of automorphisms, where
each $\sigma_f$ is chosen by the previous paragraph for $f\in\mathcal F$. It is easy to check that any accumulation point $\eta\in
EL(S_{\bar c}(\C))$ of this net satisfies our requirements.

($\Leftarrow$) Let $\eta\in\EL(S_{\bar c}(\C))$ be such that $\Im(\eta)\subseteq\Inv_{q,\bar c}(\C)$. 
Take $\bar a\models q$ and finite $\bar b\subseteq\C$ containing $\bar a$, $n<\omega$, and an externally definable coloring $c:{\C\choose\bar a}\to 2^n$ given by formulae $\varphi_i(\bar x,\bar y)$, $i<n$, and type $p\in S_{\bar c}(\C)$ (see Remark \ref{remark ext def coloring}). Since $\eta(p)$ is $q$-invariant, 
$(\varphi_i(\bar a,\bar y)\leftrightarrow\varphi_i(\bar a',\bar y))\in\eta(p)$ holds for all ${\bar a'\in{\bar b\choose\bar a}}$ and $i<n$. This is an open condition on $\eta$, so there is $\sigma\in\Aut(\C)$ such that
$(\varphi_i(\bar a,\bar y)\leftrightarrow\varphi_i(\bar a',\bar y))\in\sigma(p)$ for all ${\bar a'\in{\bar b\choose\bar a}}$ and $i<n$. For $\bar b':=\sigma^{-1}(\bar b)$ we get that $\bar b'\choose\bar a$ is monochromatic with respect to $c$.

(ii) ($\Leftarrow$) is clear by (i), so we prove ($\Rightarrow$). Suppose that $T$ has \edeerp. By (i), for every finitary type $q$, choose $\eta_q\in\EL(S_{\bar c}(\C))$ with $\Im(\eta_q)\subseteq\Inv_{q,\bar c}(\C)$. For a finite set $S=\{q_1,\dots,q_n\}$ of finitary types from $S(\emptyset)$ put $\eta_S:=\eta_{q_1}\circ\dots\circ\eta_{q_n}$ (the order of the composition is irrelevant).
By the choice of $\eta_{q_i}$ and (\ref{equation inv q}), we obtain $\Im(\eta_S)\subseteq\bigcap_{i=1}^n\Inv_{q_i,\bar c}(\C)$. Let $S$ be the family of all finite sets of finitary types in $S(\emptyset)$. Direct $S$ by inclusion, and let $\eta$ be an accumulation point of the obtained net of the $\eta_S$'s. One easily concludes that $\Im(\eta)\subseteq\Inv_{\bar c}(\C)$.
\end{proof}

\begin{corollary}\label{corollary dpeerp implies trivial ellis}
If $T$ has \edeerp,\ then any minimal left ideal $\M\lhd\EL(S_{\bar c}(\C))$ is trivial, hence $u\M$ (i.e.\ the Ellis group of $T$) and $\GalL(T) =\GalKP(T)$ are trivial as well.
\end{corollary}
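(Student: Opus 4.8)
The plan is to deduce everything from the dynamical characterization of \edeerp\ that has just been established. By Theorem~\ref{theorem characterization of dpeerp}(ii), the assumption that $T$ has \edeerp\ furnishes an element $\eta\in\EL(S_{\bar c}(\C))$ with $\Im(\eta)\subseteq\Inv_{\bar c}(\C)$. The first observation I would record is that $\Inv_{\bar c}(\C)$ is precisely the set of points of $S_{\bar c}(\C)$ fixed by the action of $\Aut(\C)$: a global type extending $\tp(\bar c)$ is $\Aut(\C)$-invariant exactly when it is a fixed point of the flow $(\Aut(\C),S_{\bar c}(\C))$. Consequently, for every $g\in\Aut(\C)$ and every $p\in S_{\bar c}(\C)$ we have $g\cdot\eta(p)=\eta(p)$, since $\eta(p)$ is invariant; hence $\pi_g\circ\eta=\eta$, i.e.\ $g\eta=\eta$. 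Thus $\eta$ is fixed by all of $\Aut(\C)$ under the left action.

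Next I would upgrade this to the statement that $\eta$ is a \emph{right zero} of the whole semigroup, namely $\alpha\eta=\eta$ for every $\alpha\in\EL(S_{\bar c}(\C))$. This follows because right multiplication by $\eta$ is continuous (the semigroup operation is continuous in the left coordinate), so the set $\{\alpha:\alpha\eta=\eta\}$ is closed; it contains the dense subset $\Aut(\C)$ by the previous paragraph, hence equals all of $\EL(S_{\bar c}(\C))$. With the right-zero property in hand, triviality of \emph{every} minimal left ideal drops out directly: given any minimal left ideal $\M$ and any $\gamma\in\M$, the element $\eta\gamma$ lies in $\M$ (as $\M$ is a left ideal), and using associativity together with $\EL(S_{\bar c}(\C))\eta=\{\eta\}$ we get $\EL(S_{\bar c}(\C))\cdot(\eta\gamma)=(\EL(S_{\bar c}(\C))\eta)\gamma=\{\eta\gamma\}$. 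Therefore $\{\eta\gamma\}$ is itself a left ideal contained in $\M$, and minimality of $\M$ forces $\M=\{\eta\gamma\}$, a single point.

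Finally, a one-point minimal left ideal is $\{u\}$ for its unique element $u$, which is an idempotent since $\J(\M)\neq\emptyset$; hence the Ellis group $u\M=\{u\}$ is trivial. Triviality of $\GalL(T)$ and $\GalKP(T)$, together with their equality, then follows at once from the chain of group epimorphisms $u\M\to\GalL(T)\to\GalKP(T)$ recalled in Subsection~\ref{Subsection: Flows in model theory} (from \cite{KPRz}), since every epimorphic image of the trivial group is trivial.

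As for difficulty, there is no serious obstacle once Theorem~\ref{theorem characterization of dpeerp}(ii) is available; the whole argument is short. The one point that genuinely requires care is that the conclusion concerns \emph{every} minimal left ideal, not merely the existence of one trivial ideal: it would be cheap to note that $\{\eta\}$ is a minimal left ideal, but to reach all of them one really needs the right-zero property of $\eta$, which is exactly what the preceding closedness-and-density step supplies. I would also double-check the direction of continuity of the semigroup operation, so that the closedness of $\{\alpha:\alpha\eta=\eta\}$ is correctly justified.
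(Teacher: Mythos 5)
Your proposal is correct and follows essentially the same route as the paper: both invoke Theorem \ref{theorem characterization of dpeerp}(ii) to obtain $\eta$ with $\Im(\eta)\subseteq\Inv_{\bar c}(\C)$, push it into the given minimal left ideal $\M$ by composing with an arbitrary element of $\M$, and conclude by minimality, then read off triviality of $u\M$ and of the Galois groups from the epimorphism of \cite{KPRz}. The only cosmetic difference is that the paper notes directly that $\Im(\eta\eta_0)\subseteq\Inv_{\bar c}(\C)$ forces $\Aut(\C)\eta\eta_0=\{\eta\eta_0\}$, so $\{\eta\eta_0\}$ is a subflow and minimality of $\M$ as a subflow finishes, whereas you first upgrade $g\eta=\eta$ to the right-zero property $\alpha\eta=\eta$ for all $\alpha\in\EL(S_{\bar c}(\C))$ via density and one-sided continuity so that $\{\eta\gamma\}$ is a left ideal; the two phrasings are interchangeable because minimal subflows of $\EL(S_{\bar c}(\C))$ coincide with its minimal left ideals.
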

\begin{proof}
Assume that $T$ has \edeerp. Let $\M\lhd\EL(S_{\bar c}(\C))$ be a minimal left ideal and let $\eta_0\in\M$ be arbitrary. By Theorem \ref{theorem characterization of dpeerp}, there exists $\eta\in\EL(S_{\bar c}(\C))$ with $\Im(\eta)\subseteq \Inv_{\bar c}(\C)$. Set $\eta_1=\eta\eta_0$; clearly $\eta_1\in\M$ and $\Im(\eta_1)\subseteq\Inv_{\bar c}(\C)$. Then $\Aut(\C)\eta_1=\{\eta_1\}$, so $\{\eta_1\}$ is a minimal subflow. Therefore, $\M=\{\eta_1\}$, and so $u\M$ is trivial. Triviality of $\Gal_L(T)$ follows from the existence of an epimoprhism $u\M \to \Gal_L(T)$ found in \cite{KPRz}.
\end{proof}


\begin{theorem}\phantomsection\label{theorem characterization of deerp} 
\begin{enumerate}[label=(\roman*), align=left, leftmargin=*, labelsep=-4pt]
\item For a finitary type $q$, TFAE:
\begin{enumerate}[label=(\arabic*), align=right, leftmargin=*, labelsep=4pt]
\item $q$ has \deerp.
\item There is an element $\eta \in \EL(S_{\bar c}(\C))$ mapping all realized types in $S_{\bar c}(\C)$ to $\Inv_{q,\bar c}(\C)$.
\item $\Inv_{q,\bar c}(\C) \ne \emptyset$
\end{enumerate}

\item For a theory $T$, TFAE:
\begin{enumerate}[label=(\arabic*), align=right, leftmargin=*,labelsep=4pt]
\item $T$ has \deerp,
\item There is an element $\eta \in \EL(S_{\bar c}(\C))$ mapping all realized types in $S_{\bar c}(\C)$ to $\Inv_{\bar c}(\C)$,
\item $T$ is extremely amenable, that is $\Inv_{\bar c}(\C) \ne \emptyset$.
\end{enumerate}
\end{enumerate}
\end{theorem}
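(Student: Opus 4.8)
The plan is to prove (i) by the cycle (1)$\Rightarrow$(2)$\Rightarrow$(3)$\Rightarrow$(1), closely mirroring the proof of Theorem~\ref{theorem characterization of dpeerp} but everywhere replacing the condition ``$\Im(\eta)\subseteq\Inv_{q,\bar c}(\C)$'' by the strictly weaker ``$\eta$ sends realized types into $\Inv_{q,\bar c}(\C)$''. The reason this weaker condition is the correct one is that elements of $\EL(S_{\bar c}(\C))$ need not be continuous, so controlling $\eta$ on the dense set of realized types does \emph{not} control it on all of $S_{\bar c}(\C)$. For (1)$\Rightarrow$(2) I would run exactly the net construction from the $(\Rightarrow)$ direction of Theorem~\ref{theorem characterization of dpeerp}(i), but restrict the indexing family to pairs $(\bar b,\{(\varphi_i,p_i)\}_{i<n})$ in which each $p_i$ is a \emph{realized} type in $S_{\bar c}(\C)$ (by Remark~\ref{remark: definable is given by realized types} these are precisely the types defining definable colorings). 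Applying \deerp\ for $q$ to each such coloring yields $\sigma_f\in\Aut(\C)$, and I would check that for any accumulation point $\eta$ of the net $(\sigma_f)$, every realized $p$, every formula $\varphi$, and all $\bar a,\bar a'\models q$, the clopen condition $(\varphi(\bar a,\bar y)\leftrightarrow\varphi(\bar a',\bar y))\in\eta(p)$ is forced for all sufficiently large indices, hence holds at $\eta$; thus $\eta(p)\in\Inv_{q,\bar c}(\C)$ for every realized $p$. The step (2)$\Rightarrow$(3) is immediate: the diagonal type $p_0=\tp(\bar c/\C)$ is realized, so $\eta(p_0)\in\Inv_{q,\bar c}(\C)$, which is therefore nonempty.

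For (3)$\Rightarrow$(1), the genuinely new point, I would fix $r\in\Inv_{q,\bar c}(\C)$ and an arbitrary definable coloring $c:{\C\choose\bar a}\to 2^n$; by Remark~\ref{remark ext def coloring}(ii) it is given by formulae $\psi_i(\bar x,\bar y)$ and a single realized type $p\in S_{\bar c}(\C)$. The key observation is that the realized types form a dense $\Aut(\C)$-orbit $\{\sigma(p):\sigma\in\Aut(\C)\}$ in $S_{\bar c}(\C)$ (by strong homogeneity every realized type has this form). Hence, given the finitely many formulae $\psi_i(\bar a',\bar y)$ with $\bar a'\in{\bar b\choose\bar a}$ and $i<n$, I can choose $\sigma$ so that $\sigma(p)$ agrees with $r$ on all of them. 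Since $r$ is $q$-invariant, these truth values do not depend on the copy $\bar a'$, so $\sigma(p)$ is constant on copies, and then $\bar b':=\sigma^{-1}(\bar b)$ makes ${\bar b'\choose\bar a}$ monochromatic. This is exactly the $(\Leftarrow)$ argument of Theorem~\ref{theorem characterization of dpeerp}(i), with the single (possibly discontinuous) element $\eta$ replaced by the dense approximation afforded by realized types.

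For (ii) the plan is to reduce everything to (i), using that a global type is invariant iff it is $q$-invariant for every finitary $q$, i.e.\ $\Inv_{\bar c}(\C)=\bigcap_q\Inv_{q,\bar c}(\C)$; note also that (3) is just the definition of extreme amenability specialized to $p=\tp(\bar c)$. For (3)$\Rightarrow$(1): since $\Inv_{\bar c}(\C)\subseteq\Inv_{q,\bar c}(\C)$, nonemptiness of $\Inv_{\bar c}(\C)$ forces each $\Inv_{q,\bar c}(\C)\neq\emptyset$, so each finitary $q$ has \deerp\ by (i), whence $T$ has \deerp. Again (2)$\Rightarrow$(3) is immediate via $p_0$. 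The only delicate implication is (1)$\Rightarrow$(2): here I cannot simply compose the $\eta_q$ from (i), because each $\eta_q$ is controlled only on realized types whereas $\eta_q(p)$ need not be realized. Instead, for each finite set $S=\{q_1,\dots,q_k\}$ of finitary types I would pass to the type $q'_S:=\tp(\bar a_1,\dots,\bar a_k)$ of a tuple with $\bar a_j\models q_j$ (so that $\Inv_{q'_S,\bar c}(\C)\subseteq\bigcap_j\Inv_{q_j,\bar c}(\C)$, since a formula involving only the block $\bar a_j$ witnesses $q_j$-invariance), and apply (i) to obtain $\eta_{q'_S}$ sending realized types into $\Inv_{q'_S,\bar c}(\C)$. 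Taking an accumulation point $\eta$ of the net $(\eta_{q'_S})_S$ directed by inclusion, for each fixed $q$ the net is eventually, on each realized $p$, inside the \emph{closed} set $\Inv_{q,\bar c}(\C)$, so $\eta(p)\in\Inv_{q,\bar c}(\C)$; intersecting over all $q$ gives $\eta(p)\in\Inv_{\bar c}(\C)$ for every realized $p$.

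The main obstacle throughout is precisely the discontinuity of Ellis semigroup elements: it forces me to phrase (2) in terms of realized types only, to express each invariance requirement as a clopen condition forced at a point of the dense orbit rather than as a statement about $\Im(\eta)$, and---in (ii)---to replace the naive composition of the $\eta_q$ by the concatenation-type device together with the fact that an accumulation point of a net eventually inside a closed set remains in that set. The remaining verifications (directedness of the index families, well-definedness of the nets, and passage of clopen conditions to limits) I expect to be entirely parallel to those already carried out for Theorem~\ref{theorem characterization of dpeerp}.
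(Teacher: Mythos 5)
Your proposal is correct and follows essentially the same route as the paper's proof: (1)$\Rightarrow$(2) by rerunning the net construction of Theorem \ref{theorem characterization of dpeerp}(i) with realized types as the $p_i$'s, (2)$\Rightarrow$(3) via a realized type, (3)$\Rightarrow$(1) by matching a finite fragment of the invariant type inside $\C$, and in (ii) replacing the (here unavailable) composition trick by a net indexed by concatenations of realizations, using that a cluster point of a net eventually inside a closed set lies in that set.

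One caveat: your ``key observation'' in (3)$\Rightarrow$(1) --- that the realized types form a single dense $\Aut(\C)$-orbit, ``by strong homogeneity'' --- is false as stated. Strong $\kappa$-homogeneity applies only to tuples of length $<\kappa$, whereas $\bar c$ has length $\geqslant\kappa$; a realized type $\tp(\bar c'/\C)$ corresponds to an elementary self-embedding of $\C$, which need not be surjective, so not every realized type is an automorphic image of $\tp(\bar c/\C)$. Fortunately your argument does not need this: it only needs that the orbit of the \emph{particular} realized type $p$ defining the given definable coloring meets the nonempty clopen set of types agreeing with $r$ on the finitely many formulae $\psi_i(\bar a',\bar y)$, $\bar a'\in{\bar b\choose\bar a}$, $i<n$. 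Since the parameters $\bar e$ of a definable coloring form a finite subtuple of $\C$, hence of the enumeration $\bar c$, you may take $p=\tp(\bar c/\C)$ itself (matching $\bar y$ to the positions of $\bar e$ in $\bar c$), and the orbit of $\tp(\bar c/\C)$ is dense in $S_{\bar c}(\C)$ by $\kappa$-saturation together with strong $\kappa$-homogeneity applied to \emph{finite} tuples. With this repair your $\sigma$ exists and the rest of your computation goes through; note that this repaired step is in substance identical to the paper's own argument, which realizes the restriction $p_0$ of $r$ over $\bar b$ by a tuple $\bar d_0\subseteq\C$ and then takes $f\in\Aut(\C)$ with $f(\bar d_0)=\bar d$, $\bar b':=f(\bar b)$.
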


\begin{proof}
(i) For $(1)\Rightarrow(2)$ it suffices to note that by working with the realized types in $S_{\bar c}(\C)$ in place of arbitrary types $p_i\in S_{\bar c}(\C)$, the construction in the proof of Theorem \ref{theorem characterization of dpeerp}(i,$\Rightarrow$) yields $\eta$ mapping all realized types in $S_{\bar c}(\C)$ to $\Inv_{q,\bar c}(\C)$. The implication $(2)\Rightarrow(3)$ is obvious, so it remains to prove $(3)\Rightarrow(1)$.

Assume that $\Inv_{q,\bar c}(\C)\neq\emptyset$ and take any $p\in \Inv_{q,\bar c}(\C)$. Fix $\bar a\models q$. Consider any finite $\bar b\subseteq\C$ containing $\bar a$, $n<\omega$, and a definable coloring $c:{\C\choose\bar a}\to 2^n$ given by 
$\varphi_i(\bar x,\bar d)$, $i<n$, with $\bar d$ finite. 
Let $p_0(\bar y)\in S_{\bar y}(\bar b)$ be the restriction of $p$, where $\bar y$ are the variables corresponding to $\bar d$ within $\bar c$, and note that, by $q$-invariance of $p$,  $p_0$ contains $\varphi_i(\bar a,\bar y)\leftrightarrow\varphi_i(\bar a_0,\bar y)$ for all $\bar a_0\in{\bar b\choose\bar a}$ and $i<n$. Take $\bar d_0\models p_0$ and $f\in\Aut(\C)$ such that $f(\bar d_0)=\bar d$; let $\bar a'=f(\bar a)$ and $\bar b'=f(\bar b)$. Then $\tp(\bar d/\bar b')$ contains $\varphi_i(\bar a',\bar y)\leftrightarrow\varphi_i(\bar a_0,\bar y)$ for all $\bar a_0\in{\bar b'\choose\bar a'}$ and $i<n$. This means that ${\bar b'\choose\bar a}$ is monochromatic with respect to $c$, as ${\bar b'\choose\bar a}={\bar b'\choose\bar a'}$.

(ii) (2)$\Rightarrow$(3) is obvious, and (3)$\Rightarrow$(1) follows by (i). To prove (1)$\Rightarrow$(2), for every finite $\bar a\subseteq\C$ choose, by (i), $\eta_{\bar a}\in\EL(S_{\bar c}(\C))$ mapping all realized types in $S_{\bar c}(\C)$ to $\Inv_{\tp(\bar a),\bar c}(\C)$. Direct all finite tuples $\bar a$ from $\C$ by inclusion and take any accumulation point $\eta$ of the net of the $\eta_{\bar a}$'s. It is easy to see that $\eta$ maps all realized types in $S_{\bar c}(\C)$ to $\Inv_{\bar c}(\C)$.
\end{proof}

 Proposition \ref{lemma def rp independent on monster} together with the observation that in the proofs of Theorems \ref{theorem characterization of dpeerp} and \ref{theorem characterization of deerp} it is enough to assume only $\aleph_0$-saturation and strong $\aleph_0$-homogeneity of $\C$ and consider \edeerp\ [resp. \deerp] only in the chosen model $\C$ yield that both the existence of $\eta\in \EL(S_{\bar c}(\C))$ with $\Im(\eta)\subseteq\Inv_{\bar c}(\C)$ as well as extreme amenability of $T$ are independent of the choice of the $\aleph_0$-saturated and strongly $\aleph_0$-homogeneous model $\C$. On the other hand, the fact that extreme amenability of $T$ is absolute was easily observed directly in \cite{KPT}, so, using the above observation on the proof of Theorem \ref{theorem characterization of deerp}, we get the first part of Proposition \ref{lemma def rp independent on monster}, i.e.\ absoluteness of \deerp\ (at least for $\aleph_0$-saturated and strongly $\aleph_0$-homogeneous models).

Note that \deerp\ implies that $\GalL(T)$ is trivial, because, by Theorem \ref{theorem characterization of deerp}, $T$ is extremely amenable and so $\GalL(T)$ is trivial by 
\cite[Proposition 4.2]{HKP}. However, in contrast with \edeerp,\ Examples \ref{example R2 R4} and \ref{example R2 R4 Ps} show that a theory with \deerp\ need not have trivial or even finite Ellis group.

In Corollaries \ref{corollary c to finite z m-ary} and \ref{corollary: c to finite z for image in inv types}, we saw that in an $(m+1)$-ary theory, in order to compute the Ellis group or to test the existence of an element in the Ellis semigroup with image contained in invariant types, we can restrict ourselves to the $\Aut(\C)$-flow $S_m(\C)$. The next proposition shows  a similar behavior of \edeerp.

\begin{proposition}\label{proposition C} If $T$ is $(m+1)$-ary and each type from $S_m(T)$ has \edeerp, then $T$ has \edeerp.
\end{proposition}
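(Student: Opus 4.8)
The plan is to reduce, via the dynamical characterization already established, to a purely combinatorial statement about invariant types, and then to use $(m+1)$-arity to pass from $m$-ary invariance to full invariance. By Theorem \ref{theorem characterization of dpeerp}(ii) it is enough to produce a single $\eta\in\EL(S_{\bar c}(\C))$ with $\Im(\eta)\subseteq\Inv_{\bar c}(\C)$. The hypothesis, together with Theorem \ref{theorem characterization of dpeerp}(i), hands us for each $q\in S_m(T)$ an element $\eta_q\in\EL(S_{\bar c}(\C))$ with $\Im(\eta_q)\subseteq\Inv_{q,\bar c}(\C)$. First I would transcribe the net construction from the proof of Theorem \ref{theorem characterization of dpeerp}(ii): for a finite $S=\{q_1,\dots,q_n\}\subseteq S_m(T)$ put $\eta_S=\eta_{q_1}\circ\dots\circ\eta_{q_n}$, use $(\ref{equation inv q})$ to get $\Im(\eta_S)\subseteq\bigcap_{i}\Inv_{q_i,\bar c}(\C)$, and let $\eta$ be an accumulation point of the net $(\eta_S)_S$ directed by inclusion. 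Since each $\Inv_{q,\bar c}(\C)$ is a subflow, hence closed, this yields $\Im(\eta)\subseteq\bigcap_{q\in S_m(T)}\Inv_{q,\bar c}(\C)$.

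The crux is then the following claim, which is where $(m+1)$-arity enters:
$$\bigcap_{q\in S_m(T)}\Inv_{q,\bar c}(\C)=\Inv_{\bar c}(\C).$$
The inclusion $\supseteq$ is trivial. For $\subseteq$, take $p$ in the left-hand intersection; I must show that $p$ is $\Aut(\C)$-invariant, i.e.\ that for any finite parameter tuples $\bar a\equiv\bar a'$ and any formula $\chi$, the membership of $\chi(\bar v,\bar a)$ in $p$ agrees with that of $\chi(\bar v,\bar a')$ (where $\bar v$ denotes the variables of $p$). By $(m+1)$-arity, $\chi$ is equivalent modulo $T$ to a Boolean combination of formulae with at most $m+1$ free variables, and since $p$ is a complete type it suffices to treat one such piece $\theta$. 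If $\theta$ uses no variable from $\bar v$, then $\theta(\bar a)$ is a parameter sentence and the conclusion is immediate from $\bar a\equiv\bar a'$. Otherwise $\theta$ leaves at least one free object variable, so it involves at most $m$ parameter variables, hence only a subtuple $\bar a_1$ of $\bar a$ (and the corresponding subtuple $\bar a_1'$ of $\bar a'$) of length $\leqslant m$. Fixing $\sigma\in\Aut(\C)$ with $\sigma(\bar a)=\bar a'$, I would extend $\bar a_1$ to a tuple $\bar e$ of length exactly $m$ and set $\bar e'=\sigma(\bar e)$; then $\bar e\equiv\bar e'$ realize a common $q\in S_m(T)$ and $\bar e'$ extends $\bar a_1'$, so $q$-invariance of $p$ gives the desired agreement on $\theta$. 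Reassembling the pieces through the Boolean combination proves the claim, whence $\Im(\eta)\subseteq\Inv_{\bar c}(\C)$, and $T$ has \edeerp\ by Theorem \ref{theorem characterization of dpeerp}(ii).

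The main obstacle is the claim, and within it the bookkeeping that reduces everything to parameter tuples of length exactly $m$: one must verify that after the $(m+1)$-ary decomposition each piece either is a parameter sentence or leaves at least one free object variable (so at most $m$ parameter slots remain), and that the padding can be carried out compatibly for $\bar a$ and $\bar a'$ — which is precisely what routing the padding through a single automorphism $\sigma$ guarantees. The net/accumulation-point portion is routine, being a direct transcription of the argument already given for Theorem \ref{theorem characterization of dpeerp}(ii), with the only change that the index set is now $S_m(T)$ rather than all finitary types.
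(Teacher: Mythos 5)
Your proof is correct and follows essentially the same route as the paper: the paper's own proof runs the composition/accumulation-point construction of Theorem \ref{theorem characterization of dpeerp}(ii,$\Rightarrow$) over the types in $S_m(T)$ to get $\eta$ with $\Im(\eta)\subseteq\bigcap_{q\in S_m(T)}\Inv_{q,\bar c}(\C)$, and then simply asserts that $(m+1)$-arity makes this intersection equal to $\Inv_{\bar c}(\C)$ — which is precisely the claim you prove in detail via the Boolean decomposition into pieces with at most $m+1$ free variables, the parameter-sentence case, and the padding-through-$\sigma$ argument. Your write-up fills in exactly the details the paper leaves implicit, with no gaps.
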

\begin{proof}
The construction from Theorem \ref{theorem characterization of dpeerp}(ii,$\Rightarrow$), but working with types $q_i\in S_{m}(T)$ instead of arbitrary finitary types, yields $\eta\in\EL(S_{\bar c}(\C))$ such that $\Im(\eta)\subseteq\bigcap_{q\in S_m(T)}\Inv_{q,\bar c}(\C)$. By $(m+1)$-arity of $T$, this intersection equals $\Inv_{\bar c}(\C)$, so $T$ has \edeerp\ by Theorem \ref{theorem characterization of dpeerp}(ii).
\end{proof}

\subsection{Separately finite externally definable elementary embedding Ramsey degree}

By a direct analogy with the introduced notions of elementary embedding Ramsey properties, one may define the notions of finite elementary embedding Ramsey degrees. We introduce here theories with separately finite \eerdeg\ and a weak form of separately finite \edeerdeg, as they play an essential role in this paper. 
In order to state this weak version, we need to use  externally definable $\Delta$-colorings (see Definition \ref{definition of definable colorings} and Remark \ref{remark ext def coloring}). 

\begin{definition}\phantomsection\label{definition strong ramsey degree}
\begin{enumerate}[label=(\alph*), align=right, leftmargin=*]
\item A theory $T$ has {\em separately finite \eerdeg} ({\em separately finite elementary embedding Ramsey degree}) if for any finite tuple $\bar a$ there exists $l<\omega$ such that for any finite tuple $\bar b\subseteq\C$ containing $\bar a$, $r<\omega$, and coloring $c:{\C\choose\bar a}\to r$ there exists $\bar b'\in{\C\choose\bar b}$ such that $\#c[{\bar b'\choose\bar a}]\leqslant l$.

\item
 A theory $T$ has {\em separately finite \edeerdeg} ({\em  separately finite externally definable elementary embedding Ramsey degree}) if for any finite tuple $\bar a$, finite set of formulae $\Delta$ in variables $\bar x$ (where $|\bar x|=|\bar a|$) and $\bar y$, and $q \in S_{\bar y}(\emptyset)$, there exists $l<\omega$ such that for any finite tuple $\bar b\subseteq\C$ containing $\bar a$, $n<\omega$, and externally definable $(\Delta,q)$-coloring $c:{\C\choose\bar a}\to 2^n$ there exists $\bar b'\in{\C\choose\bar b}$ such that $\#c[{\bar b'\choose\bar a}]\leqslant l$. By Remark \ref{remark ext def coloring}(iii), wlog we can assume here that $|\bar y|=|\bar c|$ and $q := \tp(\bar c/\emptyset)$, so $c$ is then an externally definable $\Delta$-coloring.
\end{enumerate}
\noindent
The word ``separately'' is used here to stress that $l$ depends on $\bar a$ (in (b), $l$ also depends on $(\Delta,q)$, or only on $\Delta$ if $q = \tp(\bar c/\emptyset)$). The least such number will be called the {\em [externally definable] elementary embedding Ramsey degree} of $\bar a$ [with respect to $(\Delta,q)$, or $\Delta$ if $q= \tp(\bar c/\emptyset)$].
\end{definition}

As in Remark \ref{lemma equivalents of eerps}, 
a theory $T$ has sep.\ fin.\ \eerdeg\ iff the following holds: For any finite tuple $\bar a$ there exists $l<\omega$ such that for any finite tuple $\bar b\subseteq \C$ and any $r<\omega$ there exists a finite subset $C\subseteq\C$ such that for any coloring $c:{C\choose\bar a}\to r$ there exists $\bar b'\in{C\choose\bar b}$ such that $\#c[{\bar b'\choose\bar a}]\leqslant l$. 
By using this characterization, as in Remark \ref{remark rp ind in sat and str hom} one shows that the property of having sep.\ fin.\ \eerdeg\ does not depend on the choice of the monster (or just an $\aleph_0$-saturated) model $\C$, 
i.e.\ it is indeed a property of the theory. The counterpart of Remark \ref{remark fraisse eerp} also holds:

\begin{remark}\label{remark fraisse fin sep eerdeg}
Definition \ref{definition strong ramsey degree}(a) generalizes the definition of sep.\ fin.\ embedding Ramsey degree for Fra\"iss\'e structures in the following sense: If $K$ is an $\aleph_0$-saturated Fra\"iss\'e structure, then it has sep.\ fin.\ embedding Ramsey degree iff $\mathrm{Th}(K)$ has sep.\ fin.\ \eerdeg.\qed
\end{remark}

The property of having sep.\ fin.\ \edeerdeg\ is absolute, too, i.e.\ does not depend on the choice of the monster model. To see this, one should follow the lines of the proof of absoluteness of \edeerp\ in Proposition \ref{lemma def rp independent on monster} with the following modifications. 
Fix $\bar a$, $\Delta$ and $q$ as in the definition of \edeerdeg. For any finite $\bar b\supseteq\bar a$ consider the same $V_{\bar a,\bar b}$ as in the proof of Proposition \ref{lemma def rp independent on monster}. For any $n<\omega$ and formulae $\varphi_0(\bar x,\bar y),\dots,\varphi_{n-1}(\bar x,\bar y)\in\Delta$ consider the formula:
$$\theta(\bar x',\bar y_0,\dots,\bar y_{n-1}):= \bigvee_{\substack{V\subseteq V_{\bar a,\bar b}\\|V|=l}} 
\bigwedge_{\bar x_0\in V_{\bar a,\bar b}}\bigvee_{\bar x\in V}\bigwedge_{i<n} \varphi_i(\bar x_0,\bar y_i)\leftrightarrow\varphi_i(\bar x,\bar y_i),$$
where $l$ is the \edeerdeg\ of $\bar a$ with respect to $(\Delta,q)$ computed in $\C$, and the $\bar y_i$'s are pairwise disjoint copies of $\bar y$. 
Note that for an externally definable $(\Delta,q)$-coloring $c:{\C\choose\bar a}\to 2^n$ given by 
$\varphi_i(\bar x,\bar y)$ and $p_i(\bar y) \in S_q(\C)$, $i<n$, for $\bar b'\in{\C\choose\bar b}$ we have: $\#c[{\bar b'\choose\bar a}]\leqslant l$ iff 
$\theta(\bar b',\bar y_0,\dots,\bar y_{n-1})$ is consistent with $\bigcup_{i<n}p_i(\bar y_i)$, iff 
$\bigcup_{i<n} p_i(\bar y_i)\vdash \theta(\bar b',\bar y_0,\dots,\bar y_{n-1})$. Using this, one can argue as in the proof of Proposition \ref{lemma def rp independent on monster}, starting from 
$p_i^*(\bar y) \in S_q(\C^*)$, $i<n$, choosing a completion $p^*(\bar y_0,\dots,\bar y_{n-1}) \in S(\C^*)$ of $\bigcup_{i<n}p_i^*(\bar y_i)$,  and using a strong heir extension $p \in S(\C)$ of $p^*$.

Note that the diagram in Remark \ref{remark relations between eerps} expands to:
\begin{center}\begin{tabular}{ccccc}
\eerp & $\Rightarrow$ & \edeerp & $\Rightarrow$ & \deerp \\
$\Downarrow$ && $\Downarrow$\\
sep.\ fin.\ \eerdeg & $\Rightarrow$ & sep.\ fin.\ \edeerdeg
\end{tabular}\end{center}

All the implications written above are strict. The converse of the first vertical implication fails for the random graph; the converse of the second one fails  for the hypergraph from Example \ref{example R2 R4}; the converse of the lower horizontal implication fails e.g.\ in $ACF_0$ (see Example \ref{example ACF0}).

We now prove the counterpart of Theorem \ref{theorem characterization of dpeerp} for sep.\ finite \edeerdeg,\ which is one of the main results of this paper. For a formula $\varphi(\bar x, \bar y)$ put $\varphi^{\textrm{opp}}(\bar y, \bar x):= \varphi(\bar x, \bar y)$. For $\Delta= \{\varphi_i(\bar x,\bar y)\}_{i<k}$ put $\Delta^{\textrm{opp}}: = \{\varphi_i^{\textrm{opp}}(\bar y,\bar x)\}_{i<k}$ .

\begin{theorem}\label{theorem characterization of sep fin dpeerdeg} $T$ has sep.\ fin.\ \edeerdeg\ iff for every  $\Delta= \{\varphi_i(\bar x,\bar y)\}_{i<k}$ and $\bar p=\{p_j\}_{j<m}\subseteq S_{\bar y}(T)$ there exists $\eta\in\EL(S_{\bar c,\Delta}(\bar p))$ such that $\Im(\eta)$ is finite.
\end{theorem}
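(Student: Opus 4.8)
The plan is to prove both implications through a single dictionary translating externally definable $(\Delta,q)$-colorings of copies of a finite tuple into the dynamics of the flows $S_{\bar c,\Delta}(\bar p)$, running in parallel to the proof of Theorem \ref{theorem characterization of dpeerp} but with ``monochromatic'' replaced by ``at most $l$ colors'' and ``image in invariant types'' replaced by ``finite image''. First I would record two reductions. Since every formula in $\Delta$ mentions only finitely many coordinates of $\bar c$, the flow $S_{\bar c,\Delta}(\bar p)$ depends only on the finite subtuple $\bar o\subseteq\bar c$ of those coordinates; writing $q:=\tp(\bar o)$ and fixing $\bar b_j\models p_j$, a point of $S_{\bar c,\Delta}(\bar p)$ is exactly the record of the truth values $\varphi_i(\bar o^*,\bar b)$ as $\bar o^*\equiv\bar o$ varies and $\bar b$ runs over realizations of the $p_j$. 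The role of $\Delta^{\textrm{opp}}$ is to pin down the correct \emph{colored} object: finiteness of $\Im(\eta)$ measures how $\bar o^*$ behaves over the parameters, which by swapping $\bar x\leftrightarrow\bar y$ is the same as coloring copies of the parameter tuples $\bar b_j$ by their $\Delta^{\textrm{opp}}$-pattern over copies of $\bar o$. Concretely, I claim that, for fixed $\Delta$ and $\bar p$, existence of $\eta\in\EL(S_{\bar c,\Delta}(\bar p))$ with finite image is equivalent to: for each $j<m$ the tuple $\bar b_j$ has finite \edeerdeg\ with respect to $(\Delta^{\textrm{opp}},q)$. Because the outer quantifier of the theorem ranges over \emph{all} $\Delta$ and $\bar p$, and $\Delta\mapsto\Delta^{\textrm{opp}}$ is an involution on finite sets of formulae, this claim yields the theorem once both directions of the equivalence are established; the passage from the single type $q$ of \edeerdeg\ to the family $\bar p$ is handled by treating each $p_j$ separately and composing the resulting Ellis elements as in Theorem \ref{theorem characterization of dpeerp}(ii).

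For the direction from \edeerdeg\ to finite image I would imitate the net construction of Theorem \ref{theorem characterization of dpeerp}(i,$\Rightarrow$). For each finite ``test'' consisting of a finite $\bar b\supseteq\bar o$ together with finitely many parameters (global types in the parameter variable over $\C$, reduced to a single formula and single type via Remark \ref{remark ext def coloring}(i)), the finite Ramsey degree produces an automorphism $\sigma$ under which the relevant copies fall into at most $l$ $\Delta$-classes over those parameters. Taking an accumulation point $\eta$ of the resulting net and reading off colors on finitely many parameter-columns, one gets that the behaviours of $\eta$ collapse. The essential point is that the degree $l$ is \emph{independent of the number $n$ of colors}: since a point of $S_{\bar c,\Delta}(\bar p)$ refines every externally definable $\Delta$-coloring with parameters of the given types, the number of limit behaviours is bounded by a finite function of $l$, $k$ and $m$ (the values live in the finite set $2^{k}$ indexed by the $m$ parameter types), so $\Im(\eta)$ is finite. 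This is exactly where the $n$-uniformity of \edeerdeg\ is used, and it replaces the conclusion ``$\Im(\eta)\subseteq\Inv_{q,\bar c}(\C)$'' of the Ramsey-property case.

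For the converse I would start from an $\eta$ with finite image in $S_{\bar c,\Delta^{\textrm{opp}}}(\{\tp(\bar a)\})$ provided by the hypothesis, say $\Im(\eta)=\{s_1,\dots,s_{l'}\}$. Given an externally definable $(\Delta,q)$-coloring $c$ and a finite $\bar b\supseteq\bar a$, I approximate $\eta$ by an automorphism $\sigma$ agreeing with it on the finitely many parameter-columns determined by $\bar b$ and the types defining $c$, and set $\bar b':=\sigma(\bar b)$. For a close enough approximation the color of each copy in $\binom{\bar b'}{\bar a}$ is computed from one of the $l'$ image points $s_t$, so $\#c[\binom{\bar b'}{\bar a}]$ is bounded by a finite function of $l'$; as this bound does not depend on $n$, it witnesses finite \edeerdeg.

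The main obstacle, and the step deserving the most care, is the finiteness of $\Im(\eta)$ in the first direction: unlike the number of possible $\Delta$-\emph{contents}, which is automatically finite because it is a subset of the finite set $2^{k}\times\{0,\dots,m-1\}$ and hence carries no information, the number of actual limit behaviours must be genuinely controlled, and this is precisely what the $n$-uniform Ramsey degree buys. Equivalently, one must verify that the degree translates correctly across the $\bar x\leftrightarrow\bar y$ duality, i.e.\ that finite image of $\eta$ on the object side matches few colors on the parameter side; keeping track of which variable plays the role of the colored object (hence the systematic use of $\Delta^{\textrm{opp}}$) and reducing the finite family $\bar p$ to single parameter types are the places where mistakes are easiest to make. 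Once finite image is in hand, Fact \ref{fact finite image} additionally shows that the Ellis group of $S_{\bar c,\Delta}(\bar p)$ is finite, which is what the applications in Theorem \ref{THMsecond} require.
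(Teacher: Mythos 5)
Your proposal follows essentially the same architecture as the paper's proof: the $\bar x\leftrightarrow\bar y$ duality implemented by $\Delta^{\textrm{opp}}$, the construction of $\eta$ as an accumulation point of a net of automorphisms supplied by the Ramsey degree, and, for the converse, approximating a finite-image $\eta$ by an automorphism and bounding the number of colors by the number of pairs (formula, image point) -- your converse direction is, up to a sign convention on $\sigma$ versus $\sigma^{-1}$, the paper's proof. The genuine gap is in the forward direction, at precisely the step you yourself flag as the main obstacle: you assert that ``the number of limit behaviours is bounded by a finite function of $l$, $k$ and $m$'' because ``the values live in the finite set $2^{k}$ indexed by the $m$ parameter types'', but this is not an argument. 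That parenthetical only says that the restriction of any image type to the parameters coming from a \emph{single} copy of the parameter tuple takes one of finitely many values; it does not prevent $\Im(\eta)$ from containing infinitely many global types which differ on \emph{different} copies. (Compare Example \ref{example R_2 Ps}: there, too, each point of the flow takes one of two values at each parameter, yet no element of the Ellis semigroup has finite image -- pointwise finiteness of values is never enough.) What is actually needed, and what the paper supplies, is a separation-and-encoding argument: assuming $\eta(\hat q_0),\dots,\eta(\hat q_{n-1})$ are pairwise distinct, each inequality $\eta(\hat q_t)\neq\eta(\hat q_{t'})$ is witnessed by some $\varphi_i\in\Delta$ and a realization of some $p_j$; these witnessing conditions are open in $\eta$, hence pull back along the net to a single $\sigma_{\bar b,\bar q}$ whose index dominates all the witnesses and all the $q_t$'s; inside $\bar b'=\sigma_{\bar b,\bar q}^{-1}(\bar b)$ the copies of the parameter tuple carry at most $l$ colors, while the pulled-back witnesses show that the map sending $t<n$ to its membership pattern in those $\leqslant l$ color classes (recorded for each pair $(i,j)$) is injective, whence $n\leqslant 2^{kml}$. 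This argument (run in the paper after a Ramsey-theorem uniformization of the witnesses) is the heart of the theorem and is absent from your proposal; nothing in what you wrote substitutes for it.

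A secondary, fixable issue concerns your reduction to a single $p_j$: you propose to ``compose the resulting Ellis elements as in Theorem \ref{theorem characterization of dpeerp}(ii)'', but your $\eta_j$'s live in the \emph{different} semigroups $\EL(S_{\bar c,\Delta}(\{p_j\}))$, so they cannot be composed literally, and there is no analogue of the invariance property ($\dagger$) to quote. To make this work you must first lift each $\eta_j$ to some $\tilde\eta_j\in\EL(S_{\bar c,\Delta}(\bar p))$ whose $j$-th projection is $\eta_j$ (take a subnet of automorphisms converging to $\eta_j$ and pass to a limit in the bigger semigroup), observe that elements of $\EL(S_{\bar c,\Delta}(\bar p))$ act coordinatewise through the embedding $S_{\bar c,\Delta}(\bar p)\hookrightarrow\prod_{j<m}S_{\bar c,\Delta}(\{p_j\})$, and check that $\tilde\eta_0\circ\dots\circ\tilde\eta_{m-1}$ has finite image in every coordinate (the $j$-th projection of the composition factors through the finite set $\Im(\eta_j)$). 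The paper sidesteps all of this by applying the degree hypothesis once, to the concatenated tuple $\bar a=(\bar a_0,\dots,\bar a_{m-1})$ and the swapped family $\Delta'$, which is the cleaner route.
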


\begin{proof}
$(\Rightarrow)$ 
Suppose that $T$ has sep.\ fin.\ \edeerdeg. Fix any $\Delta= \{\varphi_i(\bar x,\bar y)\}_{i<k}$ and $\bar p=\{p_j\}_{j<m}\subseteq  S_{\bar y}(T)$, where $\bar x$ corresponds to $\bar c$ and $\bar y$ is finite. Fix $\bar a_j\models p_j$ for $j<m$, and put $\bar a=(\bar a_0,\dots,\bar a_{m-1})$. 
Let $\Delta':= \{\phi_{i,j}(\bar z,\bar x) \mid i<k, j<m\}$, where $\phi_{i,j}(\bar z,\bar x):= \varphi_i(\bar x, \bar y_j)$ and $\bar z = (\bar y_0,\dots,\bar y_{m-1})$ with $\bar y_i$ corresponding to $\bar a_i$. Let $l<\omega$ be the \edeerdeg\ of $\bar a$ with respect to $\Delta'$.

For any $n<\omega$, $q_0,\dots,q_{n-1}\in S_{\bar c}(\C)$, and finite $\bar b\supseteq\bar a$ consider the externally definable $\Delta'$-coloring $c:{\C\choose\bar a}\to 2^{kmn}$ given by:
$$c(\bar a')(i,j,t)=\left\{
\begin{array}{cl}
1, &  \varphi_i(\bar x,\bar a_j')\in q_t\\
0, &  \lnot\varphi_i(\bar x,\bar a_j')\in q_t
\end{array}
\right.,$$
where $\bar a_j'\subseteq\bar a'$ is the subtuple corresponding to $\bar y_j$. 
(Note that $c$ is indeed an externally definable $\Delta'$-coloring via the formulae $\phi_{i,j,t}(\bar z,\bar x):= \varphi_i(\bar x,\bar y_j)$ and types $r_{i,j,t}(\bar x):= q_t(\bar x)$ for $i<k$, $j<m$, and $t<n$.) By the choice of $l$, we can find $\bar b'\in{\C\choose\bar b}$ such that $\#c[{\bar b'\choose\bar a}]\leqslant l$. Let $\sigma_{\bar b,\bar q}\in\Aut(\C)$ be such that $\sigma_{\bar b,\bar q}(\bar b')=\bar b$; here $\bar q$ denotes $(q_0,\dots,q_{n-1})$. Consider the naturally directed family $\mathcal F$ of all pairs $(\bar b,\bar q)$, and let $\eta$ be an accumulation point of the net $(\sigma_{\bar b,\bar q})_{(\bar b,\bar q)\in\mathcal F}$ in $\EL(S_{\bar c,\Delta}(\bar p))$. 

We claim that $\Im(\eta)$ is finite. Suppose not, i.e.\ we can find $q_0,q_1,\ldots\in S_{\bar c}(\C)$ such that the $\eta(\hat q_t)$'s are pairwise distinct, where $\hat q_t \in S_{\bar c,\Delta}(\bar p)$ denotes the restriction of $q_t$. For each $t \ne t'$ the fact that $\eta(\hat q_{t}) \ne \eta(\hat q_{t'})$ is witnessed  by one of the formulae $\varphi_0(\bar x,\bar y),\dots,\varphi_{k-1}(\bar x,\bar y)$ and a realization of one of the types $p_0,\dots,p_{m-1}$. By Ramsey theorem, passing to a subsequence, we can assume that there are $i_0<k$ and $j_0<m$ such that for each $t \ne t'$ the fact that $\eta(\hat q_{t}) \ne \eta(\hat q_{t'})$ is witnessed  by $\varphi_{i_0}(\bar x,\bar y)$ and a realization of $p_{j_0}$.
Let $n>2^l$, and for $t<t'<n$ denote by $\bar a_{j_0}^{t,t'}$ a realization of $p_{j_0}$ such that $\varphi_{i_0}(\bar x, \bar a_{j_0}^{t,t'})\in\eta(\hat q_t)$ iff $\lnot\varphi_{i_0}(\bar x, \bar a_{j_0}^{t,t'})\in\eta(\hat q_{t'})$. Note that this is an open condition on $\eta$, so if we let $\bar a^{t,t'}$ to be a copy of $\bar a$ extending $\bar a_{j_0}^{t,t'}$, then we can find a tuple $(\bar b,\bar q)$ greater than $(\bar a^\frown(\bar a^{t,t'})_{t<t'<n},(q_0,\dots,q_{n-1}))$ in $\mathcal F$ such that $\varphi_{i_0}(\bar x, \bar a_{j_0}^{t,t'})\in\sigma_{\bar b,\bar q}(\hat q_t)$ iff $\lnot\varphi_{i_0}(\bar x, \bar a_{j_0}^{t,t'})\in\sigma_{\bar b,\bar q}(\hat q_{t'})$. By the choice of $\sigma_{\bar b,\bar q}$, $\bar b'=\sigma_{\bar b,\bar q}^{-1}(\bar b)$ satisfies $\#c[{\bar b'\choose\bar a}]\leqslant l$.

For $\bar a'\in{\bar b'\choose\bar a}$ let $S(\bar a'):=\{t<n\mid \varphi_{i_0}(\bar x,\bar a_{j_0}')\in q_t\}$. Since $\#c[{\bar b'\choose\bar a}]\leqslant l$, we have $\#\{S(\bar a')\mid \bar a'\in{\bar b'\choose\bar a}\}\leqslant l$. 
Put $l'=\#\{S(\bar a')\mid\bar a'\in{\bar b'\choose\bar a}\}$, so $l'\leqslant l$, and let $\{S(\bar a')\mid\bar a'\in{\bar b'\choose\bar a}\}=\{S_0,\dots,S_{l'-1}\}$. Note that the mapping $f:n\to\mathcal P(\{S_0,\dots,S_{l'-1}\})$ given by $f(t)=\{S_u\mid u<l', t\in S_u\}$ is injective. Indeed, for $t<t'<n$, by the choice of $\bar b$, we have $\bar a^{t,t'}\subseteq\bar b$, so $\bar a':=\sigma_{\bar b,\bar q}^{-1}(\bar a^{t,t'})\subseteq\bar b'$. 
But since $\varphi_{i_0}(\bar x,\bar a_{j_0}^{t,t'})\in\sigma_{\bar b,\bar q}(\hat q_t)$ iff $\lnot\varphi_{i_0}(\bar x, \bar a_{j_0}^{t,t'})\in\sigma_{\bar b,\bar q}(\hat q_{t'})$, we have $\varphi_{i_0}(\bar x,\bar a_{j_0}')\in\hat q_t$ iff $\lnot\varphi_{i_0}(\bar x, \bar a_{j_0}')\in\hat q_{t'}$, i.e.\ $\varphi_{i_0}(\bar x, \bar a_{j_0}')\in q_t$ iff $\lnot\varphi_{i_0}(\bar x, \bar a_{j_0}')\in q_{t'}$, hence $t\in S(\bar a')$ iff $t'\notin S(\bar a')$. Thus, $S(\bar a')$ distinguishes $f(t)$ and $f(t')$, and $f$ is injective. Therefore, $n\leqslant 2^{l'}\leqslant 2^l$, which contradicts the choice of $n$. 

$(\Leftarrow)$ Suppose now that the right hand side of the theorem holds. Take a finite tuple $\bar a\subseteq\C$ and a finite set of formulae $\Delta$ in variables $\bar x, \bar y$, 
where $\bar y$ corresponds to $\bar c$; 
let $\#\Delta= k$ and set $p=\tp(\bar a)$. By assumption, we may find $\eta\in\EL(S_{\bar c,\Delta^{\textrm{opp}}}(p))$ with $\Im(\eta)$ finite, say $\#\Im(\eta)= t$. We claim that $l:=2^{kt}$ satisfies our requirements for $\bar a$ and $\Delta$.

Fix a finite $\bar b$ containing $\bar a$, $n<\omega$, and an externally definable $\Delta$-coloring $c:{\C\choose\bar a}\to 2^n$ given by $\varphi_i(\bar x,\bar y)\in\Delta$ and $q_i(\bar y)\in S_{\bar c}(\C)$, $i<n$. 
Actually, we may assume that the types $q_i(\bar y)$'s are from $S_{\bar c,\Delta^{\textrm{opp}}}(\C)$ by taking their appropriate restrictions, as the coloring $c$ depends only on these restrictions. 
For each $\bar a'\in{\C\choose\bar a}$ find $\varepsilon_{\bar a'}\in 2^n$ such that $\varphi_i(\bar a',\bar y)^{\varepsilon_{\bar a'}(i)}\in\eta(q_i(\bar y))$ for all $i<n$. Note that we have only $2^{kt}$ possibilities for $\varepsilon_{\bar a'}$, as we only have $k$-many formulae in $\Delta$ and $t$-many types in $\Im(\eta)$ (namely, if $\varphi_i(\bar x,\bar y)=\varphi_{i'}(\bar x,\bar y)$ and $\eta(q_i(\bar y))=\eta(q_{i'}(\bar y))$, then, by consistency, we must have $\varepsilon_{\bar a'}(i)=\varepsilon_{\bar a'}(i')$). Consider the open condition on $\eta$ given by:
$\varphi_i(\bar a',\bar y)^{\varepsilon_{\bar a'}(i)}\in\eta(q_i(\bar y))$ for all $\bar a'\in{\bar b\choose\bar a}$ and $i<n$.
Let $\sigma\in\Aut(\C)$ be such that:
$\varphi_i(\bar a',\bar y)^{\varepsilon_{\bar a'}(i)}\in\sigma(q_i(\bar y))$, i.e.\ $\varphi_i(\sigma^{-1}(\bar a'),\bar y)^{\varepsilon_{\bar a'}(i)}\in q_i(\bar y)$, for all $\bar a'\in{\bar b\choose\bar a}$ and $i<n$.
Put $\bar b'=\sigma^{-1}(\bar b)$; we have:
$\varphi_i(\bar a',\bar y)^{\varepsilon_{\bar a'}(i)}\in q_i(\bar y)$ for all $\bar a'\in{\bar b'\choose\bar a}$ and $i<n$.
But since we only had $2^{kt}$ possibilities for $\varepsilon$'s, this means that $\# c[{b'\choose\bar a}]\leqslant 2^{kt}$.
\end{proof}

In fact, a slight elaboration on our proof yields concrete bounds.

\begin{corollary}\phantomsection\label{corollary: bounds}
\begin{enumerate}[label=(\roman*), align=right, leftmargin=*]
\item Let  $\Delta= \{\varphi_i(\bar x,\bar y)\}_{i<k}$ and $\bar p=\{p_j\}_{j<m}\subseteq S_{\bar y}(T)$, where $|\bar x|=|\bar c|$ and $\bar y$ is finite. Fix $\bar a_j\models p_j$ for $j<m$, and put $\bar a=(\bar a_0,\dots,\bar a_{m-1})$.  Let $\Delta':= \{\phi_{i,j}(\bar z,\bar x) \mid i<k, j<m\}$, where $\phi_{i,j}(\bar z,\bar x):= \varphi_i(\bar x, \bar y_j)$ and $\bar z = (\bar y_0,\dots,\bar y_{m-1})$. Assume $l<\omega$ is the \edeerdeg\ of $\bar a$ with respect to $\Delta'$. Then there exists $\eta\in\EL(S_{\bar c,\Delta}(\bar p))$ such that $\#\Im(\eta) \leqslant 2^{mkl}$.
\item 
Let $\bar a$ be a finite tuple and $\Delta= \{\varphi_i(\bar x,\bar y)\}_{i<k}$, where $|\bar y|=|\bar c|$. Put $p= \tp(\bar a)$. Assume that there is $\eta\in\EL(S_{\bar c,\Delta^{\textrm{opp}}}(p))$ with $\#\Im(\eta)= t<\omega$. Then the \edeerdeg\ of $\bar a$ with respect to $\Delta$ is bounded by $2^{kt}$.
\end{enumerate}
\end{corollary}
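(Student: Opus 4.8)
The plan is to extract both bounds by inspecting the proof of Theorem~\ref{theorem characterization of sep fin dpeerdeg}, which already contains all the essential estimates; the only genuinely new point occurs in part~(i), where the Ramsey-theorem reduction used in the theorem must be bypassed, since that step gives no explicit control on the number of colours. For part~(ii) there is in fact nothing to add: this is precisely the $(\Leftarrow)$ direction of Theorem~\ref{theorem characterization of sep fin dpeerdeg}. Starting from $\eta\in\EL(S_{\bar c,\Delta^{\textrm{opp}}}(p))$ with $\#\Im(\eta)=t$, that argument sets $l:=2^{kt}$ and checks that for every externally definable $\Delta$-coloring $c:{\C\choose\bar a}\to 2^n$ one finds $\bar b'\in{\C\choose\bar b}$ with $\#c[{\bar b'\choose\bar a}]\leqslant 2^{kt}$, the bound $2^{kt}$ arising as ``$k$ formulae in $\Delta$ times $t$ types in $\Im(\eta)$'' possibilities for the colour-vector $\varepsilon_{\bar a'}$. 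Since this is exactly the assertion that the \edeerdeg\ of $\bar a$ with respect to $\Delta$ is at most $2^{kt}$, part~(ii) follows verbatim.

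For part~(i) I would reuse the very element $\eta\in\EL(S_{\bar c,\Delta}(\bar p))$ built in the $(\Rightarrow)$ direction as an accumulation point of the net $(\sigma_{\bar b,\bar q})$, and only re-estimate $\#\Im(\eta)$. The key structural observation is that a type $r\in S_{\bar c,\Delta}(\bar p)$ is completely determined by its family of \emph{components} $(r_{i,j})_{i<k,\,j<m}$, where $r_{i,j}$ records, for each $\bar b\models p_j$, whether $\varphi_i(\bar x,\bar b)\in r$; thus $r\mapsto(r_{i,j})_{i<k,\,j<m}$ is injective. I then claim that for each \emph{fixed} pair $(i,j)$ the number of distinct values of $r_{i,j}$ taken as $r$ ranges over $\Im(\eta)$ is at most $2^l$. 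Granting the claim, injectivity immediately yields $\#\Im(\eta)\leqslant(2^l)^{km}=2^{mkl}$.

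To prove the per-pair claim I would run the counting argument of the theorem verbatim, but with the pair $(i_0,j_0)$ fixed \emph{in advance} rather than produced by the Ramsey theorem. Concretely, supposing more than $2^l$ distinct values of $r_{i,j}$ occur, I pick $q_0,\dots,q_{n-1}\in S_{\bar c}(\C)$ with $n=2^l+1$ whose $(i,j)$-components are pairwise distinct; for $t\neq t'$ this supplies a witness $\bar a_j^{t,t'}\models p_j$ with $\varphi_i(\bar x,\bar a_j^{t,t'})\in\eta(\hat q_t)$ iff $\lnot\varphi_i(\bar x,\bar a_j^{t,t'})\in\eta(\hat q_{t'})$. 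Pulling these finitely many open conditions back through the net to a single $\sigma_{\bar b,\bar q}$ and forming the sets $S(\bar a')$ exactly as in the theorem, I obtain the injective map $f:n\to\mathcal P(\{S_0,\dots,S_{l'-1}\})$ with $l'\leqslant l$, whence $n\leqslant 2^{l'}\leqslant 2^l$, contradicting $n=2^l+1$. The one point requiring care — and the main conceptual obstacle — is precisely the legitimacy of dropping the Ramsey step: I must argue that it suffices to bound each single-formula-single-type component separately and then multiply the $km$ local bounds, using that the full $\Delta$-type factors as the product of its components. This bookkeeping is the only content beyond what the theorem already provides.
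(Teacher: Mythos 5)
Your proposal is correct, and part (ii) coincides with the paper's proof, which likewise just records that the bound $2^{kt}$ was already established in the ($\Leftarrow$) direction of Theorem \ref{theorem characterization of sep fin dpeerdeg}. For part (i) your route differs from the paper's in how the $km$ formula--type pairs are handled. The paper also discards the Ramsey step, but keeps all pairs in play simultaneously: given $n$ types with pairwise distinct images under $\eta$, the witnessing data $i(t,t')<k$, $j(t,t')<m$ are allowed to vary with the pair $(t,t')$, the set $S(\bar a')$ is upgraded to the tuple $S(\bar a')=(S(\bar a')^{i,j})_{i<k,\,j<m}$, and a single injection $f:n\to (2^{k\times m})^{l'}$ yields $n\leqslant 2^{kml}$ in one stroke. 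You instead fix one pair $(i_0,j_0)$ in advance, rerun the theorem's scalar counting argument verbatim — which is legitimate: the finitely many open conditions on $\eta$ pull back through the net $(\sigma_{\bar b,\bar q})$ exactly as before, and $S(\bar a')$ remains a function of the color $c(\bar a')$, so $\#\{S(\bar a')\mid \bar a'\in{\bar b'\choose\bar a}\}\leqslant l$ survives — to bound the number of $(i_0,j_0)$-components occurring in $\Im(\eta)$ by $2^l$, and then multiply the $km$ local bounds using the injectivity of $r\mapsto (r_{i,j})_{i<k,\,j<m}$, which holds simply because a type in $S_{\bar c,\Delta}(\bar p)$ is by definition determined by these components. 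Both arguments give the same bound $2^{mkl}$; yours is more modular, since each component bound is literally the theorem's post-Ramsey argument with the pair fixed rather than produced by Ramsey, while the paper's global injection avoids the final product step at the cost of slightly heavier bookkeeping in $S(\bar a')$ and $f$.
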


\begin{proof}
(ii) was obtained in the proof of ($\Leftarrow$) in Theorem  \ref{theorem characterization of sep fin dpeerdeg}. To get (i), one needs to elaborate on the proof of ($\Rightarrow$) as follows. We do not use Ramsey theorem. Instead, suppose we have $n$ types $q_0,\dots q_{n-1} \in S_{\bar c}(\C)$ with the $\eta(\hat q_t)$'s pairwise distinct. Then for any $t<t'<n$ we can find $i(t,t')<k$, $j(t,t')<m$, and a realization $\bar a_{j(t,t')}^{t,t'}$ of $p_{j(t,t')}$ such that $\varphi_{i(t,t')}(\bar x, \bar a_{j(t,t')}^{t,t'})\in\eta(\hat q_t)$ iff $\lnot\varphi_{i(t,t')}(\bar x, \bar a_{j(t,t')}^{t,t'})\in\eta(\hat q_{t'})$. Then we continue with obvious adjustments until the definition of $S(\bar a')$. Next, for any $\bar a'\in{\bar b'\choose\bar a}$, $i<k$, and $j<m$ put $S(\bar a')^{i,j}:=\{t<n\mid \varphi_{i}(\bar x, \bar a_{j}')\in q_t\}$; define $S(\bar a') := (S(\bar a')^{i,j})_{i<k,\; j<m}$. Then still $\#\{S(\bar a')\mid \bar a'\in{\bar b'\choose\bar a}\}\leqslant l$, so  $\{S(\bar a')\mid a'\in{\bar b'\choose\bar a}\}=\{S_0,\dots,S_{l'-1}\}$ for some $l'\leqslant l$. 
Let $f:n\to\mathcal (2^{k \times m})^{l'}$ be defined by saying that $f(t)$ is the unique function $\delta: l' \to 2^{k \times m}$ such that for all $u<l'$, $i<k$, $j<m$ one has  $\delta(u)(i,j)=1 \iff t \in S_u^{i,j}$, where $S_u^{i,j}$ is the $(i,j)$-th coordinate of $S_u$. As before, one shows that $f$ is injective, which clearly implies that $n \leqslant 2^{kml}$.
\end{proof}

The fact that the property of having sep.\ fin.\ \edeerdeg\ does not depend on the choice of $\C$ together with the fact that in the proof of Theroem \ref{theorem characterization of sep fin dpeerdeg} we work in the given $\C$ imply that the right hand side of Theorem \ref{theorem characterization of sep fin dpeerdeg} does not depend on the choice of $\C$, too. 
By examining the above proofs, one can also see that it is enough to assume here only that $\C$ is $\aleph_0$-saturated and strongly $\aleph_0$-homogeneous.

The main consequence of Theorem \ref{theorem characterization of sep fin dpeerdeg} will be Corollary \ref{corollary sep fin dpeerdeg implies 0-dim ellis} which leads to many examples of theories with profinite Ellis group.

\subsection{Elementary embedding convex Ramsey property}\label{subsection on eecrp}

We briefly explain here the counterpart of Corollary \ref{corollary extr amen prop of theory} for amenability in place of extreme amenability, leaving some details to the reader.


In \cite{Mo}, Moore characterized amenability of the automorphism group of a Fra\"{i}ss\'{e} structure via the so-called convex Ramsey property. This was generalized to arbitrary structures in \cite{KP}, where the notion of {\em ECRP} (the {\em embedding convex Ramsey property}) for a first order structure is introduced. 
A structure $M$ has {\em ECRP} if for any $\epsilon>0$, any two finite tuples $\bar a\subseteq \bar b\subseteq M$, and any $n<\omega$ there exists a finite subset $C\subseteq M$ such that for any coloring $c:{C\choose\bar a}^{\Aut}\to 2^n$ there exist $k<\omega$, $\lambda_0,\dots,\lambda_{k-1}\in[0,1]$ with $\lambda_0+\dots+\lambda_{k-1}=1$, and $\sigma_0,\dots,\sigma_{k-1}\in\Aut(M)$ such that $\sigma_0(\bar b),\dots,\sigma_{k-1}(\bar b)\subseteq C$ and for any two tuples $\bar a',\bar a''\in{\bar b\choose\bar a}^{\Aut}$ we have:
$$\max_{i<n}\vert\sum_{j<k}\lambda_jc(\sigma_j(\bar a'))(i)- \sum_{j<k}\lambda_jc(\sigma_j(\bar a''))(i)\vert\leqslant\epsilon.$$
Recall that for finite $\bar a\subseteq M$ and $C\subseteq M$,
 ${C\choose \bar a}^{\Aut}$ denotes the set of all $f(\bar a)$ for $f\in\Aut(M)$ with $f(\bar a)\subseteq C$.
Also, recall that if $M$ is strongly $\aleph_0$-homogeneous, then ${C\choose\bar a}^{\Aut}= {C\choose\bar a}$. By \cite[Remark 4.2]{KP}, in the above definition we can equivalently remove the part ``there is a finite subset $C \subseteq M$ such that" and everywhere replace $C$ by $M$.
The following fact was proved in \cite{KP}.

\begin{fact}{\cite[Theorem 4.3]{KP}}\label{fact KP amenable} Let $M$ be a first-order structure. TFAE:
\begin{enumerate}[label=(\arabic*), align=right, leftmargin=*]
\item $\Aut(M)$ is amenable as a topological group;
\item $M$ has ECRP;
\item $M$ has strong ECRP.\qed
\end{enumerate}
\end{fact}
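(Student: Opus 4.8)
The three conditions mirror the Kechris--Pestov--Todorčević correspondence recorded in Fact \ref{fact characterization of eerp}, with amenability replacing extreme amenability and convex averaging replacing exact monochromaticity. The plan is to route everything through the greatest ambit of $G := \Aut(M)$ (its Samuel compactification $\mathrm{Sa}(G)$), using the standard fact that $G$ is amenable as a topological group iff there is a left-invariant mean $m$ on the space $\mathrm{RUC}_b(G)$ of bounded right-uniformly continuous real functions on $G$, equivalently iff $\mathrm{Sa}(G)$ carries a $G$-invariant Borel probability measure. The bridge to the combinatorics is the observation that $f \in \mathrm{RUC}_b(G)$ iff for every $\epsilon > 0$ there is a finite $\bar a \subseteq M$ with $|f(g) - f(h)| \leqslant \epsilon$ whenever $g(\bar a) = h(\bar a)$; thus, up to $\epsilon$, every right-uniformly continuous function factors through the map $g \mapsto g(\bar a) \in {M\choose\bar a}^{\Aut}$, i.e.\ through a coloring of ${M\choose\bar a}^{\Aut}$.

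For (2) $\Rightarrow$ (1) I would build the invariant mean by a compactness argument. Fix finitely many functions from $\mathrm{RUC}_b(G)$, an $\epsilon$, and a common finite $\bar a$ through which they all factor up to $\epsilon$; discretizing the values encodes this data as a coloring $c : {M\choose\bar a}^{\Aut} \to 2^n$. Applying ECRP to $\bar a \subseteq \bar b$ (with $\bar b$ chosen to absorb as much left-translation as desired) produces $\lambda_0, \dots, \lambda_{k-1}$ and $\sigma_0, \dots, \sigma_{k-1} \in G$ for which the convex combination $\sum_{j<k} \lambda_j\, c(\sigma_j(\cdot))$ is $\epsilon$-constant across all copies of $\bar a$ inside a copy of $\bar b$. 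The finitely supported probability vector $\nu = \sum_{j<k}\lambda_j \delta_{\sigma_j}$ then gives averages that are almost invariant under the translations captured by $\bar b$. Directing the family of all such data $(\{f_1,\dots,f_r\}, \bar b, \epsilon)$ and taking a weak${}^{*}$ accumulation point of the induced means on $\mathrm{RUC}_b(G)$ yields $m$; the convex Ramsey condition is exactly what forces $m$ to be left-invariant.

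For the reverse direction I would establish (1) $\Rightarrow$ (3) and note that (3) $\Rightarrow$ (2) is immediate, strong ECRP being a formal strengthening. Given a $G$-invariant mean, Day's argument presents it as a weak${}^{*}$ limit of finitely supported convex combinations $\sum_j \lambda_j \delta_{\sigma_j}$ that are asymptotically left-invariant. For a coloring $c : {M\choose\bar a}^{\Aut} \to 2^n$, each of the finitely many functions $g \mapsto c(g(\bar a'))$ (as $\bar a'$ ranges over the copies of $\bar a$ in a copy of $\bar b$) lies in $\mathrm{RUC}_b(G)$, and any two of them differ by a left translation; invariance of the limiting averages then forces $\sum_j \lambda_j\, c(\sigma_j(\bar a'))$ and $\sum_j \lambda_j\, c(\sigma_j(\bar a''))$ to agree within $\epsilon$ coordinatewise, which is precisely the (strong) convex Ramsey conclusion, with $\sigma_j(\bar b) \subseteq C$ arranged by choosing the support inside a suitable finite $C$ (legitimate by the remark reducing $C$ to $M$).

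The main obstacle is the careful bookkeeping in the functional-analytic $\leftrightarrow$ combinatorial translation: matching right-uniform continuity (dependence on a single finite tuple $g(\bar a)$) with colorings of ${M\choose\bar a}^{\Aut}$, routing the joint dependence of several functions through one common $\bar a$, and verifying that the weak${}^{*}$ limit extracted in (2) $\Rightarrow$ (1) is genuinely $G$-invariant rather than merely almost invariant on each finite piece. Strong $\aleph_0$-homogeneity is what keeps this correspondence clean, since it identifies ${C\choose\bar a}^{\Aut}$ with the orbit-based copies and lets the averaging over automorphisms match the averaging over the greatest ambit.
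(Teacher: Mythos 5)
The paper itself does not prove this statement: it is imported from \cite[Theorem 4.3]{KP} without proof (the \verb|\qed| marks it as a quoted fact), so the only meaningful comparison is with the argument in the cited source, which is Moore's mean/convexity argument from \cite{Mo} generalized to arbitrary structures. Your architecture --- identify the bounded functions that $\epsilon$-factor through some $g\mapsto g(\bar a)$ as the relevant function algebra, read colorings of ${M\choose\bar a}^{\Aut}$ as such functions, extract approximately invariant finitely supported means from ECRP, and pass to a weak${}^{*}$ accumulation point --- is exactly that argument, and your (2)~$\Rightarrow$~(1) outline is correct modulo bookkeeping. One piece of bookkeeping deserves more than a flag: if $\bar a''=\tau(\bar a')$ then $c(g(\bar a''))=c\left((g\tau)(\bar a')\right)$, so moving copies of $\bar a$ inside $\bar b$ implements \emph{right} translations $g\mapsto g\tau$, not left ones; the mean you build is therefore right-invariant on the algebra of functions of $g(\bar a)$, and one must compose with the inversion $g\mapsto g^{-1}$ (which exchanges the two uniformities and the two translation actions) to land on the left-invariant mean on $\mathrm{RUC}_b(G)$ that witnesses amenability. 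You assert this invariance rather than perform the conversion, but that part is routine.

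The genuine gap is in your (1)~$\Rightarrow$~(3). Approximating the invariant mean by finitely supported means (Day's argument, or just weak${}^{*}$ density of convex combinations of evaluations, which is all you need once the mean itself is exactly invariant) yields agreement of the averages \emph{within} $\epsilon$; that is the conclusion of ECRP, i.e.\ of (2), and writing ``which is precisely the (strong) convex Ramsey conclusion'' conflates $\leqslant\epsilon$ with $=0$. Passing to exact equality requires an additional convexity-plus-finiteness argument that your sketch never supplies: since $c$ takes values in the finite set $2^n$ and $\bar b$ contains only finitely many copies $\bar a'$ of $\bar a$, the set of achievable average vectors $\bigl(\sum_{j<k}\lambda_j c(\sigma_j(\bar a'))(i)\bigr)_{\bar a',i}$, as $k$, $(\lambda_j)_j$, $(\sigma_j)_j$ vary, is the convex hull of the finitely many $0$--$1$ vectors $\bigl(c(\sigma(\bar a'))(i)\bigr)_{\bar a',i}$, $\sigma\in\Aut(M)$, hence a compact polytope; a compact set meeting every $\epsilon$-neighbourhood of the diagonal subspace must meet the diagonal, so the $\epsilon$-version forces the exact version. (Equivalently, one can run a Farkas/linear-programming duality argument against the invariant mean.) Without this observation your cycle (2)~$\Rightarrow$~(1)~$\Rightarrow$~(3)~$\Rightarrow$~(2) degenerates to (2)~$\Rightarrow$~(1)~$\Rightarrow$~(2): condition (3) is never actually reached, and only the equivalence of (1) and (2) is established.
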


By analogy with \eerp, we introduce the following definition.

\begin{definition}\label{definition eecrp} A theory $T$ has {\em EECRP} (the {\em elementary embedding convex Ramsey property}) if it has a monster model $\C$ with {\em ECRP}.
\end{definition}

A proof of the next lemma is left as an exercise.

\begin{lemma}\label{lemma eecrp independent}
If $M$ and $M^*$ are two elementarily equivalent $\aleph_0$-saturated and strongly $\aleph_0$-homogeneous models, then $M$ has $ECRP$ iff $M^*$ does. Thus, a theory $T$ has $EECRP$ iff some (equiv.\ every) $\aleph_0$-saturated and strongly $\aleph_0$-homogeneous model of $T$ has $ECRP$.\qed
\end{lemma}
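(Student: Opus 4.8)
The plan is to reduce \emph{ECRP}, stated in its default ``finite $C$'' form, to a purely combinatorial condition on the types of finite tuples, and then exploit that $M$ and $M^*$, being elementarily equivalent and $\aleph_0$-saturated, realize exactly the same such types. This parallels the absoluteness arguments in Remark~\ref{remark rp ind in sat and str hom} and Proposition~\ref{lemma def rp independent on monster}, and it should be carried out directly, \emph{without} passing through amenability of $\Aut(M)$ via Fact~\ref{fact KP amenable}, so as to avoid circularity with Corollary~\ref{THM2}. First I would use strong $\aleph_0$-homogeneity to strip the automorphisms from the definition: since $M$ is strongly $\aleph_0$-homogeneous, ${C\choose\bar a}^{\Aut}={C\choose\bar a}$, and an automorphism $\sigma_j$ with $\sigma_j(\bar b)\subseteq C$ carries exactly the same information as the copy $\bar b_j:=\sigma_j(\bar b)\in{C\choose\bar b}$, where $\sigma_j(\bar a')$ is the subtuple of $\bar b_j$ occupying the coordinates that $\bar a'$ occupies in $\bar b$. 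Thus \emph{ECRP} for fixed $(\epsilon,\bar a,\bar b,n)$ asks: there is a finite $C\subseteq M$ with ${C\choose\bar b}\neq\emptyset$ such that for every $c\colon{C\choose\bar a}\to 2^n$ there are $k$, convex $\lambda_0,\dots,\lambda_{k-1}$, and copies $\bar b_0,\dots,\bar b_{k-1}\in{C\choose\bar b}$ for which the $\max$-inequality holds for all $\bar a',\bar a''\in{\bar b\choose\bar a}$.

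Next I would observe that every combinatorial ingredient of this statement is determined by the single type $s:=\tp(C)\in S_N(T)$, where $N=|C|$: the finite sets ${C\choose\bar a}$, ${C\choose\bar b}$ and ${\bar b\choose\bar a}$, together with the assignment sending a copy $\bar b_j\in{C\choose\bar b}$ and a coordinate-pattern $\pi$ of $\bar a$ inside $\bar b$ to the copy $\bar b_j\circ\pi\in{C\choose\bar a}$, depend only on $s$ (and on $\tp(\bar a)$, $\tp(\bar b)$ and the inclusion pattern of $\bar a$ in $\bar b$). Then, for a fixed configuration $s$ and a fixed coloring $c$ (of which there are finitely many), the search for convex coefficients and copies of $\bar b$ is a finite-dimensional convex-feasibility problem: one may take $k=\#{C\choose\bar b}$ and let $(\lambda_j)_j$ range over probability distributions on the finite set ${C\choose\bar b}$, so the existence of a suitable convex combination is equivalent to non-emptiness of an explicit polytope whose defining inequalities are read off from $s$, $n$ and $\epsilon$. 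Hence whether a given finite $C$ witnesses \emph{ECRP} for $(\epsilon,\bar a,\bar b,n)$ is a property of $s=\tp(C)$ alone.

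Combining these, the finite-$C$ \emph{ECRP} statement for $(\epsilon,\bar a,\bar b,n)$ holds in $M$ iff there exist $N<\omega$ and a type $s\in S_N(T)$ carrying a copy of $\bar b$ for which the above combinatorial condition holds; indeed every witnessing $C\subseteq M$ yields such an $s=\tp(C)$, and conversely, by $\aleph_0$-saturation, every such $s$ is realized in $M$, the realizing tuple giving a witnessing $C$. This characterization refers only to $T=\Th(M)=\Th(M^*)$ and to $\tp(\bar a),\tp(\bar b)$ and their inclusion pattern. Given any finite $\bar a^*\subseteq\bar b^*\subseteq M^*$, I would pick $\bar a\subseteq\bar b$ in $M$ with the same types and pattern; then the combinatorial condition transfers verbatim, using $\aleph_0$-saturation of $M^*$ to realize $s$ and strong $\aleph_0$-homogeneity of $M^*$ to convert the chosen copies of $\bar b^*$ back into automorphisms. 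Running this in both directions, and over all $\epsilon>0$, $\bar a\subseteq\bar b$, $n<\omega$, gives that $M$ has \emph{ECRP} iff $M^*$ does. The ``Thus'' clause follows at once, since any two $\aleph_0$-saturated, strongly $\aleph_0$-homogeneous models of $T$ are elementarily equivalent and a monster model is such a model. The step requiring the most care is the third one—the passage from a genuine real convex combination to a finitary, type-determined condition—which is exactly what the reduction to non-emptiness of a polytope over the fixed simplex on ${C\choose\bar b}$ resolves.
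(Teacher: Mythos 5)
Your proof is correct. Since the paper leaves this lemma as an exercise, there is no written proof to compare against; your reduction of ECRP to a condition on the type of a finite witness $C$ — using strong $\aleph_0$-homogeneity to trade automorphisms for copies of $\bar b$ (so that the convex-combination requirement becomes a finite feasibility problem read off from $\tp(C)$), and $\aleph_0$-saturation to realize that type in the other model — is exactly the transfer argument that the paper's parallel absoluteness statements (Remark \ref{remark rp ind in sat and str hom}, Proposition \ref{lemma def rp independent on monster}) suggest, and you are right that invoking Fact \ref{fact KP amenable} instead would render Corollary \ref{corollary amen prop of theory} circular.
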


By Fact \ref{fact KP amenable} and Lemma \ref{lemma eecrp independent}, we get the main conclusion of Subsection \ref{subsection on eecrp}.

\begin{corollary}\label{corollary amen prop of theory} For a theory $T$, $\Aut(M)$ is amenable (as a topological group) for some $\aleph_0$-saturated and strongly $\aleph_0$-homogeneous model $M\models T$ iff it is amenable for all $\aleph_0$-saturated and strongly $\aleph_0$-homogeneous models of $T$.\qed
\end{corollary}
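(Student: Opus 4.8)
The plan is to obtain the statement as an immediate formal consequence of Fact~\ref{fact KP amenable} and Lemma~\ref{lemma eecrp independent}, so essentially all the mathematical content is already in place; what remains is to chain the two equivalences together. The one preliminary observation I would record is that any two $\aleph_0$-saturated and strongly $\aleph_0$-homogeneous models of $T$ are elementarily equivalent, since $T$ is a complete theory and both are models of it. This is exactly the hypothesis needed to invoke Lemma~\ref{lemma eecrp independent}, which tells us that having ECRP is a feature that passes between all such models; equivalently, by the ``thus'' clause of that lemma, $T$ has EECRP iff some (equivalently, every) $\aleph_0$-saturated and strongly $\aleph_0$-homogeneous model has ECRP.

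Next I would apply Fact~\ref{fact KP amenable}, specifically the equivalence (1)$\Leftrightarrow$(2), to each individual $\aleph_0$-saturated and strongly $\aleph_0$-homogeneous model $M\models T$: for such an $M$, the topological group $\Aut(M)$ is amenable precisely when $M$ has ECRP. This reduces the quantifier ``for some model'' (respectively ``for all models'') on the amenability side to the identical quantifier on the ECRP side.

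Putting the two steps together gives the chain
\[
\exists M\ \bigl(\Aut(M)\text{ amenable}\bigr)\ \Longleftrightarrow\ \exists M\ (M\text{ has ECRP})\ \Longleftrightarrow\ \forall M\ (M\text{ has ECRP})\ \Longleftrightarrow\ \forall M\ \bigl(\Aut(M)\text{ amenable}\bigr),
\]
where the quantifiers range over $\aleph_0$-saturated and strongly $\aleph_0$-homogeneous models of $T$, the outer two equivalences come from Fact~\ref{fact KP amenable}, and the middle one is Lemma~\ref{lemma eecrp independent}. The extreme ends of this chain are exactly the ``for some'' and ``for all'' statements in the corollary, so we are done.

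I do not anticipate any genuine obstacle here: the proof is a routine splicing of two previously established results, and the only thing one must be slightly careful about is confirming the elementary-equivalence hypothesis of Lemma~\ref{lemma eecrp independent}, which is automatic from completeness of $T$. This mirrors precisely the way Corollary~\ref{corollary extr amen prop of theory} was deduced in the extreme-amenability case from Fact~\ref{fact characterization of eerp} together with the absoluteness of \eerp.
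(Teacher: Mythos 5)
Your proposal is correct and coincides with the paper's own proof, which deduces the corollary directly by combining Fact~\ref{fact KP amenable} (amenability of $\Aut(M)$ $\Leftrightarrow$ ECRP for $M$) with Lemma~\ref{lemma eecrp independent} (transfer of ECRP between elementarily equivalent $\aleph_0$-saturated, strongly $\aleph_0$-homogeneous models). Your only addition is making explicit the routine observation that completeness of $T$ supplies the elementary-equivalence hypothesis, which the paper leaves implicit.
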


\subsection{Definable elementary embedding convex Ramsey property}

By Theorem \ref{theorem characterization of deerp}, a theory $T$ is extremely amenably iff it has \deerp. In this subsection, we will get an analogous result for amenable theories (see Definition \ref{definition of amenability}). The corresponding Ramsey property-like condition which describes amenability of a theory is the {\em definable elementary embedding convex Ramsey property}. 

\begin{definition}\label{definition deecrp} A theory $T$ has \deecrp\ (the {\em definable elementary embedding convex Ramsey property}) iff for any $\epsilon>0$, any two finite tuples $\bar a\subseteq\bar b\subseteq\C$,  any $n<\omega$, and any definable coloring $c:{\C\choose\bar a}\to 2^n$ there exist $k<\omega$, $\lambda_0,\dots,\lambda_{k-1}\in[0,1]$ with $\lambda_0+\dots+\lambda_{k-1}=1$, and $\sigma_0,\dots,\sigma_{k-1}\in\Aut(\C)$ such that for any two tuples $\bar a',\bar a''\in{\bar b\choose\bar a}$ we have:
$$\max_{i<n}\vert\sum_{j<k}\lambda_jc(\sigma_j(\bar a'))(i)- \sum_{j<k}\lambda_jc(\sigma_j(\bar a''))(i)\vert\leqslant\epsilon.$$

A theory $T$ has the {\em strong \deecrp} if the previous holds for $\epsilon=0$.
\end{definition}

\begin{remark}\label{lemma deecrp absolutness} 
By a direct argument, one may see that the previous definition does not depend on the choice of the monster model $\C$ of $T$. Moreover, it is enough to ask that $\C$ is $\aleph_0$-saturated and strongly $\aleph_0$-homogeneous.
\end{remark}

\begin{theorem}\label{theorem deecrp} For a theory $T$ the following conditions are equivalent:
\begin{enumerate}
\item $T$ is amenable;
\item $T$ has \deecrp;
\item $T$ has strong \deecrp.
\end{enumerate} 
\end{theorem}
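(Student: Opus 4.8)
The plan is to prove the cycle by reading both sides dynamically: strong \deecrp\ will say that a certain barycenter built from an invariant Keisler measure lands in a ``diagonal'' subspace, and conversely such measures will be produced as limits of the convex combinations witnessing \deecrp. I would first dispose of the two easy links. The implication $(3)\Rightarrow(2)$ is immediate, as equality is the case $\epsilon=0$. For $(2)\Rightarrow(3)$, fix data $\bar a\subseteq\bar b$, $n$, and a definable coloring $c$ given by $\varphi_i(\bar x,\bar e)$, and list ${\bar b\choose\bar a}=\{\bar a_0,\dots,\bar a_{t-1}\}$. I would consider $\Theta\colon\Aut(\C)\to\{0,1\}^{t\times n}$, $\Theta(\sigma)_{s,i}=c(\sigma(\bar a_s))(i)$, let $K$ be the convex hull of the finite image $\Theta[\Aut(\C)]$ in $[0,1]^{t\times n}$ (a compact polytope), and let $D$ be the linear subspace where the $s$-blocks agree. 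Then \deecrp\ says $K$ meets every closed $\epsilon$-neighbourhood $D_\epsilon$ of $D$; these are nested nonempty compacta, so $K\cap D\neq\emptyset$, and any point of $K\cap D$ is a convex combination $\sum_j\lambda_j\Theta(\sigma_j)$ realizing strong \deecrp.

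For $(1)\Rightarrow(3)$ I would use that amenability of $T$ provides, for the finite parameter tuple $\bar e$ of a given coloring, an $\Aut(\C)$-invariant Keisler measure $\nu$ on $S_q(\C)$ with $q=\tp(\bar e)$ (a finitary type, so amenable by Definition~\ref{definition of amenability}). With $\bar y$ the variable for $\bar e$ and $\bar x$ for copies of $\bar a$, define $v\in[0,1]^{t\times n}$ by $v_{s,i}=\nu(\{r\in S_q(\C):\varphi_i(\bar a_s,\bar y)\in r\})$. Two points must be checked. First, $v\in D$: since $\bar a_s\equiv\bar a_{s'}$ there is $\rho\in\Aut(\C)$ with $\rho(\bar a_s)=\bar a_{s'}$, the clopen for $(\bar a_{s'},i)$ is the $\rho$-translate of the one for $(\bar a_s,i)$, and invariance of $\nu$ gives $v_{s',i}=v_{s,i}$. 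Second, $v\in K$: writing $F\colon S_q(\C)\to\{0,1\}^{t\times n}$ for the continuous map $F(r)_{s,i}=[\varphi_i(\bar a_s,\bar y)\in r]$, one has $v=\int F\,d\nu$, the barycenter of $F_\ast\nu$, which lies in the closed convex hull of $F[S_q(\C)]$; since the realized types are dense in $S_q(\C)$ and $\Theta(\sigma)=F(\tp(\sigma^{-1}\bar e/\C))$, this closed convex hull is exactly $K$. Hence $v\in K\cap D$, which as above yields strong \deecrp.

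For $(3)\Rightarrow(1)$ it suffices to produce an invariant measure on $S_q(\C)$ for each finitary type $q$, which by Definition~\ref{definition of amenability} is exactly amenability of $T$. I would work in the compact space $\mathfrak M$ of Keisler measures on $S_q(\C)$, on which $\Aut(\C)$ acts, and show that the closed sets $C_{\chi,\bar m,\rho}=\{\mu:\mu(\chi(\bar m,\bar y))=\mu(\chi(\rho\bar m,\bar y))\}$ (over all formulas $\chi$, parameters $\bar m\subseteq\C$, and $\rho\in\Aut(\C)$) have the finite intersection property; their total intersection is precisely the set of invariant measures. Given finitely many requirements $(\chi^{(r)},\bar m^{(r)},\rho^{(r)})_{r<N}$, the packaging step is to let $\bar b_0$ be a finite tuple containing all $\bar m^{(r)}$, set $\bar a:=\bar b_0$, take $\bar b\supseteq\bar b_0\cup\bigcup_r\rho^{(r)}(\bar b_0)$, and rewrite each $\chi^{(r)}(\bar m^{(r)},\bar y)$ as a formula $\Phi^{(r)}(\bar x,\bar y)$ evaluated at a copy of $\bar a$ in $\bar b$; then $\bar b_0$ and each $\rho^{(r)}(\bar b_0)$ lie in ${\bar b\choose\bar a}$. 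Applying strong \deecrp\ to the coloring $(\Phi^{(r)})_{r<N}$ with parameter $\bar e\models q$ produces $\lambda_j,\sigma_j$, and $\mu:=\sum_j\lambda_j\,\delta_{\tp(\sigma_j^{-1}\bar e/\C)}$ on $S_q(\C)$ satisfies all chosen requirements at once, since $\mu(\chi(\bar m,\bar y))=\sum_j\lambda_j[\models\chi(\sigma_j\bar m,\bar e)]$ and strong \deecrp\ equalizes these across $\bar b_0$ and $\rho^{(r)}(\bar b_0)$. Compactness of $\mathfrak M$ then yields a genuinely invariant measure.

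The main obstacle, and the step I would spend the most care on, is the packaging in $(3)\Rightarrow(1)$: a single instance of \deecrp\ speaks only of copies of one fixed tuple $\bar a$, whereas a finite family of invariance requirements involves parameters of several different types. Collapsing them to copies of the single large tuple $\bar b_0$, with the sub-tuple bookkeeping absorbed into the formulas $\Phi^{(r)}$, is exactly what makes the finite-intersection argument run; everything else reduces to the two convexity/barycenter computations, which are routine once $K$, $D$ and the maps $\Theta$, $F$ are in place. Throughout, independence of the choice of monster model is supplied by Remark~\ref{lemma deecrp absolutness}, so the argument may be carried out in any convenient $\aleph_0$-saturated, strongly $\aleph_0$-homogeneous $\C$.
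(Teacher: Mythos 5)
Your proof is correct. Its two substantive bridges between convex-Ramsey data and measures are, after unwinding, the same as the paper's: your $(1)\Rightarrow(3)$ barycenter argument is the paper's $(1)\Rightarrow(3)$ in geometric clothing (the fibers of your map $F$ are exactly the paper's atoms $[\psi_\varepsilon(\bar y)]$, your weights $\nu(F^{-1}(w))$ are its $\lambda_\varepsilon=\mu([\psi_\varepsilon(\bar y)])$, and writing each $w\in\Im(F)$ as $\Theta(\sigma)$ is its choice of $\sigma_\varepsilon$ with $\sigma_\varepsilon^{-1}(\bar d)\models\psi_\varepsilon$); and your $(3)\Rightarrow(1)$, including the packaging of finitely many invariance requirements into copies of one concatenated tuple, is the paper's $(2)\Rightarrow(1)$ run as a direct finite-intersection-property argument instead of by contradiction --- the paper's compactness step (``if no invariant measure exists, then some finite family of $\epsilon$-constraints is jointly unsatisfiable'') is precisely the contrapositive of your FIP claim, and both build the candidate measure as a convex combination $\sum_j\lambda_j\,\tp(\sigma_j^{-1}(\bar e)/\C)$ of realized global types. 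Where you genuinely depart is the decomposition. The paper closes the cycle $(3)\Rightarrow(2)\Rightarrow(1)\Rightarrow(3)$, so the upgrade from approximate to exact convex combinations is obtained only via the detour through amenability; you instead prove $(2)\Rightarrow(3)$ directly by an elementary finite-dimensional compactness argument: the $\epsilon$-witnesses give points of the polytope $K$ in the nested nonempty compacta $K\cap D_\epsilon$, whose intersection $K\cap D$ is nonempty, and any of its points is again a finite convex combination of the finitely many vertices $\Theta(\sigma)$. This buys a measure-free, self-contained proof that \deecrp\ and strong \deecrp\ coincide, and it lets your $(3)\Rightarrow(1)$ work with exact equalities only; the cost is mild redundancy (four implications where three suffice) plus one extra verification the paper sidesteps, namely $F[S_q(\C)]=\Theta[\Aut(\C)]$, which you correctly obtain from density of realized types together with strong $\aleph_0$-homogeneity.
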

\begin{proof}
(3)$\Rightarrow$(2) is obvious.

(2)$\Rightarrow$(1) Assume that $T$ has \deecrp. 
Toward a contradiction, suppose that for a finite $\bar d$ there is no an invariant, finitely additive probability measure on the Boolean algebra of clopens in $S_{\bar d}(\C)$. By compactness, for some $\epsilon>0$, formulae $\varphi_i(\bar x_i,\bar y)$ and tuples $\bar a_i\equiv\bar a_i'$, $i<n$, there is no measure $\mu\in\frak M_{\bar d}:=\frak M_{\tp(\bar d)}$ (see Subsection \ref{subsection: amenability}) satisfying: 
\begin{equation}\tag{$\dagger$}\label{equation in 3.4}
\vert\mu([\varphi_i(\bar a_i,\bar y)])-\mu([\varphi_i(\bar a_i',\bar y)])\vert\leqslant\epsilon\mbox{ for all $i<n$.}
\end{equation} 
By using dummy variables we may assume that all $\bar x_i$'s are mutually equal, as well as all $\bar a_i$'s; denote them by $\bar x$ and $\bar a$, respectively. 

Set $\bar b=\bar a^\frown\bar a_0'^\frown\dots^\frown\bar a_{n-1}'$. Consider the definable coloring $c$ on ${\C\choose\bar a}$ given by the $\varphi_i(\bar x,\bar d)$'s. By \deecrp,\ there are $k<\omega$, $\lambda_0,\dots,\lambda_{k-1}\in[0,1]$ with $\sum\lambda_i=1$, and $\sigma_0,\dots,\sigma_{k-1}\in\Aut(\C)$ such that for any two tuples $\bar a',\bar a''\in{\bar b\choose\bar a}$ and for any $i<n$ we have
$\vert\sum_{j<k}\lambda_jc(\sigma_j(\bar a'))(i)- \sum_{j<k}\lambda_jc(\sigma_j(\bar a''))(i)\vert\leqslant\epsilon$. 

Consider $\mu= \lambda_0\tp(\sigma_0^{-1}(\bar d)/\C)+\dots+\lambda_{k-1}\tp(\sigma_{k-1}^{-1}(\bar d)/\C)$, defined by:
$$\mu([\phi(\bar y)]):= \sum_{j<k}\{\lambda_j\mid \phi(\bar y)\in\tp(\sigma_j^{-1}(\bar d)/\C)\},$$
for $\phi(\bar y)\in L(\C)$; it is easy to see that $\mu\in\frak M_{\bar d}$, as well as that for $i<n$, $j<k$, and $\bar a'\in{\bar b\choose\bar a}$ we have $c(\sigma_j(\bar a'))(i)=1$ iff $\varphi_i(\bar a',\bar y)\in\tp(\sigma_j^{-1}(\bar d)/\C)$, and thus, $\sum_{j<k}\lambda_j c(\sigma_j(\bar a'))(i)= \mu([\varphi_i(\bar a',\bar y)])$. Therefore, $\mu$ satisfies (\ref{equation in 3.4}). A contradiction.

(1)$\Rightarrow$(3) Assume that $T$ is amenable. Fix $\bar a\subseteq\bar b\subseteq\C$, $n<\omega$, and a definable coloring $c:{\C\choose\bar a}\to 2^n$ given by $\varphi_0(\bar x,\bar d),\dots,\varphi_{n-1}(\bar x,\bar d)$. By amenability, we can find an invariant, finitely additive probability measure $\mu$ on the clopens of $S_{\bar d}(\C)$. Let ${\bar b\choose\bar a}= \{\bar a_s\}_{s<t}$ and let us consider clopens $[\varphi_i(\bar a_s,\bar y)]$ in $S_{\bar d}(\C)$ for $i<n$, $s<t$. Let $\mathcal F$ be the family of all $\varepsilon\in 2^{n\times t}$ such that $\psi_\varepsilon(\bar y):= \bigwedge_{\substack{i<n\\s<t}}\varphi_i(\bar a_s,\bar y)^{\varepsilon(i,s)}$ is consistent with $\tp(\bar d)$. (Note that the $[\psi_\varepsilon(\bar y)]$'s are the atoms of the Boolean algebra generated by the $[\varphi_i(\bar a_s,\bar y)]$'s in $S_{\bar d}(\C)$.) Set $k=\#\mathcal F$ and $\lambda_\varepsilon= \mu([\psi_\varepsilon(\bar y)])$ for $\varepsilon\in\mathcal F$; clearly $\sum_{\varepsilon\in\mathcal F}\lambda_\varepsilon=1$. Further, for each $\varepsilon\in\mathcal{F}$ choose $\sigma_\varepsilon\in\Aut(\C)$ such that $\sigma_\varepsilon^{-1}(\bar d)\models\psi_\varepsilon(\bar y)$. We claim that the chosen $\lambda_\varepsilon$'s and $\sigma_\varepsilon$'s satisfy our requirements.

First, note the following: for $s<t$ and $\varepsilon\in\mathcal F$, $c(\sigma_{\varepsilon}(\bar a_s))(i)=1$ iff $\models\varphi_i(\sigma_\varepsilon(\bar a_s),\bar d)$ iff $\models\varphi_i(\bar a_s,\sigma_\varepsilon^{-1}(\bar d))$ iff $\varepsilon(i,s)=1$. Hence,
$$\sum_{\varepsilon\in\mathcal F}\lambda_\varepsilon c(\sigma_{\varepsilon}(\bar a_s))(i)= \sum_{\substack{\varepsilon\in\mathcal F\\\varepsilon(i,s)=1}}\mu([\psi_\varepsilon(\bar y)])= \mu(\bigsqcup_{\substack{\varepsilon\in\mathcal F\\\varepsilon(i,s)=1}}[\psi_\varepsilon(\bar y)])= \mu([\varphi_i(\bar a_s,\bar y)]),$$
so for any $s<s'<t$ we have:
$$\sum_{\varepsilon\in\mathcal F}\lambda_\varepsilon c(\sigma_{\varepsilon}(\bar a_s))(i)- \sum_{\varepsilon\in\mathcal F}\lambda_\varepsilon c(\sigma_{\varepsilon}(\bar a_{s'}))(i)= \mu([\varphi_i(\bar a_s,\bar y)])-\mu([\varphi_i(\bar a_{s'},\bar y)])=0$$
by the invariance of $\mu$. Thus, (3) holds.
\end{proof}

As usual, Remark \ref{lemma deecrp absolutness}  together with the fact that in the proof of Theorem \ref{theorem deecrp} we work in the given $\C$ imply that amenability of $T$ is absolute (i.e.\ does not depend on the choice of $\C$). But this was proved directly in \cite{HKP}, which together with the observation that in the proof of Theorem \ref{theorem deecrp} we work in the given $\C$ implies Remark \ref{lemma deecrp absolutness}. By examining the above proofs, one can also see that we can assume here only that $\C$ is $\aleph_0$-saturated and strongly $\aleph_0$-homogeneous.

\section{Around profiniteness of the Ellis group}\label{section around}

In this section, we will prove Theorem \ref{THMsecond} as well as several other criteria for [pro]finiteness of the Ellis group of the theory in question. 

\begin{proof}[Proof of Theorem \ref{THMsecond}]
The implications (D)$\implies$(C)$\implies$(B)$\implies$(A)$\implies$(A')$\implies$(A") follow from various facts or observations made so far: the first implication follows by Fact \ref{fact finite image}, the second one is clear by Lemma \ref{lemma inverse limit of St's}, the third one by Fact \ref{fact inverse limit and ellis}, and the last two by Proposition \ref{lemma galkp profinite}.

It remains to show (B)$\implies$(C).
Denote by $\mathcal F$ the set of all pairs $(\Delta,\bar p)$, where $\Delta$ is a finite set of formulae and $\bar p$ a finite set of types from $S(T)$ with the same variables as the parameter variables of the formulae in $\Delta$. For any $t=(\Delta,\bar p)\in\mathcal F$ put $S_t:= S_{\bar c,\Delta}(\bar p)$.

Let $F:S_{\bar c}(\C)\to\invlim_{t\in\mathcal F}S_t$ be the flow isomorphism given by restrictions, and let $G:S_{\bar c}(\C)\to\invlim_{i\in I}X_i$ be a flow isomorphism. Also, let $f_{t_0}: \invlim_{t\in\mathcal F}S_t\to S_{t_0}$ and $g_{i_0}:\invlim_{i\in I}X_i\to X_{i_0}$ be projections. Consider:
$F_{t}:=\{(p,q)\in S_{\bar c}(\C)^2\mid f_{t}\circ F(p)= f_{t}\circ F(q)\}$ and $G_{i}:=\{(p,q)\in S_{\bar c}(\C)^2\mid g_{i}\circ G(p)= g_{i}\circ G(q)\}$ for $t\in \mathcal F$ and $i\in I$. Both the $F_{t}$'s and the $G_{i}$'s are obviously equivalence relations on $S_{\bar c}(\C)$. They are closed: $F_{t}$ is the inverse image of the diagonal on $S_{t}$ under $f_{t}\circ F$, and similarly for the $G_{i}$'s. Moreover, they are clearly $\Aut(\C)$-invariant and also directed in the sense that $t\leqslant t'\in\mathcal F$ implies $F_{t'}\subseteq F_t$, and $i\leqslant i'\in I$ implies $G_{i'}\subseteq G_i$.

Notice that $\bigcap_{t\in\mathcal F}F_t=D$ and $\bigcap_{i\in I}G_i=D$, where $D=\{(p,p)\mid p\in S_{\bar c}(\C)\}$.

\begin{claim*} For any $t\in\mathcal F$ there is $i\in I$ such that $G_i\subseteq F_t$.
\end{claim*}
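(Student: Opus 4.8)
The plan is to unfold $F_t$ into the finitely many ``coordinates'' that define it, and then to exploit the $\Aut(\C)$-invariance of the $G_i$ together with the directedness of $I$. Write $t=(\Delta,\bar p)$ with $\Delta=\{\varphi_s(\bar x,\bar y)\}_{s<k}$ and $\bar p=\{p_j\}_{j<m}$. For $s<k$ and a tuple $\bar b\subseteq\C$ with $|\bar b|=|\bar y|$, set
$$E_{s,\bar b}:=\{(p,q)\in S_{\bar c}(\C)^2\mid \varphi_s(\bar x,\bar b)\in p\Leftrightarrow\varphi_s(\bar x,\bar b)\in q\}.$$
Each $E_{s,\bar b}$ is a \emph{clopen} equivalence relation on $S_{\bar c}(\C)$, since its two classes are the clopen set $[\varphi_s(\bar x,\bar b)]$ and its complement. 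By the very definition of $F_t$ as ``having the same $\Delta$-type over $\bigcup_{j<m}p_j(\C)$'', we have
$$F_t=\bigcap_{s<k}\ \bigcap_{j<m}\ \bigcap_{\bar b\models p_j}E_{s,\bar b}.$$

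First I would treat a single coordinate. Fix $s<k$ and a tuple $\bar b$. The set $[\varphi_s(\bar x,\bar b)]$ is clopen in $S_{\bar c}(\C)\cong\invlim_{i\in I}X_i$, and a clopen subset of an inverse limit of compact Hausdorff flows is the preimage of a clopen set at a single stage (by compactness of the clopen together with directedness of $I$). Hence there is $i_0\in I$ such that $[\varphi_s(\bar x,\bar b)]$ is a union of fibres of the quotient map $S_{\bar c}(\C)\to X_{i_0}$, whose fibres are precisely the $G_{i_0}$-classes. Consequently $G_{i_0}\subseteq E_{s,\bar b}$.

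The key step is to pass from one parameter to its entire orbit for free, using invariance. Since $p_j\in S_{\bar y}(\emptyset)$, all its realizations form a single $\Aut(\C)$-orbit, and for $\sigma\in\Aut(\C)$ the diagonal action satisfies $\sigma(E_{s,\bar b})=E_{s,\sigma(\bar b)}$, whereas $\sigma(G_{i_0})=G_{i_0}$ because each $G_i$ is $\Aut(\C)$-invariant. Thus $G_{i_0}\subseteq E_{s,\bar b}$ already forces $G_{i_0}\subseteq E_{s,\bar b'}$ for \emph{every} $\bar b'\models p_j$. Now there are only finitely many pairs $(s,j)$ with $s<k$, $j<m$; choosing for each of them an index as above and taking $i\in I$ above all of these finitely many indices (possible by directedness of $I$), we get $G_i\subseteq E_{s,\bar b'}$ for all $s<k$, $j<m$ and $\bar b'\models p_j$, that is $G_i\subseteq F_t$, as required.

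The only genuine obstacle is the passage from ``each of the defining clopen relations $E_{s,\bar b}$ is refined by some $G_{i}$'' to ``a single $G_i$ refines all of $F_t$''. This is nontrivial precisely because $F_t$ is an intersection of \emph{infinitely many} clopen relations and is itself merely closed, not clopen (indeed $S_t$ is typically infinite), so a naive compactness argument against the non-open $F_t$ fails. What rescues the argument is the $\Aut(\C)$-invariance of the $G_i$: it collapses each infinite orbit of parameters $\{\bar b'\mid\bar b'\models p_j\}$ to a single condition, leaving only the finitely many pairs $(\varphi_s,p_j)$ to be dominated simultaneously, which directedness of $I$ then handles.
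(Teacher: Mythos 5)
Your proof is correct and follows essentially the same route as the paper's: reduce to the basic clopen sets $[\varphi_s(\bar x,\bar b)]$, show each is a union of $G_{i_0}$-classes for some $i_0$ by compactness, use $\Aut(\C)$-invariance of the $G_i$ to handle the entire orbit of realizations of each $p_j$ at once, and finish by directedness of $I$ over the finitely many pairs $(\varphi_s,p_j)$. The only cosmetic differences are that the paper derives the clopen lemma from $\bigcap_{i\in I}G_i=D$ via an open cover of $S_{\bar c}(\C)^2$ rather than from the description of basic open sets in an inverse limit, and it concludes by intersecting clopens down to singleton fibres of $f_t\circ F$ instead of intersecting the clopen relations $E_{s,\bar b}$ directly.
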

\begin{proofclaim}
Observe that for any clopen $C\subseteq S_{\bar c}(\C)$ we can find $i_0\in I$ such that $C$ is a union of $G_{i_0}$-classes. To see this, note that $\bigcap_{i\in I}G_i=D$ implies that $\bigcup_{i\in I}G_i^c\cup (C\times C)\cup (C^c\times C^c)=S_{\bar c}(\C)^2$, where each member is open. By compactness, $\bigcup_{i\in I_0}G_i^c\cup (C\times C)\cup (C^c\times C^c)=S_{\bar c}(\C)^2$ for some finite $I_0$, so $\bigcap_{i\in I_0}G_i\subseteq (C\times C)\cup (C^c\times C^c)$. By choosing $i_0$ to be greater than all elements of $I_0$, we get $G_{i_0}\subseteq (C\times C)\cup (C^c\times C^c)$, and the conclusion follows. 

Let $t=(\Delta,\bar p)$, where $\Delta=\{\varphi_l(\bar x,\bar y)\}_{l<k}$ and $\bar p=\{p_j(\bar y)\}_{j<m}\subseteq S_{\bar y}(T)$. For $j<m$ choose $\bar a_j\models p_j$. Then $[\varphi_l(\bar x,\bar a_j)]$ is a clopen subset of $S_t$ for $l<k$ and $j<m$, so $(f_t\circ F)^{-1}[[\varphi_l(\bar x,\bar a_j)]]$ is clopen in $S_{\bar c}(\C)$. By the the previous paragraph, we can find $i_{l,j}\in I$ such that $(f_t\circ F)^{-1}[[\varphi_l(\bar x,\bar a_j)]]$ is a union of $G_{i_{l,j}}$-classes. Since $G_{i_{l,j}}$ is $\Aut(\C)$-invariant and $f_t\circ F$ is a homomorphism of flows, we get that for every  $\bar a\models p_j$, $(f_t\circ F)^{-1}[[\varphi_l(\bar x,\bar a)]]$ is a union of $G_{i_{l,j}}$-classes. Let $i\in I$ be greater than or equal to all $i_{l,j}$'s, for $l<k$ and $j<m$. It follows that for every clopen $C$ in $S_t$ we have that $(f_t\circ F)^{-1}[C]$ is a union of $G_i$-classes. 

Therefore, for any $q\in S_t$, since
$(f_t\circ F)^{-1}[\{q\}]= \bigcap_C (f_t\circ F)^{-1}[C],$ with the intersection taken over all clopens $C$ containing $q$,
we get that $(f_t\circ F)^{-1}[\{q\}]$ is a union of $G_i$-classes. This means that any $F_t$-class is a union of $G_i$-classes, hence $G_i\subseteq F_t$.
\end{proofclaim}

By the claim, for any $t\in\mathcal F$ we can find $i\in I$ such that the identity map $S_{\bar c}(\C) \to S_{\bar c}(\C)$ induces a flow epimorphism $S_{\bar c}(\C)/G_i\to S_{\bar c}(\C)/F_t$. 
On the other hand, $S_{\bar c}(\C)/G_i \cong X_i$ and $S_{\bar c}(\C)/F_t \cong S_t$ as flows. Therefore, there exists a flow epimorphism $X_i\to S_t$, and we are done by Fact \ref{fact induced epimorphism}(i) and Fact \ref{fact epi ellis}.
\end{proof}

By Theorem \ref{theorem characterization of sep fin dpeerdeg}, for a theory with sep.\ finite \edeerdeg\ Condition (D) holds. Hence, since (D)$\implies$(A), we obtain:

\begin{corollary}\label{corollary sep fin dpeerdeg implies 0-dim ellis}
If $T$ has sep.\ fin.\ \edeerdeg, then the Ellis group of $T$ is profinite.\qed
\end{corollary}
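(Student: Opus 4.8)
The plan is to deduce this directly from the two principal theorems already established, since sep.\ fin.\ \edeerdeg\ was tailored precisely to match Condition (D) of Theorem \ref{THMsecond}. Recall that the Ellis group of $T$ is, by definition, $u\M$ for a minimal left ideal $\M$ of $\EL(S_{\bar c}(\C))$ and an idempotent $u\in\J(\M)$, so the goal is exactly to show that $u\M$ is profinite, i.e.\ to verify Condition (A).

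First I would invoke Theorem \ref{theorem characterization of sep fin dpeerdeg}: the hypothesis that $T$ has sep.\ fin.\ \edeerdeg\ is equivalent to the assertion that for every finite set of formulae $\Delta$ and every finite $\bar p\subseteq S(\emptyset)$ there exists $\eta\in\EL(S_{\bar c,\Delta}(\bar p))$ with $\Im(\eta)$ finite. This is verbatim Condition (D) of Theorem \ref{THMsecond}. The implication $(D)\Longrightarrow(A)$ recorded in that theorem then immediately yields that $u\M$ is profinite, which is the conclusion.

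If one prefers to see the underlying mechanism rather than merely cite the chain, the same conclusion unwinds as follows. By Fact \ref{fact finite image}, the existence of an element of $\EL(S_{\bar c,\Delta}(\bar p))$ with finite image forces the Ellis group $u_{\Delta,\bar p}\M_{\Delta,\bar p}$ of the flow $(\Aut(\C),S_{\bar c,\Delta}(\bar p))$ to be finite. By Lemma \ref{lemma inverse limit of St's}, $S_{\bar c}(\C)\cong\invlim_{(\Delta,\bar p)}S_{\bar c,\Delta}(\bar p)$ as $\Aut(\C)$-flows, so Fact \ref{fact inverse limit and ellis} identifies $u\M$ (with its $\tau$-topology) with the inverse limit of the finite groups $u_{\Delta,\bar p}\M_{\Delta,\bar p}$. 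An inverse limit of finite groups is profinite, giving the result; alternatively, one notes that $u\M$ is then $0$-dimensional and applies Remark \ref{remark uM and um/H zero dimensionality}(b).

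There is essentially no obstacle at the level of the corollary itself: all the substantive work has already been carried out, in Theorem \ref{theorem characterization of sep fin dpeerdeg} (translating the Ramsey-degree hypothesis into the existence of finite-image elements of the local Ellis semigroups) and in the implications of Theorem \ref{THMsecond} (transferring finiteness through the inverse-limit presentation). The only point requiring a moment's care is the bookkeeping that the index family of pairs $(\Delta,\bar p)$ appearing in Theorem \ref{theorem characterization of sep fin dpeerdeg} is exactly the directed family over which Lemma \ref{lemma inverse limit of St's} presents $S_{\bar c}(\C)$ as an inverse limit, so that ``finite image for every $(\Delta,\bar p)$'' genuinely feeds the inverse-limit machinery and no cofinality issue arises.
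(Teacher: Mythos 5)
Your proposal is correct and follows exactly the paper's own argument: the paper deduces the corollary by noting that Theorem \ref{theorem characterization of sep fin dpeerdeg} translates sep.\ fin.\ \edeerdeg\ into Condition (D) of Theorem \ref{THMsecond}, and then applies the implication (D) $\Longrightarrow$ (A). Your ``unwinding'' via Fact \ref{fact finite image}, Lemma \ref{lemma inverse limit of St's}, and Fact \ref{fact inverse limit and ellis} is precisely how the paper itself establishes (D) $\Longrightarrow$ (C) $\Longrightarrow$ (B) $\Longrightarrow$ (A), so there is no substantive difference.
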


As mentioned in the introduction, we will give examples showing that (A'') does not imply (A') (see Example \ref{example equivalences n classes}) and (A') does not imply (B) (see Example \ref{example equivalences with two classes}). We do not know however whether Example \ref{example equivalences with two classes} satisfies (A), so we do not know whether it shows that (A') does not imply (A) or that (A) does not imply (B). We have also not found an example showing that (C) does not imply (D). 

We do not expect that (A)$\implies$(C) is true. However, we can easily see that (A) is equivalent to a weaker version of (C):
\begin{enumerate}
\item[(C')] for every finite set of formulae $\Delta$ and types $\bar p\subseteq S(T)$, the Ellis group of the flow $(\Aut(\C),S_{\bar c,\Delta}(\bar p))$ is profinite.
\end{enumerate}

By Lemma \ref{lemma inverse limit of St's} and Fact \ref{fact inverse limit and ellis}, we see that (C')$\implies$(B')$\implies$(A), where (B') is the weaker version of (B) in which we require only profinitenes of the Ellis groups of the flows $X_i$.

\begin{proposition}\label{lemma A equiv C'} Conditions (A), (B'), and (C') are equivalent. 
\end{proposition}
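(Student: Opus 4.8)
The plan is to note that the chain $(C')\Rightarrow(B')\Rightarrow(A)$ has already been recorded in the remark preceding the proposition, so it suffices to close the loop by proving $(A)\Rightarrow(C')$. This reduces everything to showing that profiniteness of the Ellis group $u\M$ of the full flow $S_{\bar c}(\C)$ is inherited by the Ellis group $u_{\Delta,\bar p}\M_{\Delta,\bar p}$ of each localized flow $S_{\bar c,\Delta}(\bar p)$.

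First I would invoke the inverse limit machinery already set up for the proof of $(B)\Rightarrow(C)$. By Lemma \ref{lemma inverse limit of St's}, $S_{\bar c}(\C)\cong\invlim_{t\in\mathcal F}S_t$, where I abbreviate $S_t:=S_{\bar c,\Delta}(\bar p)$ for $t=(\Delta,\bar p)$; by Fact \ref{fact inverse limit and ellis} this yields $u\M\cong\invlim_{t\in\mathcal F}u_t\M_t$, with the projections $\hat\pi_t:\EL(S_{\bar c}(\C))\to\EL(S_t)$ being $\Aut(\C)$-flow and semigroup epimorphisms. Applying Fact \ref{fact epi ellis}(ii) to each $\hat\pi_t$, the restriction $\hat\pi_t|_{u\M}:u\M\to u_t\M_t$ is a group epimorphism and a quotient map in the $\tau$-topologies. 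Hence, writing $N_t$ for its kernel, each $u_t\M_t$ is isomorphic, in the $\tau$-topologies, to the topological quotient $u\M/N_t$.

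The heart of the matter is then a standard group-theoretic fact: a Hausdorff topological quotient of a profinite group is profinite. Since $u_t\M_t$ is $T_1$ and $\hat\pi_t|_{u\M}$ is continuous, $N_t=(\hat\pi_t|_{u\M})^{-1}(\{u_t\})$ is a closed normal subgroup of $u\M$. As $u\M$ is profinite --- in particular a compact, Hausdorff, $0$-dimensional topological group by Remark \ref{remark uM and um/H zero dimensionality}(b) --- the quotient $u\M/N_t$ with the quotient topology is again compact, Hausdorff and $0$-dimensional, hence profinite; and via the topological quotient map it is isomorphic to $u_t\M_t$. As $(\Delta,\bar p)$ was arbitrary, $(C')$ follows, completing the cycle $(A)\Rightarrow(C')\Rightarrow(B')\Rightarrow(A)$.

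The one genuine subtlety to watch, which I regard as the main obstacle, is that the $\tau$-topology makes $u\M$ and $u_t\M_t$ only \emph{semitopological} groups a priori, whereas the profinite-quotient argument lives in the category of honest topological groups. This is resolved precisely by the hypothesis $(A)$: once $u\M$ is known to be profinite it is a compact Hausdorff topological group, so $u\M/N_t$ is a topological group. I would then carefully justify that $u_t\M_t$ is Hausdorff (being a quotient by the closed subgroup $N_t$) and $0$-dimensional (the open normal subgroups of the profinite $u\M$ push forward, under the open quotient homomorphism, to a neighbourhood basis of the identity consisting of open normal subgroups of $u\M/N_t$), so that Remark \ref{remark uM and um/H zero dimensionality}(b) applies on both ends to conclude profiniteness of $u_t\M_t$.
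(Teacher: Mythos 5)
Your proof is correct, and its skeleton coincides with the paper's: the paper also proves only (A)$\implies$(C'), obtains from the restriction map $S_{\bar c}(\C)\to S_{\bar c,\Delta}(\bar p)$, via Fact \ref{fact induced epimorphism} and Fact \ref{fact epi ellis}(ii), a group epimorphism $f_{\upharpoonright u\M}\colon u\M\to u'\M'$ onto the Ellis group of the localized flow which is a topological quotient map in the $\tau$-topologies, and it uses hypothesis (A) at exactly the same point as you do, namely to upgrade $u\M$ from a compact $T_1$ semitopological group to a genuine compact Hausdorff topological group. (Your detour through the inverse limit presentation and Fact \ref{fact inverse limit and ellis} is harmless but superfluous; only the single projection to $\EL(S_{\bar c,\Delta}(\bar p))$ is needed.) Where you genuinely diverge is the finishing step. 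You identify $u'\M'$ homeomorphically with the topological group quotient $u\M/\ker(f_{\upharpoonright u\M})$ --- legitimate, since $f_{\upharpoonright u\M}$ is a quotient map and the kernel is closed by $T_1$-ness of the target --- and then cite the standard fact that a quotient of a profinite group by a closed normal subgroup is profinite. The paper instead stays inside the semitopological framework: by Remark \ref{remark uM and um/H zero dimensionality}(b), which holds for arbitrary Ellis groups, it suffices to show that $u'\M'$ is $0$-dimensional, and this is verified by a direct computation that images of clopen sets are clopen: for clopen $U\subseteq u\M$, the saturation $f_{\upharpoonright u\M}^{-1}[f_{\upharpoonright u\M}[U]]=\ker(f_{\upharpoonright u\M})\cdot U$ is open as a union of translates of $U$ and closed by closedness of the kernel, compactness of $u\M$, and continuity of multiplication (the last again supplied by (A)). The two finishes establish the same underlying fact; yours buys brevity by outsourcing to known profinite group theory, and it correctly isolates the semitopological-versus-topological subtlety, while the paper's hands-on argument has the merit of never presupposing that $u'\M'$ is a topological group in advance --- via Remark \ref{remark uM and um/H zero dimensionality}(b) that emerges as a conclusion rather than an input.
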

\begin{proof}
It remains to prove (A)$\implies$(C'). Consider an $\Aut(\C)$-flow epimorphism $S_{\bar c}(\C)\to S_{\bar c,\Delta}(\bar p)$ given by the restriction. By Fact \ref{fact induced epimorphism}, it induces an $\Aut(\C)$-flow and semigroup epimorphism $f:\EL(S_{\bar c}(\C))\to \EL(S_{\bar c,\Delta}(\bar p))$. Let $\M$ be a minimal left ideal of $\EL(S_{\bar c}(\C))$ and $u\in\J(\M)$. By Fact \ref{fact epi ellis}, $\M':= f[\M]$ is a minimal left ideal of $\EL(S_{\bar c,\Delta}(\bar p))$, $u':=f(u)\in\J(\M')$, and $f_{\upharpoonright u\M}:u\M\to u'\M'$ is a group epimorphism and quotient map (in the $\tau$-topology). By Remark \ref{remark uM and um/H zero dimensionality}(b), it is enough to prove that $u'\M'$ is $0$-dimensional. For this, it suffices to prove that $f_{\upharpoonright u\M}[U]$ is clopen for any clopen $U\subseteq u\M$ (as $u\M$ is $0$-dimensional), which boils down to showing that $f_{\upharpoonright u\M}^{-1}[f_{\upharpoonright u\M}[U]]$ is clopen. Since $f_{\upharpoonright u\M}^{-1}[f_{\upharpoonright u\M}[U]]=\ker(f_{\upharpoonright u\M})\cdot U$, this set is clopen by the fact that $u'\M'$ is $T_1$, compactness of $u\M$, and continuity of multiplication in $u\M$.
\end{proof}

We now state two criteria which in some quite common situations guarantee profiniteness (or even finiteness) of the Ellis group.

\begin{proposition}\label{proposition A} Suppose that $L\subseteq L^*$ are finite relational languages, $T^*$ is a complete $L^*$-theory with q.e.\ such that $T:= T^*_{\upharpoonright L}$ also has q.e. Let $\C^*$ be a monster model of $T^*$ such that $\C:=\C^*_{\upharpoonright L}$ is a monster model of $T$. If there is $\eta^*\in \EL(S_{\bar c}(\C^*))$ such that $\Im(\eta^*)\subseteq\Inv_{\bar c}(\C^*)$, then for any finite $\bar z$ there exists $\eta\in\EL(S_{\bar z}(\C))$ such that $\Im(\eta)$ is finite.
In particular, the Ellis group of the flow $(\Aut(\C),S_{\bar z}(\C))$ is finite. 
\end{proposition}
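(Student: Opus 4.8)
The plan is to reduce to Lemma~\ref{lemma ellis semi epi bigger lang invariant}(iv), then prove a finiteness statement by a direct counting argument using q.e.\ in both (finite relational) languages, and finish with Fact~\ref{fact finite image}.

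First I would apply Lemma~\ref{lemma ellis semi epi bigger lang invariant}(iv). Its construction goes through for an arbitrary $\bar z$ (via Corollary~\ref{lemma c to z}), in particular for finite $\bar z$; so from the hypothesis that there is $\eta^*\in\EL(S_{\bar c}(\C^*))$ with $\Im(\eta^*)\subseteq\Inv_{\bar c}(\C^*)$ we obtain $\eta\in\EL(S_{\bar z}(\C))$ with $\Im(\eta)=\Inv_{\bar z}^*(\C)$, the set of all $\Aut(\C^*)$-invariant types in $S_{\bar z}(\C)$. It therefore suffices to prove that $\Inv_{\bar z}^*(\C)$ is finite, and then to invoke Fact~\ref{fact finite image} for the $\Aut(\C)$-flow $S_{\bar z}(\C)$.

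The heart of the proof, and the main obstacle, is the finiteness of $\Inv_{\bar z}^*(\C)$. Let $r$ be the maximal arity of a symbol of $L$. Since $T$ has q.e., a type $p(\bar z)\in S_{\bar z}(\C)$ is determined by the set of atomic $L$-formulas $\varphi(\bar z,\bar b)$ it contains, where $\varphi(\bar z,\bar y)$ ranges over atomic $L$-formulas and $\bar b\subseteq\C$ is a parameter tuple with $|\bar y|\leqslant r$; as $L$ is finite relational and $\bar z$ is finite, there are only finitely many such templates $\varphi(\bar z,\bar y)$. Now suppose $p$ is $\Aut(\C^*)$-invariant. Because $\C^*$ is strongly homogeneous, $\tp^{L^*}(\bar b)=\tp^{L^*}(\bar b')$ implies that $\bar b$ and $\bar b'$ lie in a common $\Aut(\C^*)$-orbit, whence $\varphi(\bar z,\bar b)\in p\iff\varphi(\bar z,\bar b')\in p$; thus the truth value of $\varphi(\bar z,\bar b)$ in $p$ depends only on $\tp^{L^*}(\bar b)$. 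Since $T^*$ has q.e.\ in the finite relational language $L^*$, there are only finitely many $L^*$-types of tuples of length $\leqslant r$. Consequently $p$ is completely determined by a function assigning to each of the finitely many pairs (atomic $L$-template $\varphi$, $L^*$-type of the parameter slot) a value in $2$. This assignment is injective on $\Inv_{\bar z}^*(\C)$ (two distinct invariant types differ on some $L$-formula, hence, by q.e.\ for $T$, on some atomic one), so $\Inv_{\bar z}^*(\C)$ embeds into a finite set and is finite. (Equality atomic formulas $z_i=y$ are covered by the same counting; the only effect of invariance there is to restrict which value functions actually occur, which cannot increase the count.)

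With $\Im(\eta)=\Inv_{\bar z}^*(\C)$ now known to be finite and $\eta\in\EL(S_{\bar z}(\C))$, Fact~\ref{fact finite image} yields that the Ellis group of $(\Aut(\C),S_{\bar z}(\C))$ is finite, giving both assertions. I expect the delicate point to be exactly the interplay of the two q.e.\ hypotheses: q.e.\ for $T$ is what bounds the length of the relevant parameter tuples to $\leqslant r$, and only then does q.e.\ for $T^*$ in a finite relational language force finitely many parameter-types; dropping either assumption would make the parameter data, and hence potentially $\Inv_{\bar z}^*(\C)$, infinite.
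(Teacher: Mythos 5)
Your proposal is correct and takes essentially the same route as the paper: reduce via Lemma \ref{lemma ellis semi epi bigger lang invariant}(iv) to showing that $\Inv_{\bar z}^*(\C)$ is finite, count the invariant types using q.e.\ of $T$ (to reduce to relational formulas with short parameter tuples) together with q.e.\ of $T^*$ and finiteness of $L^*$ (to get finitely many parameter types), and conclude with Fact \ref{fact finite image}. Your explicit treatment of equality formulas and of why invariance makes truth values depend only on the $L^*$-type of the parameters simply spells out what the paper leaves implicit in its remark that the languages are relational.
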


We should stress that here we consider two different flows: $(\Aut(\C^*),S_{\bar c}(\C^*))$ and $(\Aut(\C),S_{\bar c}(\C))$, defined with respect to two different languages. In order to avoid confusions, we denote by asterisk notions defined with respect to the language $L^*$, and without asterisk notions defined with respect to the language $L$.

\begin{proof} 
By Lemma \ref{lemma ellis semi epi bigger lang invariant}(iv), there is $\eta \in \EL(S_{\bar z}(\C))$ with $\Im(\eta) = \Inv_{\bar z}^*(\C)$, where $\Inv_{\bar z}^*(\C)\subseteq S_{\bar z}(\C)$ is the set of all $\Aut(\C^*)$-invariant types.  Hence,
since $T$ has q.e., each type $p\in \Im(\eta)$ is determined by saying whether $R(\bar z,\bar b)\in p$ or not for some (every) $\bar b\models q$, for all $R(\bar z,\bar y)\in L$ and $q\in S_{\bar y}(T^*)$. On the other hand, $S_{\bar y}(T^*)$ is finite by q.e.\ of $T^*$ and finiteness of $L^*$. Since $L$ is also finite, we see that we have only finitely many possibilities for $p\in\Im(\eta)$, and thus $\Im(\eta)$ is finite. (Implicitly we also used here that the languages are relational.)

The ``In particular" part follows by Fact \ref{fact finite image}.
\end{proof}

\begin{corollary}\label{corollary A} Under the assumptions of Proposition \ref{proposition A}, the Ellis group of the flow $(\Aut(\C),S_{\bar c}(\C))$ is finite.
\end{corollary}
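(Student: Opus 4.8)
The plan is to reduce the infinite enumeration $\bar c$ to a finite tuple and then invoke Proposition \ref{proposition A} directly. The key observation is that the standing hypotheses force $T$ to be $(m+1)$-ary for a suitable finite $m$. Indeed, since $L$ is a finite relational language and $T$ has q.e., every $L$-formula is $T$-equivalent to a Boolean combination of atomic $L$-formulae, and each atomic $L$-formula has at most $N$ free variables, where $N$ is the maximum of $2$ and the largest arity of a relation symbol in $L$. (The constant $2$ accounts for the equality symbol, which contributes binary atomic formulae.) Hence every $L$-formula is a Boolean combination of formulae with at most $N$ free variables, i.e.\ $T$ is $N$-ary, which is to say $(m+1)$-ary with $m:=N-1$.

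Given this, I would apply Corollary \ref{corollary c to finite z m-ary}: since $T$ is $(m+1)$-ary, the Ellis semigroups $\EL(S_{\bar c}(\C))$ and $\EL(S_m(\C))$ are isomorphic as $\Aut(\C)$-flows and as semigroups, and consequently the Ellis groups of the flows $(\Aut(\C),S_{\bar c}(\C))$ and $(\Aut(\C),S_m(\C))$ are topologically isomorphic. Thus it suffices to prove that the Ellis group of $(\Aut(\C),S_m(\C))$ is finite.

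Now $S_m(\C)=S_{\bar z}(\C)$ for a finite tuple $\bar z$ of length $m$, so this is precisely the ``In particular'' conclusion of Proposition \ref{proposition A} applied to this finite $\bar z$: under the assumptions of Proposition \ref{proposition A}, there is $\eta\in\EL(S_{\bar z}(\C))$ with finite image, whence, by Fact \ref{fact finite image}, the Ellis group of $(\Aut(\C),S_m(\C))$ is finite. Combining this with the topological isomorphism of the previous paragraph yields that the Ellis group of $(\Aut(\C),S_{\bar c}(\C))$ is finite, as required. There is no genuine obstacle in this argument; the only step requiring a moment's care is the verification of $(m+1)$-arity, which rests exactly on the two q.e.\ hypotheses together with the finiteness and relationality of $L^*$ (hence of $L$).
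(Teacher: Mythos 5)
Your proof is correct and follows essentially the same route as the paper: deduce $(m+1)$-arity of $T$ from q.e.\ together with finiteness and relationality of $L$, transfer the Ellis group to $(\Aut(\C),S_m(\C))$ via Corollary \ref{corollary c to finite z m-ary}, and conclude finiteness from the ``In particular'' part of Proposition \ref{proposition A}. Your explicit handling of the equality symbol (taking $N=\max(2,\text{max arity})$) is in fact slightly more careful than the paper's phrasing, which takes $m$ to be the maximal arity of the symbols in $L$.
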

\begin{proof}
Since $L$ is a finite relational language and $T$ has q.e., $T$ is $m$-ary, where $m$ is the maximal arity of the symbols in $L$. By Corollary \ref{corollary c to finite z m-ary}, the Ellis groups of the flows $(\Aut(\C),S_{\bar c}(\C))$ and $(\Aut(\C),S_{m-1}(\C))$ are isomorphic. The latter is finite by Proposition \ref{proposition A}.
\end{proof}


\begin{proposition}\label{proposition B}
Suppose that $L\subseteq L^*$, $T^*$ is a complete $L^*$-theory, and $T:=T^*_{\upharpoonright L}$. Let $\C^*$ be a monster of $T^*$ such that $\C:=\C^*_{\upharpoonright L}$ is a monster of $T$. Assume that for any finitely many variables $\bar y$ there are only finitely many extensions in $S_{\bar y}(T^*)$ of each type from $S_{\bar y}(T)$. If there is $\eta^*\in\EL(S_{\bar c}(\C^*))$ such that $\Im(\eta^*)\subseteq\Inv_{\bar c}(\C^*)$, then (D) holds. 
In particular, the Ellis group of $T$ is profinite.
\end{proposition}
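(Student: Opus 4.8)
The plan is to establish condition (D) directly, by exhibiting $S_{\bar c}(\C)$ as an inverse limit of the flows $S_{\bar c,\Delta}(\bar p)$ and checking that each of these has finite Ellis group. By Lemma \ref{lemma inverse limit of St's} we already have $S_{\bar c}(\C)\cong\invlim_{(\Delta,\bar p)}S_{\bar c,\Delta}(\bar p)$ as $\Aut(\C)$-flows, so it remains to show that for every $(\Delta,\bar p)$ the Ellis group of $(\Aut(\C),S_{\bar c,\Delta}(\bar p))$ is finite. By Fact \ref{fact finite image}, for this it is enough to produce, for each such pair, an element of $\EL(S_{\bar c,\Delta}(\bar p))$ with finite image.

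To produce these elements I would start from Lemma \ref{lemma ellis semi epi bigger lang invariant}(ii), which under the hypothesis that some $\eta^*\in\EL(S_{\bar c}(\C^*))$ has $\Im(\eta^*)\subseteq\Inv_{\bar c}(\C^*)$ yields a single $\eta\in\EL(S_{\bar c}(\C))$ with $\Im(\eta)=\Inv_{\bar c}^*(\C)$, the set of all $\Aut(\C^*)$-invariant types in $S_{\bar c}(\C)$. Fixing $(\Delta,\bar p)$ with $\Delta=\{\varphi_i(\bar x,\bar y)\}_{i<k}$ and $\bar p=\{p_j\}_{j<m}\subseteq S_{\bar y}(T)$, I would consider the restriction epimorphism $f\colon S_{\bar c}(\C)\to S_{\bar c,\Delta}(\bar p)$; by Fact \ref{fact induced epimorphism}(i) it induces an epimorphism $\hat f\colon\EL(S_{\bar c}(\C))\to\EL(S_{\bar c,\Delta}(\bar p))$, and by Corollary \ref{corollary of fact 2.4} the element $\hat f(\eta)$ has image exactly $f[\Inv_{\bar c}^*(\C)]$. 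Thus everything reduces to showing that this set is finite.

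The heart of the argument, and the step where the hypothesis of the proposition is actually used, is the finiteness of $f[\Inv_{\bar c}^*(\C)]$. Here I would argue as follows. A point of $S_{\bar c,\Delta}(\bar p)$ is the datum, for each $i<k$ and each $\bar b$ realizing some $p_j$, of the truth value of $\varphi_i(\bar x,\bar b)$. If $p\in\Inv_{\bar c}^*(\C)$ is $\Aut(\C^*)$-invariant, then whether $\varphi_i(\bar x,\bar b)\in p$ depends only on the $\Aut(\C^*)$-orbit of $\bar b$, i.e.\ (since $\C^*$ is a monster of $T^*$, hence strongly homogeneous) only on the $L^*$-type of $\bar b$ over $\emptyset$. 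By the standing hypothesis each $p_j\in S_{\bar y}(T)$ has only finitely many extensions in $S_{\bar y}(T^*)$, so the realizations of the $p_j$'s fall into finitely many $\Aut(\C^*)$-orbits; consequently $f(p)$ is determined by finitely much binary data, and $f[\Inv_{\bar c}^*(\C)]$ is finite. This finishes the verification of (D).

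Finally, the ``in particular'' clause follows from the implication (D)$\implies$(A) of Theorem \ref{THMsecond}, which gives profiniteness of $u\M$. I expect the only real subtlety to lie in the orbit-counting of the third paragraph: one must be careful that $\Aut(\C^*)$-invariance is precisely what reduces membership of $\varphi_i(\bar x,\bar b)$ to the $L^*$-type of $\bar b$ over $\emptyset$, and that the finite-extension hypothesis is invoked with $\bar y$ the finite tuple of parameter variables of $\Delta$, so that each $p_j$ contributes only finitely many orbits and the total bound on $\#f[\Inv_{\bar c}^*(\C)]$ is genuinely finite.
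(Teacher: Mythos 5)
Your proposal is correct and follows essentially the same route as the paper's proof: apply Lemma \ref{lemma ellis semi epi bigger lang invariant}(ii) to get $\eta\in\EL(S_{\bar c}(\C))$ with image in the $\Aut(\C^*)$-invariant types, push it forward to each $\EL(S_{\bar c,\Delta}(\bar p))$ via the restriction epimorphism, and use the finite-extension hypothesis to see that an $\Aut(\C^*)$-invariant $\Delta$-type is determined by finitely many binary conditions, yielding (D) and then profiniteness via (D)$\implies$(A). The only cosmetic difference is that you phrase the finiteness step in terms of $f[\Inv_{\bar c}^*(\C)]$ rather than the (slightly larger) set $\Inv^*_{\bar c,\Delta}(\bar p)$ of all $\Aut(\C^*)$-invariant types in $S_{\bar c,\Delta}(\bar p)$, which the paper bounds by the identical orbit-counting argument.
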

\begin{proof}
The existence of $\eta^*\in\EL(S_{\bar c}(\C^*))$ with $\Im(\eta^*)\subseteq\Inv_{\bar c}(\C^*)$ implies, by Lemma \ref{lemma ellis semi epi bigger lang invariant}(ii), that there is $\eta \in\EL(S_{\bar c}(\C))$ with $\Im(\eta)\subseteq\Inv_{\bar c}^*(\C)$, where $\Inv_{\bar c}^*(\C)\subseteq S_{\bar c}(\C)$ is the set of all $\Aut(\C^*)$-invariant types. Then, for each finite $\Delta, \bar p$, the induced $\eta_{\Delta,\bar p} \in \EL(S_{\bar c,\Delta}(\bar p))$ satisfies $\Im(\eta_{\Delta, \bar p})\subseteq \Inv^*_{\bar c,\Delta}(\bar p)$, where $\Inv^*_{\bar c,\Delta}(\bar p)$ is the set of all $\Aut(\C^*)$-invariant types from $S_{\bar c,\Delta}(\bar p)$.

Consider any finite $\Delta$ and $\bar p$. For each  $q_{\Delta,\bar p}\in \Inv^*_{\bar c,\Delta}(\bar p)$ and $\varphi(\bar x,\bar y)\in\Delta$ we have: $\varphi(\bar x,\bar b)\in q_{\Delta,\bar p}(\bar x)$, where $\tp(\bar b)\in \bar p$, iff $\varphi(\bar x,\bar b')\in q_{\Delta,\bar p}(\bar x)$ for all $\bar b'$ realizing the same extension in $S_{\bar y}(T^*)$ of $\tp(\bar b) \in S_{\bar y}(T)$ as $\bar b$. Since $\Delta$ and $\bar p$ are finite and each $p_0\in\bar p$ has only finitely many extensions in $S_{\bar y}(T^*)$, $q_{\Delta,\bar p}(\bar x)$ is completely determined by finitely many conditions. Therefore, $\Inv^*_{\bar c,\Delta}(\bar p)$ is finite, and so $\Im(\eta_{\Delta,\bar p})$ is finite.

The ``In particular'' part follows from (D),
as (D)$\implies$(A).
\end{proof}

\begin{remark}\label{remark prop B}
The assumption of Proposition \ref{proposition B} that for each finite $\bar y$ there are only finitely many extensions in $S_{\bar y}(T^*)$ of each type from $S_{\bar y}(T)$ holds when $L^*$ is relational, $L^*\smallsetminus L$ is finite, and $T^*$ has q.e.\qed
\end{remark}

\begin{corollary}\label{added corollary of proposition B}
The assumptions of Proposition \ref{proposition A} imply the assumptions of Proposition \ref{proposition B} which in turn imply that $T$ has sep.\ fin.\ \edeerdeg.
\end{corollary}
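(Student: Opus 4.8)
The plan is to prove the two implications separately; in each case the work is to chain together results already established in the excerpt, so no genuinely new argument is needed.

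For the first implication, I would start from the hypotheses of Proposition \ref{proposition A} and verify that each of the three conditions appearing in the hypothesis of Remark \ref{remark prop B} is met. Under Proposition \ref{proposition A} the language $L^*$ is finite relational, so in particular it is relational, and $L^*\smallsetminus L$ is finite (being a subset of the finite set $L^*$); moreover $T^*$ has q.e. Remark \ref{remark prop B} then supplies exactly the extra hypothesis of Proposition \ref{proposition B}, namely that for every finite tuple of variables $\bar y$ each type in $S_{\bar y}(T)$ has only finitely many extensions in $S_{\bar y}(T^*)$. The remaining hypotheses of Proposition \ref{proposition B} --- that $L\subseteq L^*$, that $T^*$ is a complete $L^*$-theory with $T=T^*_{\upharpoonright L}$, the monster-model set-up with $\C=\C^*_{\upharpoonright L}$, and the existence of $\eta^*\in\EL(S_{\bar c}(\C^*))$ with $\Im(\eta^*)\subseteq\Inv_{\bar c}(\C^*)$ --- are already literally among the hypotheses of Proposition \ref{proposition A}, so they transfer verbatim. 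Hence the assumptions of Proposition \ref{proposition A} entail those of Proposition \ref{proposition B}.

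For the second implication, the key observation is that condition (D) of Theorem \ref{THMsecond} is precisely the right-hand side of Theorem \ref{theorem characterization of sep fin dpeerdeg}: both assert that for every finite set of formulae $\Delta$ and every finite $\bar p\subseteq S(\emptyset)$ there is $\eta\in\EL(S_{\bar c,\Delta}(\bar p))$ with $\Im(\eta)$ finite. Proposition \ref{proposition B} already establishes (in its proof, via Lemma \ref{lemma ellis semi epi bigger lang invariant}(ii) and the finiteness of $\Inv^*_{\bar c,\Delta}(\bar p)$) that its assumptions imply (D). Feeding this into Theorem \ref{theorem characterization of sep fin dpeerdeg}, whose equivalence runs in both directions, I conclude that $T$ has sep.\ fin.\ \edeerdeg.

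I do not expect any serious obstacle here: the whole content is to recognise that the stronger, more concrete hypotheses of Proposition \ref{proposition A} specialise to those of Proposition \ref{proposition B} through Remark \ref{remark prop B}, and that property (D) coincides with the Ramsey-degree condition characterised in Theorem \ref{theorem characterization of sep fin dpeerdeg}. The single point worth stating explicitly is that finiteness of the whole language $L^*$ is what forces $L^*\smallsetminus L$ to be finite, which is the form in which Remark \ref{remark prop B} is phrased and applied.
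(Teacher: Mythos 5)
Your proposal is correct and follows exactly the paper's own argument: the first implication is obtained by checking the three hypotheses of Remark \ref{remark prop B} (with finiteness of $L^*$ giving finiteness of $L^*\smallsetminus L$), and the second by noting that Proposition \ref{proposition B} yields condition (D), which is precisely the right-hand side of Theorem \ref{theorem characterization of sep fin dpeerdeg}, whose ($\Leftarrow$) direction then gives sep.\ fin.\ \edeerdeg. No gaps; nothing to add.
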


\begin{proof}
The first part follows from Remark \ref{remark prop B}. To see the rest, note that since Proposition \ref{proposition B} yields (D), 
we can use Theorem \ref{theorem characterization of sep fin dpeerdeg}($\Leftarrow$).
\end{proof}

Thus, the assumptions of Proposition \ref{proposition B} yield a criterion for having sep.\ fin.\ \edeerdeg. In fact, this is a counterpart of the criterion proved by Zucker in \cite[Theorem 8.14]{Z}, which we now explain (but for the definitions the reader is referred to \cite{Z} and \cite{The2}).
Zucker shows that a Fra\"{i}ss\'{e} structure has sep.\ fin.\ Ramsey degree iff it has a Fra\"{i}ss\'{e}, precompact (relational) expansion whose age has the embedding Ramsey property and the expansion property relative to the age of the original structure. 
Combining this with \cite[Theorem 6]{The2} (and noting that the Fra\"{i}ss\'{e} subclass of $\Age(\pmb{F}^*)$ obtained there is a reasonable expansion of $\Age(\pmb{F})$), we get that  a Fra\"{i}ss\'{e} structure has sep.\ fin.\ Ramsey degree iff it has a Fra\"{i}ss\'{e}, precompact (relational) expansion whose age has the embedding Ramsey property. 
Now, if both the original and the expanded structure are $\aleph_0$-saturated (so have q.e.), then being precompact exactly means that for any finitely many variables $\bar y$ there are only finitely many extensions in $S_{\bar y}(T^*)$ of each type from $S_{\bar y}(T)$. Moreover, by Theorem \ref{theorem characterization of dpeerp}, the assumption of Proposition \ref{proposition B} saying that there is $\eta^*\in\EL(S_{\bar c}(\C^*))$ such that $\Im(\eta^*)\subseteq\Inv_{\bar c}(\C^*)$ is equivalent to $T^*$ having \edeerp\ (which is the appropriate counterpart of the embedding Ramsey property).

However, Example \ref{example criterion b in not necessary} shows that, in contrast with Zucker's result, in our case the assumptions of Proposition \ref{proposition B} are only sufficient to have sep. finite \edeerdeg\ for $T$, but they are not necessary. It could be interesting to find an ``iff'' criterion of this kind; we have not tried to do that.

\section{Applications and examples}\label{section applications}

In this section, we use our analysis to prove [pro]finiteness (or triviality) of the Ellis groups in some classes of theories, as well as give examples illustrating various phenomena and showing the lack of implications between some key conditions considered in this paper.

\begin{example}\label{example stable} We prove that stable theories have sep.\ fin.\ \edeerdeg. Denote by $\NF_{\bar c}(\C)$ the space of all types from $S_{\bar c}(\C)$ which do not fork over $\emptyset$. 
Using only one consequence of stability, namely that forking equals dividing over $\emptyset$, the proof of \cite[Proposition 7.11]{KNS} yields:
\begin{claim*} There exists an idempotent $u$ in a minimal left ideal $\M$ of $\EL(S_{\bar c}(\C))$ with $\Im(u)\subseteq\NF_{\bar c}(\C)$.
\end{claim*}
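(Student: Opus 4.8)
The plan is to separate the argument into a soft Ellis-semigroup reduction, which needs nothing beyond $\NF_{\bar c}(\C)$ being a subflow, and a hard model-theoretic core, which is the only place where forking $=$ dividing over $\emptyset$ enters.

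First I would record that $\NF_{\bar c}(\C)$ is a subflow of $S_{\bar c}(\C)$: it is $\Aut(\C)$-invariant because automorphisms fix $\emptyset$ and hence preserve forking over $\emptyset$, and it is closed because its complement is the union of the basic open sets $[\varphi(\bar x,\bar b)]$ taken over all $\varphi(\bar x,\bar b)$ forking over $\emptyset$ (a type forks over $\emptyset$ iff it contains a formula forking over $\emptyset$). Since a closed $\Aut(\C)$-invariant set is automatically invariant under all of $\EL(S_{\bar c}(\C))=\cl(\Aut(\C))$, the set $L:=\{\eta\in\EL(S_{\bar c}(\C))\mid\Im(\eta)\subseteq\NF_{\bar c}(\C)\}$ is a left ideal: for $\eta\in L$ and any $\zeta$ we get $\Im(\zeta\eta)=\zeta[\Im(\eta)]\subseteq\zeta[\NF_{\bar c}(\C)]\subseteq\NF_{\bar c}(\C)$. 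It is also closed, being $\bigcap_{p}\{\eta\mid\eta(p)\in\NF_{\bar c}(\C)\}$, an intersection of preimages of the closed set $\NF_{\bar c}(\C)$ under the continuous evaluation maps $\eta\mapsto\eta(p)$. Thus, as soon as I know $L\neq\emptyset$, it contains a minimal left ideal $\M$ of $\EL(S_{\bar c}(\C))$ (every closed left ideal does), and then any idempotent $u\in\J(\M)\subseteq L$ satisfies $\Im(u)\subseteq\NF_{\bar c}(\C)$, which is exactly the claim. So the whole problem reduces to producing a single $\eta\in\EL(S_{\bar c}(\C))$ with $\Im(\eta)\subseteq\NF_{\bar c}(\C)$.

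To build such an $\eta$ I would argue as in the proof of Theorem \ref{theorem characterization of dpeerp}(ii): it suffices to find, for every finite set $F=\{p_0,\dots,p_{n-1}\}\subseteq S_{\bar c}(\C)$, an element $\eta_F$ with $\eta_F(p_i)\in\NF_{\bar c}(\C)$ for all $i<n$. Directing the finite sets by inclusion and passing to an accumulation point $\eta$ of the net $(\eta_F)_F$, closedness of $\NF_{\bar c}(\C)$ then yields $\eta(p)\in\NF_{\bar c}(\C)$ for every $p$, since for a fixed $p$ one eventually has $F\ni p$. By Fact \ref{fact contents ellis}, the existence of such an $\eta_F$ is equivalent to finding types $q_0,\dots,q_{n-1}\in S_{\bar c}(\C)$, each non-forking over $\emptyset$, with $\ct(q_0,\dots,q_{n-1})\subseteq\ct(p_0,\dots,p_{n-1})$.

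The hard part is this last content-controlled construction, and it is here that stability, in the weak form forking $=$ dividing over $\emptyset$, is used. The plan is to realize the $p_i$ by tuples $\bar e_i$ over $\C$ in a bigger monster, extract from the $\Aut(\C/\emptyset)$-conjugates of the whole configuration a single long $\emptyset$-indiscernible sequence (by Ramsey and compactness), and let $q_i$ be the corresponding limit/average type of this sequence over $\C$. The content inclusion $\ct(q_0,\dots,q_{n-1})\subseteq\ct(p_0,\dots,p_{n-1})$ should come out because, by indiscernibility, any formula appearing in the $q_i$'s is already satisfied by an initial segment of the sequence, hence realized inside the original configuration; and the decisive point is that each $q_i$ does not fork over $\emptyset$: since forking equals dividing over $\emptyset$, it is enough to check that no formula in $q_i$ divides over $\emptyset$, which is exactly what indiscernibility of the defining sequence prevents. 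The main obstacle is to organize the extraction so that the content bookkeeping for the entire tuple $(q_0,\dots,q_{n-1})$ and the non-dividing verification go through simultaneously; this is precisely the argument carried out in the proof of \cite[Proposition 7.11]{KNS}, which I would follow.
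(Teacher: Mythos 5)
Your proposal is correct and takes essentially the same approach as the paper: the paper's proof of this claim consists precisely of invoking the proof of \cite[Proposition 7.11]{KNS} (observing that the only input from stability is forking $=$ dividing over $\emptyset$), and you defer the same model-theoretic core --- the content-controlled construction of nonforking types --- to that same proof. Your surrounding reductions (the closed left ideal $L$ yielding a minimal left ideal of $\EL(S_{\bar c}(\C))$ containing an idempotent, and the finite approximation via Fact \ref{fact contents ellis}) are sound and consistent with what the citation supplies.
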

Choose $u\in\J(\M)$ given by the claim. Fix a finite set of $L$-formulae $\Delta$ and finite tuple of types $\bar p$. Let $u_{\Delta,\bar p} := \hat{f}(u) \in \EL(S_{\bar c, \Delta}(\bar p))$, where $\hat{f}: \EL(S_{\bar c}(\C))\to \EL(S_{\bar c,\Delta}(\bar p))$ is induced by the restriction map $f: S_{\bar c}(\C) \to S_{\bar c, \Delta}(\bar p)$ via Fact \ref{fact induced epimorphism}(i). 
Denote by $\NF_{\bar c,\Delta}(\bar p)$ the restrictions of all types from $\NF_{\bar c}(\C)$ to $S_{\bar c,\Delta}(\bar p)$. By Corollary \ref{corollary of fact 2.4}, we conclude that $\Im(u_{\Delta,\bar p})\subseteq\NF_{\bar c,\Delta}(\bar p)$.
On the other hand, by 
finiteness of the $\Delta$-multiplicity of $\tp(\bar c)$, $\NF_{\bar c,\Delta}(\bar p)$ is finite. Hence, $\Im(u_{\Delta,\bar p})$ is finite.
Thus, Condition (D) 
is fulfilled, and hence $T$ has sep.\ fin.\ \edeerdeg\ by Theorem \ref{theorem characterization of sep fin dpeerdeg}.

If we in addition assume that $\tp(\bar c)$ is stationary (equivalently, all types in $S(\emptyset)$ are stationary), then  $T$ has \edeerp. Indeed, under this assumption, $\NF_{\bar c}(\C)$ is a singleton consisting of an invariant type, so $u$ given by the claim has image contained in the invariant types, and hence $T$ has \edeerp\ by Theorem \ref{theorem characterization of dpeerp}. Conversely, if $\tp(\bar c)$ is not stationary, then there is no global invariant extension of this type, i.e.\ $T$ is not extremely amenable, so $T$ does not have \deerp. 

This implies that in stable theories the properties \edeerp\ and \deerp\ are equivalent (which is also immediate from definitions, using definability of types).
%
\end{example}

The next concrete example shows that a stable theory does not need to have sep.\ fin.\ \eerdeg; in  particular, sep.\ fin.\ \edeerdeg\ does not imply sep.\ fin.\ \eerdeg.

\begin{example}\label{example ACF0}
Take $T:=\mathrm{ACF_0}$. Consider on $\C^\times$ the relations $\sim_n$ given by $a\sim_n a'$ iff $a'=\xi_n^ka$ for some $k$, where $\xi_n$ denotes an $n$-th primitive root of unity. The $\sim_n$'s are equivalence relations on $\C^\times$, and let $X_n$ be a set of representatives of the $\sim_n$-classes; then $\C^\times=\bigsqcup_{k<n}\xi_n^k X_n$. For any transcendental $a$, take $\bar b=(a,\xi_na,\dots,\xi_n^{n-1}a)$ and consider the coloring $c:{\C\choose a}\to n$ given by: $c(a')=k$ iff $a'\in\xi_n^kX_n$. For any $\bar b'\in{\C\choose\bar b}$ we have that $\bar b'=(a',\omega_na',\dots,\omega_n^{n-1}a')$ for some transcendental $a'$ and primitive $n$-th root of unity $\omega_n$. Note that $\#c[{\bar b'\choose a}]=n$. Thus, a transcendental element cannot have finite \eerdeg, so $T$ does not have sep.\ fin.\ \eerdeg, while $T$ is stable and as such has sep.\ fin.\ \edeerdeg.

If we modify $T$ by naming all constants from the algebraic closure of $\mathbb{Q}$, then all types in $S(\emptyset)$ are stationary, so the resulting theory has \edeerp, but the above argument shows that it does not have sep.\ fin.\ \eerdeg.
\end{example}

The next example shows that the criterion for having sep.\ fin.\ \edeerdeg\ given in Proposition \ref{proposition B} (see also Corollary \ref{added corollary of proposition B}) is not a necessary condition.

\begin{example}\label{example criterion b in not necessary}
Let $T$ be a stable theory with a finitary type $p \in S_{\bar y}(T)$ of infinite multiplicity (i.e.\ with infinitely many global non-forking extensions). (For example, one can take $T:=(\mathbb{Z},+)$ and $p(y):=\tp(1)$.) Then it has sep.\ fin.\ \edeerdeg, but we will show that there is no expansion $T^*$ of $T$ satisfying the assumptions of Proposition \ref{proposition B}. Suppose for a contradiction that $T^*$ is such an expansion. The assumption that $\Im(\eta^*)\subseteq\Inv_{\bar c}(\C^*)$ implies that $T^*$ is extremely amenable, so $\acl^{eq,*}(\emptyset) = \dcl^{eq,*}(\emptyset)$ (both computed in $\C^*$). Since $p$ has infinite multiplicity, it has infinitely many extensions to complete types over $\acl^{eq}(\emptyset)$ computed in $\C$, so also over $\acl^{eq,*}(\emptyset) \supseteq \acl^{eq}(\emptyset)$. Therefore, we conclude that $p$ has infinitely many extensions in $S_{\bar y}(T^*)$, a contradiction.
\end{example}

In the following example, we list some Fra\"iss\'e classes which are known to have the embedding Ramsey property and whose Fra\"iss\'e limits are $\aleph_0$-categorical, and hence $\aleph_0$-saturated. By Remark \ref{remark fraisse eerp}, the theories of these Fra\"iss\'e limits have \eerp, so \edeerp\ as well, and thus their Ellis groups are trivial by Corollary \ref{corollary dpeerp implies trivial ellis}.

By a hypergraph we mean a structure in a finite relational language such that each basic relation $R$ is irreflexive (i.e.\ $(R(a_0,\dots,a_{n-1})$ implies that the $a_i$'s are pairwise distinct) and symmetric (i.e.\ invariant under permutations of coordinates).

\begin{example}\phantomsection\label{example rp}
\begin{enumerate}[label=(\alph*), align=right, leftmargin=*]
\item The class of all finite (linearly) ordered hypergraphs omitting a fixed class of finite irreducible hypergraphs in a finite relational language containing $\leqslant$. See \cite[Theorem A]{NR}. In particular, the theories of the ordered random graph and the ordered $K_n$-free random graph have \eerp. 

\item The class of all finite sets with $n$ linear orderings (for a fixed $n$) in the language $L=\{\leqslant_1,\dots,\leqslant_n\}$. See \cite[Theorem 4]{S}.

\item The class of all finite structures in the language $L=\{\sqsubseteq,\leqslant\}$ for which $\sqsubseteq$ is a partial ordering and $\leqslant$ is a linear ordering extending $\sqsubseteq$. See \cite{S0}.

\item The class of all finite structures in the language $L=\{\sqsubseteq,\leqslant,\preccurlyeq\}$ for which $\sqsubseteq$ is a partial ordering, $\leqslant$ is a linear ordering, and $\preccurlyeq$ is a linear ordering extending $\sqsubseteq$. See \cite[Theorem 1]{S}.

\item The class of all naturally ordered finite vector spaces over a fixed finite field $F$ in the language $L=\{+,m_a,\leqslant\}_{a\in F}$, where $+$ is the addition, $m_a$ is the unary multiplication by $a\in F$, and $\leqslant$ is the anti-lexicographical linear ordering induced by an ordering of a basis. See  \cite{KPT} together with \cite[Corollary 2]{GLR}.

\item The class of all naturally ordered finite Boolean algebras in the language of Boolean algebras expanded by $\leqslant$, where $\leqslant$ is the anti-lexicographical linear order induced by an ordering of atoms. See \cite[Proposition 6.13]{KPT} and \cite{GR}.

\item The class of all finite linearly ordered structures in the infinite language consisting of relational symbols $R_n$, $n>0$, where $R_n$ is $n$-ary, such that each $R_n$ is irreflexive and symmetric. It is easy to see that this is a Fra\"{i}ss\'{e} class whose limit is $\aleph_0$-categorical. The fact that it has the embedding Ramsey property follows from the fact that the restrictions to the finite sublanguages have it by (a). This holds more generally in a situation when for each $n$ there are only finitely many relational symbols of arity $n$.



\end{enumerate}
\end{example}


In the examples of Fra\"{i}ss\'{e} structures with the embedding Ramsey property which are not $\aleph_0$-saturated, the situation is not so obvious, as we cannot use Remark  \ref{remark fraisse eerp} to deduce \eerp\ and so \edeerp. 
We leave as a problem to find an example of a Fra\"{i}ss\'{e} structure with the embedding Ramsey property whose theory does not have \eerp\ or even \edeerp, or show that there is none.

We now list some  Fra\"iss\'e classes which are known to have sep.\ fin.\ embedding Ramsey degree and whose Fra\"iss\'e limits are $\aleph_0$-categorical, and hence $\aleph_0$-saturated. By Remark \ref{remark fraisse fin sep eerdeg}, the theories of these Fra\"iss\'e limits have sep.\ fin.\ \eerdeg, so sep.\ fin.\ \edeerdeg, hence their Ellis groups are profinite by Corollary \ref{corollary sep fin dpeerdeg implies 0-dim ellis}.

As was recalled after Corollary \ref{added corollary of proposition B}, by \cite{Z} and \cite{The2}, to see that a Fra\"iss\'e class $\mathcal{K}$ has sep.\ fin.\ embedding Ramsey degree, one needs to find a Fra\"{i}ss\'{e} class which is a reasonable, precompact, relational expansion of $\mathcal{K}$ with the embedding Ramsey property.


\begin{example}\phantomsection\label{exampre rdeg}
\begin{enumerate}[label=(\alph*), align=right, leftmargin=*]
\item The class of all finite hypergraphs omitting a fixed class of finite irreducible hypergraphs in a finite relational language. Example \ref{example rp}(a) gives the desired expansion of this class.

\item The class of all finite vector spaces over a fixed finite field. Example \ref{example rp}(e) gives the desired expansion of this class.

\item The class of all finite Boolean algebras in the language of all Boolean algebras. Example \ref{example rp}(f) gives the desired expansion of this class.



\item The age of any homogeneous directed graph. See \cite{JLTW}.

\item The class of all finite structures in the infinite language consisting of relational symbols $R_n$, $n>0$, where $R_n$ is $n$-ary, such that each $R_n$ is irreflexive and symmetric. Example \ref{example rp}(g) gives the desired expansion of this class. This holds more generally in a situation when for each $n$ there are only finitely many relational symbols of arity $n$.

\end{enumerate}
\end{example}

Sometimes we can say more. For example, in Example \ref{exampre rdeg}(b) the Fra\"{i}ss\'{e} limit is just an infinite dimensional vector space over a finite field, so it is strongly minimal with the property that $\NF_{\bar c}(\C) = \Inv_{\bar c}(\C)$, and hence the Ellis group is trivial by the claim in Example \ref{example stable} and the argument from the proof of Corollary \ref{corollary dpeerp implies trivial ellis}.
The situation in Example \ref{exampre rdeg}(a) is more interesting.
Namely, the Ellis group there is finite. This follows by Corollary \ref{corollary A}. Indeed, let $L$ be a finite relational language and $L^*=L\cup\{\leqslant\}$. Let $K^*$ be the Fra\"iss\'e limit of the class of all linearly ordered finite $L$-hypergraphs (possibly omitting a fixed class of finite irreducible hypergraphs). Then $K:=K^*_{\upharpoonright L}$ is the Fra\"iss\'e limit of all finite $L$-hypergraphs (omitting this fixed class of finite irreducible hypergraphs). We may find a monster model $\C^*$ of $T^*:=\mathrm{Th}(K^*)$ such that $\C:=\C^*_{\upharpoonright L}$ is a monster model of $T:= \mathrm{Th}(K)= T^*_{\upharpoonright L}$. By Example \ref{example rp}(a), $T^*$ has \edeerp, so we can find $\eta^*\in\EL(S_{\bar c}(\C^*))$ such that $\Im(\eta^*)\subseteq \Inv_{\bar c}(\C^*)$ by Theorem \ref{theorem characterization of dpeerp}. Moreover, both theories $T$ and $T^*$ have q.e. Thus, by Corollary \ref{corollary A}, the Ellis group of $T$ is finite. The same holds for the homogeneous directed graphs from Example \ref{exampre rdeg}(d).

\begin{example}\label{example random graph}
If $T$ is the theory of the random $n$-hypergraph (so $L$ in Example \ref{exampre rdeg}(a) consists of only one $n$-ary relational symbol $R$), then one can say even more: $T$ has \edeerp, so the Ellis group is trivial. To see this, consider $L^*:=L\cup\{\leqslant\}$. Let $T$ be the theory of the random $n$-hypergraph (the theory of the Fra\"iss\'e limit of the class of all finite $L$-hypergraphs), and $T^*$ the theory of the ordered random $n$-hypergraph (the theory of the Fra\"iss\'e limit of the class of all ordered finite $n$-hypergraphs); then $T=T^*_{\upharpoonright L}$. Choose a monster model $\C^*\models T^*$ such that $\C:=\C^*_{\upharpoonright L}$ is a monster model of $T$. 

By Example \ref{example rp}(a), $T^*$ has \edeerp, so, by Theorem \ref{theorem characterization of dpeerp}, we can find $\eta_0^*\in\EL(S_{\bar c}(\C^*))$ such that $\Im(\eta_0^*)\subseteq\Inv_{\bar c}(\C^*)$. By Lemma \ref{lemma ellis semi epi bigger lang invariant}(ii), there is $\eta_0\in\EL(S_{\bar c}(\C))$ such that $\Im(\eta_0)\subseteq\Inv_{\bar c}^*(\C)$, where $\Inv_{\bar c}^*(\C)$ is the set of all $\Aut(\C^*)$-invariant types in $S_{\bar c}(\C)$. 
We finish by Theorem 3.14 after noting that each $\Aut(\C^*)$-invariant type from $S_{\bar c}(\C)$ is also $\Aut(\C)$-invariant. To see this observe that, as $T^*$ has q.e.\ and $R$ is $n$-ary and irreflexive, for any $\bar a$ and $\bar b$ with $|\bar a| = |\bar b| <n$, $\bar a\equiv^{L^*}\bar b'$ for some permutation $\bar b'$ of $\bar b$. So, by symmetry of $R$, for any $\Aut(\C^*)$-invariant type $p(\bar x) \in S_{\bar c}(\C)$, $\bar z \subseteq \bar x$, and $\bar a \subseteq \C$ with $|\bar z|+ |\bar a|=n$ and $|\bar a|<n$, we have $R(\bar z,\bar a)\in p(\bar x)$ iff $R(\bar z,\bar b)\in p(\bar x)$. So, $p(\bar x)$ is $\Aut(\C)$-invariant by q.e.\ of $T$.
\end{example}

A similar argument to the one in the previous example is viable if $L$ contains finitely many relational symbols each of which is of one of the two consecutive arities $n$ and $n+1$. We leave to the reader to check the details. However, the above conclusion does not hold for all random hypergraphs in finite languages. In the following example, we state that the theory of the random $(2,4)$-hypergraph has a non-trivial Ellis group and describe various consequences of this result. All computations around this example are contained in Appendix \ref{section example}.

\begin{example}\label{example R2 R4} 
Let $L=\{R_2,R_4\}$, where $R_2$ is a binary and $R_4$ is a quaternary relation. The Fra\"iss\'e limit $K$ of the class of all finite $L$-hypergraphs is the random $(2,4)$-hypergraph. We will prove that the Ellis group of the theory $T:=\mathrm{Th}(K)$ is $\mathbb Z/2\mathbb Z$. Hence, $T$ does not have \edeerp\ by Corollary \ref{corollary dpeerp implies trivial ellis}, although it does have sep.\ fin.\ \eerdeg\ by Example \ref{exampre rdeg}(a).  Furthermore, this theory does have \deerp\ by Theorem \ref{theorem characterization of deerp}, as the class above has free amalgamation and so $T$ is extremely amenable (see the discussion after Corollary 2.16 in \cite{HKP}). So the theory $T$ shows that in general sep.\ fin.\ \edeerdeg\ (or even \eerdeg) does not imply \edeerp, and that \deerp\ does not imply \edeerp.

Since $T$ is extremely amenable, $\Gal_L(T)=\Gal_{KP}(T)$ is trivial by \cite[Proposition 4.2]{HKP}, whereas the Ellis group of $T$ is non-trivial. 
On the other hand, \cite[Theorem 0.7]{KNS} says that a certain natural epimorphism from the Ellis group of an amenable theory with NIP to its KP-Galois group is always an isomorphism (this is a variant of Newelski's conjecture for groups of automorphisms). Thus, our theory $T$ shows that in this theorem the assumption that the theory has NIP cannot be dropped (even assuming extreme amenability of the theory in question). Such an example was not known so far.
\end{example}

Now, we will describe a variant of the above example which yields a Fra\"{i}ss\'{e} structure whose theory $T$ satisfies the assumptions of Remark \ref{remark prop B} and Proposition \ref{proposition B}, and so has sep.\ fin.\ \edeerdeg, and the Ellis group of $T$ is infinite (and clearly profinite). Also, $T$ is extremely amenable, so $\Gal_{KP}(T)$ is trivial. It will be a many-sorted example. But this is not a problem, because, as we said before, the whole theory developed in this paper works in a many-sorted context after minor adjustments. In the case of Proposition  \ref{proposition B}, it is easy to see that the proof goes through (only in Corollary \ref{corollary c to infinite z}, which is involved in this proof, one has to consider the tuple $\bar z$ in which there are infinitely many variables associated with each sort).

\begin{example}\label{example R2 R4 Ps}
Consider the language $L$ consisting of infinitely many sorts $S_n$, $n<\omega$, and relational symbols $R_2^n$ and $R_4^n$, $n<\omega$, where each $R_2^n$ is binary, each $R_4^n$ quaternary, and they are both associated with the sort $S_n$. Let $L^*:=L\cup\{\leqslant_n: n<\omega\}$, where each $\leqslant_n$ is a binary symbol associated with $S_n$. Let $K$ be the $L^*$-structure which is the disjoint union of copies of the ordered random $(2,4)$-hypergraph; then $K:=K^*_{\upharpoonright L}$ is the disjoint union of copies of the random $(2,4)$-hypergraph. Put $T:= \Th(K)$ and $T^*:=\Th(K^*)$. Choose a monster model $\C^*$ of $T^*$ such that $\C:=\C^*_{\upharpoonright L}$ is a monster model of $T$.

Note that the assumptions of the obvious many-sorted version of Remark \ref{remark prop B} are satisfied, so we check the remaining assumption of Proposition  \ref{proposition B} that there is  $\eta^*\in\EL(S_{\bar c}(\C^*))$ such that $\Im(\eta^*)\subseteq \Inv_{\bar c}(\C^*)$. Let $\C_n^*:=S_n(\C^*)$ be the $\{R_2^n,R_4^n,\leqslant_n\}$-structure induced from $\C^*$, for $n<\omega$. Then $\Aut(\C^*)= \prod_{n<\omega}\Aut(\C_n^*)$ and $S_{\bar c}(\C^*)= \prod_{n<\omega}S_{\bar c_n}(\C_n^*)$ after the 
clear identifications, where $\bar c_n\subseteq\bar c$ is an enumeration of $\C_n^*$. By \cite[Lemma 6.44]{RzPhD}, $\EL(S_{\bar c}(\C^*))$ is naturally isomorphic (as a semigroup and as an $\Aut(\C^*)$-flow) with $\prod_{n<\omega} \EL(S_{\bar c_n}(\C_n^*))$. But in each $\EL(S_{\bar c_n}(\C_n^*))$ we have $\eta_n^*$ whose image is contained in the $\Aut(\C_n^*)$-invariant types (by Example \ref{example rp}(a) and Theorem \ref{theorem characterization of dpeerp}), so the corresponding  $\eta^*\in \EL(S_{\bar c}(\C^*))$ has image contained in $\Inv_{\bar c}(\C^*)$.

Now, we will compute the Ellis group of $T$. Let $\C_n:=S_n(\C)$ be the $\{R_2^n,R_4^n\}$-structure induced from $\C$, for $n<\omega$. Then $\Aut(\C)= \prod_{n<\omega}\Aut(\C_n)$ and $S_{\bar c}(\C)= \prod_{n<\omega}S_{\bar c_n}(\C_n)$ after the clear identifications (where $\bar c_n\subseteq\bar c$ is an enumeration of $\C_n$). By \cite[Lemma 6.44]{RzPhD}, $u\M \cong \prod_{n<\omega}u_n\M_n$ (also in the $\tau$-topologies), where $\M$ and $\M_n$ are minimal left ideals of $\EL(S_{\bar c}(\C))$ and $\EL(S_{\bar c_n}(\C_n))$, respectively, and $u \in \J(\M)$, $u_n \in \J(\M_n)$. Since each $u_n\M_n\cong\mathbb Z/2\mathbb Z$ by Example \ref{example R2 R4}, we conclude that  $u\M \cong (\mathbb Z/2\mathbb Z)^\omega$ as a topological group; in particular, $u\M$ is infinite.

Extreme amenability of $T$ can be seen directly: each finitary type $p(\bar x)\in S(\emptyset)$ has a global invariant extension determined by the formulae: $y \ne a$, $\neg R_2^n(y,b)$, and $\neg R_4^n(\bar y',\bar b')$, for all $n < \omega$, and $a, b, \bar b' \subseteq \C$ (with $|\bar b'| \geqslant 1$), $y \in \bar x$, $\bar y' \subseteq \bar x$ (with $|\bar y'|\geq 1$) from the sorts for which these formulae make sense.
\end{example}

We now give an example of a theory with sep.\ fin.\ \edeerdeg\ (even \edeerp) such that for some finite set of formulae $\Delta$ there is no $\eta\in \EL(S_{\bar c,\Delta}(\C))$ with finite image. This shows that we really need to consider spaces $S_{\bar c,\Delta}(\bar p)$, rather than the classical spaces of $\Delta$-types $S_{\bar c,\Delta}(\C)$.

\begin{example}\label{example R_2 Ps}
Let $L=\{R_2,P_n\}_{n<\omega}$, where the $P_n$'s are unary and $R_2$ is binary. Put $P_\omega:=\bigwedge_{n<\omega}\lnot P_n$. Consider the class of all finite $L$-structures $A$ such that:
\begin{itemize}
\item $P_n^A$ are mutually disjoint for $n<\omega$, and
\item $R_2^A$ is irreflexive and symmetric.
\end{itemize}
This class is Fra\"iss\'e. Its Fra\"iss\'e limit $K$ consists of infinitely many disjoint parts $P_n(K)$, for $n\leqslant\omega$, each of which is isomorphic to the random graph, but also there is the random interaction between them. Also, $T:=\mathrm{Th}(K)$ has q.e., hence it is binary, $K$ is $\aleph_0$-saturated, although it is not $\aleph_0$-categorical. 

To see that $T$ has \edeerp, by Proposition \ref{proposition C}, we shall prove that for each singleton $a$, finite $\bar b$ containing $a$, $n<\omega$, and a coloring $c:{\C\choose a}\to 2^n$ there is $\bar b'\in {\C\choose\bar b}$ such that $\bar b'\choose a$ is monochromatic with respect to $c$. For each $n\leqslant\omega$, $P_n(\C)$ is the set of realizations of a complete $1$-type over $\emptyset$. Fix $a\in\C$; then $a\in P_n(\C)$ for some $n\leqslant\omega$. Take a finite $\bar b \ni a$ and without loss assume $\bar b\subseteq P_n(\C)$. Take $r<\omega$ and $c:{\C\choose a}\to r$ (so $c:P_n(\C)\to r$). By saturation, there is a copy $G\subseteq P_n(\C)$ of the random graph. The restriction of $c$ to $G$ corresponds to a finite partition of $G$, so there is a monochromatic isomorphic copy $G'$ of $G$ (see \cite[Proposition 3.3]{Cam}). We finish by taking a copy $\bar b'\subseteq G'$ of $\bar b$, as by q.e., $\bar b'\in{\C\choose\bar b}$.

Consider now $\Delta:=\{R\}$. We check that $\EL(S_{\bar c,\Delta}(\C))$ does not contain an element with finite image. Let $\bar x$ correspond to $\bar c$ and let $x_0\in\bar x$ be any fixed single variable. For every $S\subseteq\omega+1$ denote by $\pi_S(x_0)$ the set:
$$\{R(x_0,a)\mid a\in P_n(\C)\mbox{ for some }n\in S\}\cup \{\lnot R(x_0,a)\mid a\in P_n(\C)\mbox{ for some }n\notin S\}.$$
By randomness, $\pi_S(x_0)$ is consistent with $\tp(\bar c)$, so we can find $p_S(\bar x)\in S_{\bar c,\Delta}(\C)$ extending it. Each $\pi_S(x_0)$ is $\Aut(\C)$-invariant, as the $P_n(\C)$'s are sets of realizations of complete $1$-types over $\emptyset$. Thus, for each $\eta\in\EL(S_{\bar c,\Delta}(\C))$ we have $\pi_S(x_0)\subseteq\eta(p_S)$, so $\eta(p_S)$ are pairwise distinct for $S\subseteq\omega+1$. Therefore, $\Im(\eta)$ is infinite.
\end{example}

Next, we give an example of a theory $T$ showing that (A') does not imply (B) in Theorem \ref{THMsecond}. Hence, (D) fails for $T$, so $T$ does not have sep.\ fin.\ \edeerdeg\ by Theorem \ref{theorem characterization of sep fin dpeerdeg}. Also, $T$ is amenable and so has \deecrp\ by Theorem \ref{theorem deecrp}.

\begin{example}\label{example equivalences with two classes}
Consider the relational language $L=\{R,E_n\}_{n<\omega}$, where each symbol binary, and consider an $L$-structure $M$ such that:
\begin{itemize}
\item $R$ is irreflexive and symmetric;
\item each $E_n$ is an equivalence relation with exactly two classes;
\item equivalences $\{E_n\}_{n<\omega}$ are independent in the sense that for each choice of $E_n$-class $C_n$, the family $\{C_n\}_{n<\omega}$ has the finite intersection property;
\item for pairwise distinct $a_i,b_j\in M$, where $i<l,j<m$, and any $n<\omega$, the set $\bigcap_{i<l}R(M,a_i)\cap \bigcap_{j<m}\lnot R(M,b_j)$ intersects each $\bigcap_{k\leqslant n}E_k$-class.
\end{itemize}
Note that in particular we have that $(M,R)$ is a model of the theory of the random graph. Let $T:=\mathrm{Th}(M)$. By standard arguments, $T$ has q.e. Hence, there is only one type in $S_1(T)$. Let $\C\models T$ be a monster model.

We will first show that Condition (C) (equivalently (B)) fails for $T$. More precisely, we will show that the Ellis group of the $\Aut(\C)$-flow $S_{\bar c,\Delta}(p)$ is infinite, where $\Delta:=\{R(x_0,y)\}$ and $p(y)\in S_y(T)$ is the unique $1$-type over $\emptyset$ (so, $S_{\bar c,\Delta}(p)= S_{\bar c,\Delta}(\C)$). Here, $\bar x$ is reserved for $\bar c$ and $x_0\in\bar x$ is any fixed variable. Consider the following global partial types:
$$\pi_n(\bar x):= \{(R(x_0,a)\leftrightarrow R(x_0,b))\leftrightarrow E_n(a,b)\mid a,b\in\C\}.$$
By randomness, each $\pi_n(\bar x)\cup\tp(\bar c)$ is consistent. So $X_n:=[\pi_n(\bar x)]$ is an $\Aut(\C)$-subflow of $S_{\bar c}(\C)$. By independence of the relations $E_n$, the $X_n$'s are pairwise disjoint. 
Let $\Phi: S_{\bar c}(\C)\to S_{\bar c,\Delta}(\C)$ be the $\Aut(\C)$-flow epimorphism given by restriction to $\Delta$-types. Note that $\Phi^{-1}[\Phi[X_n]]=X_n$. 
Therefore, $Y_n:=\Phi[X_n]$ are pairwise disjoint $\Aut(\C)$-subflows of $S_{\bar c,\Delta}(\C)$. 
Hence, for any $\eta\in\EL(S_{\bar c,\Delta}(\C))$, $\eta[Y_n] \subseteq Y_n$, and so $\Im(\eta)$ is infinite. Thus, Condition (D) does not hold, but we want to show that (C) fails.

Let $u\in\EL(S_{\bar c,\Delta}(\C))$ be an idempotent in a minimal left ideal $\M$, and let $q_n(\bar x)\in Y_n$ be in $\Im(u)$; so $u(q_n)=q_n$. For each $n<\omega$ there is $\sigma_n\in\Aut(\C)$ such that $\sigma_n$ fixes each $E_k$-class for $k<n$ and swaps the $E_n$-classes. Fix $a\in\C$ and $k<n$. For $\sigma\in\Aut(\C)$, note that $R(x_0,\sigma_n^{-1}(\sigma^{-1}(a)))\in q_k$ iff $R(x_0,\sigma^{-1}(a))\in q_k$ iff $R(x_0,\sigma_k^{-1}(\sigma^{-1}(a)))\notin q_k$. 
Thus $R(x_0,a)\in\sigma(\sigma_n(q_k))$ iff $R(x_0,a)\notin\sigma(\sigma_k(q_k))$ for every $\sigma\in\Aut(\C)$, so $R(x_0,a)\in u(\sigma_n(q_k))$ iff $R(x_0,a)\notin u(\sigma_k(q_k))$ holds as well. This means that $u\sigma_nu(q_k)= u\sigma_n(q_k)\neq u\sigma_k(q_k)=u\sigma_ku(q_k)$, so $u\sigma_nu\neq u\sigma_ku$. Since all $u\sigma_nu\in u\M$, $u\M$ is infinite and Condition (C) does not hold.

We now prove amenability of $T$. We have to find an invariant, Borel probability measure on $S_p(\C)$, for every $p(\bar y)\in S_{\bar y}(T)$ where $\bar y=(y_0,\dots,y_{m-1})$. As the reduct $T'$ of $T$ to the sublanguage $L':=\{E_n\}_{n < \omega}$ is stable, it is amenable by \cite[Corollary 2.21]{HKP}. We can choose $\C$ so that $\C':=\C_{\upharpoonright L'}$ is a monster model of $T'$. Let $p' \in S_{\bar y}(T')$ be the restriction of $p$ to the language $L'$. By amenability of $T'$, there is an $\Aut(\C')$-invariant,  Borel probability measure $\nu$ on $S_{p'}(\C')$. Consider $\Phi:S_{p}(\C)\to S_{p'}(\C')$ given by restriction. Let $q_R(\bar y):=\{R(y_i,a)\mid i<m,a\in\C\}$; by randomness and q.e., $q_R(\bar y)$ is a partial type consistent with $p(\bar y)$. It is $\Aut(\C)$-invariant, so $X:=[q_R(\bar y)]$ is an $\Aut(\C)$-subflow of $S_p(\C)$.  
By randomness and q.e., $\Phi_{\upharpoonright X}:X\to S_{p'}(\C')$ is an isomorphism. Define $\mu$ on Borel subsets of $S_p(\C)$ by $\mu(U):= \nu(\Phi[X\cap U])$. It is clear that $\mu$ is an $\Aut(\C)$-invariant, Borel probability measure on $S_p(\C)$.

The rest of the analysis of Example \ref{example equivalences with two classes} is devoted to the proof that Condition (A') holds for $T$, i.e.\ our goal is to show that the canonical Hausdorff quotient of the Ellis group of $T$ is profinite. Since the theory is binary, by Corollary \ref{corollary c to finite z m-ary}, we can work with $(\Aut(\C), S_1(\C))$ in place of $(\Aut(\C),S_{\bar c}(\C))$. 

Put $E_{\infty}:=\bigwedge_{i\in \omega} E_i$ and $H:=2^{\omega}$. For each $i\in \omega$, fix an enumeration $C_{i,0},C_{i,1}$ of the $E_i$-classes, and an enumeration $(C_{\epsilon})_{\epsilon\in H}$ of the $E_{\infty}$-classes. Let $X:=S_1(\C)$ and put 
$$X':=\bigcap_{a,b\in \C, E_{\infty}(a,b)} [R(x,a)\leftrightarrow R(x,b)].$$ 
It is clear that $X'$ is an $\Aut(\C)$-subflow of $X$.
By q.e., each $p\in X'$ is implied by the union of the following partial types for unique $\epsilon \in H$ and $\delta \in 2^{H}$:

\begin{itemize}
	\item $p_{\epsilon}(x):=\{x\in C_{i,\epsilon(i)} \mid i\in \omega\}$; and
	\item $q_{\delta}(x):=\{R(x,a) \mid a\in C_{\epsilon'},\delta(\epsilon')=1\}\cup\{\neg R(x,a) \mid a\in C_{\epsilon'},\delta(\epsilon')=0\}$.
\end{itemize}
Conversely, for each $\epsilon \in H$ and $\delta \in 2^{H}$ the union of the above partial types implies a type in $X'$. So $X'$ is topologically identified with the space $H\times 2^{H}$. 
Further, for $\Stab(X'):=\{\sigma\in \Aut(\C) \mid \sigma_{\upharpoonright X'}=\id_{X'}\}$, we see that $\Aut(\C)/\Stab(X')\cong H$ and the flow $(\Aut(\C)/\Stab(X'), X')$ can be identified with the flow $(H,H\times 2^{H})$ equipped with the following action: For $\sigma \in H$ and $(\epsilon,\delta)\in H\times 2^{H}$, $$\sigma(\epsilon,\delta):=(\sigma+\epsilon, \sigma\delta),$$ 
where $\sigma\delta(\epsilon'):=\delta(\epsilon'-\sigma)=\delta(\epsilon'+\sigma)$ for $\epsilon'\in H$.

\begin{claim*}\label{rem:eta_image_contained_in_X'}
There is $\eta\in \EL(X)$ whose image is contained in $X'$.
\end{claim*}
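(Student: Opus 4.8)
The plan is to realize $\eta$ as a limit point, in $\EL(X)=\overline{\{\pi_\sigma:\sigma\in\Aut(\C)\}}$, of a suitable net of automorphisms. Since $\eta(p)\in[R(x,a)\leftrightarrow R(x,b)]$ (a clopen condition) holds iff the net is eventually there, and since $\pi_\sigma(p)(R(x,a))=p(R(x,\sigma^{-1}(a)))$, it suffices to build a net $(\sigma_t)_t$ such that for every $p\in S_1(\C)$ and every pair $a,b$ with $E_{\infty}(a,b)$ one has $p(R(x,\sigma_t^{-1}(a)))=p(R(x,\sigma_t^{-1}(b)))$ for all large $t$. I would index the net by triples $t=(F,n,\bar p)$ with $F\subseteq\C$ finite, $n<\omega$, and $\bar p=\{p_1,\dots,p_r\}\subseteq S_1(\C)$ finite, directed by inclusion and $\leqslant$; note that for a fixed finite $F$ and all large $n$ the relation $E_{\infty}$ agrees with $\bigwedge_{k\leqslant n}E_k$ on $F$. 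For a fixed $p$ and a fixed pair $(a,b)$, every index past $(\{a,b\},n_0,\{p\})$ (with $n_0$ large) then handles that pair for $p$, so any limit point $\eta$ of the net satisfies $\Im(\eta)\subseteq X'$. The whole problem thus reduces to producing, for each $t$, one automorphism $\sigma=\sigma_t$ that works.

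Fixing $t$ and writing $v(c):=\big(p_j(R(x,c))\big)_{j<r}\in 2^r$ for the finite colouring of $\C$ induced by $\bar p$, and putting $F'=\sigma^{-1}(F)$, the requirement on $\sigma_t$ becomes purely combinatorial: find a copy $F'\equiv F$ such that $v$ is constant on each $E_{\infty}$-block of $F'$ (elements of $F'$ lying in a common $E_{\infty}$-class receive the same $v$-colour). I would build $F'$ one $E_{\infty}$-block at a time, using two facts. First, inside each $E_{\infty}$-class the induced graph $(C,R)$ is (isomorphic to) the random graph, which is indivisible, so any finite colouring of $C$ has a monochromatic induced copy of the random graph; in particular, since both the random graph and its complement have infinite chromatic number, any single $E_{\infty}$-block can be re-embedded $v$-monochromatically (this is the immediate base case). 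Second, the independence axiom (the fourth bullet) guarantees that for a finite parameter set $P$ and an $E_{\infty}$-class $C$ disjoint from the classes of $P$, each set of elements of $C$ realizing a fixed $R$-pattern over $P$ is again a copy of the random graph. Processing the blocks in order, for the current block I would locate, inside a fresh target class, a monochromatic generic copy into which it embeds with its correct internal $R$-pattern and its correct $R$-pattern to the already-placed blocks.

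The genuine obstacle is this last step: distinct elements of a single block may require different $R$-patterns to the previously placed parameters, so the monochromatic copy must meet several pattern-slices at once with a single common colour, and a priori an adversarial $v$ (arising, say, from a type whose $R$-behaviour on a class mimics that of an already-placed element) could colour different slices with different colours. I would defeat this by again exploiting that distinct $E_{\infty}$-classes are $R$-independent: the relevant slices are themselves random graphs that are mutually generic, so a partite/product form of indivisibility yields one colour surviving on all finitely many slices simultaneously. This is the only place where more than the one-block single-colour case is needed. Once this combinatorial lemma is established, choosing $\sigma_t$ accordingly for each $t$ and passing to a limit point of $(\sigma_t)_t$ in $\EL(X)$ produces the desired $\eta$ with $\Im(\eta)\subseteq X'$.
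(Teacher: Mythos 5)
Your reduction of the claim to a finitary statement is correct (and is essentially how the paper also passes to a limit of automorphisms): it suffices to show that for every finite $F\subseteq\C$ and finite $\bar p\subseteq S_1(\C)$ there is $F'\equiv F$ such that the induced colouring $v$ is constant on each $E_\infty$-block of $F'$. The gap is in your proof of this statement: the ``partite/product form of indivisibility'' you invoke is false, and in fact the adversary you yourself describe (``a type whose $R$-behaviour on a class mimics that of an already-placed element'') refutes it. Concretely, suppose $B_1'=\{a_1'\}$ has already been placed and you must now place a block $\{b_1,b_2\}$ with $R(b_1,b_2)$, $R(b_1,a_1)$, $\neg R(b_2,a_1)$ into a fresh class $C$. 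Let $p\in S_1(\C)$ be any completion of the partial type $\{R(x,c)\mid R(a_1',c)\}\cup\{\neg R(x,c)\mid \neg R(a_1',c)\}$, which is consistent by the randomness axiom. The colouring induced by $p$ is exactly the indicator of the slice $S_1=\{c\mid R(c,a_1')\}$, so every admissible $b_1'$ receives colour $1$ and every admissible $b_2'$ receives colour $0$: no colour survives on both slices, even though the slices are indeed mutually generic random graphs. The point is that type-induced colourings can encode precisely the slice decomposition over the already-placed parameters, so no Ramsey-type statement can yield a common colour across distinct slices once the earlier blocks are fixed. (With the colouring given in advance one could try to choose the earlier blocks so as to dodge such coincidences, but your procedure does not do this, and the lemma you state instead — which would make \emph{every} placement work — is simply false.)

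The paper circumvents this by never fixing finitely many points of one class and then colour-matching another class against them. For each single class $C_\epsilon$ it builds $\eta_\epsilon\in\EL(X)$ as an accumulation point of automorphisms fixing every $E_\infty$-class setwise: a monochromatic copy is needed only \emph{inside} $C_\epsilon$ (plain indivisibility of the random graph), while the witnesses chosen in all other classes carry no colour requirement — their only role is to make the approximating automorphisms class-preserving. Class-preserving automorphisms map each $[\Delta_{\epsilon'}]$ into itself, hence so does each $\eta_\epsilon$; consequently the compositions $\eta_{\epsilon_{n-1}}\circ\cdots\circ\eta_{\epsilon_0}$ have image in $\bigcap_{i<n}[\Delta_{\epsilon_i}]$, and an accumulation point of these compositions has image in $X'$. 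The invariant propagated there is constancy of the transformed type on an \emph{entire} class — a property preserved by class-preserving automorphisms — rather than monochromaticity of finitely many already-placed points, which is what your greedy block-by-block scheme would need to preserve but cannot.
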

\begin{proofclaim}
For each $\epsilon\in H$, put $\Delta_{\epsilon}(x):=\{R(x,a)\leftrightarrow R(x,b):a,b\in C_{\epsilon}\}.$
First, we will show that for every $\epsilon \in H$ there is $\eta_{\epsilon}\in \EL(X)$ such that:
\begin{itemize}
	\item $\eta_{\epsilon}$ is the limit of a net of $\sigma\in \Aut(\C)$ fixing each $C_{\epsilon'}$, $\epsilon' \in H$, setwise (equivalently, this is a net of Shelah strong automorphism);
	\item $\Im(\eta_{\epsilon})\subseteq [\Delta_{\epsilon}]$.
\end{itemize}
For this, choose any representatives $a_{\epsilon'}$ of the classes $C_{\epsilon'}$, $ \epsilon' \in H$, and consider any $p_0,\ldots,p_{k-1}\in X$ and $a_0,\ldots,a_{n-1}\in C_{\epsilon}$. Since $(C_{\epsilon},R_{\upharpoonright_{C_{\epsilon}}})$ is a (monster) model of the theory of the random graph, 
by \cite[Proposition 3.3]{Cam} and q.e., 
there are $a_0',\ldots,a_n'\in C_{\epsilon}$ such that $\bar a:= a_0\ldots a_{n-1}\equiv a_0' \ldots a_{n-1}'=: \bar a'$ and for each $l<k$: 
$$p_l\models \bigwedge_{j,j'<n} R(x,a_j')\leftrightarrow R(x,a_{j'}').$$
By randomness, q.e., and saturation, we can find $a_{\epsilon'}'\in C_{\epsilon'}$ for all $\epsilon' \in H $ so that 
$$(a_j\mid j<n)^{\frown}(a_{\epsilon'}\mid \epsilon' \in H)\equiv (a_j' \mid j<n)^{\frown}(a_{\epsilon'}'\mid \epsilon' \in H).$$ 
So, there is $\sigma_{\bar p,\bar a}\in \Aut(\C)$ mapping the latter sequence above to the former one. Now,  it is enough to take $\eta_\epsilon$ to be an accumulation point of the net $(\sigma_{\bar p,\bar a})_{\bar p, \bar a}$.

Since the $\eta_\epsilon$'s are approximated by Shelah strong automorphisms,  for each $\epsilon,\epsilon' \in H$,  $\eta_{\epsilon}[[\Delta_{\epsilon'}]]\subseteq [\Delta_{\epsilon'}]$ holds.
For $\bar \epsilon =\{\epsilon_i\}_{i<n} \subseteq H$ put $\eta_{\bar \epsilon}:=\eta_{\epsilon_{n-1}}\circ \cdots \circ \eta_{\epsilon_0}$, where $\epsilon_0<\dots <\epsilon_{n-1}$ (say with respect to the lexicographic order).  Then, an accumulation point of the net $(\eta_{\bar \epsilon})_{\bar \epsilon}$ satisfies $\Im(\eta)\subseteq \bigcap [\Delta_{\epsilon}]=X'$. 
\end{proofclaim}

By the claim and Corollary \ref{corollary isomorphism of ellis groups having good eta}, the Ellis groups of the flows $(\Aut(\C),X)$ and $(\Aut(\C),X')$ are topologically isomorphic. By the discussion before the claim, the latter Ellis group is topologically isomorphic with the Ellis group of the flow $(H, H \times 2^H)$ which in turn is topologically isomorphic with the Ellis group of the flow $(H, \beta H)$ by Proposition \ref{prop:ellisgroup_associated_bernoullishift}(ii). As a conclusion, we get that the canonical Hausdorff quotient of the Ellis group of $T$ is topologically isomorphic with the canonical Hausdorff quotient of the Ellis group of $\beta H$, i.e.\ with the generalized Bohr compactification of $H$. Since $H$ is abelian (and so strongly amenable), Fact \ref{fact:bohr_compactification_tau_topology} implies that this generalized Bohr compactification coincides with the Bohr compactification of $H$, which is profinite by Fact \ref{fact:profinite_bohr_compactification}, as $H$ is abelian of finite exponent. So Condition (A') has been proved.
\end{example}

The next example is a modification of the previous one. It shows that (A'') does not imply (A') in Theorem \ref{THMsecond}. Moreover, the obtained theory is supersimple of SU-rank 1, so we see that even for supersimple theories the Ellis group need not be profinite, while $\Gal_{KP}(T)$ must be profinite by \cite{BPW}. Our theory is also amenable.

\begin{example}\label{example equivalences n classes} The idea is to modify the previous example by considering independent equivalence relations with growing finite number of classes in order to get at the end that the canonical Hausdorff quotient of the Ellis group of the resulting theory is topologically isomorphic with the Bohr compactification of the product $\prod_{n \geqslant 2} \mathbb{Z}/n\mathbb{Z}$ which is not profinite by Fact \ref{fact:profinite_bohr_compactification}, as $\prod_{n \geqslant 2}\mathbb{Z}/n\mathbb{Z}$ is not of finite exponent.

Consider the relational language $L'=\{R,E_n\}_{2\leqslant n<\omega}$, where each symbol binary, and consider an $L'$-structure $M'$ satisfying the same axioms as $M$ in Example \ref{example equivalences with two classes}, except each $E_n$ has exactly $n$ classes.
Expand $M'$ to the structure $M$ in the language $L:=L' \cup\{O_{n,k}\}_{\substack{2\leqslant n<\omega\\1\leqslant k<n}}$, where each $O_{n,k}$ is a binary relational symbol interpreted as follows: for each $2\leqslant n<\omega$ enumerate the $E_n$-classes as $C_{n,0},\dots,C_{n,n-1}$ and take the cyclic permutation $\rho_n:=(0,1,\dots,n-1) \in \Sym(n)$; set $O_{n,k}:= \bigcup_{i<n}C_{n,i}\times C_{n,\rho_n^k(i)}$. 
Let $T:=\mathrm{Th}(M)$. A standard back-and-forth argument shows that $T$ has q.e.; in particular, there is only one type in $S_1(T)$. Let $\C\models T$ be a monster model.

Put $\bar C_n:=(C_{n,0},\ldots,C_{n,n-1})$ and define $O_n(x_0,\dots,x_n)$ as $\bigwedge_{1 \leqslant k<n} O_{n,k}(x_0,x_k)$. Observe that $O_n(a_0,\dots,a_{n-1})$ holds iff $(a_0/E_n,\dots,a_{n-1}/E_n)$ belongs to the orbit of $\bar C_n$ under the action of $\mathbb{Z}/n\mathbb{Z}$ on $(M/E_n)^n$ defined as follows: for $(b_0,\ldots,b_{n-1})\in (M/E_n)^n$ and $l\in \mathbb{Z}/n\mathbb{Z}$, $l(b_0,\ldots,b_{n-1}):=(b_{\rho_n^l(0)},\ldots,b_{\rho_n^l(n-1)}).$

By q.e.\ and randomness, it easily follows that $\tp(a/A)$ does not fork over $\emptyset$ for any $a$ and $A$ with $a \notin A$. Hence, $T$ is supersimple of SU-rank 1. (The same is true in Example \ref{example equivalences with two classes}, but supersimplicity is more important here.) So, by \cite{BPW}, $\Gal_{KP}(T)$ is profinite, i.e.\ (A'') holds. Amenability of $T$ can be proved as in Example \ref{example equivalences with two classes}.

To show that (A') fails, one should follow the lines of the argument in Example \ref{example equivalences with two classes} with $H:=\prod_{n \geqslant 2} \mathbb{Z}/n\mathbb{Z}$. The fact the $O_n$'s are $L$-formulae is used to see that $(\Aut(\C)/\Stab(X'), X')$ can be identified with the flow $(H,H\times 2^{H})$. At the end, we get that the Ellis group of $T$ is topologically isomorphic with the Bohr compactification of $H$, which is not profinite by Fact \ref{fact:profinite_bohr_compactification}, i.e.\ Condition (A') fails.
\end{example}

By \cite[Proposition 4.2]{HKP}, if a theory $T$ is extremely amenable, then $\Gal_L(T)=\Gal_{KP}(T)$ is trivial. The next example (whose details are left to the reader) shows that amenability of $T$ does not even imply that $\Gal_{KP}(T)$ is profinite.

\begin{example}\label{example that amenability does not imply KP is profinite}
Let $N=(M,X,\cdot)$ be a two-sorted structure, where:
\begin{itemize}
	\item $M$ is a real closed field in the language $L_{or}(\mathbb{R})$ of ordered rings with constant symbols for all $r\in \mathbb{R}$;
	\item $\cdot : S^1\times X\rightarrow X$ is a strictly 1-transitive action of the circle group $S^1$ on $X$.
\end{itemize}
$N$ is clearly interpretable in $M$, hence $T:=\Th(N)$ has NIP. By \cite{GN}, we easily get that $\Gal_{KP}(T) \cong S^1$, so  $\Gal_{KP}(T)$ is not profinite. By \cite[Corollary 2.21]{HKP}, we know that a NIP theory is amenable iff $\emptyset$ is an extension base for forking. We leave as a non-difficult exercise to check that in $T$ every set is an extension base.
\end{example}

We have determined the relationships between most of the introduced properties. Let us discuss a few remaining questions.

\begin{question}
Is there an example for which (C) holds but (D) does not?
\end{question}

\begin{question}
\begin{enumerate}[label=(\roman*), align=left, leftmargin=*,labelsep=-4pt]
\item Is there an example for which (A) holds but (C) does not?
\item  Is there an example for which (A') holds but (A) does not?
\end{enumerate}
\end{question}

Example \ref{example equivalences with two classes} shows that (A') does not imply (B). So this example either witnesses that (A') does not imply (A), or that (A) does not imply (B); but we do not know which of these two lack of implications is witnessed by Example \ref{example equivalences with two classes}. By Remark \ref{remark uM and um/H zero dimensionality}(a), it witnesses the lack of the implication (A') $\implies$ (A) if and only if  the Ellis group of the theory in this example is not Hausdorff.

By Example \ref{example stable}, we know that having sep.\ fin.\ \edeerdeg\ does not imply \deerp; this is witnessed by any stable theory with a non-stationary type in $S_1(\emptyset)$. 
Is the converse true, i.e.\ does \deerp\ (equiv.\ extreme amenablity) imply sep.\ fin.\ \edeerdeg? Probably not. 
Example \ref{example equivalences with two classes} shows that \deecrp\ (equiv.\ amenability) does not imply sep.\ fin.\ \edeerdeg. In fact, Example \ref{example that amenability does not imply KP is profinite} shows that amenability does not even imply that $\Gal_{KP}$ is profinite (i.e.\ (A'')), whereas extreme amenability implies that $\Gal_{KP}$ is trivial by \cite[Proposition 4.2]{HKP}.

By Example \ref{exampre rdeg}(e), we know that the Ellis group there is profinite. Is it infinite? Can one compute it precisely, as we did for the theory from Example \ref{example R2 R4}? The same problem for all random hypergraphs, although here we know that the Ellis groups are finite by the paragraph after Example  \ref{exampre rdeg}.

\appendix
\section{Example \ref{example R2 R4}}\label{section example}

We now calculate the Ellis group from Example \ref{example R2 R4}. Recall that we consider the language $L:=\{R_2,R_4\}$, where $R_2$ is a binary and $R_4$ is a quaternary relational symbol. We consider the class of all finite $(2,4)$-hypergraphs, i.e.\ the class of all finite structures in $L$ for which $R_2$ and $R_4$ are irreflexive and symmetric. This is a Fra\"iss\'e class, and its Fra\"iss\'e limit $K$ is called the random $(2,4)$-hypergraph. The theory $T:=\mathrm{Th}(K)$ is $\aleph_0$-categorical, and so $\aleph_0$-saturated with q.e. 

We also consider the following expansion of $T$. Let $L^*=L\cup\{<\}$. Consider the class of all finite linearly ordered $(2,4)$-hypergraphs. This is a Fra\"iss\'e class, and its Fra\"iss\'e limit $K^*$ is called the ordered random $(2,4)$-hypergraph. The theory $T^*:=\mathrm{Th}(K^*)$ is $\aleph_0$-categorical, so $\aleph_0$-saturated with q.e. Then  $K^*_{\upharpoonright L} \cong K$, so  $T^*_{\upharpoonright L}=T$, and we may assume that $K^*_{\upharpoonright L}=K$.

By a straightforward back-and-forth argument one can easily see that there exists $\rho\in\Aut(K)$ such that $\rho$ reverses the order on $K^*$: $\rho[<]=>$.
Take $L^*_\rho:= L^*\cup\{\rho\}$ and the obvious expansion $K^*_\rho$ of $K^*$. Take a monster $\C^*_\rho$ of $\mathrm{Th}(K^*_\rho)$ such that $\C^*:=\C^*_{\rho\upharpoonright L^*}$ and $\C:=\C^*_{\rho\upharpoonright L}$ are monster models of $T^*$ and $T$, respectively. The interpretation of $\rho$ in $\C^*_\rho$, which will be also denoted by $\rho$, is an automorphism of $\C$ reversing $<$. Further on, we fix $\C^*$, $\C$, and $\rho$.

Since $T^*$ has \edeerp\ (even \eerp\ by Example \ref{example rp}(a)), by Theorem \ref{theorem characterization of dpeerp}, we can find $u^*\in\EL(S_{\bar c}(\C^*))$ with $\Im(u^*)\subseteq\Inv_{\bar c}(\C^*)$.
By Lemma \ref{lemma ellis semi epi bigger lang invariant}(iv), for a single variable $z$, we can find $u\in\EL(S_z(\C))$ such that $\Im(u)\subseteq\Inv^*_z(\C)$, where $\Inv^*_z(\C)$ is the set of all $\Aut(\C^*)$-invariant types in $S_z(\C)$. Moreover, we may assume that $u\in\J(\M)$ for a minimal left ideal $\M$ of $\EL(S_z(\C))$.

The elements of $\Inv^*_z(\C)$ are not hard to describe. Let $p(z)\in\Inv^*_z(\C)$. Since there is only one type in $S_z(T^*)$, $p(z)$ either contains $R_2(z,a)$ for all $a\in \C$ or $\lnot R_2(z,a)$ for all $a\in\C$. Note that $S_{\pi(\bar y)}(T^*)$, where $\bar y=(y_0,y_1,y_2)$ and $\pi(\bar y)=\{y_0\neq y_1\neq y_2\neq y_0\}$, is completely determined by restriction to $\{R_2,<\}$. Let us write $[\C]^3=O_0\sqcup O_1\sqcup O_2\sqcup O_3$, where $O_i$ is the set of all $\{a,b,c\}\in[\C]^3$ with exactly $i$-many $R_2$-edges on the set $\{a,b,c\}$. Note that by symmetry of $R_4$, either $R_4(z,a,b,c)\in p(z)$ for all $\{a,b,c\}\in O_0$, or $\lnot R_4(z,a,b,c)\in p(z)$ for all $\{a,b,c\}\in O_0$. The same holds for $O_3$. The sets $O_1$ and $O_2$ are more interesting. 
Write $O_1= O_1^{-}\sqcup O_1^{\circ}\sqcup O_1^{+}$, where for $\{a,b,c\}\in O_1$ with $b$ being $R_2$-unconnected with $a$ and $c$ we put:
$$\{a,b,c\}\in\left\{\begin{array}{cl}
O_1^- & \mbox{ if $b$ is minimal among }\{a,b,c\}\\
O_1^\circ & \mbox{ if $b$ is the middle one among }\{a,b,c\}\\
O_1^+ & \mbox{ if $b$ is maximal among }\{a,b,c\}\\
\end{array}\right..$$
Similarly, we write $O_2= O_2^-\sqcup O_2^\circ\sqcup O_2^+$, where the division is determined by the element $R_2$-connected to both other elements. By symmetry of $R_4$, we have either $R_4(z,a,b,c)\in p(z)$ for all $\{a,b,c\}\in O_1^-$, or $\lnot R_4(z,a,b,c)\in p(z)$ for all $\{a,b,c\}\in O_1^-$. The same holds for $O_1^\circ,O_1^+,O_2^-,O_2^\circ$ and $O_2^+$. By q.e., $p(z)$ is completely determined by the previous information. Moreover, by randomness, each described possibility occurs. Thus, we see that $\Inv^*_z(\C)$ has $2^9$ elements.

\begin{lemma}\label{lemma image of u are all inv}  $u(p)=p$ for all $p\in\Inv_z^*(\C)$. In particular, $\Im(u)=\Inv^*_z(\C)$.
\end{lemma}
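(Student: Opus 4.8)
The plan is to reformulate the statement and then split it into ``bits''. Since $u$ is idempotent it fixes $\Im(u)$ pointwise, the claimed equality $\Im(u)=\Inv^*_z(\C)$ is equivalent to the displayed assertion $u(p)=p$ for all $p\in\Inv^*_z(\C)$: if $u$ fixes each such $p$ then $\Inv^*_z(\C)\subseteq\Im(u)$, while $\Im(u)\subseteq\Inv^*_z(\C)$ is already known. So I would prove the pointwise statement, checking that $u(p)$ and $p$ agree on each of the nine pieces of data from the description preceding the lemma: the $R_2$-bit, the $R_4$-bits on $O_0$ and $O_3$, and the six $R_4$-bits on $O_1^-,O_1^\circ,O_1^+,O_2^-,O_2^\circ,O_2^+$.

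The first three kinds of bits are immediate. Writing $u=\lim_i\sigma_i$ with $\sigma_i\in\Aut(\C)$ (as $\EL(S_z(\C))=\cl(\Aut(\C))$), each $\sigma_i$ preserves $R_2$ and $R_4$, hence $R_2$-adjacency and the unordered-triple orbits $O_0$ and $O_3$; therefore $\sigma_i(p)$ has the same $R_2$-, $O_0$- and $O_3$-bits as $p$, and so does the limit $u(p)$. These are exactly the coordinates on which $\Aut(\C^*)$-invariance already forces $\Aut(\C)$-invariance, so no order information is involved.

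The six order-refined bits are the heart of the matter, because the labels $-,\circ,+$ record the $<$-position of the distinguished vertex of an $O_1$- or $O_2$-triple, and $\Aut(\C)$ does not see $<$. Realizing $u$ as a limit along an ultrafilter, each $\sigma_i^{-1}$ sends an $O_1^s$-triple to an $O_1$-triple at some position, and in the limit this yields a position map $g\colon\{-,\circ,+\}\to\{-,\circ,+\}$ with $u(p)|_{O_1^s}=p|_{O_1^{g(s)}}$ for all $p$ (well-defined because $u(p)\in\Inv^*_z(\C)$ is constant on each $O_1^s$); idempotence $u\circ u=u$ forces $g\circ g=g$, and similarly there is an idempotent $h$ governing $O_2$. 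The claim becomes $g=h=\id$. Two tools are available: the order-reversing $\rho$, which normalizes $\Aut(\C^*)$, preserves $\Inv^*_z(\C)$, and acts on it by the transposition exchanging $O_1^-\leftrightarrow O_1^+$ and $O_2^-\leftrightarrow O_2^+$ while fixing the middle classes; and the order-preserving element $\eta_0\in\EL(S_z^{*}(\C))\subseteq\EL(S_z(\C))$, a limit of $\Aut(\C^*)$-automorphisms with $\Im(\eta_0)=\Inv^*_z(\C)$ fixing $\Inv^*_z(\C)$ pointwise, out of which $u$ is built, and for which a one-line computation gives $\eta_0u=u$ (for any $x$ one has $\eta_0(u(x))=u(x)$ since $u(x)\in\Im(u)\subseteq\Inv^*_z(\C)$ is $\eta_0$-fixed). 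The symmetry $\rho$ lets me treat the two outer positions on an equal footing, so it is enough to establish $g(+)=+$ (and the analogue for $h$).

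The main obstacle is precisely this last equality, i.e.\ that $u$ preserves the outer bits rather than collapsing them. Idempotence together with $\rho$-symmetry is genuinely insufficient: the degenerate idempotent $g\equiv\circ$ (and its $O_2$-analogue) is compatible with the $\pm$-swap and also satisfies $\eta_0u=u$, yet would shrink $\Im(u)$ strictly below $\Inv^*_z(\C)$. To exclude it I would exploit that $u$ is not an arbitrary idempotent with image in $\Inv^*_z(\C)$, but one lying in a minimal left ideal $\M$ and produced from the order-preserving $\eta_0$, so that its order-data is inherited from $\eta_0$ rather than freely chosen; the goal is to show that $u$ restricted to the finite set $\Inv^*_z(\C)$ is onto — equivalently that $g$ and $h$ are surjective — whence, being an idempotent self-map of a finite set, $u|_{\Inv^*_z(\C)}=\id$. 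Once $g=h=\id$ is secured, all nine bits of $u(p)$ coincide with those of $p$, giving $u(p)=p$ for every $p\in\Inv^*_z(\C)$ and hence the ``in particular'' clause $\Im(u)=\Inv^*_z(\C)$.
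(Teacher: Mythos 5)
Your preparatory reductions are correct: an idempotent fixes its image pointwise, so the lemma is indeed equivalent to $\Im(u)=\Inv_z^*(\C)$; the $R_2$-, $O_0$- and $O_3$-bits are preserved because every $\sigma\in\Aut(\C)$ preserves them and $u$ is a limit of such $\sigma$'s; and the idempotent position maps $g,h$ with $u(p)|_{O_1^s}=p|_{O_1^{g(s)}}$ do exist (this needs a small argument you gloss over --- along the net defining $u$ the position of $\sigma_i^{-1}$ applied to a fixed test triple eventually stabilizes, because all $2^3$ possible $O_1$-bit-vectors are realized in $\Inv_z^*(\C)$ --- but it is fillable). The genuine gap is exactly the step you yourself call ``the main obstacle'': you never prove that $g$ and $h$ are surjective. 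You state it as a goal and propose to extract it from the facts that $u$ lies in a minimal left ideal and was ``produced from'' the order-preserving $\eta_0$, but no argument is given, and since (by your own finite-set observation) surjectivity of $g,h$ is \emph{equivalent} to the lemma, what you have produced is a reformulation of the statement, not a proof of it.

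Moreover, the route you gesture at cannot succeed as described. The relations actually available from the construction are left-multiplicative: $\eta_0 u=u$, which, as you note, is satisfied by the degenerate idempotents too, and $u(\eta_0\nu)=\eta_0\nu$ (where $\nu\in\M$ is the auxiliary element used to push $\eta_0$ into $\M$), which only says that $u$ fixes $\eta_0[\Im(\nu)]$ pointwise --- a set that has no reason to exhaust $\Inv_z^*(\C)$. (Also, the $\rho$-symmetry reduction to ``$g(+)=+$'' is itself incomplete: conjugating by $\rho$ replaces $u$ by a different idempotent, and $g(\circ)$ still has to be dealt with.) Excluding the degenerate idempotents is not an abstract dynamical fact but a combinatorial fact about the random $(2,4)$-hypergraph, and that is precisely the content of the paper's proof: setting $q=u(p)$, idempotence gives $u(q)=q$, so the conjunction of $R_4$-formulas whose exponents are read off from the $O_1$-type of $q$, over the configuration $a<b<c<d$ with $R_2(a,d)$, $R_2(b,c)$ and no other $R_2$-edges, lies in $u(p)$ and $u(q)$ simultaneously; approximating $u$ by a single automorphism yields one copy $(a',b',c',d')$ of this configuration over which that conjunction lies in both $p$ and $q$, and a case analysis on the possible orderings of $(a',b',c',d')$ forces the $O_1$-types (and, dually, the $O_2$-types) of $p$ and $q$ to coincide, with the one exceptional case ($q$ of type $(0,0,0)$ against $p$ of type $(0,1,0)$) eliminated by a separate $5$-cycle configuration. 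Note that this argument uses neither minimality of $\M$ nor the provenance of $u$ --- generality the paper exploits later, when the lemma is applied in the final proposition to an idempotent obtained through a different epimorphism. To complete your proof you must supply an argument of exactly this combinatorial kind for surjectivity; the semigroup identities alone will not rule out the degenerate maps you correctly identified as the danger.
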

\begin{proof}
It is enough to prove the first part. If $R_2(z,a)^\epsilon\in p$ for $\epsilon\in 2$ and $a\in\C$, then $R_2(z,a)^\epsilon \in\sigma(p)$ for every $\sigma\in\Aut(\C)$, so $R_2(z,a)^\epsilon \in u(p)$ holds as well. Similarly, if $R_4(z,a,b,c)^\epsilon\in p$ for $\epsilon\in 2$ and $\{a,b,c\}\in O_0$, then $R_4(z,a,b,c)^\epsilon\in \sigma(p)$ for every $\sigma\in\Aut(\C)$, so 
$R_4(z,a,b,c)^\epsilon\in u(p)$. The same holds for $O_3$.

Note that if we put $q=u(p)$, then, by idempotency, $u(q)= u^2(p)=u(p)=q$.

Let us focus on $O_1$. We say that $p\in\Inv^*_z(\C)$ is of type $(\epsilon_-,\epsilon_\circ,\epsilon_+)$, where $\epsilon_-,\epsilon_\circ,\epsilon_+\in 2$, if $R_4^{\epsilon_\star}(z,a,b,c)\in p$ for all $\{a,b,c\}\in O_1^\star$, for each $\star\in\{-,\circ,+\}$.

\begin{claim*} $p$ and $q=u(p)$  have the same type.
\end{claim*}
\begin{proofclaim}
If $p$ is of type $(0,0,0)$ or $(1,1,1)$, then $q$ is of the same type, as all automorphisms in these cases preserve the type of $p$, and hence $u$ preserves it, too.
Take $a,b,c,d$ such that $a<b<c<d$, $R_2(a,d)$, $R_2(b,c)$, and there are no other $R_2$-edges between $a,b,c,d$. Let $q$ be of type $(\epsilon_-,\epsilon_\circ,\epsilon_+)$. Then the formula $\phi(z,a,b,c,d)$:=
$$R_4^{\epsilon_-}(z,a,b,c)\land R_4^{\epsilon_\circ}(z,b,a,d)\land R_4^{\epsilon_\circ}(z,c,a,d)\land R_4^{\epsilon_+}(z,d,b,c)\; \in \; q=u(p)=u(q),$$
We can approximate $u$ by $\sigma\in\Aut(\C)$ such that $\phi(z,a,b,c,d)\in \sigma(p),\sigma(q)$, i.e.\ $\phi(z,a',b',c',d')\in p,q,$ where $\sigma(a',b',c',d')=(a,b,c,d)$. 


To see that $p$ and $q$ are of the same type, we do the following case analysis. E.g.\ if $q$ is of type $(1,0,0)$, then $p$ is of one of the types $(1,0,0)$, $(1,1,0)$ or $(1,0,1)$, as $R_4(z,a',b',c')\in q$ implies $a'<b',c'$, so $R_4(z,a',b',c')\in p$. If $p$ is of type $(1,1,0)$, then $\lnot R_4(z, b',a',d')\land\lnot R_4(z,d',b',c')\in p$ implies $b'>a',d'$ and $d'>b',c'$ which is not possible. Similarly, one eliminates that $p$ is of type $(1,0,1)$. 

A similar analysis works when $q$ is of type $(0,0,1)$ and $(0,1,0)$, and also shows that $p$ is not of type $(1,-,-)$ nor $(-,-,1)$ when $q$ is of type $(0,0,0)$. Exceptionally, to eliminate the case when $q$ is of type $(0,0,0)$ and $p$ is of type $(0,1,0)$ a different trick is required.

Choose elements $a_0,\dots,a_4$ such that $R_2(a_i,a_{i+1})$ for all $i<5$ ($+$ is modulo 5), and there are no other $R_2$-edges between them. Then $\bigwedge_{i<5}\lnot R_4(z,a_i,a_{i+2},a_{i+3})\in q=u(p)=u(q)$, as $q$ is of type $(0,0,0)$. As above, we can approximate $u$ by $\sigma$ and find a copy $(a_0',a_1',a_2',a_3',a_4')$ of $(a_0,a_1,a_2,a_3,a_4)$ such that both $p$ and $q$ contain $\bigwedge_{i<5}\lnot R_4(z,a_i',a_{i+2}',a_{i+3}')$. Since $p$ is of type $(0,1,0)$, we get that $a_i'<a_{i+2}',a_{i+3}'$ or $a_i'>a_{i+2}',a_{i+3}'$, for all $i<5$. But it is easy to see that this is impossible (just looking at $<$).

The remaining cases are completely dual by interchanging $0$ and $1$, and $R_4$ and $\lnot R_4$ in the previous cases.
\end{proofclaim}

The above analysis for $O_1$ applies to $O_2$ by interchanging all $R_2$-edges and $R_2$-non-edges.
The lemma is proved.
\end{proof}

We can now easily see that $u\M$ is not trivial. Take $p,q\in\Inv_z^*(\C)$ such that $p(z)$ implies that $z$ is not $R_2$-connected to anything and only $R_4$-connected to $O_1^-$, and $q(z)$ implies that $z$ is not $R_2$-connected to anything and only $R_4$-connected to $O_1^+$. Note that $u\rho u\in u\M$ and that $\rho[O_1^-]=O_1^+$ and vice versa. Thus, by Lemma \ref{lemma image of u are all inv}, $u\rho u(p)= u\rho(p)=u(q)=q$, so $u\rho u\neq u$ as $u(p)=p$. We will see that $u\M=\{u,u\rho u\}$, but this will require more work, involving applications of contents.

\begin{lemma}\label{lemma uM two element} $u\M=\{u,u\rho u\}$, so $u\M\cong\mathbb Z/2\mathbb Z$.
\end{lemma}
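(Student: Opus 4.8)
The plan is to compute the image of the restriction homomorphism $\Theta\colon u\M\to\Sym(\Inv^*_z(\C))$, $\tau\mapsto\tau_{\upharpoonright\Inv^*_z(\C)}$. By the argument sketched for Fact~\ref{fact finite image}, $\Theta$ is injective, and its target is finite since $\#\Inv^*_z(\C)=2^9$; so it suffices to identify $\Theta[u\M]$ as a two-element group. By Lemma~\ref{lemma image of u are all inv}, $\Theta(u)=\id$. For $u\rho u$ I would first check that $\rho$ normalizes $\Aut(\C^*)$ inside $\Aut(\C)$: for $\sigma\in\Aut(\C^*)$ the composite $\rho^{-1}\sigma\rho$ reverses $<$ twice and preserves it once, hence lies in $\Aut(\C^*)$. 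Consequently $\rho[\Inv^*_z(\C)]=\Inv^*_z(\C)$, and reading off the effect of order reversal on the nine defining bits shows that $\rho$ acts on $\Inv^*_z(\C)$ as the involution $s$ swapping the $O_1^-$- and $O_1^+$-bits and the $O_2^-$- and $O_2^+$-bits while fixing the other five. Since $\Theta(u)=\id$, for $p\in\Inv^*_z(\C)$ we get $u\rho u(p)=u(\rho(p))=\rho(p)$ (the last equality by Lemma~\ref{lemma image of u are all inv}, as $\rho(p)\in\Inv^*_z(\C)$), so $\Theta(u\rho u)=s$ has order $2$; thus $\{u,u\rho u\}\cong\mathbb Z/2\mathbb Z$ sits inside $u\M$.

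For the reverse inclusion, take an arbitrary $\tau\in u\M$; as $u\M$ is finite it suffices to show $\Theta(\tau)\in\{\id,s\}$. The first ingredient is contents. Since $\tau^{-1}\in u\M$ and $\tau^{-1}(\tau(p))=u(p)=p$ for $p\in\Inv^*_z(\C)$, applying Fact~\ref{fact contents ellis} to $\tau$ and to $\tau^{-1}$ (in both directions, for all finite sequences from $\Inv^*_z(\C)$) shows that $\tau$ preserves the joint contents of all such sequences. Because contents only see $L$-types and are blind to $<$, this forces $\tau$ to fix the $R_2$-, $O_0$- and $O_3$-bits and to act on the $O_1$- and $O_2$-profiles, regarded as functions $\{-,\circ,+\}\to 2$, merely by relabelling positions through permutations $\pi_1,\pi_2\in\Sym\{-,\circ,+\}$. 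In particular $\Theta[u\M]$ embeds into a group of order $36$.

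To determine $(\pi_1,\pi_2)$ I would argue by realizability, approximating $\tau$ by automorphisms $\sigma_i\in\Aut(\C)$ with $\tau=\lim_i\sigma_i$ and running the ordered-configuration analysis from the proof of Lemma~\ref{lemma image of u are all inv}. For a fixed ordered test-tuple $\bar e$ one has $R_4(z,\bar e)\in\tau(p)$ iff eventually $R_4(z,\sigma_i^{-1}\bar e)\in p$, and the latter is decided by the $<$-order-type of $\sigma_i^{-1}\bar e$ together with the profile of $p$. Choosing the same kind of $4$- and $5$-element configurations as in that lemma — now ones that simultaneously carry $O_1^\pm$ and $O_2^\pm$ triples — one reads off that each of $\pi_1,\pi_2$ must be order-compatible, hence equal to $\id$ or to the reversal $s_0\colon -\leftrightarrow+$, and, crucially, that they are synchronised: $\pi_1=s_0$ iff $\pi_2=s_0$, because a single order reversal carried by the approximating automorphisms flips the $O_1$- and the $O_2$-positions at once. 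Thus $\Theta(\tau)\in\{\id,s\}$, completing the proof that $u\M=\{u,u\rho u\}\cong\mathbb Z/2\mathbb Z$.

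The hard part will be exactly this synchronisation. Content-preservation alone cannot exclude a ``mixed'' map such as $(\pi_1,\pi_2)=(s_0,\id)$: a direct check shows that relabelling the $O_1$-positions only still respects all joint contents (which never link an $O_1$-triple to an $O_2$-triple in an order-rigid way). Ruling out the mixed maps, as well as any $3$-cycle on positions, therefore cannot be done at the level of contents and must come from the hands-on order analysis, extending the case distinctions already present in Lemma~\ref{lemma image of u are all inv} to configurations carrying both edge-patterns at once.
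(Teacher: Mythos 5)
The easy half of your argument is correct and matches the paper: the restriction map $\Theta$ is injective (this is exactly the monomorphism behind Fact \ref{fact finite image}), $\Theta(u)=\id$ by Lemma \ref{lemma image of u are all inv}, $\rho$ normalizes $\Aut(\C^*)$, so it permutes $\Inv_z^*(\C)$ as the double-reversal involution $s$, and hence $u\rho u$ restricts to $s$; this gives $\{u,u\rho u\}\cong\mathbb Z/2\mathbb Z$ inside $u\M$. The gap is in the reverse inclusion, which is the real content of the lemma, and it occurs at both steps of your plan. Your step (a) --- that exact content preservation forces $\tau$ to fix the five rigid bits and to act on the $O_1$- and $O_2$-profiles by position permutations $\pi_1,\pi_2$ --- is asserted, not proved: ``contents are blind to $<$'' explains why position permutations are \emph{allowed}, not why nothing else is (a priori, why could a type of $O_1$-profile $(1,0,0)$ not go to one of profile $(1,1,0)$?). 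A single $R_4$-atom paired with the $L$-type of an $O_1$-triple only detects whether the profile is constant or mixed, so pinning down the action requires conjunctions over multi-triple configurations; this is precisely the bulk of the paper's proof (a four-point configuration with two disjoint $R_2$-edges, a table counting positive $R_4$-occurrences per order type, a separate five-point trick for the $(0,1,0)$-versus-$(0,0,0)$ case, and joint contents of \emph{pairs} of types to get uniformity across types). Your step (b) is likewise only a plan.

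More seriously, your closing assertion --- that contents can never link an $O_1$-triple to an $O_2$-triple ``in an order-rigid way,'' so the mixed maps such as $(\pi_1,\pi_2)=(s_0,\id)$ must be excluded by some other technique --- is false, and this mis-assessment is exactly what leaves your route open. The paper's synchronisation step is a content argument applied to a \emph{single} type: take $p$ with both profiles $(1,0,0)$, and take $a_0>a_1>a_2>a_3$ with $R_2(a_0,a_1)$, $R_2(a_0,a_3)$ and no other edges. Then $\{a_2,a_0,a_1\}$ and $\{a_2,a_0,a_3\}$ are $O_1$-triples while $\{a_0,a_1,a_3\}$ is an $O_2$-triple, and they share elements, so the quantifier-free $L$-type $q$ of $(a_0,a_1,a_2,a_3)$ --- which contents do see --- links the two edge-patterns rigidly. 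If $\eta(p)$ kept the $O_1$-profile but reversed the $O_2$-profile, then $R_4(z,a_2,a_0,a_1)\land\lnot R_4(z,a_2,a_0,a_3)\land R_4(z,a_0,a_1,a_3)\in\eta(p)$, and $\ct(\eta(p))\subseteq\ct(p)$ (Fact \ref{fact contents ellis}) yields $\bar b\models q$ realizing this formula in $p$; the profile of $p$ then forces $b_2<b_0,b_1$ and $b_0<b_1,b_3$ while simultaneously $b_2$ is not below both $b_0$ and $b_3$, which is impossible. The other mixed map is excluded dually by reversing the order on the configuration. So the ``hard part'' that you claim cannot be done at the level of contents is done at the level of contents, once the test configuration is chosen so that $O_1$- and $O_2$-triples overlap; what your proposal is missing is that configuration, not a new method.
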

\begin{proof}
Since $\Im(u) \subseteq \Inv_z^*(\C)$, it is enough to prove that for any $\eta\in u\M$: either  $\eta(p)=u(p)$ for all $p\in \Inv_z^*(\C)$, or $\eta(p)=u\rho u(p)$ for all $p\in\Inv_z^*(\C)$. We define the notion of $O_1$-type and $O_2$-type of $p \in \Inv_z^*(\C)$ as in the proof of Lemma \ref{lemma image of u are all inv}. Fix $\eta\in u\M$; $p$ will always range over $\Inv_z^*(\C)$.

\begin{claim*} If the $O_1$-type of $p$ is $(\epsilon_-,\epsilon_\circ,\epsilon_+)$, then the $O_1$-type of $\eta(p)$  is $(\epsilon_-,\epsilon_\circ,\epsilon_+)$ or $(\epsilon_+,\epsilon_\circ,\epsilon_-)$. 
The same holds for $O_2$-types.
\end{claim*}
\begin{proofclaim}
Let us first deal with $O_1$-types.
If $\epsilon_-=\epsilon_\circ=\epsilon_+$, then each automorphism preserves the $O_1$-type of $p$, hence $\eta$ preserves it, too, and we are done. 

Take elements $a_0,a_1,a_2,a_3$ such that $R_2(a_0,a_2)$ and $R_2(a_1,a_3)$, and there are no other $R_2$-edges between them. Put $q(y_0,y_1,y_2,y_3):=\tp^L (a_0,a_1,a_2,a_3)$. For a realization $\bar b$ of $q$ we will say that it is of type:
\begin{enumerate}[label=\Alph*:, align=right, leftmargin=*]
\item if $\min(\bar b)$ is $R_2$-connected to $\max(\bar b)$ ($\min$ and $\max$ are taken in $\C^*$);
\item if $\min(\bar b)$ and $\max(\bar b)$ are not $R_2$-connected and $\min(\bar b)^\star<\max(\bar b)^\star$, where $b_i^\star=b_{i+2}$, so it is the element to which $b_i$ is $R_2$-connected;
\item if $\min(\bar b)$ and $\max(\bar b)$ are not $R_2$-connected and $\max(\bar b)^\star<\min(\bar b)^\star$.
\end{enumerate}

For a type $p$, let $\delta_0,\delta_1,\delta_2,\delta_3\in \{0,1\}$ be the unique numbers such that the formula $\bigwedge_{i<4}R_4^{\delta_i}(z,b_i,b_{i+1},b_{i+2})$ belongs to $p$. Note that they depend on $p$ and $\bar b\models q$. Denote this formula by $\phi_{p,\bar b}(z,\bar b)$. In the following table, we calculate $\sum_{i<4}\delta_i$ depending on the $O_1$-type of $p$ and the type of ordering on $\bar b$:

\begin{center}
\begin{tabular}{|c||c|c|c|c|c|c|c|c|}\hline
& (0,0,0) & (1,0,0) & (0,1,0) & (0,0,1) & (1,1,0) & (1,0,1) & (0,1,1) & (1,1,1) \\\hline\hline
A/C & 0 & 1 & 2 & 1 & 3 & 2 & 3 & 4\\\hline
B & 0 & 2 & 0 & 2 & 2 & 4 & 2 & 4\\\hline
\end{tabular}
\end{center}

Recall that by Fact \ref{fact contents ellis}, $\ct(\eta(p))\subseteq \ct(p)$. By our choice, $(\phi_{p,\bar b}(z,\bar y),q(\bar y))\in \ct(p)$ for every $\bar b\models q$. Consider the following cases.
 
Case 1. $p$ is of $O_1$-type $(1,0,0)$ or $(0,0,1)$. If $\eta(p)$ is of type $(0,0,0)$, $(0,1,0)$, $(1,0,1)$ or $(1,1,1)$, then choose $\bar b\models q$  of type B. Since $(\phi_{\eta(p),\bar b}(z,\bar y),q(\bar y))\in\ct(\eta(p))$, this pair belongs to $\ct(p)$ as well. But, by the table, this is not possible, since $\phi_{\eta(p),\bar b}$ has either $0$ or $4$ positive occurrences of $R_4$, whereas this does not happen in $\varphi_{p,\bar b'}$ for any $\bar b'\models q$ if $p$ is of type $(1,0,0)$ or $(0,0,1)$. Similarly, if $\eta(p)$ is of type $(1,1,0)$ or $(0,1,1)$, by choosing $\bar b\models q$ of type A, we have $(\phi_{\eta(p),\bar b}(z,\bar y),q(\bar y))\in\ct(\eta(p))\subseteq\ct(p)$, but since $\phi_{\eta(p),\bar b}$ has $3$ positive occurrences of $R_4$, we again cannot find $\bar b'\models q$ such that $\phi_{\eta(p),\bar b}(z,\bar b')\in p$.
So, $\eta(p)$ is either of $O_1$-type $(1,0,0)$ or $(0,0,1)$.

Case 2. $p$ is of $O_1$-type $(0,1,0)$. 
By similar considerations to those in Case 1, we obtain that the only possibilities for the $O_1$-type of $\eta(p)$ are $(0,0,0)$ and $(0,1,0)$.
The case when $\eta(p)$ is of $O_1$-type $(0,0,0)$ requires a different trick, but this can be done in the same way as in the proof of Lemma \ref{lemma image of u are all inv}. So $\eta(p)$ is of $O_1$-type $(0,1,0)$.
 
The remaining cases are dual. Interchanging $R_2$ edges and $R_2$-non-edges, we obtain the claim for $O_2$-types.
\end{proofclaim}

Note that if $p$ is of $O_1$-type $(\epsilon_-,\epsilon_\circ,\epsilon_+)$, then $\rho(p)$ is of $O_1$-type $(\epsilon_+,\epsilon_\circ,\epsilon_-)$, and similarly for $O_2$-types. Therefore, the previous claim says that $\eta(p)$ has the same $O_1$-type [$O_2$-type] as $p$ or as $\rho(p)$. 

\begin{claim*} Either for every $p$ the $O_1$-types of $p$ and $\eta(p)$ are equal, or for every $p$ the $O_1$-types of $\rho(p)$ and $\eta(p)$ are equal.
The same holds for $O_2$-types.
\end{claim*}
\begin{proofclaim}
Consider the case of $O_1$-types (the case of $O_2$-types follows by interchanging all $R_2$-edges and $R_2$-non-edges).
Suppose not. Then we have types $p,p'$ with $O_1$-types $(\epsilon_-,\epsilon_\circ,\epsilon_+)$ and $(\epsilon_-',\epsilon_\circ',\epsilon_+')$ such that $\eta(p)$ and $\eta(p')$ are of $O_1$-types $(\epsilon_-,\epsilon_\circ,\epsilon_+)$ and $(\epsilon_+',\epsilon_\circ',\epsilon_-')$, respectively, where  $\epsilon_-\neq\epsilon_+$ and $\epsilon_-'\neq\epsilon_+'$. We have two cases.

Case 1. $(\epsilon_-,\epsilon_+)= (\epsilon_-',\epsilon_+')$. 
Consider 
$a_i$, $i<4$, such that $R_2(a_0,a_2)$, $R_2(a_1,a_3)$, and there are no other $R_2$-edges between them, and $a_0,a_2<a_1,a_3$. Let $q(\bar y)=\tp^L(a_0,a_1,a_2,a_3)$. Choose $\delta_0,\delta_1,\delta_2,\delta_3$ such that $\bigwedge_{i<4}R_4^{\delta_i}(z,a_i,a_{i+1},a_{i+2})\in \eta(p)$. 
Then $\bigwedge_{i<4}R_4^{1-\delta_i}(z,a_i,a_{i+1},a_{i+2})\in\eta(p')$, so:
$$\left(\bigwedge_{i<4}R_4^{\delta_i}(z,y_i,y_{i+1},y_{i+2}), \bigwedge_{i<4}R_4^{1-\delta_i}(z,y_i,y_{i+1},y_{i+2}),q(\bar y)\right)\in\ct(\eta(p),\eta(p')).$$
By Fact \ref{fact contents ellis}, this triple belongs to $\ct(p,p')$, so $\bigwedge_{i<4}R_4^{\delta_i}(z,b_i,b_{i+1},b_{i+2})\in p$ and $\bigwedge_{i<4}R_4^{1-\delta_i}(z,b_i,b_{i+1},b_{i+2})\in p'$ for some $\bar b\models q$. Choose $i$ such that $b_{i+1}=\min(\bar b)$. 
Then $R_4^{\delta_i}(z,b_i,b_{i+1},b_{i+2})\in p$ implies that $\epsilon_-=\delta_i$, and $R_4^{1-\delta_i}(z,b_i,b_{i+1},b_{i+2})\in p'$ implies that $\epsilon_-'=1-\delta_i$. Therefore, $\epsilon_-\neq \epsilon_-'$; a contradiction.

Case 2. $(\epsilon_-,\epsilon_+)\neq (\epsilon_-',\epsilon_+')$. Then $(\epsilon_-,\epsilon_+)= (\epsilon_+',\epsilon_-')$, so we reduce this to Case 1 by considering $\eta(p)$ and $\eta(p')$ instead of $p$ and $p'$, and $\eta^{-1}$ (computed in $u\M$)  instead of $\eta$ (note that $\eta^{-1}(\eta(p))= u(p) =p$ and $\eta^{-1}(\eta(p'))=u(p')= p'$ by Lemma \ref{lemma image of u are all inv}).
\end{proofclaim}



We finally prove:

\begin{claim*} Either for every $p$ the $O_1$-types of $p$ and $\eta(p)$ are equal and the $O_2$-types of $p$ and $\eta(p)$ are equal, or for every $p$ the $O_1$-types of $\rho(p)$ and $\eta(p)$ are equal and the $O_2$-types of $\rho(p)$ and $\eta(p)$ are equal.
\end{claim*}
\begin{proofclaim}
Let $p$ have both the $O_1$-type and the $O_2$-type equal to $(1,0,0)$. If the claim fails, then, by the previous two claims, the $O_1$-types of $p$ and $\eta(p)$ are equal and the $O_2$-types of $\rho(p)$ and $\eta(p)$ are equal, or the $O_1$-types of $\rho(p)$ and $\eta(p)$ are equal and the $O_2$-types of $p$ and $\eta(p)$ are equal.

So, assume first that $\eta(p)$ has $O_1$-type $(1,0,0)$ but $O_2$-type $(0,0,1)$. Consider $a_0,a_1,a_2,a_3$ such that $R_2(a_0,a_1)$, $R_2(a_0,a_3)$ and there are no other $R_2$-edges between them, and $a_0>a_1>a_2>a_3$; set $q(\bar y)=\tp^L(a_0,a_1,a_2,a_3)$. Then $R_4(z,a_2,a_0,a_1)\land\lnot R_4(z,a_2,a_0,a_3)\land R_4(z,a_0,a_1,a_3)\in\eta(p)$. But, by Fact \ref{fact contents ellis}, $\ct(\eta(p))\subseteq\ct(p)$. Hence, we can find $\bar b\models q$ such that $R_4(z,b_2,b_0,b_1)\land\lnot R_4(z,b_2,b_0,b_3)\land R_4(z,b_0,b_1,b_3)\in p$. Since the $O_1$-type and the $O_2$-type of $p$ are both $(1,0,0)$, this implies $b_2<b_0,b_1$, \ $b_0<b_1,b_3$, but $b_2$ is not less than both $b_0$ and $b_3$. Clearly, this is not possible.

If $\eta(p)$ has $O_1$-type $(0,0,1)$ but $O_2$-type $(1,0,0)$, the proof is dual by reversing the order on $\{a_0,a_1,a_2,a_3\}$.
\end{proofclaim}

We are ready to finish the proof of the lemma. If $p$ contains $R_2^\epsilon(z,a)$ for some $\epsilon\in 2$ and all $a\in\C$, then $\sigma(p)$ contains it, too, and so does $\eta(p)$. Similarly, if $p$ contains $R_4^\epsilon(z,a,b,c)$ for some $\epsilon\in 2$ and all $\{a,b,c\}\in O_0$ [resp. $\in O_3$], then $\eta(p)$ contains it, too. Thus, the restrictions of $\eta(p)$, $p$, and $\rho(p)$ to these formulae coincide for every $p \in \Inv_z^*(\C)$. 
By the previous claim, either for every $p \in \Inv_z^*(\C)$ the restrictions of $\eta(p)$ and $p$ to the formulae $R_4^\epsilon(z,a,b,c)$ for $\epsilon\in 2$ and $\{a,b,c\}\in O_1 \cup O_2$ coincide, or for every $p\in \Inv_z^*(\C)$ the restrictions of $\eta(p)$ and $\rho(p)$ to these formulae coincide. Therefore, either for every $p\in \Inv_z^*(\C)$ we have $\eta(p)=p=u(p)$, or for every $p\in \Inv_z^*(\C)$ we have $\eta(p)=\rho(p)=u\rho u(p)$. But this means that  either $\eta=u$, or $\eta=u\rho u$.
\end{proof}

\begin{proposition} The Ellis group of $(\Aut(\C),S_{\bar c}(\C))$ is $\mathbb Z/2\mathbb Z$.
\end{proposition}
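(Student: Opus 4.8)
The plan is to reduce the computation of the Ellis group of the full flow $(\Aut(\C), S_{\bar c}(\C))$ to the already-analyzed flow $(\Aut(\C), S_z(\C))$ on a single variable, and then invoke the two preceding lemmas. The key observation is that the theory $T$ of the random $(2,4)$-hypergraph is $m$-ary for a small $m$: since $L$ is a finite relational language whose maximal arity is $4$ and $T$ has q.e., every $L$-formula is (modulo $T$) a Boolean combination of formulae with at most $4$ free variables, so $T$ is $4$-ary, i.e.\ $(3+1)$-ary. This is exactly the hypothesis of Corollary \ref{corollary c to finite z m-ary}.

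First I would apply Corollary \ref{corollary c to finite z m-ary} with $m=3$ to conclude that the Ellis groups of the flows $(\Aut(\C), S_{\bar c}(\C))$ and $(\Aut(\C), S_3(\C))$ are topologically isomorphic. This handles the reduction from the full enumeration $\bar c$ to a finite tuple, but the lemmas above were proved for a \emph{single} variable $z$, so there is still a gap between $S_3(\C)$ and $S_1(\C) = S_z(\C)$. I expect the cleanest route is not through Corollary \ref{corollary c to finite z m-ary} at $m=1$ (which would require $T$ to be binary, which it is not), but rather to observe directly that the analysis in Lemmas \ref{lemma image of u are all inv} and \ref{lemma uM two element} already pins down $u\M$ for the one-variable flow: those lemmas establish that $\Im(u) = \Inv_z^*(\C)$ and that $u\M = \{u, u\rho u\} \cong \mathbb{Z}/2\mathbb{Z}$. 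Thus the plan reduces to transferring the isomorphism $\EL(S_{\bar c}(\C)) \cong \EL(S_z(\C))$ (for $\bar z$ a single variable repeated appropriately, or rather via the machinery of Corollary \ref{corollary c to finite z m-ary} adapted to the relevant finite tuple) down to the level where Lemma \ref{lemma uM two element} applies.

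The honest way to organize this is as follows. By Corollary \ref{corollary c to finite z m-ary}, the Ellis group of $(\Aut(\C), S_{\bar c}(\C))$ is topologically isomorphic to that of $(\Aut(\C), S_3(\C))$. Now I would argue that the element $u \in \EL(S_z(\C))$ found above, together with $\rho$, lifts through the reduction so that the same two-element structure governs the larger flow; concretely, the isomorphism $\Phi$ of Lemma \ref{lemma: first reduction} and Corollary \ref{corollary c to infinite z} identifies $\EL(S_{\bar c}(\C))$ with $\EL(S_{\bar z}(\C))$ for an infinite $\bar z$, and the invariance computation in Lemma \ref{lemma ellis semi epi bigger lang invariant}(iv) produces the relevant $u$ and $\Im(u) = \Inv_{\bar z}^*(\C)$. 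Since the description of $\Aut(\C^*)$-invariant types, the action of $\rho$, and the content-based arguments of Lemmas \ref{lemma image of u are all inv} and \ref{lemma uM two element} all go through verbatim coordinate-by-coordinate (each single coordinate contributing the same $\mathbb{Z}/2\mathbb{Z}$ and $\rho$ acting diagonally by order-reversal), the Ellis group of the full flow is again $\{u, u\rho u\} \cong \mathbb{Z}/2\mathbb{Z}$.

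**The main obstacle** I anticipate is the passage from one variable to many. The single-variable lemmas exploit that an $\Aut(\C^*)$-invariant type in $z$ is determined by finitely many bits (the $2^9$ count), and the $\rho$-swap argument is carried out for that one variable. For the full flow one must check that enlarging to several variables does not create new "twists'' beyond the single global order-reversal $\rho$ — i.e.\ that there is no element of $u\M$ acting as order-reversal on some coordinates and as the identity on others. This is precisely what the content arguments in the claims of Lemma \ref{lemma uM two element} rule out, since any $\eta \in u\M$ must act coherently (the choice between "$p$'' and "$\rho(p)$'' in those claims is forced to be uniform), and $\rho$ reverses the order globally on all of $\C$ simultaneously. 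I would therefore present the proposition's proof as a short invocation of Corollary \ref{corollary c to finite z m-ary} together with the statement that Lemma \ref{lemma uM two element} computes the relevant Ellis group to be $\mathbb{Z}/2\mathbb{Z}$, leaving the verification that the multi-variable case introduces no additional elements to the (uniform) coherence already established.
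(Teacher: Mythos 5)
Your reduction via Corollary \ref{corollary c to finite z m-ary} is fine as far as it goes ($T$ has q.e.\ in a finite relational language of maximal arity $4$, so it is $4$-ary and the Ellis groups of $S_{\bar c}(\C)$ and $S_3(\C)$ are topologically isomorphic), but it is idle: the real difficulty is exactly the passage you postpone, from several variables down to the single-variable flow where Lemmas \ref{lemma image of u are all inv} and \ref{lemma uM two element} live, and your proposed resolution of that difficulty does not work. The uniformity established in the claims of Lemma \ref{lemma uM two element} is uniformity of the choice between $p$ and $\rho(p)$ \emph{across different one-variable types} $p\in\Inv_z^*(\C)$; it says nothing about how an element of the Ellis group of the multi-variable flow acts on a multi-variable type. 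Worse, even if you know that $\eta$ fixes \emph{every} one-variable restriction of a type $q\in S_{\bar c}(\C)$ (which is what ``no mixed twists across coordinates'' would give you), this does not determine $\eta(q)$: by q.e., $q$ also contains atomic formulae such as $R_4^\epsilon(x_i,x_j,a,b)$ and $R_4^\epsilon(x_i,x_j,x_k,a)$, with two or three free variables and parameters, and these are invisible in every one-variable restriction. So ``the content arguments go through verbatim coordinate-by-coordinate'' is precisely the assertion that needs a proof, and it is not already established.

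What the paper actually does at this point is a kernel-triviality argument, and that is the idea missing from your proposal. One takes an idempotent $u'$ in a minimal left ideal $\M'$ of $\EL(S_{\bar c}(\C))$ with $\Im(u')\subseteq\Inv_{\bar c}^*(\C)$ (via Lemma \ref{lemma ellis semi epi bigger lang invariant}(ii)), and uses the epimorphism $\Phi$ of Lemma \ref{lemma: first reduction} together with Fact \ref{fact epi ellis} to get a group epimorphism $\Phi_{\upharpoonright u'\M'}\colon u'\M'\to u\M\cong\mathbb Z/2\mathbb Z$; it then remains to show $\ker(\Phi_{\upharpoonright u'\M'})$ is trivial. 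For $\eta\in u'\M'$ with $\Phi(\eta)=u$ and $q\in\Im(u')$, one checks $\eta(q)=q$ by running through the atomic formulae permitted by q.e.: membership of $R_2^\epsilon(x_i,a)$ and of $R_4^\epsilon(x_i,x_j,x_k,a)$ is forced by $\Aut(\C^*)$-invariance of $q$ (there is a unique $1$-type in $T^*$); membership of $R_4^\epsilon(x_i,x_j,a,b)$ is forced by invariance combined with the symmetry of $R_4$ (which absorbs the order of the pair $(a,b)$); and membership of $R_4^\epsilon(x_i,a,b,c)$ --- the only case genuinely controlled by the one-variable flow --- follows because $\hat\eta=u$ fixes the restriction $q_{\upharpoonright x_i}[x_i/z]\in\Inv_z^*(\C)$ pointwise by Lemma \ref{lemma image of u are all inv}. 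This case analysis is the substantive step your outline replaces with an appeal to coherence that the cited lemma does not provide; alternatively, redoing Lemmas \ref{lemma image of u are all inv} and \ref{lemma uM two element} directly for $S_3(\C)$ would require describing $\Inv_3^*(\C)$ and new content computations, which you do not carry out.
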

\begin{proof}

Take $u^*\in\EL(S_{\bar c}(\C^*))$  such that $\Im(u^*)\subseteq\Inv_{\bar c}(\C^*)$, as $T^*$ has \edeerp.
By Corollary \ref{lemma ellis semi epi bigger lang invariant}(ii), there is $u' \in \EL(S_{\bar c}(\C))$ with $\Im(u') \subseteq\Inv_{\bar c}^*(\C)$. Furthermore, we may assume that $u'$ is an idempotent in a minimal left ideal $\M'$ of $\EL(S_{\bar c}(\C))$. By Lemma \ref{lemma: first reduction}  (having in mind the natural identification of $S_{\bar d}(\C)$ with $S_{\bar c}(\C)$), we have the flow and semigroup epimorphism $\Phi:\EL(S_{\bar c}(\C))\to \EL(S_z(\C))$ (where $z$ is a single variable) given by:
$$\Phi(\eta)(p(z)) =\hat\eta(p(z)):= \eta(q(\bar x))_{\upharpoonright x'}[x'/z],$$
where $p(z)\in S_z(\C)$, $x'\in\bar x$, and $q(\bar x)\in S_{\bar c}(\C)$ are such that $q(\bar x)_{\upharpoonright x'}[x'/z]=p(z)$. By Fact \ref{fact epi ellis}, $u:=\Phi(u')$ is an idempotent in the minimal left ideal $\M:=\Phi[\M']$ of $\EL(S_z(\C))$, and $\Phi_{\upharpoonright u'\M'}:u'\M'\to u\M$ is  a group epimorphism. By the formula above, $\Im(u)\subseteq\Inv_{z}^*(\C)$, so by Lemma \ref{lemma image of u are all inv} and Lemma \ref{lemma uM two element}, we have that $\Im(u)=\Inv_z^*(\C)$ and $u\M=\{u,u\rho u\}$ has two elements. So it remains to show that $\ker(\Phi_{\upharpoonright u'\M'})$ is trivial.

Let $\eta\in u'\M'$ be such that $\hat{\eta}=u$. It is enough to prove that $\eta(q)=q$ for all $q\in\Im(\eta)=\Im(u')$ (recall that all such $q$'s are $\Aut(\C^*)$-invariant). 

If $R_2^\epsilon(x_i,a)\in\eta(q)$, then $R_2^\epsilon(x_i,\sigma(a))\in q$ for some $\sigma\in\Aut(\C)$, but then $R_2^\epsilon(x_i,a)\in q$ by $\Aut(\C^*)$-invariance of $q$ (as there is only one type in $S_1(T^*)$).
Similarly, if $R_4^\epsilon(x_i,x_j,x_k,a)\in\eta(q)$, then $R_4^\epsilon(x_i,x_j,x_k,a)\in q$ by invariance. If $R_4^\epsilon(x_i,x_j,a,b)\in\eta(q)$, by symmetry of $R_4$ and invariance of $q$, the conclusion is the same: $R_4^\epsilon(x_i,x_j,a,b)\in q$.
Let us consider $R_4^\epsilon(x_i,a,b,c)\in\eta(q)$. Let $p(z)= q(\bar x)_{\upharpoonright x_i}[x_i/z]$; note that $p(z)\in\Inv_z^*(\C)$, so $p(z)\in\Im(u)$. Then $R_4^\epsilon(z,a,b,c)\in\hat\eta(p)= u(p)= p= q_{\upharpoonright x_i}[x_i/z]$. Thus $R_4^\epsilon(x_i,a,b,c)\in q$.

By q.e., we conclude that  $\eta(q)\subseteq q$, so $\eta(q)=q$, and we are done.
\end{proof}

\section*{Acknowledgments}
The first author would like to thank Pierre Simon for inspiring discussions on random hypergraphs. The authors are also grateful to Tomasz Rzepecki for some useful suggestions which helped us to complete our analysis of Example \ref{example R2 R4} and for suggesting Example \ref{example R2 R4 Ps}.

\bibliographystyle{plain}

\end{document}